      \title          [K-theory of group rings  and the cyclotomic trace map]
            {Algebraic K-theory of group rings\\and the cyclotomic trace map}
     \author{Wolfgang L\"uck}
    \address{HIM (Hausdorff Research Institute for Mathematics)
             \&\ Mathematisches Institut, Rheinische Friedrich-Wilhelms-Universit\"at Bonn, Germany}
      \email{\hemail{wolfgang.lueck@him.uni-bonn.de}}
    \urladdr{\hurl{him.uni-bonn.de/lueck/}}
     \author{Holger Reich}
    \address{Institut f\"ur Mathematik, Freie Universit\"at Berlin, Germany}
      \email{\hemail{holger.reich@fu-berlin.de}}
    \urladdr{\hurl{mi.fu-berlin.de/math/groups/top/members/Professoren/reich.html}}
     \author{John Rognes}
    \address{Department of Mathematics, University of Oslo, Norway}
      \email{\hemail{rognes@math.uio.no}}
    \urladdr{\hurl[]{folk.uio.no/rognes/home.html}}
     \author{Marco Varisco}
    \address{Department of Mathematics and Statistics, University at Albany, SUNY, USA}
      \email{\hemail{mvarisco@albany.edu}}
    \urladdr{\hurl{albany.edu/~mv312143/}}
  \subjclass[2010]{\MSC{19D50}, \MSC{19D55}, \MSC{19B28}, \MSC{55P91}, \MSC{55P42}}
       \date{September 7, 2016}
\renewcommand{\tocsection}[3]{%
  \indentlabel{\@ifnotempty{#2}{\makebox[2.25em][r]{\ignorespaces#1 #2.\quad}}}#3}
\def\@seccntformat#1{%
  \protect\textup{%
    \protect\@secnumfont
    \expandafter\protect\csname format#1\endcsname
    \csname the#1\endcsname
    \protect\@secnumpunct
  }%
}
\newcommand*{\hurl}  [2][www.]{\href{http://#1#2}{\nolinkurl{#2}}}
\newcommand*{\hemail}[1]{\href{mailto:#1}{\nolinkurl{#1}}}
\newcommand*{\DOI}   [1]{\href{http://dx.doi.org/#1}{\nolinkurl{#1}}}
\newcommand*{\arXiv} [1]{\href{http://www.arxiv.org/abs/#1}{\nolinkurl{arXiv:#1}}}
\newcommand*{\MSC}   [1]{\href{http://www.ams.org/msc/msc2010.html?t=#1}{#1}}
\setlist{leftmargin=*}
\setlist[enumerate]{label=(\roman*)}
\numberwithin{equation}{section}
\newcommand*{\definetheorem}[3][equation]{%
  \newaliascnt{#2}{#1}
  \newtheorem{#2}[#2]{#3}
  \aliascntresetthe{#2}
  \expandafter\def\csname #2autorefname\endcsname{#3}
}
\newcommand*{\definetheoremsame}[2][equation]{%
  \definetheorem[#1]{\zap@space#2 \@empty}{\capitalisewords{#2}}
}
\theoremstyle{plain}
\theoremstyle{definition}
\newcommand*{\one}  {\text{\ding{192}}} \newcommand*{\bone}  {\text{\ding{202}}}
\newcommand*{\two}  {\text{\ding{193}}} \newcommand*{\btwo}  {\text{\ding{203}}}
\newcommand*{\three}{\text{\ding{194}}} \newcommand*{\bthree}{\text{\ding{204}}}
\newcommand*{\four} {\text{\ding{195}}} \newcommand*{\bfour} {\text{\ding{205}}}
\newcommand*{\five} {\text{\ding{196}}} 
 \newcommand*{\bsix}  {\text{\ding{207}}}
\newcommand*{\seven}{\text{\ding{198}}} 
\newcommand*{\eight}{\text{\ding{199}}}
\DeclareMathAlphabet{\matheurm}{U}{eur}{m}{n}
\newcommand*{\define}[5]{%
  \ifstrequal{#2}{*}{\expandafter#1\expandafter*}{\expandafter#1}%
  \csname#4#5\endcsname{#3{#5}}
}
\renewcommand*{\SS}{\mathscr{S}}
\newcommand*{\Ione}{\mathds{1}}
\DeclareMathOperator{\ii}{in}
\newcommand*{\FGI}{\matheurm{FinGroupsInj}}
\newcommand*{\RFcat}{{\mathcal{RF}}}
\newcommand*{\wh}{\mathit{Wh}}
\newcommand*{\TO}  [1][]{\stackrel{#1}{\longrightarrow}}
\newcommand*{\FROM}[1][]{\stackrel{#1}{\longleftarrow}}
\newcommand*{\MOR} [4][]{#2\colon#3\TO[#1]#4}
\newcommand*{\AND}{\qquad\text{and}\qquad}
\DeclarePairedDelimiterX\SET[2]{\{}{\}}{\,#1\;\delimsize\vert\;#2\,}
\DeclarePairedDelimiter\real{\lvert}{\rvert}
\DeclarePairedDelimiter\reall{\|}{\|}
\newcommand*{\ds}{\displaystyle}
\newcommand*{\ts}{\textstyle}
\newcommand*{\ab}  {{\operatorname{ab}}}
\newcommand*{\cont}{{\operatorname{cont}}}
\newcommand*{\copr}{{\operatorname{copr}}}
\newcommand*{\et}  {{\operatorname{\acute{e}t}}}
\newcommand*{\op}  {{\operatorname{op}}}
\DeclareMathOperator*{\smallcoprod}{\ts\coprod}
\DeclareMathOperator*{\smallprod}  {\ts\prod}
\DeclareMathOperator*{\tensor}     {\otimes}
\DeclareMathOperator*{\timesd}     {\times}
\DeclareMathOperator*{\sma}        {\wedge}
\newcommand*{\D}{\text{-}} 
\newcommand*{\forget}{\IU}           
\newcommand*{\spec}[1]{\mathbb{#1}}  
\newcommand*{\WT}  [1]{\mathbf{#1}}  
\newcommand*{\GS}  [1]{\mathcal{#1}} 
\newcommand*{\Q}   [1]{Q^{#1}\!}     
\newcommand*{\cofr}{\Gamma}          
\newcommand*{\hm}{\varphi}           
\newcommand*{\sh}{\smash{\operatorname{sh}}\vphantom{\Omega}}
\newcommand*{\pcompl}[1][p]{\widehat{{}_#1}}
\newcommand*{\prodp} [1][p]{\;\smallprod_{\mathclap{#1\textup{ prime}}}\;}
\newcommand*{\Cp}    [2][p]{C_{{#1}^{#2}}}
\newcommand*{\modu}[3]{\vphantom{#2}_{#1}{#2}_{#3}}
\newcommand*{\assembly}[1]{\alpha^#1}
\newcommand*{\Cyc} {{\mathcal{C} \hspace{-.2ex}\mathit{yc}}}
\newcommand*{\VCyc}{{\mathcal{VC}\hspace{-.2ex}\mathit{yc}}}
\newcommand*{\FCyc}{{\mathcal{FC}\hspace{-.2ex}\mathit{yc}}}
\newcommand*{\Fin} {{\mathcal{F} \hspace{-.2ex}\mathit{in}}}
\newcommand*{\oid}[2]{#1\!\smallint\!#2} 
\newcommand*{\coffree}[1]{\Sets^{#1}_{\text{\textup{cof,free}}}}
\newcommand*{\assum}[2]{$[#1_#2]$}
\newcommand*{\techA}[2]{$[A'_{#1,\,\le#2}]$}
\newcommand*{\techB}[2]{$[ B_{#1,\,   #2}]$}
\newcommand*{\TT}{T}
\newcommand*{\BHM}{B\"okstedt-Hsiang-Madsen}
\newcommand*{\piiso}{$\underline{\pi}_*$\=/isomorphism}
\newcommand*{\cplus}[1][]{\ifstrequal{#1}{*}{C}{c}onnective\textsuperscript{+}}
\newcommand*{\RMfiller}[1]{\phantom{\ds\Bigl(\mspace{#1mu}}}
\newcommand*{\RMheader}[2]{\underline{\RMfiller{#1}\text{#2}\RMfiller{#1}}}
\newcommand*{\RMref}   [1]{\text{\makebox[4.2em][s]{#1}}\qquad}
\begin{document}

\begin{abstract}
We prove that the Farrell-Jones assembly map for connective algebraic $K$-theory is rationally injective, under mild homological finiteness conditions on the group and assuming that a weak version of the Leopoldt-Schneider conjecture holds for cyclotomic fields.
This generalizes a result of B\"okstedt, Hsiang, and Madsen, and leads to a concrete description of a large direct summand of $K_n(\IZ[G])\tensor_\IZ\IQ$ in terms of group homology.
In many cases the number theoretic conjectures are true, so we obtain rational injectivity results about assembly maps, in particular for Whitehead groups, under homological finiteness assumptions on the group only.
The proof uses the cyclotomic trace map to topological cyclic homology, \BHM's functor~$C$, and new general isomorphism and injectivity results about the assembly maps for topological Hochschild homology and~$C$.
\end{abstract}


\maketitle
\tableofcontents
\thispagestyle{empty}


\section{Introduction}
\label{INTRO}

The algebraic $K$-theory groups~$K_n(\IZ[G])$ of the integral group ring of a discrete group~$G$ have far-reaching geometric applications to the study of manifolds with fundamental group~$G$ and of sufficiently high dimension.
One of the most famous and important manifestations of this phenomenon is given by the Whitehead group~$\wh(G)$, which is the quotient of~$K_1(\IZ[G])=GL(\IZ[G])_\ab$ by the subgroup generated by the units in~$\IZ[G]$ of the form~$\pm g$ with $g\in G$.
By the celebrated $s$-Cobordism Theorem, $\wh(G)$ completely classifies the isomorphism classes of $h$-cobordisms over any closed, connected manifold~$M$ with $\dim(M)\ge5$ and $\pi_1(M)\cong G$.

A well-known and still open conjecture, which we review below, predicts that the Whitehead group of any torsion-free group vanishes.
However, if a group~$G$ has torsion, then in general~$\wh(G)$ is not trivial.
For example, the structure of the Whitehead groups of finite groups~$G$ is well understood (see~\cite{Oliver}), and if~$C$ is any finite cyclic group of order $c\not\in\{1,2,3,4,6\}$, then $\wh(C)$ is not zero, not even after tensoring with the rational numbers~$\IQ$.

One of the main consequences of this work is the following theorem about Whitehead groups of infinite groups with torsion.
Its conclusion says that rationally the Whitehead groups~$\wh(H)$ of all finite subgroups~$H$ of~$G$ contribute to~$\wh(G)$, and only the obvious relations between these contributions hold.
Its assumption is a very weak and natural homological finiteness condition only up to dimension two.
As we explain in \autoref{ASSUMPTIONS}, it is satisfied by many geometrically interesting groups, such as outer automorphism groups of free groups and Thompson's group~$T$, thus yielding the first known results about the Whitehead groups of these groups.
But, in contrast to other known results in this area, the assumption is purely homological, and no geometric input is required.

\begin{theorem}
\label{Whitehead}
Assume that, for every finite cyclic subgroup~$C$ of a group~$G$, the first and second integral group homology $H_1(BZ_GC;\IZ)$ and $H_2(BZ_GC;\IZ)$ of the centralizer~$Z_G C$ of~$C$ in~$G$ are finitely generated abelian groups.
Then the map
\begin{equation}
\label{eq:whitehead}
\colim_{H\in\obj\Sub G(\Fin)}\wh(H)\tensor_\IZ\IQ
\TO
\wh(G)\tensor_\IZ\IQ
\end{equation}
is injective.
\end{theorem}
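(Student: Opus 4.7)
The plan is to derive the theorem from the paper's main rational-injectivity result for the Farrell-Jones assembly map on connective algebraic $K$\=/theory, applied to~$\IZ[G]$ in degree~$1$. The first step is to reduce the Whitehead statement to the corresponding statement about~$K_1$. For each finite subgroup~$H$ of~$G$, the Whitehead group $\wh(H)$ is $K_1(\IZ[H])$ modulo the ``trivial units'' $\{\pm h\}_{h\in H}$, and this quotient is the $\pi_1$ of a cofibre sequence of $\Or(G)$\=/spectra which, after Bredon homology with respect to~$\Fin$ and rationalisation, produces a commutative square comparing the $K_1$\=/assembly with the Whitehead assembly. The rational contribution of trivial units, which is $\colim_H H^\ab\tensor_\IZ\IQ$ on the source and $G^\ab\tensor_\IZ\IQ$ on the target, matches via the evident comparison map, so injectivity of the $K_1$\=/assembly yields injectivity of the $\wh$\=/assembly by a small diagram chase.

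Second, I would apply the paper's main theorem in degree~$1$ to~$\IZ[G]$. Under the $H_1$/$H_2$ finiteness hypothesis on centralisers of finite cyclic subgroups, this gives rational injectivity of the Farrell-Jones assembly for connective algebraic $K$\=/theory, conditional on the weak Leopoldt-Schneider conjecture for cyclotomic fields. In degree~$1$ this is exactly the ingredient needed for the comparison square built in the first step.

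Third, I would verify that the Leopoldt-Schneider hypothesis is unnecessary for the Whitehead case. In the main theorem's proof, the cyclotomic trace sends $K(\IZ[G])$ into $TC(\IZ[G])$, which is analysed via the \BHM\ functor~$C$; Leopoldt-Schneider intervenes only to control rational $K$\=/groups of $p$\=/completed cyclotomic integer rings. The instances relevant to the degree-$1$ statement are classically known, via Dirichlet's unit theorem and related facts about units of cyclotomic rings, so the Whitehead statement is unconditional. This last step is the main obstacle: isolating exactly where and in which degrees Leopoldt-Schneider enters the main theorem's proof, and confirming that no residual arithmetic input is needed in degree~$1$, will require a careful reading of the cyclotomic trace machinery, the \BHM\ functor~$C$, and the spectral sequences relating $TC$ of group rings to $K$\=/theory of cyclotomic integers.
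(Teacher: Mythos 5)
Your strategy coincides with the paper's: use a cofibre sequence of $\Or G$-spectra to reduce the Whitehead injectivity statement to injectivity of the Farrell--Jones assembly for connective $K$-theory in degree~$1$, then invoke the main result under hypotheses \techA{\FCyc}{2} and \techB{\FCyc}{\{0,1\}}, the latter being automatic. Your Step~3 correctly identifies that the required cases of Leopoldt--Schneider (here $t=0,1$) reduce to finiteness of ideal class groups and Dirichlet's unit theorem; in the paper this is precisely Proposition~\ref{no-B-in-Wh}, and the version of the main result you need is the Main Technical Theorem~\ref{main-technical} with $\TT=\{0,1\}$, not \autoref{main}, since the latter requires Assumption~\ref{our-B} for \emph{all}~$t\ge0$.

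However, Step~1 contains a genuine gap. You claim the source of the trivial-units assembly in degree~$1$ is $\colim_H H^\ab\tensor_\IZ\IQ$ and that it ``matches'' the target $G^\ab\tensor_\IZ\IQ$ via the evident comparison map. That comparison map is far from an isomorphism in general: for $G=\IZ$ the colimit over finite subgroups is zero while $G^\ab\tensor_\IZ\IQ=\IQ$. The first term of the relevant cofibre sequence is $\BE(-)=(EG\times_G-)_+\sma\K^{\ge0}(\IZ)$, not a ``trivial units'' spectrum concentrated in degree~$1$, and its Bredon homology $H_1^G\bigl(EG(\Fin);\BE\bigr)\tensor_\IZ\IQ$ receives an $E^2_{1,0}$ contribution $H_1(EG(\Fin)/G;\IQ)$ in addition to the $E^2_{0,1}$ term you describe; this is exactly what repairs the example $G=\IZ$, where $E\IZ(\Fin)/\IZ\simeq S^1$. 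The input that makes the four-lemma argument work is the Davis--L\"uck result (cited as Lemma~5.4 in the paper) that the assembly map for any functor of the Bredon form $(EG\times_G-)_+\sma\spec{E}$ is an isomorphism for \emph{every} family of subgroups. This nontrivial fact is missing from your proposal; without it the diagram chase collapses already for $G=\IZ$, and identifying it is the content the paper supplies via the homotopy cofibre $\Wh_\IZ^{\ge0}$ and Lemma~\ref{Wh-Bredon}.
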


Here the colimit is taken over the finite subgroup category~$\Sub G(\Fin)$, whose objects are the finite subgroups $H$ of~$G$ and whose morphisms are induced by subconjugation in~$G$.
In the special case when $G$ is abelian, then $\Sub G(\Fin)$ is just the poset of finite subgroups of~$G$ ordered by inclusion.
The general definition is reviewed at the beginning of \autoref{WHITEHEAD}.

The injectivity of the map~\eqref{eq:whitehead}, as well as the vanishing of the Whitehead groups of torsion-free groups, are in fact consequences of much more general conjectures about assembly maps.
Similarly, \autoref{Whitehead} is a consequence of more general results, namely \autoref{main} and \autoref{main-technical} below.
In the rest of this section we introduce assembly maps, recall these conjectures, and state our main results.

We denote by~$\K(R)$ the non-connective algebraic $K$-theory spectrum of a ring~$R$, with homotopy groups $\pi_n (\K( R ) ) = K_n ( R )$ for all~$n \in \IZ$,
and we let~$BG$ be the classifying space of the group~$G$.
The classical assembly map is a map of spectra
\begin{equation}
\label{eq:classical-assembly}
BG_+ \sma \K (\IZ ) \TO \K ( \IZ[G] )
\,,
\end{equation}
which was first introduced by Loday~\cite{Loday}*{Proposition~4.1.1 on page 356}.
This map is conjectured to be a $\pi_*$-isomorphism if the group~$G$ has no torsion, as we explain below.
After taking homotopy groups and rationalizing, the left-hand side of~\eqref{eq:classical-assembly} can be explicitly computed:
for every $n \in \IZ$ there is an isomorphism
\begin{equation}
\label{eq:Dold}
\bigoplus_{\substack{s+t=n\\s,t\ge0}}
H_s ( BG ; \IQ ) \tensor_{\IQ} \Bigl(  K_t ( \IZ ) \tensor_{\IZ} \IQ \Bigr) \TO[\cong] \pi_n \bigl( BG_+ \sma \K ( \IZ ) \bigr) \tensor_{\IZ} \IQ
\,.
\end{equation}

We call the map
\begin{equation}
\label{eq:Q-classical-assembly}
\bigoplus_{\substack{s+t=n\\s,t\ge0}}
H_s ( BG ; \IQ ) \tensor_{\IQ} \Bigl(  K_t ( \IZ ) \tensor_{\IZ} \IQ \Bigr) \TO K_n ( \IZ G ) \tensor_{\IZ} \IQ
\end{equation}
induced by the composition of \eqref{eq:Dold} and~\eqref{eq:classical-assembly} the \emph{rationalized classical assembly map}.
Hence we obtain the following conjecture, which was formulated by Hsiang in his plenary address at the 1983 ICM under the additional assumption that $G$ has a finite classifying space~$BG$; see~\cite{Hsiang}*{Conjecture~4 on page~114}.

\begin{conjecture}[The Hsiang Conjecture]
\label{Hsiang-Conj}
For every torsion-free group $G$ and every $n \in \IZ$ the rationalized classical assembly map~\eqref{eq:Q-classical-assembly} is an isomorphism.
\end{conjecture}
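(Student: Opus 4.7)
The plan is to prove the Hsiang Conjecture for a torsion-free group $G$ by reducing, via the cyclotomic trace map to topological cyclic homology, to a rational isomorphism statement about assembly for $\TC$, where the \BHM{} splitting of $\THH$ by conjugacy classes of $G$ makes rational computations tractable.

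First I would form the commutative square
\begin{equation*}
\begin{tikzcd}
BG_+ \sma \K(\IZ) \ar[r]\ar[d] & \K(\IZ[G]) \ar[d] \\
BG_+ \sma \TC(\IZ) \ar[r] & \TC(\IZ[G])
\end{tikzcd}
\end{equation*}
of classical assembly maps, with vertical maps induced by the cyclotomic trace. Using the known rational computations of $\K(\IZ)$ and $\TC(\IZ)$ together with Bökstedt periodicity, one identifies the left vertical map rationally with a well-understood inclusion; thus the rationalized conjecture for $\K$ reduces, in positive degrees and modulo the $\K(\IZ)$\=/summand, to the analogous statement for $\TC$.

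Second, I would analyze the bottom assembly map using the \BHM{} decomposition of $\THH(\IZ[G])$ indexed by conjugacy classes of elements $g\in G$: the summand indexed by the class of $g$ takes the form $BZ_G\langle g\rangle_+ \sma \THH(\IZ)$ with a twisted $S^1$\=/action. For torsion-free $G$, only the trivial conjugacy class produces a family of $\Cp{n}$\=/fixed-point spectra that assemble into a summand of $\TC(\IZ[G])$ meeting the image of the classical assembly, and this summand is precisely $BG_+ \sma \TC(\IZ)$. The remaining task is to prove that the rationalized $\TC$\=/assembly is an isomorphism, that is, that the non-unit conjugacy summands of $\THH(\IZ[G])$ contribute nothing to the rational homotopy of $\TC(\IZ[G])$ for torsion-free $G$.

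The main obstacle is surjectivity. One must establish (a)~that $\TC(\IZ[G])\tensor\IQ$ is rationally concentrated in the unit conjugacy component when $G$ is torsion-free---an extension of the Bass conjecture to all of higher $\TC$---and (b)~that the trace $\K(\IZ[G])\tensor\IQ\to\TC(\IZ[G])\tensor\IQ$ is surjective onto that component. Injectivity of the classical assembly would then follow from~(a) combined with the trace methodology of this paper, since the finite-cyclic-subgroup summands that complicate the torsion case are absent here. Surjectivity, however, is genuinely open: delivering it in the full generality of all torsion-free groups would require either a new rational descent technique for $\K$\=/theory of group rings, or progress on the Farrell-Jones conjecture for arbitrary torsion-free $G$, and I expect this to be the decisive difficulty of the proposal.
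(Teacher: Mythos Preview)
The statement you are asked to prove is a \emph{conjecture}, not a theorem: the paper does not prove it, and indeed the Hsiang Conjecture remains open in general. So there is no proof in the paper to compare your proposal against. Your proposal is honest about this in its final paragraph---you correctly identify surjectivity as ``genuinely open'' and requiring either new descent techniques or progress on Farrell--Jones for arbitrary torsion-free groups---but that means what you have written is a strategy sketch, not a proof.

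Even on the injectivity side your outline is too optimistic. The trace-method argument you describe is essentially the \BHM{} approach behind \autoref{BHM}, but that theorem needs the finiteness hypothesis~\ref{BHM-A} on the integral homology of~$BG$. The paper explains in \autoref{limitation} that this hypothesis cannot simply be dropped: for the torsion-free group $G=\IQ$ the interchange of smash product and homotopy limit fails rationally, and the relevant $\TC$-assembly factorization produces a spectrum with vanishing rational homotopy in degrees~$\ge2$, so the diagram cannot detect rational injectivity. Thus your second step---reducing to a rational $\TC$-assembly isomorphism and then invoking the conjugacy-class decomposition---already breaks down for injectivity without extra assumptions on~$G$, and your claim that ``injectivity of the classical assembly would then follow'' from the absence of nontrivial finite cyclic subgroups is not justified.
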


There are analogous versions of the classical assembly map~\eqref{eq:classical-assembly} and \autoref{Hsiang-Conj} for $L$-theory instead of algebraic $K$-theory.
The rational injectivity of the $L$-theory version of~\eqref{eq:classical-assembly} is equivalent to the famous Novikov Conjecture about the homotopy invariance of higher signatures.

In~\cite{BHM} B\"okstedt, Hsiang, and Madsen invented topological cyclic homology and the cyclotomic trace map from algebraic $K$-theory to topological cyclic homology, and used them to prove the following theorem, which is often referred to as the algebraic $K$-theory Novikov Conjecture; see~\cite{BHM}*{Theorem~9.13 on page~535} and~\cite{Madsen}*{Theorem~4.5.4 on page~284}.

\begin{theorem}[\BHM]
\label{BHM}
Let $G$ be a not necessarily torsion-free group.
Assume that the following condition holds.
\begin{enumerate}[label=\({[}\Alph*_{1}{]}\)]
\item
\label{BHM-A}
For every~$s\ge1$ the integral group homology $H_s(BG;\IZ)$ is a finitely generated abelian group.
\end{enumerate}
Then the rationalized classical assembly map~\eqref{eq:Q-classical-assembly} is injective for all $n\geq0$.
\end{theorem}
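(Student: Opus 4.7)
The plan is to follow the cyclotomic-trace strategy introduced in~\cite{BHM}. For each prime~$p$ one has a commutative square
\[
\begin{tikzcd}
BG_+ \sma \K(\IZ) \arrow[r] \arrow[d, "1 \sma \trc"'] & \K(\IZ[G]) \arrow[d, "\trc"] \\
BG_+ \sma \TC(\IZ;p) \arrow[r, "\alpha"'] & \TC(\IZ[G];p)
\end{tikzcd}
\]
whose horizontal maps are assembly maps and whose vertical maps are induced by the cyclotomic trace. It suffices to prove that, after $\tensor_\IZ\IQ$ and in every nonnegative degree, both the left vertical map and the bottom horizontal map~$\alpha$ are injective; then so is the top horizontal map, which is exactly~\eqref{eq:Q-classical-assembly}.

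For the left vertical map I would establish injectivity of $\trc\tensor_\IZ\IQ \colon K_n(\IZ)\tensor_\IZ\IQ \to \pi_n\bigl(\TC(\IZ;p)\bigr)\tensor_\IZ\IQ$ for all $n \ge 0$. By Borel's computation $K_*(\IZ)\tensor_\IZ\IQ$ has a very simple description, and each of its classes is detected by the $p$-adic cyclotomic trace---this is one of the main technical inputs of~\cite{BHM}. Naturality of the Dold decomposition~\eqref{eq:Dold} in its spectrum variable upgrades this to injectivity of $\pi_n(1\sma\trc)\tensor_\IZ\IQ$ for every $n\ge0$, with no hypothesis on~$G$.

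The main step is rational injectivity of~$\alpha$. Here I would use B\"okstedt's equivalence $\THH(\IZ[G]) \simeq \THH(\IZ) \sma N^{\operatorname{cyc}}G_+$, where the cyclic nerve $N^{\operatorname{cyc}}G$ decomposes as a disjoint union indexed by conjugacy classes of~$G$ and has the constant-loops summand~$BG$ as a natural retract. Passing to $C_{p^n}$-fixed points of the canonical $S^1$\=/action and then to the homotopy limit over the restriction and Frobenius maps defining $\TC(-;p)$, this retraction makes~$\alpha$ a split injection at each finite stage; the hard part will be to show that the splitting survives the homotopy limit after rationalization. Inverse limits and $\tensor_\IZ\IQ$ do not commute in general, and without a finiteness condition the $\lim^1$-contributions from the tower defining~$\TC$ could destroy injectivity. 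Hypothesis~\ref{BHM-A} is engineered exactly to control this: finite generation of $H_s(BG;\IZ)$ makes the relevant Atiyah--Hirzebruch-type $E_2$-terms finitely generated in each bidegree, which forces the $\lim^1$-terms to vanish rationally and allows the splitting to descend to~$\alpha\tensor_\IZ\IQ$. I expect the management of these derived limits to be the technical heart of the proof.
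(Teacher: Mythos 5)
Your outline---compare assembly maps via the cyclotomic trace and prove both the trace and the $\TC$-assembly map rationally injective---has the right shape, but the second step runs straight into the obstruction that this paper is designed to route around. You claim that the constant-loops retraction of the cyclic nerve ``makes $\alpha$ a split injection at each finite stage,'' leaving only a rational $\lim^1$ problem. That is not so: the restriction map $R$ on fixed points is built from the wrong-way homeomorphism $\Delta$ of~\eqref{eq:res-to-fix}, and $\Delta$ does \emph{not} restrict to the summand indexed by the trivial conjugacy class. Warning~\ref{no-R} makes this explicit already in simplicial degree zero with $n=1$; there is no $R$-map on the constant-loops part, so your claimed ``split injection at each finite stage'' simply does not exist for the $\RFcat$-tower defining $\TC$. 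Even with spherical coefficients, where a section $S$ of $R$ does exist (\autoref{R-and-S}), Warning~\ref{S_F} records that the squares you would need---$S$ and $F\circ S$ against the constant-loops retraction---fail to commute precisely in the case $\CF=1$, and notes this gap appears implicitly in a published survey. Remark~\ref{limitation} and the remark following it state flatly that the $\TC$-assembly map $\assembly{4}$ cannot be proved $\IQ$-injective from Assumption~\ref{BHM-A} alone, not even for the trivial family, so~\ref{BHM-A} is not there to kill $\lim^1$-terms on the $\TC$-tower.

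What the paper does instead is avoid $\TC$ as the carrier of the splitting. One works with spherical coefficients (linearization $\K^{\ge0}(\IS[G])\to\K^{\ge0}(\IZ[G])$ is a rational equivalence by Waldhausen) and maps further from $\prod_p\TC(\IS[G];p)$ into $\THH(\IS[G])\times\prod_p\C(\IS[G];p)$, where $\C$ is \BHM's functor: the homotopy limit over the Frobenius maps $F$ only of the shifted fixed points $\bigl(\sh^{ES^1_+}\THH\bigr)^{C_{p^n}}$. Since $F$ is a literal inclusion of fixed points it commutes on the nose with the retraction, so the splitting goes through for $\C$ (\autoref{split-C}); hypothesis~\ref{BHM-A} is used there, via the results of~\cite{LRV}, to commute the $F$-homotopy-limit with $EG(\CF)_+\sma_{\Or G}-$ rationally, a derived-limit problem of a genuinely different flavor than the one you describe. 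The transition from $\TC$ back to $\THH\times\C$ is only ever needed over \emph{finite} groups, where \autoref{TC->TxC} shows it is a rational isomorphism in positive degrees. You also leave implicit the number-theoretic input needed for the trace to detect $K_*(\IZ)\tensor_\IZ\IQ$: this is Assumption~\techB{1}{\IN}, verified in \autoref{no-B-in-BHM} using a regular prime and the Iwasawa-theoretic form of the Schneider conjecture.
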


For groups with torsion the classical assembly map is not surjective in general.
For example, the rationalized Whitehead group~$\wh(G)\tensor_\IZ\IQ$ is the cokernel of~$\eqref{eq:Q-classical-assembly}$ for~$n=1$, and it is usually not zero when $G$ has torsion, as we recalled before \autoref{Whitehead}.
Notice that, in particular, \autoref{BHM} does not give any information about Whitehead groups.
In contrast, as a consequence of our main result, we detect in \autoref{Whitehead} a large direct summand of~$\wh(G)\tensor_\IZ\IQ$ under extremely mild homological finiteness assumptions on~$G$.

The Farrell-Jones Conjecture~\cite{FJ-iso} predicts the structure of $K_n ( \IZ G )$ for arbitrary groups $G$, and reduces to the above-mentioned conjecture about the map \eqref{eq:classical-assembly} in the case of torsion-free groups.
In the equivalent reformulation due to J.~Davis and the first author~\cite{Davis-Lueck}, the Farrell-Jones Conjecture says that the Farrell-Jones assembly map
\begin{equation}
\label{eq:FJ-assembly}
EG(\VCyc)_+  \sma_{\Or G} \K(\IZ[\oid{G}{-}]) \TO \K (\IZ[G])
\,,
\end{equation}
a certain generalization of the classical assembly map \eqref{eq:classical-assembly}, is a $\pi_*$-isomorphism.
Here $EG(\VCyc)$ is the universal space for the family of virtually cyclic subgroups of~$G$ reviewed at the beginning of \autoref{ASSUMPTIONS}, $\Or G$ is the orbit category, $\oid{G}{-}$ is the action groupoid functor from \autoref{ACTION-GROUPOID}, and the map~\eqref{eq:FJ-assembly} is defined in \autoref{ASSEMBLY}.
We refer to the end of this introduction for some additional remarks about the Farrell-Jones Conjecture.
Now we concentrate on computational consequences for the rationalized homotopy groups and describe the analogue of the isomorphism~\eqref{eq:Dold}.

In order to give a precise statement we need more notation.
Given a group $G$ and a subgroup $H$, we denote by $N_G H$ the normalizer and by $Z_G H$ the centralizer of $H$ in $G$, and we define the Weyl group as the quotient $W_G H = N_G H / (Z_G H \cdot H)$.
Notice that the Weyl group $W_G H$ of a finite subgroup $H$ is always finite, since it embeds into the outer automorphism group of~$H$.
We write $\FCyc$ for the family of finite cyclic subgroups of $G$, and $( \FCyc )$ for the set of conjugacy classes of finite cyclic subgroups.

Using results of the first author in~\cite{L-Chern}, Grunewald established in~\cite{Grunewald}*{Corollary on page~165} an isomorphism
\begin{equation}
\label{eq:Lueck-Grunewald}
\begin{tikzcd}[row sep=small]
\ds
\bigoplus_{(C)\in(\FCyc)}
\bigoplus_{\substack{s+t=n\\s\ge0,t\ge-1}}
H_s(BZ_GC;\IQ)\tensor_{\IQ[W_G C]}\Theta_C\Bigl(K_t(\IZ[C])\tensor_{\IZ}\IQ\Bigr)
\arrow[d, "\cong", pos=.1, shorten <=-.75em]
\\
\ds
\pi_n\Bigl(EG(\VCyc)_+\sma_{\Or G}\K(\IZ[\oid{G}{-}])\Bigr)\tensor_\IZ\IQ
\end{tikzcd}
\end{equation}
for every $n\in\IZ$; see also~\cite{LS}*{Theorem~0.3 on page~2}.
Here $\Theta_C$ is an idempotent endomorphism of $K_t(\IZ[C])\tensor_{\IZ}\IQ$, which corresponds to a specific idempotent in the rationalized Burnside ring of~$C$, and whose image is a direct summand of $K_t(\IZ[C])\tensor_{\IZ}\IQ$ isomorphic to
\begin{equation}
\label{eq:Artin-defect}
\coker\Biggl(\, \bigoplus_{D\lneqq C} \ind_D^C\colon \bigoplus_{D\lneqq C}
     K_t(\IZ[D])\tensor_{\IZ}\IQ \TO K_t(\IZ[C])\tensor_{\IZ}\IQ \Biggr).
\end{equation}
See \autoref{CHERN} for more details.
The Weyl group acts via conjugation on $C$ and hence on $\Theta_C ( K_t(\IZ[C])\tensor_{\IZ}\IQ)$.
The Weyl group action on the homology comes from the fact that $EN_G C / Z_G C$ is a model for $BZ_G C$.
We remark that the dimensions of the $\IQ$-vector spaces in~\eqref{eq:Artin-defect} are explicitly computed in~\cite{Patronas}*{Theorem on page~9} for any~$t$ and any finite cyclic group~$C$.
Notice that if $t<-1$ then $K_t(\IZ[C])$ vanishes for any virtually cyclic group~$C$ by~\cite{FJ-vcyc}.

We call the map
\begin{equation}
\label{eq:Q-FJ-assembly}
\bigoplus_{(C)\in(\FCyc)}
\bigoplus_{\substack{s+t=n\\s\ge0,t\ge-1}}
H_s(BZ_GC;\IQ)\tensor_{\IQ[W_G C]}\Theta_C\Bigl(K_t(\IZ[C])\tensor_{\IZ}\IQ\Bigr)
\TO
K_n(\IZ[G])\tensor_\IZ\IQ
\end{equation}
induced by the composition of \eqref{eq:Lueck-Grunewald} and~\eqref{eq:FJ-assembly} the \emph{rationalized Farrell-Jones assembly map}.
Hence we obtain the following conjecture, which is a generalization of the Hsiang \autoref{Hsiang-Conj} for groups that are not necessarily torsion-free.

\begin{conjecture}[The Rationalized Farrell-Jones Conjecture]
\label{Q-FJ-Conj}
For every group $G$ and every $n \in \IZ$ the rationalized Farrell-Jones assembly map~\eqref{eq:Q-FJ-assembly} is an isomorphism.
\end{conjecture}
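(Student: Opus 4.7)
The plan is to prove that the Farrell-Jones assembly map~\eqref{eq:FJ-assembly} is a rational \piiso{} for every~$G$; by the L\"uck-Grunewald splitting~\eqref{eq:Lueck-Grunewald} this is equivalent to the isomorphism~\eqref{eq:Q-FJ-assembly} asserted in the conjecture. I would split the task into rational injectivity and rational surjectivity of~\eqref{eq:FJ-assembly}, since the two halves demand genuinely different machinery, and I would first use the transitivity principle together with the known fact that the relative assembly map for $\Fin\subseteq\VCyc$ is a rational $K$-theory isomorphism to reduce the target family from $\VCyc$ to~$\Fin$. Proving both halves of the assembly map for the family~$\Fin$ would establish the full isomorphism claimed.

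For rational injectivity I would follow and generalize the B\"okstedt-Hsiang-Madsen paradigm of \autoref{BHM}: compose with the cyclotomic trace $\trc\colon\K(\IZ[G])\TO\TC(\IZ[G])$, replace the $K$-theoretic assembly map by the corresponding $\TC$-assembly map, and analyze the latter using \BHM's functor~$C$ and the equivariant stable homotopy of topological Hochschild homology on the orbit category of~$G$. This is precisely the technology developed in the present paper, and it should produce rational injectivity of~\eqref{eq:FJ-assembly}, summand by summand over $(C)\in(\FCyc)$ in accordance with the Chern character decomposition.

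For rational surjectivity my plan would be to invoke a Bartels-L\"uck-Reich style controlled-algebra descent argument: for a group~$G$ acting suitably on a geometric space — $\delta$-hyperbolic, CAT(0), a building, or any finite-dimensional contractible model for $EG(\Fin)$ admitting long-and-thin open covers — one propagates the assembly isomorphism from $\Fin$-subgroups upward to $G$ by a covering spectral sequence argument. Combining this geometric surjectivity input with the trace-theoretic injectivity from the previous step yields the full rational isomorphism~\eqref{eq:Q-FJ-assembly} on each homotopy group.

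The hard part will be rational surjectivity in complete generality, and this is the true obstacle to the conjecture. Trace methods are by their nature injectivity tools: they only see classes that survive to $\TC$ and cannot, in principle, produce a surjection onto $K_*(\IZ[G])\tensor_\IZ\IQ$. On the other hand, every known surjectivity result for the Farrell-Jones assembly exploits specific geometric input — hyperbolicity, non-positive curvature, lattice structure in a virtually connected Lie group, elementary amenability — that is not available for an arbitrary countable group~$G$. A complete proof of the conjecture as stated would therefore require either a uniform geometric framework covering \emph{all} discrete groups or an entirely new mechanism for controlling the rational cokernel of the assembly map; the present paper pushes the cyclotomic-trace strategy as far as it will go and so delivers, at the very least, the injectivity half under extremely weak hypotheses.
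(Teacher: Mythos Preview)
The statement you are attempting to prove is a \emph{conjecture}, not a theorem: the paper does not prove \autoref{Q-FJ-Conj}, and indeed it remains open. There is therefore no ``paper's own proof'' to compare against. Your proposal is not a proof but a research programme, and your final paragraph essentially concedes as much: you identify rational surjectivity as ``the true obstacle'' and observe that it would require ``an entirely new mechanism,'' which is an accurate assessment of the state of the art but not a proof.

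Even the injectivity half of your plan is overstated. You write that the cyclotomic-trace technology ``should produce rational injectivity of~\eqref{eq:FJ-assembly}'' for every group~$G$, but the paper's \autoref{main} requires both Assumption~\ref{our-A} (homological finiteness of centralizers) and Assumption~\ref{our-B} (a weak Leopoldt--Schneider condition), and \autoref{limitation} shows explicitly that Assumption~\ref{our-A} cannot be dropped: for $G=\IQ$ the trace method fails outright. So the trace approach does not yield unconditional rational injectivity, and your plan has a genuine gap already at the injectivity stage, quite apart from the well-known obstruction on the surjectivity side.
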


The restriction of the map \eqref{eq:Q-FJ-assembly} to the summand indexed by the trivial subgroup of $G$ is the map \eqref{eq:Q-classical-assembly}.
So for a torsion-free group the Rationalized Farrell-Jones \autoref{Q-FJ-Conj} specializes to the Hsiang \autoref{Hsiang-Conj}.

Our main result generalizes \BHM's \autoref{BHM} to the Rationalized Farrell-Jones \autoref{Q-FJ-Conj}.

\begin{maintheorem}
\label{main}
Let $G$ be a group.
Assume that the following two conditions hold.
\begin{enumerate}[label=\assum{\Alph*}{\FCyc}]
\item
\label{our-A}
For every finite cyclic subgroup~$C$ of~$G$ and for every $s\geq1$, the integral group homology $H_s(BZ_GC;\IZ)$ of the centralizer of~$C$ in~$G$ is a finitely generated abelian group.
\item
\label{our-B}
For every finite cyclic subgroup~$C$ of~$G$ and for every $t\geq0$, the natural homomorphism
\[
K_t(\IZ[\zeta_c])
\TO
\;\smallprod_{\mathclap{p\textup{ prime}}}\;
K_t\Bigl(\IZ_p\tensor_\IZ\IZ[\zeta_c];\IZ_p\Bigr)
\]
is $\IQ$-injective, i.e., injective after applying~$-\tensor_\IZ\IQ$.
Here $c$ is the order of~$C$ and $\zeta_c$ is any primitive $c$-th root of unity.
\end{enumerate}
Then the restriction
\begin{equation}
\label{eq:Q-FJ-assembly-conn}
\bigoplus_{(C)\in(\FCyc)}
\,
\bigoplus_{\substack{s+t=n\\s,t\ge0}}
H_s(BZ_GC;\IQ)\tensor_{\IQ[W_G C]}\Theta_C\Bigl(K_t(\IZ[C])\tensor_{\IZ}\IQ\Bigr)
\TO
K_n(\IZ[G])\tensor_\IZ\IQ
\end{equation}
of the rationalized Farrell-Jones assembly map~\eqref{eq:Q-FJ-assembly} to the summands satisfying $t \geq 0$ is injective for all $n \in \IZ$.
\end{maintheorem}

In Assumption~\ref{our-B} we write $\IZ_p$ for the ring of $p$-adic integers and we use the notation $K_t(R;\IZ_p)$ for
$\pi_t(\BK(R)\pcompl)$,
the $t$-th homotopy group of the $p$-completed algebraic $K$-theory spectrum
of the ring $R$;
see \autoref{P-COMPL}.
The map to the factor indexed by~$p$ is induced by the natural map $K_t (R ) \TO K_t( R ; \IZ_p )$ and the ring homomorphism $\IZ [ \zeta_c ] \TO \IZ_p \tensor_{\IZ} \IZ [ \zeta_c ]$.

Assumption~\ref{our-A} implies and is the obvious generalization of Assumption~\ref{BHM-A}.

In the next section we thoroughly discuss Assumptions \ref{our-A} and~\ref{our-B}.
The discussion can be summarized as follows.
\begin{enumerate}[label=\assum{\Alph*}{\FCyc}]
\item
is satisfied by a very large class of groups.
\item
is conjecturally always true:
it is automatically satisfied if a weak version of the Leopoldt-Schneider Conjecture holds for cyclotomic fields.
\end{enumerate}
Moreover, Assumption~\ref{our-B} is satisfied for all~$c$ when $t=0$ or~$1$, and it is satisfied for all~$t$ if $c=1$;
for any fixed~$c$, it is satisfied for almost all~$t$.
Because of this, only finiteness assumptions appear in Theorems \ref{Whitehead} and~\ref{BHM} and in the following result.

\begin{theorem}[Eventual injectivity]
\label{above-L}
Assume that there is a finite universal space for proper actions $EG(\Fin)$.
Then there exists an integer~$L>0$ such that the rationalized Farrell-Jones assembly map~\eqref{eq:Q-FJ-assembly} is injective for all $n\ge L$.
The bound~$L$ only depends on the dimension of~$EG(\Fin)$ and on the orders of the finite cyclic subgroups of~$G$.
\end{theorem}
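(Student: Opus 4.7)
The plan is to use the finiteness of $EG(\Fin)$ to make Assumption~\assum{A}{\FCyc} automatic and to render Assumption~\assum{B}{\FCyc} irrelevant outside a finite range of~$t$, so that \autoref{main} applies in all sufficiently high degrees.

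Set $d:=\dim EG(\Fin)$. Cocompactness of $EG(\Fin)$ implies that $G$ has only finitely many conjugacy classes of finite subgroups, so the orders of the finite cyclic subgroups of $G$ form a finite set $\{c_1,\ldots,c_k\}$. For each finite cyclic $C\le G$, the fixed-point subcomplex $EG(\Fin)^C$ is contractible of dimension at most~$d$ and is a cocompact $Z_GC$-CW complex with finite isotropy, using that $N_GC/Z_GC$ is finite for finite~$C$ together with the finiteness of the set of $G$-orbits of cells; hence it is a cocompact finite-dimensional model for $EZ_GC(\Fin)$. Its cellular spectral sequence, combined with the rational acyclicity of finite groups in positive degrees, yields that $H_s(BZ_GC;\IZ)$ is finitely generated for every~$s$ and that $H_s(BZ_GC;\IQ)=0$ for $s>d$. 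Thus Assumption~\assum{A}{\FCyc} is automatic. By the facts recalled in \autoref{ASSUMPTIONS}, there is an integer $T\ge 0$ depending only on $c_1,\ldots,c_k$ such that Assumption~\assum{B}{\FCyc} holds for every $t\ge T$ and every $c\in\{c_1,\ldots,c_k\}$.

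Now set $L:=d+T$. For $n\ge L$, inspect the source of~\eqref{eq:Q-FJ-assembly}: the summands with $s>d$ vanish by the rational dimension bound established above, which in particular kills all $t=-1$ contributions (where $s=n+1>d$); and every surviving summand satisfies $t=n-s\ge n-d\ge T$, so~\assum{B}{\FCyc} is satisfied for every value of $t$ that actually contributes in degree~$n$. Appealing to the form of \autoref{main} made precise by \autoref{main-technical} then gives the desired injectivity in degree~$n$. The bound $L=d+T$ depends only on $d=\dim EG(\Fin)$ and on the orders $c_1,\ldots,c_k$, as required.

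The main obstacle is verifying that \autoref{main} can indeed be applied with Assumption~\assum{B}{\FCyc} assumed only for the finitely many values of~$t$ that contribute to the source in the given degree~$n$, rather than for all $t\ge 0$ simultaneously. Since the proof outlined in the abstract proceeds at the level of spectra via the cyclotomic trace and \BHM's functor~$\WT{C}$, followed by rationalization, a standard spectral sequence argument should show that injectivity of the assembly map at a given homotopy degree only uses~\assum{B}{\FCyc} in the range of~$t$ realized in that degree; this locality is presumably encoded in the statement of \autoref{main-technical}. Granting it, the argument above is complete.
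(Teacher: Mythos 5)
Your proof is correct and follows the paper's approach almost verbatim: you use the finiteness of $EG(\Fin)$ to deduce that $H_s(BZ_GC;\IQ)$ vanishes for $s>\dim EG(\Fin)$ and that Assumption~\assum{A}{\FCyc} holds, invoke the eventual validity of the Schneider Conjecture (the final statement of \autoref{Schneider} together with \autoref{Schneider=>B}) to obtain a threshold~$T$ for Assumption~\assum{B}{\FCyc}, and apply \autoref{main-technical} with $\TT=\{t\in\IN:t\ge T\}$. Your closing worry is unfounded: the subset parameter $\TT$ in \autoref{main-technical} is exactly the degree-local hypothesis you need, and your own observation that every nonzero summand of~\eqref{eq:Q-FJ-assembly} in degree $n\ge L$ has $t\ge T$ (with the $t=-1$ terms killed by the dimension bound) identifies the full source with the restriction covered by that theorem, so the argument is already complete as written.
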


Universal spaces are reviewed at the beginning of the next section, where we also give many interesting examples of groups having finite~$EG(\Fin)$.
Here we only remark that, for example, the assumption of \autoref{above-L} is satisfied by outer automorphism groups of free groups, for which there seems to be no other known result about the Farrell-Jones Conjecture.
\autoref{above-L} is proved at the end of \autoref{SCHNEIDER}.

We now formulate a stronger but more technical version of \autoref{main}, which in particular is needed to deduce \autoref{Whitehead}.
We first need some algebraic definitions from~\cite{LRV}.
An abelian group~$M$ is called almost trivial if there exists an $r \in \IZ-\{0\}$ such that $rm = 0$ for all $m \in M$.
We say that $M$ is \emph{almost finitely generated} if its torsion part $\tors M$ is almost trivial and if $M/ \tors M$ is a finitely generated abelian group.
Every finitely generated abelian group~$M$ is in particular almost finitely generated.
However, notice that $M \tensor_{\IZ} \IQ$ being finitely generated as a rational vector space does not necessarily imply that $M$ is almost finitely generated.
Almost finitely generated abelian groups form a Serre class.

\begin{maintechnicaltheorem}
\label{main-technical}
Let $G$ be a group and let $\CF\subseteq\FCyc$ be a family of finite cyclic subgroups of~$G$.
Let $N\geq0$ be an integer and let $\TT$ be a subset of the non-negative integers $\IN=\{0,1,2,\dotsc\}$.
Assume that the following two conditions hold.
\begin{enumerate}[label=\({[}\Alph*'_{\CF,\,\le N+1}{]}\)]
\item
\label{techA}
For every~$C\in\CF$ and for every $1\leq s\leq N+1$, the integral group homology $H_s(BZ_GC;\IZ)$ of the centralizer of~$C$ in~$G$ is an almost finitely generated abelian group.
\item[\techB{\CF}{\TT}]
For every~$C\in\CF$ and for every $t\in\TT$, the natural homomorphism
\[
K_t(\IZ[\zeta_c])
\TO
\;\smallprod_{\mathclap{p\textup{ prime}}}\;
K_t\Bigl(\IZ_p\tensor_\IZ\IZ[\zeta_c];\IZ_p\Bigr)
\]
is $\IQ$-injective, where $c$ is the order of~$C$ and $\zeta_c$ is any primitive $c$-th root of unity.
\end{enumerate}
Then the restriction
\begin{equation}
\label{eq:main-technical}
\bigoplus_{(C)\in(\CF)}
\,
\bigoplus_{\substack{s+t=n\\s\ge0,\,t\in\TT}}
H_s(BZ_GC;\IQ)\tensor_{\IQ[W_G C]}\Theta_C\Bigl(K_t(\IZ[C])\tensor_{\IZ}\IQ\Bigr)
\TO
K_n(\IZ[G])\tensor_\IZ\IQ
\end{equation}
of the rationalized Farrell-Jones assembly map~\eqref{eq:Q-FJ-assembly} to the summands satisfying $(C)\in(\CF)$ and $t\in\TT$ is injective for all~$0\leq n\leq N$.
\end{maintechnicaltheorem}

It is clear that Theorem~\ref{main} is a special case of Theorem~\ref{main-technical}, since Assumption \assum{A}{\CF} implies \techA{\CF}{N} for all~$N$, and \assum{B}{\CF} is the same as~\techB{\CF}{\IN}.
Notice that also \autoref{BHM} is a special case of Theorem~\ref{main-technical}.
This is because for the trivial family $\CF=1$ the maps \eqref{eq:Q-classical-assembly} and~\eqref{eq:main-technical} coincide,
and Assumption~\techB{1}{\IN} holds by \autoref{no-B-in-BHM}.
Moreover, for torsion-free groups Theorems \ref{BHM} and~\ref{main} coincide.

\autoref{no-B-in-Wh} says that Assumption~\techB{\FCyc}{\{0,1\}} is always satisfied.
This allows us to deduce \autoref{Whitehead} about the rationalized assembly map for Whitehead groups from Theorem~\ref{main-technical}.
The proof is given in \autoref{WHITEHEAD}.
In fact, the following strengthening of \autoref{Whitehead} is also true.

\begin{addendum}
\label{add-Whitehead}
The map~\eqref{eq:whitehead} is injective if, for every finite cyclic subgroup~$C$ of~$G$, the abelian groups $H_1(BZ_GC;\IZ)$ and $H_2(BZ_GC;\IZ)$ are \emph{almost} finitely generated.
\end{addendum}

In \autoref{STRATEGY} we outline the strategy of the proof of Theorems \ref{main} and~\ref{main-technical}, and explain in \autoref{limitation} why with this strategy the homological finiteness assumptions cannot be removed.
The main ingredient in the proof is, as in~\cite{BHM}, the cyclotomic trace map from algebraic $K$-theory to topological cyclic homology.
A~key role is played by a generalization of the functor~$C$ defined by B\"okstedt, Hsiang, and Madsen in~\cite{BHM}*{(5.14) on page~497}; see \autoref{FUNCTOR-C}.
In Theorems~\ref{split-THH} and~\ref{split-C}, we prove the following general results about the assembly maps for topological Hochschild homology and for~$C$, which we believe are of independent interest.
Here we say that a map of spectra~$\MOR{f}{\BX}{\BY}$ is split injective if there is a map $\MOR{g}{\BY}{\BZ}$ to some other spectrum~$\BZ$ such that $g\circ f$ is a $\pi_*$-isomorphism.
We say that $f$ is $\pi_n^\IQ$-injective if $\pi_n(f)\tensor_\IZ\IQ$ is injective.
The conditions very well pointed and \cplus{} for symmetric ring spectra are introduced in \autoref{SYMMETRIC} and are very mild.
\cplus[*] is a minor strengthening of connectivity; the sphere spectrum and suitable models for all Eilenberg-Mac Lane ring spectra are \cplus.

\begin{theorem}
\label{THH-and-C}
Let $G$ be a group and let $\CF$ be a family of subgroups of~$G$.
Let $\spec{A}$ be a very well pointed symmetric ring spectrum.
\begin{enumerate}
\item\label{i:THH}
The assembly map for topological Hochschild homology
\[
EG(\CF)_+\sma_{\Or G}\THH(\spec{A}[\oid{G}{-}])
\TO
\THH(\spec{A}[G])
\]
is always split injective, and it is a $\pi_*$-isomorphism if $\Cyc\subseteq\CF$, i.e., if $\CF$ contains all finite and infinite cyclic subgroups of~$G$.
\item\label{i:C}
Assume that $\spec{A}$ is \cplus.
If $\CF\subseteq\Fin$ and if Assumption~\ref{techA} in \autoref{main-technical} holds,
then the assembly map for \BHM's functor~$C$
\[
EG(\CF)_+\sma_{\Or G}\C(\spec{A}[\oid{G}{-}];p)
\TO
\C(\spec{A}[G];p)
\]
is $\pi_n^\IQ$-injective for all~$n\le N$ and all primes~$p$.
\end{enumerate}
\end{theorem}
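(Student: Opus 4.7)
The proof rests on the classical identification of topological Hochschild homology of group rings with the cyclic nerve. For a very well pointed symmetric ring spectrum $\spec{A}$ and a group $G$, there is a natural equivalence
\[
\THH(\spec{A}[G]) \simeq \THH(\spec{A}) \sma B^{\mathrm{cyc}}(G)_+,
\]
and the cyclic nerve decomposes naturally as $B^{\mathrm{cyc}}(G) \simeq \coprod_{(g)} BZ_G(g)$, indexed by conjugacy classes of elements of~$G$. This decomposition is functorial in group homomorphisms, and hence extends over the orbit category $\Or G$ through the action-groupoid construction $\oid{G}{-}$. The ``very well pointed'' hypothesis is precisely what is needed to pass from simplicial levels to realizations.

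For part~(i), the plan is to analyze the assembly map one conjugacy class at a time using the decomposition above. The $(g)$-summand of $\THH(\spec{A}[G])$ has the form $\THH(\spec{A}) \sma BZ_G(g)_+$, while the assembly source decomposes over pairs $(H,(h)_H)$ with $H\in\CF$ and $h\in H$ conjugate in~$G$ to~$g$. Split injectivity is established by exhibiting, for each $(g)$, a compatible retraction; the key point is that $B^{\mathrm{cyc}}$ commutes with the relevant homotopy colimits, and the restriction functor from the orbit category of~$G$ to that of~$Z_G(g)$ supplies the splitting data on each conjugacy-class component. When $\Cyc\subseteq\CF$, for every~$g$ the cyclic subgroup $\langle g\rangle$ lies in~$\CF$, and a cofinality argument on the orbit category identifies source and target at each conjugacy class, upgrading split injectivity to a \piiso.

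For part~(ii), the plan is to start from part~(i) applied with $\CF'=\Cyc$ to obtain a \piiso{} for $\THH$, and then layer the construction of $\C(-;p)$ on top. Recall that $\C(\spec{A}[G];p)$ is assembled from $\THH(\spec{A}[G])$ via homotopy limits involving the $\Cp{n}$-homotopy fixed points under the cyclotomic $S^1$-action. Under the decomposition $B^{\mathrm{cyc}}(G) \simeq \coprod_{(g)} BZ_G(g)$, the $S^1$-action corresponds to loop rotation and its $\Cp{n}$-fixed-point components detect the $p^n$-th power structure on conjugacy classes; after $p$-completion and inverse limits, only conjugacy classes of elements of finite order contribute rationally, and combined with $\CF\subseteq\Fin$ these contributions are indexed exactly by conjugacy classes of finite cyclic subgroups. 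One then aims to prove $\pi_n^{\IQ}$-injectivity of the assembly one such summand at a time.

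The principal obstacle, and the role of Assumption~\ref{techA}, is controlling the interchange of rationalization with the homotopy limits defining~$\C$. Almost finite generation of $H_s(BZ_GC;\IZ)$ for $1\le s\le N+1$ is precisely what makes the Atiyah-Hirzebruch and homotopy fixed-point spectral sequences computing $\pi_n$ of the summands $BZ_G(C)_+\sma\THH(\spec{A})^{h\Cp{n}}$, and their cyclotomic and profinite variants, have finitely generated columns through the range $n\le N$; the \cplus{} hypothesis on~$\spec{A}$ provides the corresponding connectivity input. This is what ensures that passage to the inverse limit defining~$\C$ followed by rationalization preserves injectivity through degree~$N$. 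Combining this control with the split injectivity of part~(i) and summing over $(C)\in(\CF)$ then yields the stated $\pi_n^{\IQ}$-injectivity for $n\le N$.
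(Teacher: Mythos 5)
Part~(i) of your proposal is essentially the paper's approach.
The identification $\THH(\spec{A}[G])\simeq\THH(\spec{A})\sma\real{CN_\bullet(G)}_+$ is \autoref{THH-CN}, and the decomposition of the cyclic nerve by conjugacy classes is \autoref{computation-CN} together with \autoref{computation-X-sma-CN}.
One small imprecision: the splitting is not supplied by a restriction functor $\Or G\to\Or Z_G(g)$, but by the projection $\pr_\CF$ onto the $\CF$-part of the cyclic nerve (and hence of $\THH$) -- the key observation in \autoref{split-CN} being that $EG(\CF)^{\langle c\rangle}$ is empty exactly when $\langle c\rangle\notin\CF$, which collapses the source onto the $\CF$-summands.
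That same computation, combined with the naturality of the homotopy equivalence of \autoref{computation-CN}, gives the $\pi_*$-isomorphism when $\Cyc\subseteq\CF$, so your cofinality step is on the right track.

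Part~(ii) has genuine gaps.
First, your description of the functor~$\C$ is wrong: it is not built from $C_{p^n}$-\emph{homotopy fixed points} of $\THH$.
By \autoref{C}, $\C(\spec{D};p)=\F(\sh^{ES^1_+}\THH(\spec{D});p)$, the homotopy limit along the Frobenius maps of the genuine $C_{p^n}$-fixed points of the equivariantly shifted $\THH$; by the Adams isomorphism (\autoref{THH-Adams}, \autoref{Adams}) these are modelled by $C_{p^n}$-\emph{homotopy orbits}.
This distinction matters: homotopy orbits preserve connectivity and $\pi_*^1$-isomorphisms, and this is precisely what makes \autoref{split-THH-hC} and \autoref{split-THH-sh-fix} follow from \autoref{split-THH}; homotopy fixed points would not cooperate.
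Second, you do not address the central obstruction, namely that the restriction map~$R$ does \emph{not} restrict to the $\CF$-parts of $\THH$ (\autoref{no-R}, \autoref{no-R-revisited}).
This is why the paper replaces $\hofib R$ by the point-set model $(\sh^{ES^1_+}\THH)^{C_{p^n}}$ (\autoref{hofib-R}) and then works exclusively with $F$-maps, which do commute with~$\pr_\CF$; any argument that ``layers $\C$ on top of $\THH$'' one conjugacy class at a time must explain how to get fixed-point data without $R$.
Third, your plan of first applying part~(i) with $\CF'=\Cyc$ is not productive here: part~(ii) is a statement about assembly for a family $\CF\subseteq\Fin$, and the paper's reduction chain ($\THH \Rightarrow \THH_{hC} \Rightarrow (\sh^{ES^1_+}\THH)^C \Rightarrow \C$) stays with that single family throughout.
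Finally, the role of Assumption~\ref{techA} is not that certain spectral sequences have finitely generated columns; the precise tool is \cite{LRV}*{Addendum~1.3}, which says that under these almost finite generation hypotheses the interchange map
\(
t\colon EG(\CF)_+\sma_{\Or G}\holim_{\IN}(\dots)\to\holim_{\IN}\bigl(EG(\CF)_+\sma_{\Or G}(\dots)\bigr)
\)
is a rational isomorphism in degrees $\le N$.
Invoking that result explicitly is what closes the argument in \autoref{split-C}; without it the passage to the homotopy limit defining~$\C$ is not justified.
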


In a separate paper~\cite{tc} we use the techniques developed here to show that, under stronger finiteness assumptions on~$G$, one also obtains injectivity statements for the assembly map for topological cyclic homology.
In particular, we show that if there is a universal space for proper actions~$EG(\Fin)$ of finite type, then the assembly map for topological cyclic homology
\[
EG(\Fin)_+\sma_{\Or G}\TC(\spec{A}[\oid{G}{-}];p)
\TO
\TC(\spec{A}[G];p)
\]
is split injective for any \cplus{} symmetric ring spectrum~$\spec{A}$.
These results are not needed here and do not lead to stronger statements in algebraic $K$-theory.

We conclude this introduction with some more information about the Farrell-Jones Conjecture in general, not just its rationalized version \autoref{Q-FJ-Conj}.
Recall that the conjecture predicts that the Farrell-Jones assembly map~\eqref{eq:FJ-assembly} is a $\pi_*$-isomorphism.
There is a completely analogous conjecture for $L$-theory.
Both conjectures were stated by Farrell and Jones in~\cite{FJ-iso}*{1.6 on page~257}, using a different but equivalent formulation; see~\cite{Hambleton-Pedersen}.
Among many others, the results in~\cites{FH-spaceform, FH-poly, FJ-dyn, FJ-Mostow} provided evidence and paved the way for these conjectures.
The Farrell-Jones Conjectures have many applications; most notably, they imply the Borel Conjecture about the topological rigidity of aspherical manifolds in dimensions greater than or equal to~five.
We refer to~\cites{LR-survey, Bartels-Lueck-Reich-appl, L-ICM} for comprehensive surveys about these conjectures and their applications.

Besides \autoref{BHM}, there are many other important injectivity results, among which~\cites{FH-Novikov, Carlsson-Pedersen, Carlsson-Goldfarb, Rosenthal, Bartels-Rosenthal, Bartels-Lueck-HK, Kasprowski, RTY}.
Furthermore, in recent years there has been tremendous progress in proving isomorphisms results, even for the more general version of the Farrell-Jones Conjectures with coefficients in additive categories and with finite wreath products~\cites{Bartels-Reich-coeff, Bartels-Lueck-coeff}.
This more general version has good inheritance properties:
it passes to subgroups, overgroups of finite index, finite direct products, free products, and colimits over arbitrary directed systems;
see for instance \cites{Bartels-Echterhoff-Lueck, Bartels-Lueck-ind}.
Even this more general version has now been verified for hyperbolic groups, CAT(0)-groups, arithmetic groups, lattices in virtually connected locally compact second countable Hausdorff groups, virtually solvable groups, fundamental groups of $3$-manifolds;
see for example~\cites{Bartels-Farrell-Lueck, Bartels-Lueck-annals, Bartels-Lueck-Reich-invent, Bartels-Lueck-Reich-Rueping, Bartels-Reich-jams, FW, Kammeyer-Lueck-Rueping, Roushon, Rueping, Wegner-CAT0, Wegner-solv}.

In this work we only consider the algebraic $K$-theory version of the conjecture and only with coefficients in the ring of integers~$\IZ$ or the sphere spectrum~$\IS$.
For more information about why the method used in~\cite{BHM} and here does not work for $L$-theory, we refer to~\cite{Madsen}*{Remark~4.5.5 on pages~285--286}.
Notice, however, that all the results mentioned above are about groups satisfying some geometric condition, typically some version of non-positive curvature, and the proofs use geometric tools like controlled topology and flows.
Our results only need homological finiteness conditions, and the methods of proof are completely different, coming mostly from equivariant stable homotopy theory.
We cover prominent groups, such as outer automorphism groups of free groups and Thompson's groups, for which no previous results were known.
In particular, \autoref{Whitehead} about Whitehead groups gives a deep result under very mild assumptions.

\subsection*{Acknowledgments}
Part of this work was financially supported by CRC~647 in Berlin, by the first author's Leibniz Award, granted by the DFG (Deutsche Forschungsgemeinschaft), and by his ERC Advanced Grant ``KL2MG-interactions'' (no.~662400), granted by the European Research Council.


\section{Discussion of the assumptions}
\label{ASSUMPTIONS}

In this section we discuss the assumptions of \autoref{main}.
We describe many groups that satisfy Assumption~\ref{our-A} and explain why Assumption~\ref{our-B} is conjecturally always true.

Let $\CF$ be a family of subgroups of $G$, i.e., a collection of subgroups that is closed under passage to subgroups and conjugates.
Examples of families are
\[
\CF=1,\FCyc,\Fin,\Cyc,\VCyc,
\]
the families consisting respectively of the trivial subgroup only, of all finite cyclic, finite, cyclic, or virtually cyclic subgroups of~$G$.
Recall that a group is called virtually cyclic if it contains a cyclic subgroup of finite index.
A \emph{universal space for~$\CF$} is a $G$-CW complex~$EG(\CF)$ such that, for all subgroups~$H\le G$, the $H$-fixed point space
\[
\bigl(EG(\CF)\bigr)^H
\,
\text{ is }
\begin{cases}
\text{empty}&\text{if $H\not\in\CF$;}\\
\text{contractible}&\text{if $H\in\CF$.}
\end{cases}
\]
Such a space is unique up to $G$-homotopy equivalence, and it receives a $G$-map that is unique up to $G$-homotopy from every $G$-CW complex all of whose isotropy groups are in~$\CF$.

When $\CF$ is the trivial family~$1$, then $EG(1)$ if the universal cover $EG=\widetilde{BG}$ of the classifying space of~$G$.
When $\CF$ is the family~$\Fin$ of finite subgroups, $EG(\Fin)$ is often denoted $\underline{E}G$ and called the universal space for proper actions.

The following result can be used to verify Assumption~\ref{our-A} for a large class of examples, as illustrated below.
We say that a $G$-CW-complex has finite type if it has only finitely many $G$-cells in each dimension.
A group~$G$ has type~$F_\infty$ if there is a classifying space~$BG$ of finite type.
Clearly, if $G$ has type~$F_\infty$, then $H_s(BG;\IZ)$ is a finitely generated abelian group for every~$s$.

\begin{proposition}
\label{EGF-finite-type}
Let $\CF\subseteq\Fin$ be a family of finite subgroups of~$G$.
Then there is a universal space~$EG(\CF)$ of finite type if and only if there are only finitely many conjugacy classes of subgroups in~$\CF$ and $Z_GH$ has type~$F_\infty$ for each~$H\in\CF$.
\end{proposition}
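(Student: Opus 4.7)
The plan is to prove both implications using the $H$-fixed point subcomplex $X^H := EG(\CF)^H$ with its natural $W_GH = N_GH/H$-action. A key preliminary is a commensurability observation: for any finite subgroup $H \le G$ the sequence
\[
Z_GH \twoheadrightarrow Z_GH\cdot H / H \hookrightarrow N_GH / H = W_GH
\]
has finite kernel $Z(H)$ and image of finite index (bounded by $|\mathrm{Out}(H)|$) in $W_GH$, so that $Z_GH$ has type $F_\infty$ if and only if $W_GH$ does. This allows one to phrase the homotopy-theoretic argument directly in terms of $W_GH$.

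For the \emph{only if} direction, suppose $X := EG(\CF)$ has finite type. The $0$-skeleton $X_0 = \coprod_{i=1}^m G/K_i$ has only finitely many orbits, with finite isotropy groups $K_1,\dotsc,K_m \in \CF$. Since $X^H$ is non-empty for every $H \in \CF$, it has a $0$-cell, which forces $H$ to be subconjugate to some $K_i$; as each $K_i$ is finite and $\CF$ is closed under subgroups and conjugation, this yields only finitely many conjugacy classes in $\CF$. Next, fix $H \in \CF$. The identity $(G/K \times D^n)^H = (G/K)^H \times D^n$ shows that $X^H$ inherits a $W_GH$-CW structure in which each $G$-cell of $X$ contributes only finitely many $W_GH$-cells: indeed $(G/K)^H$ splits into $N_GH$-orbits parameterized by $K$-conjugacy classes of subgroups of $K$ that are $G$-conjugate to $H$, a set of size at most $2^{|K|}$. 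Hence $X^H$ is a contractible $W_GH$-CW complex of finite type whose cell isotropies $(N_GH \cap K)/H$ are all finite. By the standard fact that a group admitting such a complex has type $F_\infty$ (verified via the diagonal $W_GH$-action on $EW_GH \times X^H$, whose orbit space inherits a cell filtration with successive quotients of the form $BF$ for finite groups $F$, each itself of type $F_\infty$), one concludes that $W_GH$, and therefore $Z_GH$, has type $F_\infty$.

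For the \emph{if} direction, assume that $\CF$ has finitely many conjugacy classes $(H_1), \dotsc, (H_r)$ and that each $W_{G}H_i$ has type $F_\infty$. One applies the standard inductive pushout construction of classifying spaces for families: order the $(H_i)$ by non-increasing order of $H_i$, and build $EG(\CF)$ stage by stage, at the $i$-th stage attaching orbits of type $G/H_i$ by means of an equivariant pushout whose input is a $W_GH_i$-CW model of $EW_GH_i$. The ordering ensures that at each stage the required universal fixed-point property is achieved and preserved. By the hypothesis each $EW_GH_i$ can be chosen of finite type, and since only $r$ stages are performed, the resulting $G$-CW model $EG(\CF)$ has finitely many cells in each dimension.

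The main obstacle lies in the \emph{only if} direction, specifically in establishing that $X^H$ carries a $W_GH$-CW structure of finite type with finite cell isotropies and then invoking the criterion for $W_GH$ to have type $F_\infty$; the commensurability observation is what translates this into the statement about $Z_GH$ in the proposition. The converse is essentially mechanical once one has finite-type models of the $EW_GH_i$, although care is needed in arranging the inductive attachment of $G$-orbits so that the fixed-point universality is met at every stage.
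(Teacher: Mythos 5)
You take a genuinely different route from the paper. The paper invokes \cite{L-type}*{Theorem~4.2} for the special case $\CF=\Fin$ and then argues that the proof carries over to arbitrary $\CF\subseteq\Fin$, with the key observation being that $(EG(\CF))^H$ is a model for $E(N_GH/H)(q_*(\CF))$ for the pushed-forward family $q_*(\CF)$. You instead reconstruct both implications from scratch. Your commensurability reduction from $Z_GH$ to $N_GH/H$ is the paper's first step as well; note, though, that the paper reserves the symbol $W_GH$ for $N_GH/(Z_GH\cdot H)$, so your usage $W_GH=N_GH/H$ clashes with the surrounding text and should be replaced by $N_GH/H$ throughout. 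Your ``only if'' direction is essentially a correct reconstruction of the argument underlying Lück's theorem: the finiteness of conjugacy classes via the $0$-skeleton, the orbit count for $(G/K)^H$ by $K$-conjugacy classes of $G$-conjugates of $H$, the resulting finite-type $N_GH/H$-CW structure on $X^H$ with finite isotropy, and the deduction of $F_\infty$ by Brown's finiteness criterion (your sketch via the diagonal action on $E(N_GH/H)\times X^H$ is a viable way to verify the criterion, though it deserves a citation rather than a one-line proof).

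Where I would flag a genuine gap is the ``if'' direction. Describing the $i$-th stage of the construction as ``an equivariant pushout whose input is a model of $EW_GH_i$'' is not quite right: what the $F_\infty$ hypothesis on $N_GH_i/H_i$ actually buys is that the already-constructed, finite-type, finite-isotropy $N_GH_i/H_i$-CW complex $X_{i-1}^{\,H_i}$ can be completed to a contractible one by attaching finitely many \emph{free} $N_GH_i/H_i$-cells in each dimension (equivalently, $G$-cells of type $G/H_i$). For non-maximal $H_i$ the complex $X_{i-1}^{\,H_i}$ carries non-trivial finite isotropy, so it is not a free $N_GH_i/H_i$-complex and one is not literally inserting a model of $E(N_GH_i/H_i)$. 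Formulating this correctly is exactly the role of the quotient family $q_*(\CF)$ that the paper highlights as the single modification needed to generalize \cite{L-type}'s proof beyond $\CF=\Fin$; your proposal does not account for this, and the phrase ``care is needed'' does not close the gap. The underlying strategy is sound, but this step has to be made precise for the argument to go through.
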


\begin{proof}
Since $H$ is finite and $Z_GH$ has finite index in~$N_GH$, notice that $Z_GH$ has type~$F_\infty$ if and only if~$N_GH$ has type~$F_\infty$ if and only if~$N_GH/H$ has type~$F_\infty$; see for example~\cite{Geoghegan}*{Corollary~7.2.4 on page~170, Theorem~7.2.21 and Exercise~7.2.1 on pages~175--176}.
In the case when $\CF=\Fin$ \cite{L-type}*{Theorem~4.2 on page~195} states that there is an $EG(\CF)$ of finite type if and only if there are only finitely many conjugacy classes of subgroups in~$\CF$ and $N_GH/H$ has type~$F_\infty$ for each~$H\in\CF$.
By inspection, the proof of this result in~\cite{L-type} goes through for arbitrary~$\CF\subseteq\Fin$.
Notice that \cite{L-type}*{Lemma~1.3 on page~180}, which is used in the proof, in this general case shows that if~$H\in\CF$ then $(EG(\CF))^H$ is a model for~$E(N_GH/H)(q_*(\CF))$, where $\MOR{q}{N_GH}{N_GH/H}$ is the quotient map and $q_*(\CF)=\SET{F\le N_GH/H}{q^{-1}(F)\in\CF}$.
The rest of the proof works verbatim.
\end{proof}

\begin{corollary}
\label{EGFin-finite-type=>A}
If there is a universal space~$EG(\Fin)$ of finite type then Assumption~\ref{our-A} is satisfied.
\end{corollary}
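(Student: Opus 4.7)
The plan is to derive this as an essentially immediate consequence of \autoref{EGF-finite-type} applied to the family $\CF = \Fin$. Since we assume that a universal space $EG(\Fin)$ of finite type exists, that proposition tells us in particular that $Z_G H$ has type $F_\infty$ for every finite subgroup $H$ of~$G$. Specializing this to the case where $H = C$ is a finite cyclic subgroup gives that $Z_G C$ has type $F_\infty$.

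Next, I would translate ``type $F_\infty$'' into the homological finiteness statement required by Assumption~\ref{our-A}. By definition of type~$F_\infty$, the centralizer $Z_G C$ admits a classifying space $BZ_G C$ of finite type, i.e., a CW model with only finitely many cells in each dimension. The cellular chain complex of its universal cover is then a chain complex of finitely generated free $\IZ[Z_G C]$\nobreakdash-modules in each degree, and tensoring down over $\IZ[Z_G C]$ yields a chain complex of finitely generated free abelian groups computing $H_*(BZ_G C; \IZ)$. Hence $H_s(BZ_G C; \IZ)$ is a subquotient of a finitely generated abelian group, and therefore itself finitely generated, for every $s \ge 1$ (indeed for every $s \ge 0$). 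This is exactly Assumption~\ref{our-A}.

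There is no real obstacle in this argument: the only nontrivial input is \autoref{EGF-finite-type}, which has already been proved, and the passage from ``classifying space of finite type'' to ``finitely generated integral homology in each degree'' is a standard fact from the cellular chain complex. The proof therefore reduces to citing \autoref{EGF-finite-type} and invoking this standard fact.
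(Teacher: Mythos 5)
Your proof is correct and matches the paper's intended argument: the paper states this corollary immediately after \autoref{EGF-finite-type} with no written proof, having already remarked just before that proposition that type~$F_\infty$ implies finitely generated integral homology in every degree, so the corollary is indeed the specialization of \autoref{EGF-finite-type} to $\CF=\Fin$ followed by that observation applied to $Z_GC$ for $C$ finite cyclic. The cellular chain complex argument you give is the standard justification of the paper's ``clearly''; nothing is missing.
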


The existence of a universal space~$EG(\Fin)$ of finite type is already a fairly mild condition.
For example, all the following groups have even finite (i.e., finite type and finite dimensional) models for~$EG(\Fin)$, and therefore satisfy Assumption~\ref{our-A} by \autoref{EGFin-finite-type=>A}.
\autoref{above-L} about eventual injectivity applies to all these groups.
\begin{itemize}
\item
Word-hyperbolic groups.
\item
Groups acting properly, cocompactly, and by isometries on a CAT(0)-space.
\item
Arithmetic groups in semisimple connected linear $\IQ$-algebraic groups.
\item
Mapping class groups.
\item
Outer automorphism groups of free groups.
\end{itemize}
For more information we refer to \cite{L-survey}*{Section~4} and also to~\cite{Mislin} in the case of mapping class groups.

Moreover, Assumption~\ref{our-A} is much weaker than $EG(\Fin)$ having finite type.
For example, Thompson's group~$T$ of orientation preserving, piece-wise linear, dyadic homeomorphisms of the circle contains finite cyclic subgroups of any given order, and hence there can be no $ET(\Fin)$ of finite type by \autoref{EGF-finite-type}.
However, in~\cite{GV}*{Corollary~1.5} it is proved that Thompson's group~$T$ satisfies Assumption~\ref{our-A}.

For Thompson's group~$T$ and for outer automorphism groups of free groups there seem to be no previously known results about the Farrell-Jones Conjecture.

We now turn to Assumption~\ref{our-B}.
First of all, notice that this assumption depends on the group~$G$ only through the set of numbers~$c$ that appear as orders of finite cyclic subgroups of $G$.
Fix integers~$c\ge1$, $t\ge0$, and a prime number~$p$.
The map in Assumption~\ref{our-B}, followed by the projection to the factor indexed by~$p$, is the diagonal~$\beta$ of the following commutative diagram.
\begin{equation}
\label{eq:pcompl-pcompl}
\begin{tikzcd}[row sep=scriptsize]
\ds K_t\bigl( \IZ [ \zeta_c ] \bigr)
\arrow[dr, "\beta" description]
\arrow[r]
\arrow[d, shorten <=-.25em]
&
\ds K_t\bigl(\IZ_p\tensor_\IZ \IZ[\zeta_c]\bigr)
\arrow[d, shorten <=-.25em]
\\
\ds K_t\bigl(\IZ[\zeta_c]; \IZ_p \bigr)
\arrow[r]
&
\ds K_t\bigl(\IZ_p\tensor_\IZ \IZ[\zeta_c];\IZ_p\bigr)
\end{tikzcd}
\end{equation}
The vertical maps are induced by the natural map $\K(R)\TO\K(R)\pcompl$ from the algebraic $K$-theory spectrum to its $p$-completion.
The horizontal maps are induced by the ring homomorphism
\(
\IZ [ \zeta_c ] \TO \IZ_p \tensor_{\IZ} \IZ [ \zeta_c ]
\).
Notice that $\IZ [\zeta_c ]$ is the ring of integers in the number field $\IQ ( \zeta_c )$, and that there is a decomposition
\[
\IZ_p \tensor_{\IZ} \IZ [ \zeta_c ] = \smallprod_{{\mathfrak{p}} | p } \CO_{\mathfrak{p}}
\,,
\]
where the product runs over all primes $\mathfrak{p}$ in $\IZ[\zeta_c]$ over~$p$, and $\CO_{\mathfrak{p}}$ is the completion of $\IZ[ \zeta_c ]$ with respect to~$\mathfrak p$.
Since $K_t ( - )$ and $K_t (- ; \IZ_p )$ respect finite products, one can rewrite diagram~\eqref{eq:pcompl-pcompl} with suitable products on the right-hand side.

We first consider the cases $t=0$ and~$t=1$.

\begin{proposition}
\label{no-B-in-Wh}
If $t=0$ or~$t=1$, then the map~$\beta$ in diagram~\eqref{eq:pcompl-pcompl} is $\IQ$-injective for any prime~$p$ and for any $c\ge1$.
In particular, in the notation of \autoref{main-technical}, Assumption~\techB{\FCyc}{\{0,1\}} is always satisfied.
\end{proposition}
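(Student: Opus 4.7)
The plan is to factor $\beta$ through the top row of diagram~\eqref{eq:pcompl-pcompl}. Set $R=\IZ[\zeta_c]$. Since $R\tensor\IZ_p=\smallprod_{\mathfrak p|p}\CO_{\mathfrak p}$ is a finite product of complete local rings, and since both $K_t(-)$ and $K_t(-;\IZ_p)$ preserve finite products, $\beta$ factors as
\[
K_t(R)\TO\smallprod_{\mathfrak p|p}K_t(\CO_{\mathfrak p})\TO\smallprod_{\mathfrak p|p}K_t(\CO_{\mathfrak p};\IZ_p).
\]
Two elementary facts will be used throughout. First, the completion map $R\hookrightarrow\CO_{\mathfrak p}$ is injective for every prime $\mathfrak p|p$. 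Second, for each $\CO_{\mathfrak p}$ the universal coefficient sequence for spectrum $p$-completion identifies $K_t(\CO_{\mathfrak p};\IZ_p)$ with the derived $p$-completion of the abelian group $K_t(\CO_{\mathfrak p})$; the potential $\Hom(\IZ/p^\infty,K_{t-1}(\CO_{\mathfrak p}))$ term vanishes in the cases $t=0,1$ because $K_0(\CO_{\mathfrak p})=\IZ$ is torsion-free and $K_{-1}(\CO_{\mathfrak p})=0$ (as $\CO_{\mathfrak p}$ is regular).

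For $t=0$, the rank map yields $K_0(R)\tensor\IQ\cong\IQ$ (the class group is finite) and $K_0(\CO_{\mathfrak p};\IZ_p)=\IZ_p$ for each $\mathfrak p$. Thus $\beta\tensor\IQ$ is the diagonal embedding $\IQ\to\smallprod_{\mathfrak p|p}\IQ_p$, which is injective.

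For $t=1$, Dirichlet's unit theorem gives $K_1(R)=R^\times\cong\mu\times\IZ^r$, where $r=r_1+r_2-1$ and $\mu$ is the finite group of roots of unity of $R$. Via Teichm\"uller lifts, $\CO_{\mathfrak p}^\times$ decomposes as $\mu(k_{\mathfrak p})\times U^{(1)}_{\mathfrak p}$, where $\mu(k_{\mathfrak p})$ is the cyclic group of Teichm\"uller units (of order prime to~$p$) and $U^{(1)}_{\mathfrak p}$ is the pro-$p$ group of principal units, a finitely generated $\IZ_p$-module. The derived $p$-completion kills $\mu(k_{\mathfrak p})$ and leaves $U^{(1)}_{\mathfrak p}$ unchanged, so $K_1(\CO_{\mathfrak p};\IZ_p)=U^{(1)}_{\mathfrak p}$. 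Suppose now that $u\in R^\times$ has torsion image in every $K_1(\CO_{\mathfrak p};\IZ_p)$. Since the torsion of $U^{(1)}_{\mathfrak p}$ is the finite group $\mu_{p^\infty}(\CO_{\mathfrak p})$, a positive power $u^M$ projects to $1$ in $U^{(1)}_{\mathfrak p}$, so $u^M\in\mu(k_{\mathfrak p})\subset\CO_{\mathfrak p}^\times$; hence $u^{M'}=1$ in $\CO_{\mathfrak p}$ for a suitable multiple $M'$ of $M$. By injectivity of $R\hookrightarrow\CO_{\mathfrak p}$, $u^{M'}=1$ already in $R$, so $u$ is torsion in $R^\times$. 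This shows that the rational kernel of $\beta$ vanishes.

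The main technical point is the identification $K_1(\CO_{\mathfrak p};\IZ_p)=U^{(1)}_{\mathfrak p}$: since $\CO_{\mathfrak p}^\times$ is not finitely generated as an abelian group, its derived $p$-completion cannot be obtained by simply tensoring with $\IZ_p$, and one is forced to use the Teichm\"uller decomposition to isolate the pro-$p$ summand. Note that no Leopoldt-type input is required here, because we only need $\IQ$-linear (as opposed to $\IQ_p$-linear) independence of the units, which follows from the elementary global-to-local torsion argument above. The concluding assertion about Assumption~\techB{\FCyc}{\{0,1\}} is then immediate.
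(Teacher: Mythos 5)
Your proof is correct and factors $\beta$ through the top-right corner of diagram~\eqref{eq:pcompl-pcompl} exactly as the paper does, relying on the same two facts: that $R^\times\to\smallprod_{\mathfrak p|p}\CO_\mathfrak p^\times$ is injective, and that passing to $p$-completed $K$-theory of $\CO_\mathfrak p$ only introduces a torsion kernel. The paper verifies the second point by invoking the general observation that $A\to A\pcompl$ is $\IQ$-injective for any finitely generated $\IZ$- or $\IZ_p$-module $A$ (applied to $\CO_\mathfrak p^\times\cong\mu(k_\mathfrak p)\times U^{(1)}_\mathfrak p$), whereas you carry out the same verification by hand via the Teichm\"uller decomposition and a torsion chase; the content is the same, just phrased more explicitly.
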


\begin{proof}
The second statement clearly follows from the first.

If $t=0$, the map $\beta$ is easily seen to be $\IQ$-injective, because $K_0(\IZ[\zeta_c])$ is the sum of~$\IZ$ and a finite group, the ideal class group of $\IQ(\zeta_c)$.

If $t=1$, the determinant identifies $K_1(R)$ with the group of units~$R^\times$, provided that $R$ is a ring of integers in a number field~\cite{BMS}*{Corollary~4.3 on page~95}, or a local ring~\cite{Weibel-K-book}*{Lemma~III.1.4 on page~202}.
The abelian group $\IZ[\zeta_c]^\times$ is finitely generated, $\CO_{\mathfrak{p}}^\times$ is the sum of a finite group and a finitely generated $\IZ_p$-module, and both $K_0(\IZ[\zeta_c])$ and $K_0(\smallprod_{\mathfrak{p}|p}\CO_\mathfrak{p})$ are finitely generated abelian groups.
Hence all these groups have bounded $p$-torsion and diagram~\eqref{eq:pcompl-pcompl} can be identified with the following diagram; compare \autoref{P-COMPL}.
\begin{equation}
\label{eq:units-pcompl-pcompl}
\begin{tikzcd}[row sep=tiny]
\ds\IZ[\zeta_c]^\times
\arrow[r]
\arrow[d]
&
\ds\smallprod_{\mathfrak{p}|p}\CO_\mathfrak{p}^\times
\arrow[d, shorten <=-.5em]
\\
\ds\bigl(\IZ[\zeta_c]^\times\bigr)\pcompl
\arrow[r]
&
\ds\smallprod_{\mathfrak{p}|p}\bigl(\CO_\mathfrak{p}^\times\bigr)\pcompl
\end{tikzcd}
\vspace{-1.75ex}
\end{equation}
Here $A\pcompl$ is the $p$-completion of the abelian group~$A$, defined as the inverse limit $\lim_i A/p^i$.
The natural map $A\TO A\pcompl$ is $\IQ$-injective provided that $A$ is a  finitely generated $\IZ$- or $\IZ_p$-module.
Therefore the vertical maps in~\eqref{eq:units-pcompl-pcompl} are $\IQ$-injective.
Since the upper horizontal map is clearly injective, too, the result is proved.
\end{proof}

We remark that also the lower horizontal map in diagram~\eqref{eq:units-pcompl-pcompl} is $\IQ$-injective.
However, this is highly non-trivial.
It follows from the fact that the following conjecture was proved by Brumer~\cite{Brumer}*{Theorem~2 on page~121} for abelian number fields, and so in particular for $F=\IQ(\zeta_c)$.

\begin{conjecture}[The Leopoldt Conjecture]
\label{Leopoldt-Conj}
Let $\CO_F$ be the ring of integers in a number field $F$, and let $p$ be a prime.
Then the map
\[
\bigl(\CO_F^\times\bigr)\pcompl
\TO
\bigl(\bigl(\IZ_p\tensor_\IZ\CO_F\bigr)\!{}^\times\bigr)\pcompl
\]
induced by the natural inclusion $\CO_F \TO \IZ_p \tensor_\IZ\CO_F$ is injective.
\end{conjecture}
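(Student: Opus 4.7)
The plan is to reinterpret injectivity as the non-vanishing of the \(p\)-adic regulator of~\(F\). By Dirichlet's unit theorem, \(\CO_F^\times\) is, modulo its finite torsion subgroup, a free abelian group of rank \(r = r_1 + r_2 - 1\), so \(\bigl(\CO_F^\times\bigr)\pcompl\tensor_\IZ\IQ\) is a \(\IQ_p\)-vector space of dimension~\(r\). On the target, for each prime \(\mathfrak{p}\) of \(\CO_F\) above~\(p\), the \(p\)-adic logarithm identifies \(\bigl(\CO_\mathfrak{p}^\times\bigr)\pcompl\tensor_\IZ\IQ\) with \(F_\mathfrak{p}\) as a \(\IQ_p\)-vector space, and summing over \(\mathfrak{p}\mid p\) yields a space of dimension \([F:\IQ]\). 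Since the torsion subgroup of \(\bigl(\CO_F^\times\bigr)\pcompl\) consists of \(p\)-power roots of unity in \(F\), which inject into every local completion, literal injectivity is equivalent to injectivity after \(-\tensor_\IZ\IQ\), so it suffices to handle the rationalized map.

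First I would fix a fundamental system of units \(u_1,\dotsc,u_r\) and encode the rationalized map by the \(r\times[F:\IQ]\) matrix whose rows record \(\bigl(\log_p\sigma(u_i)\bigr)_\sigma\) as \(\sigma\) runs over the embeddings \(F\hookrightarrow\overline{\IQ}_p\). The rank deficiency of this matrix is the classical Leopoldt defect~\(\delta_p(F)\), and the conjecture is exactly the assertion \(\delta_p(F)=0\). The task therefore reduces to producing \(r\) linearly independent relations among \(p\)-adic logarithms of units in \(F\).

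The hard part is exactly this: without transcendence input there is no way to rule out unexpected \(\IQ_p\)-linear dependencies among logarithms of algebraic numbers. Brumer's proof in the abelian case replaces \(\CO_F^\times\), up to finite index, by the subgroup of cyclotomic units, whose \(p\)-adic logarithms admit explicit expressions in terms of \(p\)-adic \(L\)-values and Dirichlet characters. The decisive ingredient is the Baker-Brumer theorem, asserting that \(p\)-adic logarithms of algebraic numbers that are linearly independent over~\(\IQ\) remain linearly independent over~\(\overline{\IQ}\subset\IC_p\); combined with linear independence of the non-trivial Dirichlet characters cutting out~\(F\), this rules out vanishing of the relevant determinant. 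For a general number field, however, no explicit system of units is available and no \(p\)-adic transcendence theorem strong enough to treat arbitrary units is known: this is precisely where the general conjecture remains open, and no purely algebraic reformulation appears to bypass the need for a deeper transcendence result.
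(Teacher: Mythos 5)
The statement you were asked to prove is an open conjecture; the paper does not prove it, and no proof is known. The paper merely states it, records that it was proved by Brumer for abelian number fields (which covers the case $F = \IQ(\zeta_c)$ actually needed in the paper), and explicitly remarks ``The conjecture is open for arbitrary number fields.'' Your response correctly recognizes this, so there is no gap in the sense of a missing step that could be filled in.

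On the content of what you wrote: the reduction from integral injectivity to rational injectivity is correct (the torsion of $\bigl(\CO_F^\times\bigr)\pcompl$ is the finite group of $p$-power roots of unity in $F$, which injects into any completion, so the kernel is $\IZ_p$-free and vanishes iff it vanishes after $-\tensor_\IZ\IQ$). The reformulation in terms of the $p$-adic regulator matrix and the Leopoldt defect $\delta_p(F)=0$ is the standard one, and your dimension counts ($r_1+r_2-1$ on the source, $[F:\IQ]$ on the target after rationalization) are right. Your description of Brumer's proof for abelian $F$ --- reduce to cyclotomic units and invoke the Baker--Brumer $p$-adic linear-independence theorem --- is also accurate, and that is precisely the ingredient whose absence makes the general case open. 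In short, you have produced a faithful account of why this is a conjecture rather than a theorem, which is the correct answer here; what the paper actually relies on, via \autoref{no-B-in-Wh}, is only the much weaker fact that the upper horizontal and vertical maps in diagram~\eqref{eq:units-pcompl-pcompl} are $\IQ$-injective (elementary finiteness of $K_0$ and $K_1$ of rings of integers), deliberately routed so as to avoid invoking Leopoldt.
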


For more information about the Leopoldt \autoref{Leopoldt-Conj} we refer to~\cite{NSW}*{Section~X.3} and \cite{Washington}*{Section~5.5}.
The conjecture is open for arbitrary number fields.

Next we discuss diagram \eqref{eq:pcompl-pcompl} for~$t\geq2$.
We can restrict to the case when $t$ is odd, because if $t\geq2$ is even then the rationalized upper left corner~$K_t(\IZ[\zeta_c])\tensor_\IZ\IQ$ vanishes by~\cite{Borel}*{Proposition~12.2 on page~271}.

The $p$-completed $K$-theory groups of $\CO_{\mathfrak{p}}$ and hence of $\IZ_p\tensor_\IZ\IZ[\zeta_c]$ are completely known; see for example~\cite{Weibel-survey}*{Theorem~61 on page~165}.
However, the non-completed $K$-theory groups $K_t(\CO_\mathfrak{p})\tensor_\IZ\IQ$ seem to be rather intractable; compare~\cite{Weibel-survey}*{Warning~60 on pages~164--165}.
Hence the strategy of going first right and then down in diagram \eqref{eq:pcompl-pcompl}, which was successful without any effort in the case $t=1$, does not seem very promising for~$t\geq2$.

However, since the abelian groups $K_t(\IZ[\zeta_c])$ are finitely generated by~\cite{Quillen-fg}, the left-hand vertical map in diagram~\eqref{eq:pcompl-pcompl} is always $\IQ$-injective.

The following conjecture concerns the lower horizontal map in diagram~\eqref{eq:pcompl-pcompl}.
It is the rationalized analogue for higher algebraic $K$-theory of the Leopoldt \autoref{Leopoldt-Conj}.
It appeared in~\cite{Schneider}*{page~192}, and we refer to it as the Schneider Conjecture.
A related question was asked by Wagoner in~\cite{Wagoner}*{page~242}.
In \autoref{SCHNEIDER} we briefly discuss some equivalent formulations, including Schneider's original one.

\begin{conjecture}[The Schneider Conjecture]
\label{Schneider-Conj}
Let $\CO_F$ be the ring of integers in a number field $F$, and let $p$ be a prime.
Then for any $t\geq2$ the map
\[
K_t(\CO_F;\IZ_p)
\TO
K_t(\IZ_p\tensor_\IZ\CO_F;\IZ_p)
\]
induced by the natural inclusion $\CO_F\TO\IZ_p\tensor_\IZ\CO_F$ is $\IQ$-injective.
\end{conjecture}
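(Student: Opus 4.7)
The Schneider Conjecture is a deep open problem in algebraic number theory; rather than a full proof, I outline the standard plan of attack and indicate where the fundamental obstacle lies. The strategy has three stages: translate to \'etale cohomology, apply Poitou-Tate duality, and reduce to a question in Iwasawa theory.

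First, I would invoke the Quillen-Lichtenbaum conjecture, now a theorem thanks to the work of Voevodsky, Rost, and others on the Bloch-Kato conjecture, to obtain natural isomorphisms
\[
K_{2i-1}(\CO_F;\IZ_p) \tensor_\IZ \IQ \cong H^1_{\et}\bigl(\CO_F[1/p]; \IQ_p(i)\bigr)
\]
for $i \geq 1$, and analogously $K_{2i-1}(\IZ_p \tensor_\IZ \CO_F;\IZ_p) \tensor_\IZ \IQ$ would decompose as a product over the primes $\mathfrak{p}$ of $F$ above~$p$ of the corresponding local \'etale cohomology of $\CO_{\mathfrak{p}}$. In even positive degrees the relevant $K$-groups are torsion after rationalization, so the content lies in odd $t \geq 3$; under these identifications, the map in the conjecture becomes the natural localization map in \'etale cohomology.

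Second, by the Poitou-Tate nine-term exact sequence applied to $\CO_F[1/p]$, the kernel of this localization map fits into an exact sequence involving $H^2_{\et}(\CO_F[1/p]; \IQ_p(i))$. Rational injectivity thus becomes equivalent to the vanishing of a suitable piece of this global $H^2$ with $\IQ_p$-coefficients. I would then perform Iwasawa-theoretic descent along the cyclotomic $\IZ_p$-extension $F_\infty/F$, with Galois group $\Gamma$, to reduce this vanishing to a statement about the $\Gamma$-coinvariants of the classical Iwasawa module $X_\infty = \operatorname{Gal}(M_\infty/F_\infty)$, where $M_\infty$ is the maximal abelian pro-$p$ extension of $F_\infty$ unramified outside~$p$, twisted by a suitable power of the cyclotomic character.

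For $F = \IQ(\zeta_c)$, which is the only case actually needed in \autoref{main-technical}, and more generally for abelian number fields, the Iwasawa main conjecture of Mazur-Wiles determines the characteristic ideal of $X_\infty$ in terms of $p$-adic $L$-functions, and this already yields partial information on the relevant $\Gamma$-coinvariants. The hard part, and the reason the conjecture remains open, is that rational injectivity demands control beyond the characteristic ideal: one must rule out a nontrivial $\mu$-type contribution at the relevant twist, equivalently a transcendence input of $p$-adic Baker-Brumer type analogous to the one used by Brumer to settle the Leopoldt case $i=1$. No such input is presently available for twists $i \geq 2$, which is why the Schneider Conjecture enters \autoref{main-technical} only as a hypothesis rather than as a theorem.
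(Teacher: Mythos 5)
You correctly recognize that this is an open conjecture with no proof to reproduce, and your outline of the reduction strategy closely tracks the paper's own discussion in \autoref{SCHNEIDER}: the passage from $K$-theory to \'etale cohomology via the now-proven Quillen-Lichtenbaum/Bloch-Kato conjectures is precisely what makes the vertical maps in diagram~\eqref{eq:K-to-etale} isomorphisms (\autoref{K-to-etale}), and the Poitou-Tate and Iwasawa-theoretic reformulations are exactly conditions~\ref{i:schneider3}--\ref{i:Iwasawa} of \autoref{Schneider}. Your observation that the full conjecture is only used as a hypothesis, and that only $F = \IQ(\zeta_c)$ matters for the main theorems, also matches the paper.

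One technical quibble with the final paragraph: calling the residual obstruction a ``$\mu$-type contribution'' is not quite the right framing. The Ferrero-Washington theorem already gives $\mu = 0$ for abelian fields, yet the conjecture remains open there. The real issue is finiteness of the $\Gamma$-coinvariants of the twisted Iwasawa module $X_\infty(i)$, equivalently the absence of a zero of the characteristic polynomial at the specific Tate twist $i$; the Mazur-Wiles main conjecture identifies the characteristic ideal with a $p$-adic $L$-function but does not by itself preclude such a zero. This is what requires the additional transcendence input of Baker-Brumer type, analogous to the Leopoldt case $i = 1$. Your instinct that the obstruction is of $p$-adic transcendence flavor is correct; it is just not a $\mu$-invariant issue.

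Two small points worth adding from the paper: (1) \autoref{Schneider} records Schneider's result that for a fixed number field and odd prime the conjecture holds for almost all $i \geq 2$, which is what makes \autoref{above-L} possible; (2) for $c = 1$ and $p$ an odd regular prime, the conjecture is known because the relevant Iwasawa module $Z$ vanishes (\autoref{no-B-in-BHM}), and for $p = 2$ it is handled separately by Rognes's $2$-adic computation. These partial results are what make the unconditional Theorems~\ref{Whitehead} and~\ref{above-L} possible despite the open status of the full conjecture.
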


The discussion above can then be summarized as follows.

\begin{proposition}
\label{Schneider=>B}
If the Schneider \autoref{Schneider-Conj} holds for $F=\IQ(\zeta_c)$, $t\ge2$, and~$p$, then the map~$\beta$ in diagram~\eqref{eq:pcompl-pcompl} is $\IQ$-injective.
If for each $c\ge1$ and each $t\ge2$ there is a prime~$p$ such that the Schneider \autoref{Schneider-Conj} holds for $F=\IQ(\zeta_c)$ and~$t$, then Assumption~\ref{our-B} is always satisfied.
\end{proposition}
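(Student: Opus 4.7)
My plan is to prove the two statements in sequence, with the first being a direct factorization argument and the second being a case analysis reducing to the first.

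\textbf{First statement.} I would factor $\beta$ as the composition of the left vertical map and the bottom horizontal map in diagram~\eqref{eq:pcompl-pcompl}:
\[
K_t(\IZ[\zeta_c]) \TO K_t(\IZ[\zeta_c];\IZ_p) \TO K_t(\IZ_p\tensor_\IZ\IZ[\zeta_c];\IZ_p).
\]
As recalled in the paragraph just before \autoref{Schneider-Conj}, Quillen's finite generation theorem implies that $K_t(\IZ[\zeta_c])$ is a finitely generated abelian group, hence the first map above is $\IQ$-injective. The second map is exactly the one appearing in the Schneider \autoref{Schneider-Conj} with $F=\IQ(\zeta_c)$, and so it is $\IQ$-injective under the hypothesis. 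Composing two $\IQ$-injections gives a $\IQ$-injection.

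\textbf{Second statement.} I first observe that, in order to verify Assumption~\ref{our-B} for a given pair $(c,t)$, it suffices to produce a single prime~$p$ for which the map to the $p$-th factor—that is, the map $\beta$—is $\IQ$-injective, since injectivity of a composition into one factor forces injectivity into the full product. I would then split into three cases according to~$t$:
\begin{enumerate}
\item If $t\in\{0,1\}$, then $\beta$ is $\IQ$-injective for every prime~$p$ by \autoref{no-B-in-Wh}, so there is nothing left to do.
\item If $t\ge2$ is even, then Borel's computation \cite{Borel} gives $K_t(\IZ[\zeta_c])\tensor_\IZ\IQ=0$, and the claim is trivial.
\item If $t\ge2$ is odd, the hypothesis provides a prime~$p$ for which the Schneider \autoref{Schneider-Conj} holds at $F=\IQ(\zeta_c)$ and~$t$; for this choice of~$p$, the first statement of the proposition gives the $\IQ$-injectivity of~$\beta$.
\end{enumerate}
Combining the three cases shows that for every finite cyclic subgroup $C\le G$ (with $c=|C|$) and every $t\ge0$, the required $\IQ$-injectivity holds, which is precisely Assumption~\ref{our-B}.

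\textbf{Main obstacle.} There is no real obstacle: the proposition is essentially a bookkeeping consequence of the discussion preceding it. The only non-formal inputs are Quillen's finite generation theorem for the first statement and Borel's rational vanishing in even positive degrees for the second—both of which are cited and used elsewhere in the section. The point of the proposition is to record cleanly that the Schneider Conjecture, which is already known in many cases, is the exact number-theoretic input that forces Assumption~\ref{our-B}.
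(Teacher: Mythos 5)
Your proof is correct and follows exactly the paper's (implicitly given) argument: the paper has no separate proof for this proposition but says it "summarizes the discussion above," and that discussion consists of precisely the three ingredients you assemble — Quillen's finite generation for the left vertical map, the Schneider Conjecture for the bottom horizontal map, and the reduction to odd $t\ge2$ via Borel's vanishing and Proposition~\ref{no-B-in-Wh}. The observation that $\IQ$-injectivity into a single $p$-factor suffices is exactly what the paper's definition of $\beta$ as "the map of Assumption~\ref{our-B} followed by the projection to the factor indexed by $p$" is set up to exploit.
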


By~\autoref{Schneider} we know that for a fixed number field~$F$ and a fixed odd prime~$p$ there can only be finitely many values of~$t$ for which the Schneider \autoref{Schneider-Conj} is false.

In light of Propositions \ref{no-B-in-Wh} and~\ref{Schneider=>B}, we view our Assumption~\ref{our-B} as a weak version of the conjectures of Leopoldt and Schneider.

We conclude this section with the following well known result, which explains why there is no Assumption~\assum{B}{1} in \autoref{BHM}.
In fact, a version of this result enters in the original proof of \autoref{BHM} by \BHM; compare~\cite{BHM}*{Remark~9.6(ii) on page~533}.
Recall that a prime $p$ is called regular if $p$ does not divide the order of the ideal class group of $\IQ(\zeta_p)$.

\begin{proposition}
\label{no-B-in-BHM}
If $c=1$ and $p$ is a regular prime, then the map
\[
\MOR{\beta}{K_t(\IZ)}{K_t(\IZ_p;\IZ_p)}
\]
in diagram~\eqref{eq:pcompl-pcompl} is $\IQ$-injective for any~$t\ge0$.
In the notation of \autoref{main-technical}, Assumption~\techB{1}{\IN} is satisfied.
\end{proposition}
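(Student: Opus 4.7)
The strategy is to combine Borel's rank computation of $K_*(\IZ)$ with the fact that, for regular primes, the Borel generators survive under the cyclotomic trace; this is essentially the same input that \BHM{} used in the proof of \autoref{BHM}, as hinted at in \cite{BHM}*{Remark~9.6(ii)}.

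First, I would invoke Borel's theorem to observe that $K_t(\IZ)\tensor_\IZ\IQ$ vanishes unless $t=0$ or $t=4i+1$ for some $i\ge1$, in which case it is one-dimensional over $\IQ$. Since $\IQ$-injectivity of $\beta$ is automatic whenever the source vanishes rationally, only these two families of degrees require proof. For $t=0$, the map $\beta$ is the composition $\IZ=K_0(\IZ)\hookrightarrow K_0(\IZ_p)=\IZ_p$ followed by the natural map $K_0(\IZ_p)\to K_0(\IZ_p;\IZ_p)$. The first inclusion is $\IQ$-injective and the second is a rational isomorphism (because $\IZ_p$ is a finitely generated $\IZ_p$-module, so $p$-completion is rationally an isomorphism; compare \autoref{P-COMPL}), hence $\beta$ is $\IQ$-injective in degree zero.

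For $t=4i+1$ with $i\ge1$, I would use the cyclotomic trace. By naturality, $\beta$ agrees with the composition
\[
K_t(\IZ)\TO[\trc]\TC_t(\IZ;p)\TO\TC_t(\IZ_p;p)\cong K_t(\IZ_p;\IZ_p),
\]
where the last isomorphism holds in non-negative degrees by the Hesselholt-Madsen comparison theorem for $\IZ_p$. For $p$ regular, the \BHM{} calculation of $\TC_*(\IZ_p;p)$ in terms of the $p$-adic $K$-theory of $\IZ_p$ and $p$-adic étale cohomology exhibits a one-dimensional $\IQ$-summand of $\TC_t(\IZ_p;p)\tensor\IQ$ onto which the Borel generator of $K_t(\IZ)\tensor\IQ$ projects non-trivially via the trace. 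This detection statement is precisely the ingredient that enters the proof of \autoref{BHM}, and it yields the $\IQ$-injectivity of $\beta$ in this range of degrees.

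For the second assertion, one picks any regular prime (for instance, any prime $p<37$ is regular); the projection of $\smallprod_q K_t(\IZ_q;\IZ_q)$ onto its factor at such a $p$ recovers $\beta$ at that prime, which is $\IQ$-injective by the first part, so the full map to the product is $\IQ$-injective as well. The real substance of the argument, and the reason the regularity hypothesis on $p$ is needed, is the non-triviality of the cyclotomic trace on the Borel classes in $K_{4i+1}(\IZ)\tensor\IQ$; this is classical but non-trivial, and it is the main obstacle one has to overcome—once this \BHM{} detection result is invoked, the rest of the proof is a bookkeeping reduction via Borel's theorem.
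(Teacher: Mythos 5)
Your argument follows a genuinely different route from the paper's. The paper's proof never invokes Borel's theorem or the trace directly: for odd regular $p$ it uses \autoref{Schneider-reductions}\ref{i:extensions} and \autoref{Schneider} to reduce to statement~\ref{i:Iwasawa} of \autoref{Schneider} for $F=\IQ(\zeta_p)$, which Schneider verified ($Z=0$) when $p$ is regular, and for $p=2$ it cites Rognes~\cite{Rognes-2adic}*{Theorem~7.7}. Your route --- Borel's rank computation reducing everything to $t=0$ and $t=4i+1$, then detection of the Borel class by the cyclotomic trace --- is instead the one \BHM{} themselves took; the paper explicitly acknowledges this in the remark preceding the proposition. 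The paper's route is shorter because the Iwasawa-theoretic machinery is already built in \autoref{SCHNEIDER}; yours is more directly in the spirit of trace methods.

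That said, your write-up has concrete gaps. First, you give no argument at $p=2$: the \BHM{} analysis of $\TC(\IZ_p;p)$ you rely on is for odd $p$, and the $2$-primary analogue is due to Rognes; this matters for the first assertion, which quantifies over all regular primes (for verifying \techB{1}{\IN} any single odd regular prime suffices, so that part is safe). Second, the key detection statement --- that the Borel generator maps non-trivially under the trace for regular $p$ --- is asserted but neither proved nor pinned to a precise result; this is where all the content lies, and the paper, by contrast, cites \cite{Schneider}*{Beispiele~ii)} for exactly the Iwasawa input needed. Two smaller points: the claim that $\beta$ ``agrees with'' the trace composition is not a literal identity of maps; what is true is that the comparison square of $\K$ and $\TC$ along $\IZ\to\IZ_p$ commutes and the $p$-completed trace for $\IZ_p$ is an equivalence in non-negative degrees by \cite{HM-top}, so $\IQ$-injectivity of $\beta$ transfers across that square. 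And in degree zero the map $K_0(\IZ_p)=\IZ\to K_0(\IZ_p;\IZ_p)=\IZ_p$ is rationally $\IQ\hookrightarrow\IQ_p$, which is injective but certainly not an isomorphism as you state (your conclusion is unaffected).
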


\begin{proof}
If $p$ is an odd regular prime, by \autoref{Schneider-reductions}\ref{i:extensions} and \autoref{Schneider} it suffices to verify statement~\ref{i:Iwasawa} in \autoref{Schneider} for $F=\IQ(\zeta_p)$.
This follows from the fact that in this case $Z=0$; compare~\cite{Schneider}*{Beispiele~ii) on page~201}.

If $p=2$, this is proved in~\cite{Rognes-2adic}*{Theorem~7.7 on page~318}.
\end{proof}


\section{Strategy of the proof and outline}
\label{STRATEGY}

In this section we describe the strategy of the proof of \autoref{main-technical} and the outline of the paper.
Recall that, as explained in the introduction, Theorems \ref{main} and~\ref{BHM} are special cases of Theorem~\ref{main-technical}.
The strategy can be summarized by the following commutative diagram, which we explain below and refer to as \emph{main diagram}.
\begin{equation}
\label{eq:main-diagram}
\hspace{-.7em}\adjustbox{scale=.968}{
\begin{tikzcd}[column sep=-.1em, row sep=scriptsize]
\ds H^G_n\bigl(EG(\VCyc);\K^{\ge0}(\IZ[\oid{G}{-}])\bigr)
\arrow[rr]
&
\raisebox{1.25ex}{$\scriptstyle\assembly{1}$}
\vphantom{\Bigr[\smallprod_{p}}
&
K_n(\IZ[G])
\\
\ds H^G_n\bigl(EG(\CF);\K^{\ge0}(\IZ[\oid{G}{-}])\bigr)
\arrow[rr]
\arrow[u, "H^G_n(\incl)\,"]
&
\raisebox{1.25ex}{$\scriptstyle\assembly{2}$}
\vphantom{\Bigr[\smallprod_{p}}
&
\ds\pi_n\K^{\ge0}(\IZ[G])
\arrow[equal, u]
\\
\ds H^G_n\bigl(EG(\CF);\K^{\ge0}(\IS[\oid{G}{-}])\bigr)
\arrow[rr]
\arrow[u, "\ell_\%\,"]
\arrow[d, "\tau_\%\,"']
&
\raisebox{1.25ex}{$\scriptstyle\assembly{3}$}
\vphantom{\Bigr[\smallprod_{p}}
&
\pi_n\K^{\ge0}(\IS[G])
\arrow[u, "\,\ell"']
\arrow[d, "\,\tau"]
\\
\ds H^G_n\Bigl(EG(\CF);\smash{\smallprod_{p}}\TC(\IS[\oid{G}{-}];p)\Bigr)
\arrow[rr]
\arrow[d, "\sigma_\%\,"']
&
\raisebox{1.25ex}{$\scriptstyle\assembly{4}$}
&
\ds\pi_n\Bigl(\smash{\smallprod_{p}}\TC(\IS[G];p)\Bigr)
\arrow[d, "\,\sigma"]
\\
\ds H^G_n\Bigl(EG(\CF);\THH(\IS[\oid{G}{-}])\!\times\!\smallprod_{p}\C(\IS[\oid{G}{-}];p)\Bigr)
\arrow[rr]
&
\raisebox{1.25ex}{$\scriptstyle\assembly{5}$}
&
\ds\pi_n\Bigl(\THH(\IS[G])\!\times\!\smallprod_{p}\C(\IS[G];p)\Bigr)
\end{tikzcd}
}
\end{equation}
The products are indexed over all prime numbers~$p$.

Here $\CF$ is a family of subgroups of~$G$ and $EG(\CF)$ is the corresponding universal space; see the beginning of the previous section.
We assume that $\CF\subseteq\FCyc$, i.e., that all subgroups in~$\CF$ are finite cyclic.

The notation~$\K^{\ge0}$ stands for the connective algebraic $K$-theory spectrum, and $\THH$, $\TC$, and~$\C$ denote topological Hochschild homology, topological cyclic homology, and \BHM's functor~$C$; see Sections~\ref{THH}, \ref{TC}, and~\ref{FUNCTOR-C}.
We construct functors
\begin{equation}
\label{eq:theories}
\K^{\ge0}(\spec{A}[\oid{G}{-}])
,\quad
     \THH(\spec{A}[\oid{G}{-}])
,\quad
       \C(\spec{A}[\oid{G}{-}];p)
,\quad
      \TC(\spec{A}[\oid{G}{-}];p)
,
\end{equation}
for any coefficient ring or symmetric ring spectrum~$\spec{A}$; compare \autoref{TRC}.
These are all functors from the orbit category~$\Or G$ of~$G$ to the category of naive spectra~$\IN\Sp$.
Given any such functor
\(
\MOR{\BE}{\Or G}{\IN\Sp}
\),
the {assembly map} for~$\BE$ with respect to a family~$\CF$ of subgroups of~$G$ is a map of spectra
\begin{equation}
\label{eq:generic-assembly-outline}
\MOR{\asbl}{EG(\CF)_+\sma_{\Or G}\BE}{\BE(G/G)}
\end{equation}
defined in~\autoref{ASSEMBLY}.
When $\BE$ is one of the functors in~\eqref{eq:theories}, then there are isomorphisms
\[
\K^{\ge0}(\spec{A}[\oid{G}{\,(G/G)}])\cong\K^{\ge0}(\spec{A}[G])
\,,
\]
and similarly for the other examples.

Taking homotopy groups of~$X_+\sma_{\Or G}\BE$ defines a $G$-equivariant homology theory $H^G_*(X;\BE)$.
The assembly map~\eqref{eq:generic-assembly-outline} then gives a group homomorphism
\[
\MOR{\alpha=\pi_n(\asbl)}{H^G_n(EG(\CF);\BE)}{\pi_n\BE(G/G)}
\,.
\]
All horizontal maps in diagram~\eqref{eq:main-diagram} are given by assembly maps.
The top horizontal map~$\assembly{1}$ is the Farrell-Jones assembly map for connective algebraic $K$-theory.

The vertical map~$H^G_n(\incl)$ is induced by the inclusions $\CF\subseteq\FCyc\subseteq\VCyc$.
If~$\CF=\FCyc$ then $H^G_n(\incl)$ is a $\IQ$-isomorphism by \autoref{S_t(H;K)-computation} and~\cite{LS}*{Theorem~0.3 on page~2}.

\autoref{Chern} and \autoref{EG(F)-computation} identify the source of~$\assembly{2}$ rationally with
\begin{equation}
\label{eq:Q-computation-of-source}
\bigoplus_{(C)\in(\CF)}
\bigoplus_{\substack{s+t=n\\s,t\ge0}}
H_s(BZ_GC;\IQ)\tensor_{\IQ[W_G C]}\Theta_C\Bigl(K_t(\IZ[C])\tensor_{\IZ}\IQ\Bigr)
\,.
\end{equation}
Under this identification, the map~\eqref{eq:main-technical} in \autoref{main-technical} is the restriction of $\assembly{2}\tensor_\IZ\IQ$ to the summands satisfying~$t\in\TT$.
In the case $\CF=1$, it is the rationalized classical assembly map~\eqref{eq:Q-classical-assembly}.
The map~\eqref{eq:Q-FJ-assembly-conn} in \autoref{main} is identified with $\assembly{1}\tensor_\IZ\IQ$.

All other vertical maps are induced by natural transformations of functors from $\Or G$ to~$\IN\Sp$; compare \autoref{TRC}.

The map $\ell$ is the linearization map given by changing the group ring coefficients from the sphere spectrum~$\IS$ to the integers~$\IZ$.
The spectrum $\K^{\ge0}(\IS[G])$ is a model for~$\mathbf{A}(BG)$, Waldhausen's algebraic $K$-theory of the classifying space of~$G$.
By a result of Waldhausen~\cite{Waldhausen-A1}*{Proposition~2.2 on page~43}, $\ell$ is a $\IQ$-isomorphism for any group~$G$.
The linearization map extends to a natural transformation $\K^{\ge0}(\IS[\oid{G}{-}])\TO\K^{\ge0}(\IZ[\oid{G}{-}])$ of functors $\Or G\TO\IN\Sp$, and therefore it induces the map~$\ell_\%$.
Using \autoref{Chern} and \autoref{rel-Chern} we conclude that also $\ell_\%$ is a $\IQ$-isomorphism.

The map~$\tau$, followed by the projection to the factor indexed by the prime~$p$, is \BHM's cyclotomic trace map~$\trc_p$ from algebraic $K$-theory to topological cyclic homology at~$p$.
Using the model of Dundas-Goodwillie-McCarthy \cite{Dundas} recalled in \autoref{TRC}, this map also extends to a natural transformation and induces the map~$\tau_\%$.

For any coefficient ring or symmetric ring spectrum~$\spec{A}$ there is a projection map $\TC(\spec{A}[G];p)\TO\THH(\spec{A}[G])$.
In the special case of spherical coefficients $\spec{A}=\IS$, there is also a map $\TC(\IS[G];p)\TO\C(\IS[G];p)$ from topological cyclic homology to \BHM's functor~$C$; compare \autoref{A=S}.
The map~$\sigma$ is then defined as the product of the following maps.
For $p=11$ take the induced map $\TC(\IS[G];11)\TO\THH(\IS[G])\times\C(\IS[G];11)$.
For $p\ne11$ take $\TC(\IS[G];p)\TO\C(\IS[G];p)$.
The choice of the prime~$11$ is arbitrary; we could fix any other prime.
Also $\sigma$ extends to a natural transformation and induces the map~$\sigma_\%$.

This completes the explanation of the main diagram~\eqref{eq:main-diagram}.
The proof of \autoref{main-technical} is then an immediate consequence of the following two results.
Recall that our goal is to show that the restriction of~$\assembly{2}\tensor_\IZ\IQ$ to the summands satisfying~$t\in\TT$ in~\eqref{eq:Q-computation-of-source} is injective.
We already know that $\ell_\%$ and~$\ell$ are $\IQ$-isomorphisms.

\begin{theorem}[Splitting Theorem]
\label{splitting}
If Assumption~\ref{techA} holds, then for every~$n\le N$ the map~$\assembly{5}\tensor_\IZ\IQ$ is injective.
\end{theorem}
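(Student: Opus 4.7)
The Splitting Theorem concerns the rational injectivity of $\pi_n(\assembly{5})$, where $\assembly{5}$ is the assembly map for the product functor $\THH(\IS[\oid{G}{-}])\timesd\smallprod_p\C(\IS[\oid{G}{-}];p)$ from $\Or G$ to $\IN\Sp$. The plan is to reduce it to the two injectivity results in \autoref{THH-and-C} by exploiting the product structure of the coefficient functor.

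Since $\THH$ and the $\C(-;p)$ enter as a product on both the source and the target, $\assembly{5}$ factors as the natural comparison map
\[EG(\CF)_+\sma_{\Or G}\Bigl(\THH\timesd\smallprod_p\C\Bigr)\TO\Bigl(EG(\CF)_+\sma_{\Or G}\THH\Bigr)\timesd\smallprod_p\Bigl(EG(\CF)_+\sma_{\Or G}\C(-;p)\Bigr)\]
followed by the product of the individual assembly maps $\alpha_\THH$ for $\THH(\IS[\oid{G}{-}])$ and $\alpha_{\C,p}$ for $\C(\IS[\oid{G}{-}];p)$, combined with the product splitting $\THH(\IS[G])\timesd\smallprod_p\C(\IS[G];p)$ of the target. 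It therefore suffices to prove that each factor assembly map is $\pi_n^\IQ$-injective in the relevant range and that these individual injectivity statements, together with the comparison map above, suffice to detect non-trivial rational classes.

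For $\alpha_\THH$, I would invoke \autoref{THH-and-C}\ref{i:THH}, which yields split injectivity at the level of spectra unconditionally on $G$, and in particular $\pi_n^\IQ$-injectivity in every degree. For each prime~$p$, I would invoke \autoref{THH-and-C}\ref{i:C}, whose hypotheses are all met: the sphere spectrum $\IS$ is \cplus{}, the family $\CF\subseteq\FCyc$ is contained in $\Fin$, and the hypothesis~\ref{techA} of the Splitting Theorem is precisely the one required there. Hence $\alpha_{\C,p}$ is $\pi_n^\IQ$-injective for every $n\le N$ and every prime~$p$.

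The main obstacle is the careful interplay between rationalization and the infinite product $\smallprod_p\C(-;p)$: in general neither $\tensor_\IZ\IQ$ nor the smash product over $\Or G$ commutes with infinite products, so the reduction to individual factors is not formal. I would handle this by arguing that a class in the rationalized source of $\assembly{5}$ which dies in the rationalized target must have zero image under the projection to the rationalized $\THH$-component and to each of the rationalized $\C(-;p)$-components of the target; the individual injectivity statements from \autoref{THH-and-C} then force the corresponding projections of the class in the rationalized source to vanish, and the source-side comparison map displayed above is sufficient to conclude that the class itself is trivial. Granting \autoref{THH-and-C}, whose proof constitutes the technical heart of the paper, the Splitting Theorem then follows.
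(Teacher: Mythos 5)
Your high-level decomposition — reduce to the $\THH$-factor via \autoref{THH-and-C}\ref{i:THH} and to each $\C(-;p)$-factor via \autoref{THH-and-C}\ref{i:C}, then glue via a source-side comparison map — is the same skeleton as the paper's proof of \autoref{split-C}. But the proposal has a genuine gap precisely at the step you flag as "the main obstacle" and then wave away.

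You correctly observe that the reduction to individual factors is \emph{not formal}, because $-\sma_{\Or G}$ does not commute with the infinite product over primes. Overcoming this requires showing that the comparison map
\[
t\colon EG(\CF)_+\sma_{\Or G}\smallprod_p\C(\IS[\oid{G}{-}];p)
\TO
\smallprod_p\Bigl(EG(\CF)_+\sma_{\Or G}\C(\IS[\oid{G}{-}];p)\Bigr)
\]
is a rational isomorphism (in fact an almost isomorphism) in degrees $\le N$. This is not a consequence of \autoref{THH-and-C}; it is an independent input, proved in the paper by verifying the hypotheses of \cite{LRV}*{Addendum~1.3}, and it is here — not only in \autoref{THH-and-C}\ref{i:C} — that Assumption~\ref{techA} enters. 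Your closing sentence, ``the source-side comparison map displayed above is sufficient to conclude that the class itself is trivial,'' simply asserts the rational injectivity of $t$, which is exactly what you had just identified as the nontrivial point. Moreover, the detection pattern you propose has a second hidden issue: even after applying $t$, a class in $\bigl(\smallprod_p\pi_n(\cdots)\bigr)\tensor_\IZ\IQ$ that maps to zero in each factor $\pi_n(\cdots)\tensor_\IZ\IQ$ need not itself vanish, since rationalization does not commute with infinite products; ruling this out again uses the almost-finite-generation coming from~\ref{techA} and the machinery of~\cite{LRV}.

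There is also a smaller omission: the verification of the \cite{LRV} hypotheses (specifically their finiteness condition on the indexing of cells) requires $\CF$ to have only finitely many conjugacy classes. The paper therefore first proves the statement under that extra hypothesis and then removes it by a directed colimit over subfamilies $\CE\subseteq\CF$ that are finite up to conjugacy, using that $\pi_*$, $-\tensor_\IZ\IQ$, and exactness all commute with directed colimits. Your proposal does not address this reduction. In short, the architecture is right and the appeal to \autoref{THH-and-C} is appropriate, but the two places where the argument actually needs Assumption~\ref{techA} beyond \autoref{THH-and-C}\ref{i:C} — the interchange map~$t$ and the product/rationalization interchange — are left unproved.
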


\begin{theorem}[Detection Theorem]
\label{detection}
If Assumption~\techB{\CF}{\TT} holds, then for every~$n$ the restriction of~$(\sigma_\%\circ\tau_\%)\tensor_\IZ\IQ$ to the summands satisfying~$t\in\TT$ is injective.
\end{theorem}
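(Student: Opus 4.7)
The plan is to use the equivariant Chern character of \autoref{Chern} and \autoref{EG(F)-computation} to diagonalize both sides of $(\sigma_\%\circ\tau_\%)\tensor_\IZ\IQ$ according to conjugacy classes $(C)\in(\CF)$ of finite cyclic subgroups, and then, summand by summand, to match the resulting map with the arithmetic homomorphism hypothesized to be $\IQ$-injective in Assumption~\techB{\CF}{\TT}.

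First, by the naturality of the Chern character isomorphism in the coefficient $\Or G$-spectrum, both the source $H^G_n(EG(\CF);\K^{\ge0}(\IS[\oid{G}{-}]))\tensor\IQ$ and the target $H^G_n\bigl(EG(\CF);\THH(\IS[\oid{G}{-}])\!\times\!\smallprod_p\C(\IS[\oid{G}{-}];p)\bigr)\tensor\IQ$ split canonically as direct sums over $(C)\in(\CF)$ and $s+t=n$ with $s\ge0$ of terms of the shape $H_s(BZ_GC;\IQ)\tensor_{\IQ[W_GC]}\Theta_C(\,\cdot\,\tensor\IQ)$, and $\sigma_\%\circ\tau_\%$ restricts on each summand to the map induced by $\sigma\circ\tau$ at the orbit $G/C$. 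Since a direct sum of $\IQ$-linear maps is injective if and only if each summand is, it suffices to show that, for every finite cyclic subgroup $C$ of order $c$ and every $t\in\TT$, the induced map
\[
\Theta_C\bigl(\pi_t\K^{\ge0}(\IS[C])\tensor\IQ\bigr)
\TO
\Theta_C\Bigl(\pi_t\bigl(\THH(\IS[C])\!\times\!\smallprod_p\C(\IS[C];p)\bigr)\tensor\IQ\Bigr)
\]
is injective.

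Second, I would pre-compose with Waldhausen's linearization $\K^{\ge0}(\IS[C])\to\K^{\ge0}(\IZ[C])$, a rational equivalence by~\cite{Waldhausen-A1}, to replace the source with $\Theta_C(K_t(\IZ[C])\tensor\IQ)$. The rational Wedderburn decomposition $\IQ[C]\cong\prod_{d\mid c}\IQ(\zeta_d)$ combined with the cokernel formula~\eqref{eq:Artin-defect} defining $\Theta_C$ identifies this summand with $K_t(\IZ[\zeta_c])\tensor\IQ$. For each prime~$p$, \BHM's cyclotomic-fixed-point description of $\C(\IS[C];p)$, combined with the same linearization and with the Hesselholt-Madsen computation of $\TC$ for complete discrete valuation rings of residue characteristic~$p$, identifies the $\Theta_C$-summand of $\pi_t\C(\IS[C];p)\tensor\IQ$ with the corresponding summand of $K_t(\IZ_p\tensor_\IZ\IZ[\zeta_c];\IZ_p)\tensor\IQ$, in a way that intertwines $\sigma\circ\tau$ with the natural map $K_t(\IZ[\zeta_c])\to\smallprod_p K_t(\IZ_p\tensor_\IZ\IZ[\zeta_c];\IZ_p)$ induced by the ring homomorphism $\IZ[\zeta_c]\to\prod_{\mathfrak{p}\mid p}\CO_{\mathfrak{p}}$. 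This is precisely the homomorphism hypothesized to be $\IQ$-injective by Assumption~\techB{\CF}{\TT}, which finishes the argument.

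The main obstacle is this last identification: one must verify that, after restricting to the $\Theta_C$-summand, the topologically-defined composite $\sigma\circ\tau$ really does correspond to the arithmetically-defined map appearing in Assumption~\techB{\CF}{\TT}. This requires a careful unpacking of \BHM's fixed-point model for $\C(\IS[C];p)$ and of its compatibility with the transition from spherical to integral coefficients, so that the ring homomorphism $\IZ[\zeta_c]\to\prod_{\mathfrak{p}\mid p}\CO_{\mathfrak{p}}$ is recovered from the topological cyclic data. Equally delicate is the separation of the ``new'' contribution selected by $\Theta_C$ from the parts coming from proper subgroups, which land in the other $(D)$-summands of the Chern-character decomposition; this is controlled by the inductive nature of the idempotent $\Theta_C$ and by the cofibration-sequence structure used in the definition of $\C(-;p)$.
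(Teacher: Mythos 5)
Your proposal captures the broad shape of the argument — decompose via the equivariant Chern character and reduce to an arithmetic injectivity statement about cyclotomic integers — but it diverges from the paper's proof in several places, and the gaps you haven't flagged are the ones that matter most.

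The first issue concerns the Mackey structure on the target. Your step~one asserts that the composite $\sigma_\%\circ\tau_\%$ diagonalizes compatibly with the Chern character decomposition of both source and target. The paper points out in \autoref{tau-sigma-Mackey} that while $\tau$ induces a natural transformation of Mackey functors, $\sigma$ is only known to respect the induction structure, not the Mackey structure. The Chern character isomorphism $\mathit{ch}=\mathit{c}\circ\mathit{h}$ of \autoref{Chern} has two halves: $\mathit{c}$ depends only on the induction structure, but $\mathit{h}$ depends on choices that require compatibility with the full Mackey structure (\autoref{rel-Chern}\ref{i:tau-Chern}). So one cannot simply assert that $\sigma_\%$ respects the $\Theta_C$-decomposition. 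The paper circumvents this by a case split: for $t>0$ it proves (\autoref{TC->TxC}, using \autoref{TC(SC)} and \autoref{THH-free-loop}) that $\sigma$ is already a $\pi^\IQ_t$-\emph{isomorphism} when evaluated at any finite group, so the compatibility with $\mathit{h}$ becomes irrelevant; for $t=0$ it observes that $S_0(C;\K^{\ge0}(\IS[-]))$ vanishes for $C\neq 1$ by Swan's theorem, so the whole problem collapses to the classical assembly map and the Dennis trace. Your proposal treats all $t$ uniformly and never discusses the rational vanishing of $\THH(\IS[C])$ in positive degrees; that vanishing is not a side remark, it is what makes $\sigma$ an isomorphism.

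The second issue is the identification of the $\Theta_C$-summand with $K_t(\IZ[\zeta_c])\tensor\IQ$. The Wedderburn decomposition $\IQ[C]\cong\prod_{d\mid c}\IQ(\zeta_d)$ is a statement about the group \emph{algebra}, not the group \emph{ring}; the $\IZ$-order $\IZ[C]$ sits strictly inside the maximal order $\CM C\cong\prod_{d\mid c}\IZ[\zeta_d]$, and passing from one to the other requires the conductor square
\[
\begin{tikzcd}
\IZ[C] \ar[r] \ar[d] & \IZ[C]/ c\CM C \ar[d] \\
\CM C  \ar[r]        & \CM C / c\CM C
\end{tikzcd}
\]
together with the Mayer-Vietoris sequence in $K_*(-)\tensor\IZ[\tfrac1c]$ and the rational vanishing of higher $K$-theory of finite rings. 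This is exactly the content of \autoref{Z[mu]=>Z[C]}, and it is not subsumed in the cokernel formula~\eqref{eq:Artin-defect} defining $\Theta_C$. Moreover the paper never actually makes the explicit summand identification you propose; it instead proves in \autoref{K->TC} that $\tau$ is $\pi^\IQ_t$-injective on the whole group $\pi_t\K^{\ge0}(\IS[C])\tensor\IQ$ for all finite cyclic $C$, and then lets the Chern character formalism (\autoref{rel-Chern}\ref{i:tau-Chern-inj}, via the commutative diagram~\eqref{eq:tau-Chern-inj}) translate this pointwise injectivity into injectivity at the $\Theta_C$-summand level, with no need to identify either side with a cyclotomic $K$-group. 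That route is cleaner and avoids the ``careful unpacking of \BHM's fixed-point model'' your last paragraph anticipates having to do.
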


The main ingredient in the proof of the Detection \autoref{detection} is the theory of equivariant Chern characters due to the first author, which we explain in \autoref{CHERN}.
To show in \autoref{TRC} that the theory can be applied to our examples, we develop a general framework in \autoref{MACKEY}, which uses the formalism of enriched categories of modules that we introduce in \autoref{MODULES}.
The proof of the Detection \autoref{detection} is then completed in \autoref{PROOF-DETECTION}.
In this step we also use a deep result of Hesselholt and Madsen~\cite{HM-top}, building on a theorem of McCarthy~\cite{McCarthy}, about the cyclotomic trace map.

The proof of the Splitting \autoref{splitting} occupies the first half of the paper, Sections~\ref{CN} to~\ref{FUNCTOR-C}.
The proof proceeds in stages, starting with general results about the assembly map for the cyclic nerve~$CN$, and leading to \autoref{split-C}; see also \autoref{THH-and-C} mentioned at the end of the introduction.
The most delicate step occurs when passing from topological Hochschild homology to \BHM's functor~$C$, and is based on the results we developed in \cite{LRV} and~\cite{RV}.

We summarize the steps of the proof in the following roadmap.
Every line represents a result about the assembly map for the theory indicated on the left.
Next to the vertical implication symbols, we indicate which assumptions are needed, and some of the important results that are used.
\[
\hspace{-7.5em}\adjustbox{scale=.87}{
\begin{tikzcd}[column sep=.8em]
&
{\RMheader{143}{For \ldots}}
&
{\RMheader{ 43}{\ldots\ the $\CF$-assembly map is \ldots}}
\\
\RMref{Prop \ref{split-CN}}
&
\smash{\ds\real{CN_\bullet(G)}}
&
\text{split inj for all~$\CF$, $\pi_*$-iso if $\Cyc\subseteq\CF$}
\arrow[dd, Rightarrow, shorten <=1em, shorten >=1em,
"\ \ \text{uses \cites{HM-top, Shipley}}"
]
\\
\\
\RMref{Thm \ref{split-THH}}
&
\smash{\ds\THH(\spec{A}[G])}
&
\text{split inj for all~$\CF$, $\pi_*$-iso if $\Cyc\subseteq\CF$}
\arrow[dd, Rightarrow, shorten <=1em, shorten >=1em]
\\
\\
\RMref{Cor \ref{split-THH-hC}}
&
\smash{\ds\THH(\spec{A}[G])_{hC_{p^{n}}}}
&
\text{split inj for all~$\CF$, $\pi_*$-iso if $\Cyc\subseteq\CF$}
\arrow[dd, Rightarrow, shorten <=1em, shorten >=1em,
"\text{\scriptsize\textsl{if $\spec{A}$ is \cplus}}\ \ "' pos=.46,
"\ \ \text{uses \cites{HM-top, Blumberg, RV}}"
]
\\
\\
\RMref{Cor \ref{split-THH-sh-fix}}
&
\smash{\ds\hofib\left(
{\THH(\spec{A}[G])^{C_{p^{n+1}}}}
\TO[R]
{\THH(\spec{A}[G])^{C_{p^{n}}}}
\!
\right)}
&
\text{split inj for all~$\CF$, $\pi_*$-iso if $\Cyc\subseteq\CF$}
\arrow[dd, Rightarrow, shorten <=1em, shorten >=1em,
"\text{\scriptsize\textsl{\& if $G$ fulfills \ref{techA}}}\ \ "',
"\ \ \text{uses \cite{LRV}}"
]
\\
\\
\RMref{Thm \ref{split-C}}
&
\smash{\ds
\C(\spec{A}[G];p)
\quad\text{\&}\quad
\THH(\spec{A}[G])\times\prodp\C(\spec{A}[G];p)}
&
\text{$\smash{\pi^\IQ_n}$-injective if~$\CF\subseteq\FCyc$, $n\le N$}
\arrow[dd, Rightarrow, shorten <=1em, shorten >=1em,
"\text{\scriptsize\textsl{\& if $G$ fulfills \techB{\CF}{T}}}\ \ "',
"\ \ \text{uses \cites{BHM, HM-top}}" pos=.38,
"\ \ \text{\makebox[\widthof{uses}][l]{and} \cites{L-Chern, Dundas}}" pos=.64
]
\\
\\
\RMref{Thm \ref{main-technical}}
&
\smash{\ds
\K^{\ge0}(\IS[G])
\quad\text{\&}\quad
\K^{\ge0}(\IZ[G])
}
&
\text{$\smash{\pi^\IQ_n}$-injective if~$\CF\subseteq\FCyc$, $n\le N$, $t\in\TT$}
\end{tikzcd}
}
\]
\medskip

We conclude this section with two important remarks.

\begin{remark}[Limitation of trace methods]
\label{limitation}
The strategy used both here and in~\cite{BHM} to prove rational injectivity of the assembly map for algebraic $K$-theory breaks down if the homological finiteness assumptions \ref{BHM-A}, \ref{our-A}, or~\ref{techA} are dropped.

For example, let $G$ be the additive group of the rational numbers~$\IQ$.
Since $\IQ$ is torsion-free, $\FCyc=1$ and Theorems \ref{BHM} and~\ref{main} would coincide.
Since $H_1(B\IQ;\IZ)\cong\IQ$,
assumption \ref{BHM-A} is not satisfied by $G=\IQ$.
Recall that $B\IQ$ is a Moore space $M(\IQ,1)$, and therefore for any spectrum~$\WT{E}$ we have that $B\IQ\sma\WT{E}\simeq\Sigma\WT{E}_\IQ$, the suspension of the rationalization~$\WT{E}$.

Fixing a prime~$p$, the assembly map for~$\TC(\IS[\IQ];p)$ factors as
\begin{equation*}
\begin{tikzcd}[row sep=scriptsize]
\ds B\IQ_+\sma\TC(\IS;p)
\arrow[rr, "\alpha"]
\arrow[dr, shorten >=2em, near start, "t"']
&
&
\ds\TC(\IS[\IQ];p)
\\
&
\mathclap{\ds\holim_\RFcat\,B\IQ_+\sma\bigl(\THH(\IS)^{\Cp{m}}\bigr)\mathrlap{\,,}}
\arrow[ru, shorten <=2em]
\end{tikzcd}
\end{equation*}
where $t$ interchanges smash product and homotopy limit.
The assembly map for the trivial group~$1$ is a $\pi_*$-isomorphism and splits off from the assembly map for~$\TC(\IS[G];p)$ for any group~$G$.
So the interesting part of~$\alpha$ factors through
\begin{equation}
\label{eq:holim-BQ-sma-THH}
\holim_\RFcat\,B\IQ\sma\bigl(\THH(\spec{S})^{\Cp{m}}\bigr)
\,,
\end{equation}
since $B\IQ=(B\IQ_+)/(B1_+)$.
Using that $\THH(\IS)\simeq\IS$ (\autoref{THH-free-loop}) and the fundamental fibration sequence (\autoref{THH-Adams}), we conclude that the homotopy groups of the spectrum in~\eqref{eq:holim-BQ-sma-THH} vanish for every~$n\ge2$.
A similar argument applies to $\smallprod_p\TC(-;p)$.
Therefore, we see that diagram~\eqref{eq:main-diagram} cannot be used to prove rational injectivity of the assembly map in algebraic $K$-theory in this case.
\end{remark}

\begin{remark}
With only Assumption~\ref{our-A} we are not able to prove that the assembly map~$\assembly{4}$ for topological cyclic homology in diagram~\eqref{eq:main-diagram} is $\IQ$-injective, not even in the case of the trivial family~$\CF=1$.
Even treating one prime at the time, our method of proof breaks down without further assumptions on~$G$.
The reason for this is subtle, and is explained in detail in \autoref{no-R}, \autoref{no-R-revisited}, and \autoref{S_F}.
\end{remark}


\section{Conventions}

\subsection{Spaces}
\label{SPACES}
We denote by $\Top$ the category of compactly generated and weak Hausdorff spaces (e.g., see~\cite{tD-top}*{Section~7.9 on pages~186--195} and~\cite{Strickland}), which from now on we simply call \emph{spaces}.
A space homeomorphic to a (finite or countable) CW-complex is called a (finite or countable, respectively) \emph{CW-space}.
We denote by~$\CT$ the category of pointed spaces, and we let~$\CW$ be the full subcategory of~$\CT$ whose objects are the pointed countable CW-spaces.

\subsection{Equivariant spaces}
\label{G-SPACES}
Given a discrete group~$G$, we denote by $\Top^G$ the category of left $G$-spaces and $G$-equivariant maps.
Given $G$-spaces $X$ and~$Y$, the group~$G$ acts by conjugation on the space of maps from~$X$ to~$Y$, and the fixed points are the $G$-equivariant maps.
The pointed version of this category is denoted~$\CT^G$.

We denote by $\Sets^G$ the category of left $G$-sets and $G$-equivariant functions, and we consider $\Sets^G$ as a full subcategory of~$\Top^G$.
The \emph{orbit category}~$\Or G$ is the full subcategory of~$\Sets^G$ with objects~$G/H$ for all subgroups $H$ of~$G$.


Given a group homomorphism $\MOR{\alpha}{H}{G}$, the restriction functor
\[
\MOR{\res_\alpha}{\Top^G}{\Top^H}
\qquad\text{has a left adjoint}\qquad
\MOR{\ind_\alpha}{\Top^H}{\Top^G}
\]
which sends an $H$-space~$X$ to the $G$-space~$G\times_H X$ defined as the quotient of~$G\times X$ by the $H$-action $h(g,x)=(g\alpha(h^{-1}),hx)$.


\subsection{Action groupoids}
\label{ACTION-GROUPOID}
Given a group~$G$ and a $G$-set~$S$, we denote by~$\oid{G}{S}$ the groupoid with $\obj\oid{G}{S}=S$ and $\mor_{\oid{G}{S}}(s,s')=\SET{g\in G}{gs=s'}$.

Identifying the group~$G$ with the category~$\oid{G}{\pt}$ with only one object, and viewing the $G$-set~$S$ as a functor from $G$ to~$\Sets$, then $\oid{G}{S}$ is the Grothendieck construction of this functor; compare~\cite{Thomason}*{Definition~1.1 on page~92}.
This definition is functorial in~$S$ and thus yields a functor
\[
\MOR{\oid{G}{-}}{\Sets^G}{\Groupoids}
\,.
\]

\subsection{Enriched categories}
\label{ENRICHED}
We refer to \cite{MacLane}*{Section~VII.7 on pages~180--181} and~\cite{Kelly} for the theory of symmetric monoidal categories $(\CV,\odot,\Ione)$ and categories and functors enriched over~$\CV$, or simply $\CV$-categories and $\CV$-functors.
A monoid in a monoidal category~$\CV$ is the same as a $\CV$-category with only one object.
The important examples for us are summarized in the following table.

\begin{center}
\begin{tabular}{c|c|c}
$(\CV,\odot,\Ione)$&$\CV$-categories&monoids in~$\CV$\\\hline
$(\Sets,\times,\pt)$&ordinary categories&ordinary monoids\\
$(\Cat,\times,\pt)$&strict $2$-categories&strict monoidal categories\\
$(\Ab,\tensor,\IZ)$&pre-additive categories&rings\\
$(\CT,\sma,S^0)$&topological categories&pointed topological monoids\\
$(\Sigma\Sp,\sma,\spec{S})$&symmetric spectral categories&symmetric ring spectra
\end{tabular}
\end{center}
Given a lax monoidal functor~$\MOR{F}{\CV}{\CV'}$, any $\CV$-category~$\SX$ has an associated $\CV'$-category~$F\SX$ with the same objects and $(F\SX)(x,y)=F(\SX(x,y))$.
We use the term  base change for this passage from $\CV$- to $\CV'$-categories.

As indicated in the table, we refer to $\CT$-categories, i.e., categories enriched over pointed spaces, as {topological categories}; we refer to $\CT$-enriched functors as {continuous functors}.
Given a skeletally small topological category~$\CC$, we denote by $\CC\CT$ the category of continuous functors $\CC\TO\CT$ and continuous natural transformations.
Of great importance in this paper is the category~$\CW\CT$ of continuous functors to~$\CT$ from the full subcategory~$\CW$ of pointed countable CW-spaces defined in~\ref{SPACES}.
The objects of~$\CW\CT$ are called $\CW$-spaces and are models for spectra.
In fact, given $\WT{X}\in\obj\CW\CT$, for every $A,B\in\obj\CW$ there is a map $A\sma\WT{X}(B)\TO\WT{X}(A\sma B)$, and so in particular we get a symmetric spectrum by evaluating on spheres.
The category $\CW\CT$ is itself a closed symmetric monoidal category, with respect to the smash product of $\CW$-spaces $\WT{X}$ and~$\WT{Y}$ defined as the continuous left Kan extension along $\MOR{-\sma-}{\CW\times\CW}{\CW}$ of the composition
\[
\CW\times\CW\xrightarrow{\;\WT{X}\times\WT{Y}\;}
\CT\times\CT\xrightarrow{\;-\sma-\;}\CT
\,.
\]
Compare Subsections~\ref{KAN} and~\ref{SPECTRA}.

\subsection{Coends and left Kan extensions}
\label{KAN}
Given a small topological category~$\CC$ and continuous functors
\(
\MOR{W}{\CC^\op}{\CT}
\)
and
\(
\MOR{X}{\CC}{\CT}
\),
we let
\[
W\sma_\CC X
=
\coend\Bigl(\CC^\op\times\CC\xrightarrow{W\times X}\CT\times\CT\xrightarrow{-\sma-}\CT\Bigr)
\,.
\]
Given a continuous functor~$\MOR{\alpha}{\CC}{\CD}$, the continuous left Kan extension of~$X$ along~$\alpha$ is the continuous functor
\[
\MOR{\Lan_\alpha X}{\CD}{\CT}
\,,\qquad
d\longmapsto\CD\bigl(\alpha(-),d\bigr)\sma_\CC X
\,.
\]

If we replace $\CT$ by $\CW\CT$ or by one of the other topological categories of spectra in~\autoref{SPECTRA}, then coends and Kan extensions are defined levelwise.

\subsection{Homotopy colimits}
\label{HOCOLIM}
Given an ordinary small category~$\CC$, consider the functor
\[
\MOR{E\CC}{\CC^\op}{\Top}
\,,\qquad
c\longmapsto\real{N_\bullet(c\downarrow\CC)}
\]
that sends an object~$c$ to the classifying space of the under category~$c\downarrow\CC$.
The homotopy colimit of a functor $\MOR{X}{\CC}{\CT}$ is defined by
\[
\hocolim_\CC X=E\CC_+\sma_\CC X
\,.
\]
If $\MOR{W}{\CC^\op}{\Top}$ is a contravariant free $\CC$-CW-complex in the sense of~\cite{Davis-Lueck}*{Definition~3.2 on page~221} that is objectwise contractible, then $W\sma_\CC X$ is homotopy equivalent to~$\hocolim_\CC X$.

If we replace $\CT$ by $\CW\CT$ or by one of the other topological categories of spectra in~\autoref{SPECTRA}, then homotopy colimits are defined levelwise.

\subsection{Natural modules}
\label{NAT-MOD}
The category $\Mod_{\CT}$ of \emph{natural modules} in~$\CT$ is defined as follows; compare~\cite{Dundas}*{Section~A.10.4.1 on page~404}.
The objects are pairs~$(\CC,\spec{X})$, with $\CC$ an ordinary small category and $\MOR{\spec{X}}{\CC}{\CT}$ a functor.
A morphism in~$\Mod_{\CT}$ from $(\CC,\spec{X})$ to~$(\CD,\spec{Y})$ is a pair $(f,\nu)$, with $\MOR{f}{\CC}{\CD}$ a functor and $\MOR{\nu}{\spec{X}}{f^*\spec{Y}}$ a natural transformation.
Homotopy colimits define a functor
\[
\MOR{\hocolim}{\Mod_{\CT}}{\CT}
\,,\qquad
(\CC,\spec{X})\longmapsto\hocolim_\CC\spec{X}=E\CC_+\sma_\CC\spec{X}
\,.
\]
Analogously we define $\Mod_{\CW\CT}$ and get a functor
\begin{equation}
\label{eq:hocolim-WT}
\MOR{\hocolim}{\Mod_{\CW\CT}}{\CW\CT}
\,.
\end{equation}

\subsection{Assembly maps}
\label{ASSEMBLY}
As a special case of~\ref{KAN} and~\ref{HOCOLIM}, take $\CC=\Or G$ and $\CD=\Top^G$ for a discrete group~$G$, and consider them as topological categories by base change along the strong symmetric monoidal functor $\MOR{(-)_+}{\Top}{\CT}$.
Take $\MOR{\alpha}{\Or G}{\Top^G}$ to be the inclusion functor.
Let $\IN\Sp$ be the topological category of naive spectra from~\ref{SPECTRA}, and let $\MOR{\WT{E}}{\Or G}{\IN\Sp}$ be a functor.
Define $\WT{E}_\%=\Lan_\alpha\WT{E}$.
\[
\begin{tikzcd}[row sep=scriptsize]
\Or G\arrow[r, "\WT{E}"]\arrow[hook, d, "\alpha\,"']&\IN\Sp\\
\Top^G\arrow[ru, "\WT{E}_\%"']
\end{tikzcd}
\]
By the definition in~\ref{KAN}, for any $G$-space~$X$ the spectrum~$\WT{E}_\%(X)$ is the coend of the functor \((\Or G)^\op\times\Or G\TO\IN\Sp\) given by
\[
(G/H,G/K)\longmapsto\map(G/H,X)^G_+\sma\WT{E}(G/K)\cong X^H_+\sma\WT{E}(G/K)
\,.
\]
We write
\(
\WT{E}_\%(X)=X_+\sma_{\Or G}\WT{E}
\).
When $X=G/H$ there is a natural isomorphism
\begin{equation}
\label{eq:G/H}
G/H_+\sma_{\Or G}\WT{E}
\cong
\WT{E}(G/H)
\,.
\end{equation}
The \emph{assembly map} for~$\BE$ with respect to a family~$\CF$ of subgroups of~$G$ is the map of spectra
\begin{equation}
\label{eq:generic-assembly}
\MOR{\asbl}{EG(\CF)_+\sma_{\Or G}\BE}{\BE(G/G)}
\end{equation}
induced by the projection~$EG(\CF)\TO\pt=G/G$ and by the isomorphism~\eqref{eq:G/H}.
Here $EG(\CF)$ is the universal space for~$\CF$; see the beginning of \autoref{ASSUMPTIONS}.
Moreover,
\[
EG(\CF)_+\sma_{\Or G}\WT{E}
\cong
EG(\CF)_+\sma_{\Or G(\CF)}\WT{E}_{|\Or G(\CF)}
\,,
\]
where $\Or G(\CF)$ is the full subcategory of~$\Or G$ with objects $G/H$ for $H\in\CF$.
Since the functor $\MOR{\map(-,EG(\CF))^G}{\Or G(\CF)^\op}{\Top}$ is an objectwise contractible contravariant free $\Or G(\CF)$-CW-complex, the source of~\eqref{eq:generic-assembly} is homotopy equivalent to
\[
\hocolim\bigl(\Or G(\CF)\hookrightarrow\Or G\TO[\WT{E}]\IN\Sp\bigr)
\,.
\]

\subsection{Spectra}
\label{SPECTRA}
As motivated in \autoref{why-all-these-models} below, we need to use several different model categories of spectra:
\begin{itemize}[leftmargin=6.55em, labelsep=2.75em]
\item[$\IN\Sp$]
naive spectra (of topological spaces),
\item[$\Sigma\Sp$]
symmetric spectra (of topological spaces),
\item[$\Th\CT$]
orthogonal spectra,
\item[$\CW\CT$]
$\CW$-(pointed) spaces,
\item[$\Gamma\CS_*$]
$\Gamma$-(pointed) simplicial sets.
\end{itemize}
We refer to~\cite{MMSS} for definitions, details, and historical remarks.
Two comments are in order.

Our category~$\CW$, defined in~\ref{SPACES}, differs from~\cite{MMSS}*{Example~4.6 on page~454}, where only finite CW-spaces are considered.
However, it is remarked in loc.~cit.\ that the theory works equally well for countable rather than finite CW-spaces.
In \autoref{Adams} we use a result of Blumberg~\cite{Blumberg} for finite CW-spaces, and there we make that restriction explicit.

Moreover, our notation is slightly different from the one in~\cite{MMSS}.
In particular, we write $\Sp$ instead of~$\SS$, $\Gamma$ instead of~$\SF$ for Segal's category~$\Gamma^\op$ and,
following~\cite{RV}, $\Th$ instead of~$\SI$ for the topological category of real finite dimensional inner product spaces and Thom spaces of orthogonal complement bundles.

These categories are connected by forgetful functors
\begin{equation}
\label{eq:spectra}
\CW\CT
\TO[\forget]
\Th\CT
\TO[\forget]
\Sigma\Sp
\TO[\forget]
\IN\Sp
\,;
\end{equation}
compare~\cite{MMSS}*{Main Diagram on page~442}.
With the sole exception of~$\IN\Sp$, all other categories in~\eqref{eq:spectra} are closed symmetric monoidal with respect to the smash product, and the two functors on the left are lax symmetric monoidal.

For all categories in~\eqref{eq:spectra}, homotopy groups and hence $\pi_*$-isomorphisms are defined after forgetting down to~$\IN\Sp$.
All categories are Quillen model categories with respect to the stable model structure from~\cite{MMSS}*{Theorem~9.2 on page~471}.
With the only exception of~$\Sigma\Sp$, the weak equivalences are the $\pi_*$-isomorphisms.
In all cases, the fibrant objects are the ones whose underlying naive spectrum is an $\Omega$-spectrum in the classical sense~\cite{MMSS}*{Proposition~9.9(ii) on page~472}.
All categories in~\eqref{eq:spectra} are Quillen equivalent~\cite{MMSS}*{Theorem~0.1 on page~443}.

In all cases, limits, colimits, and homotopy colimits are defined levelwise.
However, homotopy limits have to be defined by first applying a fibrant replacement functor.

Finally, in \autoref{TRC} we need to compare the topological categories of spectra that we use with the category~$\Gamma\CS_*$, which is used in~\cite{Dundas}.
There are strong symmetric monoidal functors
\begin{equation}
\label{T-then-P}
\Gamma\CS_*
\TO[\IT]
\Gamma\CT
\TO[\IP]
\CW\CT
\,;
\end{equation}
see~\cite{MMSS}*{Section~19 on page~499 and Proposition~3.2 on page~450}.
We introduce the notation~$\MOR{\reall{-}}{\Gamma\CS_*}{\Sigma\Sp}$ for the composition of~\eqref{T-then-P} and the forgetful functor~$\MOR{\forget}{\CW\CT}{\Sigma\Sp}$, because we need it explicitly in~\autoref{TRC}.
Notice that the choice of using countable instead of finite CW-spaces in the definition of~$\CW$ does not affect the composition $\MOR{\forget\circ\IP}{\Gamma\CT}{\Sigma\Sp}$.

\begin{remark}
\label{why-all-these-models}
Notice that assembly maps cannot be defined in the stable homotopy category; the category~$\IN\Sp$ of naive spectra is sufficient to define and study them.
Symmetric spectra have the additional structure that is needed for the input of topological Hochschild homology and its variants.
The output naturally comes equipped with more structure, and $\THH$ takes values in~$\CW\CT$ and actually even in~$\CW\CT^{S^1}$; see~\ref{S1} and \autoref{THH}.
Striving to use the same category for both input and output is irrelevant for our purposes and would lead to superfluous complications.
Working in~$\CW\CT^{S^1}$ enables us to consider equivariant shifts as in \autoref{shift} and~\ref{S1} below, which we use in \autoref{HOFIB-R} to obtain a convenient point-set level model for the homotopy fiber of the restriction map between the fixed points of~$\THH$.
This is crucial for the proof of our splitting theorems; see \autoref{no-R-revisited}.
In \autoref{ADAMS} we use (equivariant) orthogonal spectra for the natural model of the Adams isomorphism between homotopy orbits and fixed points developed in~\cite{RV}.
Finally, in \autoref{TRC} we use constructions of~\cite{Dundas} that are performed using $\Gamma$-spaces in the simplicial setting, and so we need to compare their setup with ours.
\end{remark}

\subsection{Properties of symmetric spectra and symmetric spectral categories}
\label{SYMMETRIC}
As indicated in~\ref{ENRICHED}, we refer to $\Sigma\Sp$-categories as {symmetric spectral categories}.
Symmetric ring spectra are the monoids in~$\Sigma\Sp$, i.e, symmetric spectral categories with only one object.

\begin{definition}
Let $\spec{E}$ be a symmetric spectrum.
We say that $\spec{E}$ is:
\begin{enumerate}

\item\emph{well pointed}
if for every $x\in\IN$ the space~$\spec{E}_x$ is well pointed.

\item\emph{cofibrant}
if it is cofibrant in the stable model category structure on~$\Sigma\Sp$.

\item\emph{strictly connective}
if for every~$x\geq0$ the space $\spec{E}_x$ is $(x-1)$-connected; compare~\cite{MMSS}*{Definition 17.17 on page 496}.

\item\emph{convergent}
if there exists a non-decreasing function $\MOR{\lambda}{\IN}{\IZ}$ such that $\lim_{x\to\infty}\lambda(x)=\infty$ and the adjoint structure map $\spec{E}_x\TO\Omega\spec{E}_{x+1}$ is $(x+\lambda(x))$-connected for every~$x\geq0$.

\end{enumerate}
\end{definition}

\begin{definition}
Let $\spec{D}$ be a symmetric spectral category.
We say that $\spec{D}$ is:
\begin{enumerate}

\item\emph{objectwise well pointed}
if for every $c,d\in\obj\spec{D}$ the symmetric spectrum $\spec{D}(c,d)$ is well pointed.
Similarly we define objectwise cofibrant, objectwise strictly connective, and objectwise convergent.

\item\emph{very well pointed}
if it is objectwise well pointed and for every $d\in\obj\spec{D}$ the identity induces a cofibration $S^0\TO\spec{D}(d,d)(S^0)$ of the underlying unpointed spaces.
We say that a symmetric ring spectrum~$\spec{A}$ is very well pointed if it is so when considered as a symmetric spectral category with only one object.

\end{enumerate}
\end{definition}

\begin{definition}[{\cplus[*]}]
\label{cplus}
We say that a symmetric ring spectrum~$\spec{A}$ is \emph{\cplus} if it is strictly connective, convergent, and very well pointed.
\end{definition}

We emphasize that the last two conditions are very mild.
The last one simply means that $S^0\TO\spec{A}_0$ and $\pt\TO\spec{A}_n$ are cofibrations for all~$n>0$.
Any $(-1)$-connected $\Omega$-spectrum is strictly connective and convergent.
The sphere spectrum~$\spec{S}$ and suitable models for all Eilenberg-Mac Lane ring spectra~$\spec{H}R$ are \cplus.

\subsection{Equivariant spectra}
\label{G-SPECTRA}
Given a finite group~$G$, we consider naive $G$-spectra, i.e., objects of the categories in~\eqref{eq:spectra} equipped with a $G$-action in the categorical sense, and $G$-equivariant maps.
We write $\CW\CT^G$, $\Th\CT^G$, $\Sigma\Sp^G$ for the corresponding subcategories.
Notice that, as the notation suggests, $\CW\CT^G$ and $\Th\CT^G$ are the same as the categories of continuous functors from $\CW$ or~$\Th$ to~$\CT^G$.
A $G$-map $\MOR{f}{\WT{X}}{\WT{Y}}$ in any of these categories is called a $\pi_*^1$-isomorphism if it is a $\pi_*$-isomorphism of the underlying non-equivariant spectra.

When working with orthogonal spectra, though, naive is not naive, in the precise sense that the categories $\Th\CT^G$ and $\Th_G\CT_G$ are equivalent; see \citelist{\cite{HHR}*{Proposition~A.19 on page~147} \cite{MM}*{Theorem~1.5 on pages~75--76} \cite{RV}*{Theorem~4.12 on pages~1507--1508}}.
Here $\CT_G$ is the category with $\obj\CT_G=\obj\CT^G$ and morphism $G$-spaces of not necessarily equivariant maps equipped with the conjugation $G$-action.
This is a topological $G$-category, i.e., a $\CT^G$-enriched category.
The topological $G$-category $\Th_G$ is defined analogously, and we denote by $\Th_G\CT_G$ the corresponding category of continuous $G$-functors.
This is the category of non-naive orthogonal $G$-spectra from~\cite{MM}; see also~\cite{RV}*{Section~4, pages~1504--1507}.

Analogously, we write $\CW_G\CT_G$ for the category of non-naive $G$-equivariant $\CW$-spaces, studied in detail in~\cite{Blumberg}.
Here $\CW_G$ is the full subcategory of~$\CT_G$ whose objects are the pointed countable $G$-CW-spaces, where a (countable) $G$-CW-space is a $G$-space $G$-homeomorphic to a $G$-CW-complex (with only countably many $G$-cells).
The following definition plays an important role in our work.

\begin{definition}[Equivariant shift]
\label{shift}
Given $E\in\obj\CW_G$ and $\WT{X}\in\obj\CW\CT^G$, we define $\sh^E\WT{X}\in\obj\CW\CT^G$ as follows.
Let~$\MOR{\iota}{\CW}{\CW_G}$ be the inclusion of the trivial $G$-spaces.
Restriction along~$\iota$ gives a functor $\MOR{\iota^*}{\CW_G\CT_G}{\CW\CT^G}$.
In the other direction, given $\WT{X}\in\obj\CW\CT^G$, define its diagonal extension $\ext\WT{X}\in\obj\CW_G\CT_G$ by $(\ext\WT{X})(B)=\WT{X}(\res B)$, where $\res=\res_{1\to G}$ as in~\ref{G-SPACES}.
If $\MOR{\ell_g^B}{B}{B}$ denotes the action of~$g\in G$ on~$B$, then the action of~$g$ on~$(\ext\WT{X})(B)$ is defined as
\[
\ell_g^{(\ext\WT{X})(B)}
=
\ell_g^{\WT{X}(\res B)}\circ\WT{X}\bigl(\ell_g^B\bigr)
=
\WT{X}\bigl(\ell_g^B\bigr)\circ\ell_g^{\WT{X}(\res B)}
\,.
\]
For $\WT{Y}\in\obj\CW_G\CT_G$ and~$E\in\obj\CW_G$, define $\sh^E\WT{Y}\in\obj\CW_G\CT_G$ by $(\sh^E\WT{Y})(B)=\WT{Y}(E\sma B)$.
Finally, we put
\[
\sh^E\WT{X} = \iota^*\sh^E\ext\WT{X}
\,.
\]
In words: extend diagonally to non-naive $G$-equivariant $\CW$-spaces, shift, and then restrict back to naive $G$-equivariant $\CW$-spaces.
\end{definition}

\subsection{$S^1$-equivariant spaces and spectra}
\label{S1}
The circle group~$S^1$ acts on the geometric realization of cyclic spaces and spectra; compare~\ref{DELTA-LAMBDA-GOOD}.
For actions of the non-discrete group~$S^1$, we keep the notations and conventions introduced in~\ref{G-SPACES} and~\ref{G-SPECTRA}, e.g., we write $\CT^{S^1}$, $\CW\CT^{S^1}$, $\Sigma\Sp^{S^1}$, etc.

Now let $\WT{X}\in\obj\CW\CT^{S^1}$ and let $C$ be a finite subgroup of~$S^1$.
The fixed points~$\WT{X}^C$ are defined levelwise: $\WT{X}^C(A)=\WT{X}(A)^C$ for every~$A\in\obj\CW$.
Given a pointed $S^1$-space~$E$ such that $\res_{C\to S^1}\!E\in\obj\CW_C$, using \autoref{shift} we put
\[
\bigl(\sh^E\WT{X}\bigr)^C
=
\bigl(\sh^{\res_{C\to S^1}\!E}\res_{C\to S^1}\!\WT{X}\bigr)^C
\in\obj\CW\CT
\,.
\]
Notice that the assumption on $\res_{C\to S^1}\!E$ is satisfied for any subgroup~$C$ of~$S^1$ if~$E=ES^1_+=\IC P^\infty_{\,+}$.

\subsection{Simplicial and cyclic objects}
\label{DELTA-LAMBDA-GOOD}
Given a category~$\CC$, we write $\Delta^\op\CC$ and $\Lambda^\op\CC$ for the categories of simplicial and cyclic objects in~$\CC$, respectively.
We use the abbreviation $\CS_*=\Delta^\op\Sets_*$ for the category of pointed simplicial sets.

Recall that the geometric realization of a cyclic space carries a natural $S^1$-action, and so we get a functor $\MOR{\real{-}}{\Lambda^\op\Top}{\Top^{S^1}}$;
e.g., see~\citelist{\cite{DHK} \cite{Jones}*{Section~3 on pages~411--414} \cite{Drinfeld}}.

The following condition, introduced by Segal~\cite{Segal}*{Definition~A.4 on page~309}, becomes important when realizing simplicial spaces.

\begin{definition}
We say that a simplicial unpointed space~$X_\bullet\in\obj\Delta^\op\Top$ is \emph{Segal good} if for every $n\in\IN$ and every $i=0,\ldots,n-1$ the degeneracy map $\MOR{s_i}{X_{n-1}}{X_{n}}$ is a cofibration.
We call a simplicial or a cyclic pointed space Segal good if the underlying simplicial unpointed space is Segal good.
\end{definition}

\begin{lemma}
\label{real-lemma}
Let $X_\bullet, Y_\bullet\in\obj\Delta^\op\Top$ be simplicial spaces and let $\MOR{f_\bullet}{X_\bullet}{Y_\bullet}$ be a simplicial map.
Assume that $X_{\bullet}$ and $Y_{\bullet}$ are both Segal good.
\begin{enumerate}
\item
\label{i:real-hocolim}
There is a natural weak equivalence
\(
\hocolim_{\Delta^\op}X_\bullet\TO\real{X_\bullet}
\,.
\)
\item
\label{i:real-we}
If $f_{\bullet}$ is a weak equivalence in each simplicial degree, then $|f_{\bullet}|$ is a weak equivalence.
\item
\label{i:real-cofib}
If $f_\bullet$ is a cofibration in each simplicial degree, then $\real{f_\bullet}$ is a cofibration.
\end{enumerate}
\end{lemma}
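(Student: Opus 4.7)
The plan is to analyze the skeletal filtration of the geometric realization. Write $F_n\real{X_\bullet}$ for the image of $\real{\mathrm{sk}_n X_\bullet}$ in $\real{X_\bullet}$, and let $L_n X \subseteq X_n$ denote the latching space of degenerate $n$-simplices. Standardly, $F_n \real{X_\bullet}$ is the pushout of $\Delta^n \times X_n \leftarrow Q_n \to F_{n-1}\real{X_\bullet}$, where
\[
Q_n = \partial\Delta^n \times X_n \cup_{\partial\Delta^n \times L_n X} \Delta^n \times L_n X,
\]
and $\real{X_\bullet} = \colim_n F_n\real{X_\bullet}$. The Segal good hypothesis is precisely that $L_n X \hookrightarrow X_n$ is a cofibration, so by the pushout-product axiom $Q_n \hookrightarrow \Delta^n \times X_n$ is a cofibration, and hence so is $F_{n-1}\real{X_\bullet} \hookrightarrow F_n\real{X_\bullet}$.

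First I would prove (iii). Applying the skeletal filtration to both $X_\bullet$ and $Y_\bullet$ yields a ladder of pushout squares. By induction on $n$, assuming $F_{n-1}\real{f_\bullet}$ is a cofibration and using that $f_n$ is a cofibration, a pushout-product argument shows that $F_n\real{f_\bullet}$ is a cofibration; passing to the colimit of this tower of cofibrations gives that $\real{f_\bullet}$ is a cofibration.

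For (i), the homotopy colimit admits an analogous filtration via the Bousfield-Kan bar construction $\hocolim_{\Delta^\op} X_\bullet \simeq B(\ast, \Delta, X_\bullet)$. The natural comparison map $\hocolim_{\Delta^\op} X_\bullet \to \real{X_\bullet}$ is filtration-preserving, and on the associated graded at level $n$ both sides reduce, up to weak equivalence, to $(\Delta^n / \partial\Delta^n) \sma (X_n / L_n X)$, precisely because the Segal good condition makes the pointwise quotient by degeneracies compute the homotopy cofiber of $L_n X \hookrightarrow X_n$. An induction on skeleta then shows the comparison is a weak equivalence on each $F_n$, and since both filtrations are by cofibrations, the colimit preserves this equivalence. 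Finally, (ii) follows formally from (i) by 2-out-of-3 applied to the commuting square relating $\hocolim$ and $\real{-}$ for $f_\bullet$: the left vertical is a weak equivalence because $\hocolim$ preserves pointwise weak equivalences, and the horizontal arrows are weak equivalences by (i).

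The main technical obstacle lies in (i), namely verifying that the Bousfield-Kan hocolim and the realization agree on each filtration stage up to weak equivalence; this is exactly where the Segal good hypothesis is used in an essential way, going back to Segal's original observation that latching cofibrations are what is needed for realization to be homotopically well-behaved. Once (i) is in place, parts (ii) and (iii) are formal consequences of pushout-product considerations and 2-out-of-3.
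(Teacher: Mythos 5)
Your approach differs from the paper's: the paper cites Reedy model category machinery (Lewis's lemma that Segal good implies Reedy cofibrant, Hirschhorn for part (i), Goerss--Jardine for the behavior of realization on Reedy cofibrations), whereas you unpack the skeletal filtration directly. This is a legitimate and more self-contained route. Your treatment of (i) via comparison of the hocolim (fat-realization) filtration with the realization filtration, and (ii) as a 2-out-of-3 consequence of (i), matches the classical Segal argument in spirit and is sound.

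However, there is a genuine gap in your proof of (iii). The phrase ``a pushout-product argument shows that $F_n\real{f_\bullet}$ is a cofibration'' is doing far more work than it appears. To conclude from the map of pushout squares that $F_n\real{f_\bullet}$ is a cofibration, what you actually need is the \emph{Reedy cofibration} condition on $f_\bullet$: not merely that $f_n$ is a cofibration and that $L_n X \hookrightarrow X_n$, $L_n Y \hookrightarrow Y_n$ are cofibrations, but that the map
\[
X_n \cup_{L_n X_\bullet} L_n Y_\bullet \TO Y_n
\]
is a cofibration for each $n$. In a general topological or cofibrantly generated model category, a levelwise cofibration between Reedy cofibrant simplicial objects need \emph{not} be a Reedy cofibration; this implication is not formal. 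The paper proves it by exploiting two point-set facts specific to $\Top$: Lillig's union theorem for cofibrations, and the fact that a pushout square in $\Top$ along a closed embedding is also a pullback (so that $X_n \cup_{L_n X_\bullet} L_n Y_\bullet$ can be identified with the genuine union $X_n \cup L_n Y_\bullet$ inside $Y_n$, to which Lillig applies). Without some such argument, your induction for (iii) does not close, so you should either invoke the Reedy cofibration fact directly from a reference or supply the Lillig-style verification; the abstract pushout-product axiom alone does not deliver it.
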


\begin{proof}
\ref{i:real-we} follows at once from~\ref{i:real-hocolim}.

\ref{i:real-hocolim} and~\ref{i:real-cofib} follow from more general model-theoretic statements, which we now briefly recall.
A simplicial space~$X_\bullet$ is called {Reedy cofibrant} if $L_nX_\bullet\subseteq X_n$ is a cofibration for each~$n$, where
\(
L_nX_\bullet=\bigcup_{0\leq i\leq n-1}s_i(X_{n-1})
\,.
\)
(Slightly stronger versions of the Reedy cofibrant and Segal good conditions are called proper and strictly proper in~\cite{May}*{Definition~11.2 on page~102}.)
A simplicial map~$f_\bullet$ is called a {Reedy cofibration} if $X_n\cup_{L_nX_\bullet}L_nY_\bullet\TO Y_n$ is a cofibration for each~$n$.
By~\cite{Lewis}*{proof of Corollary~2.4(b) on page~153}, Segal good implies Reedy cofibrant.
If $X_\bullet$ is Reedy cofibrant, then \ref{i:real-hocolim}~is proved in~\cite{Hirschhorn}*{Theorem~19.8.7(i) on page~427}.
Furthermore, if $X_\bullet$ and~$Y_\bullet$ are Reedy cofibrant and $f_\bullet$ is a cofibration in each simplicial degree, then $f_\bullet$ is a Reedy cofibration.
This follows from Lillig's union theorem for cofibrations~\cite{tD-top}*{Theorem~5.4.5 on page~114} and the fact that a pushout square in~$\Top$ along a closed embedding is a pullback square~\cite{Strickland}*{Proposition~2.35 on page~9}.
Then \ref{i:real-cofib} is a special case of the fact that geometric realization sends Reedy cofibrations between Reedy cofibrant objects to cofibrations; see~\cite{Goerss-Jardine}*{Proposition~VII.3.6 on page~374}.
\end{proof}

For simplicial or cyclic objects in one of our categories of spectra we define Segal goodness levelwise.
Since homotopy colimits are defined levelwise, we have the following immediate consequence of \autoref{real-lemma}.
We formulate it explicitly for~$\CW\CT$, but it holds verbatim for all other categories of spectra in~\eqref{eq:spectra}.

\begin{corollary}
\label{real-lemma-WT}
If $\WT{X}_\bullet\in\obj\Delta^\op(\CW\CT)$ is levelwise Segal good then there is a natural level equivalence in~$\CW\CT$
\[
\hocolim_{\Delta^\op}\WT{X}_\bullet
\TO
\real{\WT{X}_\bullet}
\,.
\]
If $\WT{X}_\bullet,\WT{Y}_\bullet\in\obj\Delta^\op(\CW\CT)$ are both levelwise Segal good and $\MOR{f_\bullet}{\WT{X}_\bullet}{\WT{Y}_\bullet}$ is a $\pi_*$-isomorphism in each simplicial degree, then $\real{f_\bullet}$ is a $\pi_*$-isomorphism.
\end{corollary}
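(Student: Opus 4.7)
The plan is to deduce both halves of the corollary by applying \autoref{real-lemma} levelwise, using the fact, recalled in the sentence preceding the corollary, that geometric realization, homotopy colimits, and Segal goodness for simplicial objects in $\CW\CT$ are all defined or detected at each object $A$ of $\CW$.

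For the first half, I would evaluate at each $A\in\obj\CW$ to obtain a Segal good simplicial space $\WT{X}_\bullet(A)$ in~$\CT$. Then \autoref{real-lemma}\ref{i:real-hocolim} yields a natural weak equivalence $\hocolim_{\Delta^\op}\WT{X}_\bullet(A)\to\real{\WT{X}_\bullet(A)}$. Naturality in~$A$ assembles these maps into a morphism in~$\CW\CT$; since both sides and the map itself are formed by evaluating levelwise, this morphism is a level equivalence by construction.

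For the second half, the subtle point is that a $\pi_*$-isomorphism in~$\CW\CT$ is detected after forgetting to~$\IN\Sp$ and is therefore strictly weaker than a levelwise weak equivalence of the underlying $\CW$-spaces; in particular one cannot apply \autoref{real-lemma}\ref{i:real-we} pointwise in~$A$. Instead, I would use the first half of the corollary applied to both $\WT{X}_\bullet$ and~$\WT{Y}_\bullet$ to replace each realization by the corresponding homotopy colimit, reducing the claim to showing that the induced map $\hocolim_{\Delta^\op} f_\bullet$ is a $\pi_*$-iso in~$\CW\CT$. Since the homotopy colimit is formed levelwise and the forgetful functor $\CW\CT\to\IN\Sp$ commutes with it, this is a standard homotopy-invariance property of the Bousfield-Kan homotopy colimit of simplicial spectra, which I would verify via the spectral sequence $E^2_{p,q}=\pi_p\pi_q(\WT{X}_\bullet)\Rightarrow\pi_{p+q}(\hocolim_{\Delta^\op}\WT{X}_\bullet)$: the hypothesis that $f_n$ is a $\pi_*$-iso for every~$n$ makes the maps of simplicial abelian groups $\pi_q(\WT{X}_\bullet)\to\pi_q(\WT{Y}_\bullet)$ isomorphisms degreewise, hence isomorphisms on the $E^2$-page, and the spectral sequence comparison theorem finishes the job.

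The only real obstacle is bookkeeping: one must be careful to distinguish $\pi_*$-isomorphism in~$\CW\CT$ (weak) from level equivalence (strong) in the second half, and to check that the Segal goodness hypothesis on $\WT{X}_\bullet$ and $\WT{Y}_\bullet$ is exactly what is needed to bring \autoref{real-lemma}\ref{i:real-hocolim} into play at each level~$A$, thereby bridging realization and homotopy colimit. Once these points are handled, the argument is routine, matching the authors' description of the corollary as an ``immediate consequence'' of \autoref{real-lemma}.
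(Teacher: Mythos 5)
Your argument is correct and is, in essence, what the authors intend when they call the corollary an ``immediate consequence'' of \autoref{real-lemma}: apply \ref{real-lemma}\ref{i:real-hocolim} at each $A\in\obj\CW$ to get the natural level equivalence, then deduce the second claim from the first by using that $\hocolim_{\Delta^\op}$ (computed levelwise and compatible with the forgetful functor to $\IN\Sp$) preserves $\pi_*$-isomorphisms. You are right that the second statement does \emph{not} follow from a pointwise application of \ref{real-lemma}\ref{i:real-we}, since a $\pi_*$-isomorphism in $\CW\CT$ is genuinely weaker than a level equivalence; this is a subtlety the paper leaves implicit, and your detour through homotopy colimits is exactly the needed bridge. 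The spectral-sequence argument for the homotopy invariance of $\hocolim_{\Delta^\op}$ is fine but a bit more machinery than necessary — one can simply cite that homotopy colimits of naive spectra send objectwise $\pi_*$-isomorphisms to $\pi_*$-isomorphisms as a derived functor.
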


\begin{lemma}
\label{real-hofib}
Let
\begin{equation}
\label{eq:fib-seq}
\WT{X}_\bullet\TO[f_\bullet]\WT{Y}_\bullet\TO[g_\bullet]\WT{Z}_\bullet
\end{equation}
be a sequence of maps in~$\Delta^\op(\CW\CT)$.
Assume that $\WT{X}_\bullet$, $\WT{Y}_\bullet$, and $\WT{Z}_\bullet$ are levelwise Segal good, and assume that \eqref{eq:fib-seq} is in every simplicial degree a stable homotopy fibration sequence in~$\CW\CT$.
Then
\[
\real*{\WT{X}_\bullet}\TO[\real{f_\bullet}]\real*{\WT{Y}_\bullet}\TO[\real{g_\bullet}]\real*{\WT{Z}_\bullet}
\]
is a stable homotopy fibration sequence in~$\CW\CT$.
\end{lemma}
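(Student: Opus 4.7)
The strategy is to translate the levelwise fibration sequences into levelwise cofibration sequences --- which interact well with geometric realization --- and then to translate back. Both translations rest on the fact that $\CW\CT$ is a stable model category, in which fibration sequences and cofibration sequences represent the same distinguished triangles up to $\pi_*$\=/isomorphism.

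More concretely, I would first form the simplicial mapping cone $C(f_\bullet) \in \Delta^\op(\CW\CT)$, defined levelwise by $C(f_n) = \WT{Y}_n \cup_{\WT{X}_n} C\WT{X}_n$ where $C\WT{X}_n$ denotes the reduced cone, together with the canonical comparison map $C(f_\bullet) \to \WT{Z}_\bullet$ induced in each degree by the universal property of the cofiber. Since in each simplicial degree $n$ the sequence $\WT{X}_n \to \WT{Y}_n \to \WT{Z}_n$ is a stable homotopy fibration sequence in the stable category $\CW\CT$, this comparison is a $\pi_*$\=/iso in each simplicial degree.

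Next, I would verify that $C(f_\bullet)$ remains levelwise Segal good. Each degeneracy $s_i\colon C(f_{n-1}) \to C(f_n)$ is the map induced on pushouts by the degeneracies of $\WT{Y}_\bullet$ and of $\WT{X}_\bullet \sma I$, all of which are cofibrations by Segal goodness of $\WT{X}_\bullet$ and $\WT{Y}_\bullet$ together with the fact that $-\sma I$ preserves cofibrations. A standard application of Lillig's union theorem --- in exactly the form used in the proof of \autoref{real-lemma}\ref{i:real-cofib} --- then shows that the induced map between the pushouts is itself a cofibration. This bookkeeping is the main obstacle of the proof; the rest is a formal consequence of properties of geometric realization.

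Finally, since geometric realization commutes with pushouts and with smashing with the fixed pointed space $I$, there is a natural identification of $\real{C(f_\bullet)}$ with the mapping cone $C(\real{f_\bullet})$ of $\real{f_\bullet}$. Applying \autoref{real-lemma-WT} to the levelwise $\pi_*$\=/iso $C(f_\bullet) \to \WT{Z}_\bullet$ between Segal good objects yields a $\pi_*$\=/iso $C(\real{f_\bullet}) \to \real{\WT{Z}_\bullet}$. Hence $\real{\WT{X}_\bullet} \to \real{\WT{Y}_\bullet} \to \real{\WT{Z}_\bullet}$ is, up to $\pi_*$\=/iso, a cofibration sequence in $\CW\CT$, and by stability of $\CW\CT$ it is therefore a stable homotopy fibration sequence, as desired.
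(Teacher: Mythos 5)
Your proof is correct and takes essentially the same route as the paper: replace the levelwise fibration sequence by the equivalent levelwise cofibration sequence via the mapping cone, check that the cone stays levelwise Segal good, use that geometric realization commutes with (homotopy) cofibers, and conclude via \autoref{real-lemma-WT} and stability of $\CW\CT$. The paper states this as a one-line proof asserting exactly these three ingredients; you have simply unpacked the Segal-goodness verification for $\hocofib f_\bullet$ a bit more explicitly.
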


\begin{proof}
This follows from \autoref{real-lemma-WT}, using that stable homotopy fibration sequences and stable homotopy cofibration sequences are the same in~$\CW\CT$, that $\hocofib f_\bullet$ is levelwise Segal good if $\WT{X}_\bullet$ and $\WT{Y}_\bullet$ are levelwise Segal good, and that geometric realization commutes with homotopy cofibers.
\end{proof}

\subsection{$p$-completion of spectra}
\label{P-COMPL}
For a prime~$p$,
we define the $p$-completion~$\WT{X}\pcompl$ of a naive spectrum~$\WT{X}$ as the Bousfield localization of~$\WT{X}$ with respect to the Moore spectrum of~$\IZ/p\IZ$, in the sense of~\cite{Bousfield}.
This construction is functorial in the stable homotopy category; it can be made functorial in the symmetric monoidal categories of spectra above, but we do not need it for our purposes.
By~\cite{Bousfield}*{Proposition~2.5 on page~262}, for any $n\in\IZ$ there is a splittable short exact sequence
\[
0
\TO
\Ext\bigl(\IZ[\tfrac{1}{p}]/\IZ,\pi_n(\WT{X})\bigr)
\TO
\pi_n\bigl(\WT{X}\pcompl\bigr)
\TO
\Hom\bigl(\IZ[\tfrac{1}{p}]/\IZ,\pi_{n-1}(\WT{X})\bigr)
\TO
0
\,.
\]
If the $p$-torsion elements in $\pi_{n}(\WT{X})$ and~$\pi_{n-1}(\WT{X})$ have bounded order, then
\begin{align*}
\Ext\bigl(&\IZ[\tfrac{1}{p}]/\IZ,\pi_n(\WT{X})\bigr)
\cong
\pi_{n}(\WT{X})\pcompl
=
\lim_{i\in\IN}\,\pi_{n}(\WT{X})/p^i
\,,
\\
\Hom\bigl(&\IZ[\tfrac{1}{p}]/\IZ,\pi_{n-1}(\WT{X})\bigr)
\cong
0
\,;
\end{align*}
see~\cite{Bousfield-Kan}*{Section~IV.2.1 on pages~166--167}.
In particular, if the abelian groups $\pi_*(\WT{X})$ are finitely generated, then 
\(
\pi_*(\WT{X}\pcompl)
\cong
\pi_*(\WT{X})\tensor_\IZ\IZ_p
\).
By~\cite{Bousfield}*{Lemma~1.10 on page~259}, $p$-completion preserves stable homotopy fibration sequences of spectra.


\section{Cyclic nerves}
\label{CN}

In this section we recall the definition of the cyclic nerve of categories, study the corresponding assembly map, and then prove our first splitting result, Proposition~\ref{split-CN}.
This proposition serves as model for all our subsequent splitting theorems.

\begin{definition}[Cyclic nerve]
\label{cyclic-nerve}
The cyclic nerve of a small category~$\CC$ is the cyclic set~$CN_\bullet(\CC)$ with $q$-simplices
\begin{equation*}
CN_q(\CC)
=
\smallcoprod_{\substack{c_0,c_1,\dotsc,c_q\\\text{in}\,\obj\CC}}
\CC(c_0, c_q) \times \CC(c_1, c_0) \times \dotsb \times \CC(c_q , c_{q-1})
\,.
\end{equation*}
The face maps are given by composition (and~$d_q$ also uses cyclic permutation), the degeneracies by insertion of identities, and the cyclic structure by cyclic permutations.
\end{definition}

In the case of monoids, this construction appears in~\cite{Waldhausen-A2}*{(2.3) on page~368}, where the notation $N^{\operatorname{cy}}$ is used.

\autoref{cyclic-nerve} produces a functor
\[
\MOR{CN_\bullet}{\Cat}{\Lambda^\op\Sets}
\]
from small categories to cyclic sets.
Natural isomorphisms of functors induce cyclic homotopies, and so cyclic nerves of equivalent categories are cyclically homotopy equivalent.
Recall from \autoref{DELTA-LAMBDA-GOOD} that the geometric realization of a cyclic set has a natural $S^1$-action.

\begin{example}[Free loop space]
\label{free-loop}
If~$M$ is a monoid, seen as a category with only one object, then there is an $S^1$-equivariant map
\begin{equation}
\label{eq:free-loop}
\real{CN_\bullet(M)}\TO\CL BM=\map(S^1,BM)
\,,
\end{equation}
where $BM$ is the classifying space of $M$ and $\CL BM$ is the free loop space of~$BM$.
If~$M$ is group-like, i.e., if $\pi_0(M)$ is a group, then this map induces a weak equivalence between the fixed points
\[
\real{CN_\bullet(M)}^C \TO[\simeq] (\CL BM)^C
\]
of any finite subgroup~$C$ of~$S^1$.
However, the induced map on $S^1$-fixed points is not an equivalence in general.
For more details, see~\citelist{\cite{Goodwillie}*{proof of Lema~V.1.3 on pages~209--211} \cite{Jones}*{Theorem~6.2 on page~420}}.
\end{example}

Given a group~$G$, we consider the functor given by the composition
\[
\Or G
\xrightarrow{\oid{G}{-}}
\Groupoids
\xrightarrow{CN_\bullet}
\Lambda^\op\Sets
\xrightarrow{\real{-}_+}
\CT^{S^1}
\]
and the associated assembly map of pointed $S^1$-spaces
\[
EG(\CF)_+\sma_{\Or G}\real{CN_\bullet(\oid{G}{-})}_+
\TO
\real{CN_\bullet(G)}_+
\,.
\]
The key to study this assembly map is a certain retraction that we proceed to define.

Let $S$ be a $G$-set, and consider its action groupoid~$\oid{G}{S}$ from \autoref{ACTION-GROUPOID}.
Let $\conj G$ be the set of conjugacy classes~$[g]$ of elements $g\in G$.
We denote by $(g_0 , g_1 , \dotsc , g_q , s)$ the element in $CN_q(\oid{G}{S})$ consisting of the $(q+1)$-tuple of morphisms $(g_0, g_1, \dotsc, g_q)$ in the summand indexed by $(s_0, s_1, \dotsc, s_q)$, where $s_q = s$ and $s_i = g_{i+1} \dotsb g_q s$ for $0\le i < q$.
\[
\begin{tikzcd}
s_0
\arrow[rrrr, rounded corners,
       to path={    (\tikztostart.south)
                 |- +(3,-.3) [at end]\tikztonodes
                 -| (\tikztotarget.south)},
      "g_0"']
&
s_1
\arrow[l, "g_1"']
&
\dotsb
\arrow[l, "g_2"']
&
s_{q-1}
\arrow[l, "g_{q-1}"']
&
s_q
\arrow[l, "g_q"']
\end{tikzcd}
\]
Sending $(g_0, \dotsc , g_q, s)$ to~$[g_0 g_1 \dotsb g_q ] \in \conj G$ defines a map
\begin{equation}
\label{eq:maptoconG}
CN_{\bullet}(\oid{G}{S}) \TO \conj G
\,.
\end{equation}
If we consider $\conj G$ as a constant cyclic set, then~\eqref{eq:maptoconG} is a map of cyclic sets, and therefore it induces a decomposition
\begin{equation}
\label{eq:decomposition-CN}
CN_{\bullet}(\oid{G}{S})
=
\smallcoprod_{[c]\in\conj G}
CN_{\bullet,[c]}(\oid{G}{S})
\,,
\end{equation}
where $CN_{\bullet,[c]}(\oid{G}{S})$ denotes the preimage of~$[c]$ under the map~\eqref{eq:maptoconG}.
Similarly, given a family~$\CF$ of subgroups of~$G$, we denote by~$CN_{\bullet,\CF}(\oid{G}{S})$ the preimage of
\[
\conj_{\CF} G = \SET{[c]}{ \langle c \rangle \in \CF } \subseteq \conj G
\,,
\]
and we call $CN_{\bullet,\CF}(\oid{G}{S})$ the $\CF$-part of~$CN_{\bullet}(\oid{G}{S})$.
Here and elsewhere $\langle c \rangle$ denotes the cyclic subgroup of~$G$ generated by~$c$.

There are cyclic maps
\begin{equation} \label{eq:retraction-CN}
\begin{tikzcd}
CN_{\bullet}    (\oid{G}{S})_+
\arrow[r, shift left, "\pr_{\CF}"]
&
CN_{\bullet,\CF}(\oid{G}{S})_+
\arrow[l, shift left, "\ii_{\CF}"]
\,,
\end{tikzcd}
\end{equation}
where $\ii_{\CF}$ is the inclusion, and the projection $\pr_{\CF}$ sends the components with $\langle c \rangle \notin \CF$ to the disjoint basepoint and is the identity on the components with $\langle c \rangle \in \CF$.
They clearly satisfy $\pr_{\CF} \circ \ii_{\CF} = \id$.

Notice that the decomposition~\eqref{eq:decomposition-CN} and the maps~\eqref{eq:retraction-CN} are natural in~$S$.

The following lemma gives an explicit computation of the cyclic nerve of action groupoids, and is the basis for all our splitting results for assembly maps.
We need to introduce some notation.
We put $E_\bullet G=N_\bullet(\oid{G}{\,(G/1)})$, the nerve of the category with set of objects~$G$ and precisely one morphism between any two objects.
The geometric realization $EG=\real{E_\bullet G}$ is a functorial model for the universal contractible free $G$-space.
Recall that $Z_G H$ denotes the centralizer of~$H$ in~$G$.

\begin{lemma}
\label{computation-CN}
Choose a representative~$c$ in each conjugacy class~$[c]\in\conj G$.
Then for any $G$-set~$S$ there is a simplicial homotopy equivalence
\begin{equation}
\label{eq:computation-CN}
\smallcoprod_{[c]\in\conj G}
E_\bullet Z_G\langle c \rangle \timesd_{Z_G\langle c \rangle} \map(G/\langle c \rangle,S)^G
\TO
CN_{\bullet}(\oid{G}{S})
\,.
\end{equation}
The map~\eqref{eq:computation-CN} is natural in~$S$, and it is compatible with the decomposition~\eqref{eq:decomposition-CN} of the target.
\end{lemma}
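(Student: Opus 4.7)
The plan is to define the map of~\eqref{eq:computation-CN} explicitly on each summand by a telescoping formula, verify it is simplicial and compatible with the decomposition~\eqref{eq:decomposition-CN}, and then deduce the simplicial homotopy equivalence by reinterpreting the map as induced by an equivalence into the inertia groupoid of $\oid{G}{S}$.

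After identifying $\map(G/\langle c\rangle, S)^G$ with $S^{\langle c\rangle}$ via evaluation at $1\langle c\rangle$, I would define the $[c]$-component on $q$-simplices by
\[
[z_0, z_1, \dotsc, z_q;\, s]
\;\longmapsto\;
\bigl(z_q c z_0^{-1},\; z_0 z_1^{-1},\; z_1 z_2^{-1},\; \dotsc,\; z_{q-1} z_q^{-1},\; z_q s\bigr).
\]
Because each~$z_i$ centralizes~$c$, the right-hand side is invariant under replacing $(z_0,\dotsc,z_q,s)$ by $(z_0 w,\dotsc,z_q w,w^{-1}s)$ for $w\in Z_G\langle c\rangle$, so the formula factors through the balanced product; the telescoping computation $g_0 g_1 \dotsm g_q = z_q c z_q^{-1} = c$ shows the image lies in $CN_{q,[c]}(\oid{G}{S})$, giving compatibility with~\eqref{eq:decomposition-CN}. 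Naturality in~$S$ is immediate. Checking that the assignment is simplicial is a routine direct verification against the face and degeneracy maps of $E_\bullet Z_G\langle c\rangle$ (which drop or duplicate the~$z_i$'s) and of the cyclic nerve (which compose adjacent $g_i$'s, with the cyclic face $d_q$ rotating first); the cyclic face $d_q$ preserves the formula because $g_q = z_{q-1}z_q^{-1}$ lies in $Z_G\langle c\rangle$, so conjugation by~$g_q$ fixes~$c$.

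For the homotopy equivalence I would use the natural isomorphism of simplicial sets $E_\bullet Z \times_Z X \cong N_\bullet\,\oid{Z}{X}$ (identifying a class $[w_0,\dotsc,w_q,x]$ with the chain $w_0 x \to w_1 x \to \dotsb \to w_q x$) to rewrite the source as the nerve of the action groupoid $\oid{Z_G\langle c\rangle}{S^{\langle c\rangle}}$. Under this identification, the explicit formula above is precisely the map on nerves induced by the inclusion functor $F^c\colon \oid{Z_G\langle c\rangle}{S^{\langle c\rangle}} \to \oid{G}{S}$ together with the natural self-transformation of~$F^c$ given by left multiplication by~$c$, which is natural precisely because $c$ centralizes $Z_G\langle c\rangle$ and fixes every point of~$S^{\langle c\rangle}$. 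This data is the same as a functor into the $[c]$-component $\Lambda_{[c]}\oid{G}{S}$ of the inertia groupoid, and one checks directly that this functor is an equivalence of groupoids: every object $(s',g')$ with $g' = hch^{-1}$ is isomorphic via~$h$ to $(h^{-1}s', c)$, which lies in the image, and the morphism sets match on both sides.

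The main obstacle is upgrading the classical weak equivalence $|CN_\bullet\CC|\simeq B\Lambda\CC$ (for~$\CC$ a small groupoid) to a natural \emph{simplicial} homotopy equivalence of nerves that is compatible with the splitting by conjugacy class of holonomy, and verifying that the composite along the groupoid equivalence above matches the explicit formula. Both simplicial sets in question are nerves of groupoids, hence componentwise $K(\pi,1)$ Kan complexes, so weak equivalences between them are automatically simplicial homotopy equivalences; and an equivalence of (small) groupoids produces cyclic, and in particular simplicial, homotopies on cyclic nerves through the homotopies induced by natural isomorphisms. Assembling the component maps over all $[c]\in\conj G$ then yields~\eqref{eq:computation-CN} as a simplicial homotopy equivalence, naturally in~$S$.
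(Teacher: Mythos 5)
The paper gives no in-text proof of this lemma; it only cites \cite{LR-cyclic}*{Proposition~9.9 on page~627}, so there is no local argument to compare against. Your proof is correct and self-contained.

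To confirm the pieces: your explicit formula is well-defined on the balanced product (the auxiliary $w\in Z_G\langle c\rangle$ cancels, using that $w$ commutes with $c$), it lands in the $[c]$-summand because $g_0g_1\dotsm g_q=z_qcz_q^{-1}=c$, and the simplicial identities, including $d_q$, check out directly. Your reformulation is the right way to see the simplicial homotopy equivalence: $E_\bullet Z\timesd_Z X\cong N_\bullet\oid{Z}{X}$; for a groupoid $\CC$ there is a simplicial isomorphism $CN_\bullet\CC\cong N_\bullet\Lambda\CC$ onto the nerve of the inertia groupoid, recording a $q$-chain in $\Lambda\CC$ together with the automorphism at its final object; and the functor $\oid{Z_G\langle c\rangle}{S^{\langle c\rangle}}\to\Lambda_{[c]}\oid{G}{S}$, $s\mapsto(s,c)$, is an equivalence of groupoids, essentially surjective by conjugating the holonomy to the chosen representative~$c$ and fully faithful because $\{h\in G: hch^{-1}=c\}=Z_G\langle c\rangle$. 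The nerve of an equivalence of categories is a simplicial homotopy equivalence, with inverse and homotopies furnished by a quasi-inverse and its unit and counit natural isomorphisms, and unwinding the identifications recovers your explicit formula. Naturality in~$S$ is immediate since every step is induced by the given $G$-map.
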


\begin{proof}
See~\cite{LR-cyclic}*{Proposition~9.9 on page~627}.
\end{proof}

\begin{remark}
Notice that the source of the map~\eqref{eq:computation-CN} does not admit any obvious cyclic structure.
\end{remark}

\begin{proposition}
\label{computation-X-sma-CN}
For every $G$-space~$X$ there is a natural commutative diagram
\begin{equation}
\label{eq:computation-X-sma-CN}
\begin{tikzcd}[row sep=small]
\ds
\hphantom{{}_{\CF}}
{\bigvee_{[c]\in\conj G}}
\Bigl(
EZ_G\langle c \rangle \timesd_{Z_G\langle c \rangle} X^{\langle c \rangle}
\Bigr)_+
\arrow[r]
\arrow[d, shorten <=-1.5ex, shorten >=-.5ex]
&
\ds X_+ \sma_{\Or G}
{\real{CN_{\bullet}(\oid{G}{-})}_+}
\hphantom{{}_{,\CF}}
\arrow[d, shift right=2.7em, "\id\sma\pr_\CF"]
\\
\ds
{\bigvee_{[c]\in\conj_\CF G}}
\Bigl(
EZ_G\langle c \rangle \timesd_{Z_G\langle c \rangle} X^{\langle c \rangle}
\Bigr)_+
\arrow[r]
&
\ds X_+ \sma_{\Or G}
{\real{CN_{\bullet,\CF}(\oid{G}{-})}_+}
\end{tikzcd}
\end{equation}
where the left-hand vertical map is the projection to the wedge summands with $\langle c \rangle \in \CF$, and both horizontal maps are homotopy equivalences.
\end{proposition}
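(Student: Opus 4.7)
The plan is to apply $X_+\sma_{\Or G}(-)$ to the natural simplicial homotopy equivalence provided by \autoref{computation-CN} and then identify the resulting source by a co-Yoneda calculation. First, I would apply \autoref{computation-CN} with $S$ varying over objects of $\Or G$: for every $S\in\obj\Or G$ the map is a simplicial homotopy equivalence, and the naturality statement in~$S$ means the simplicial homotopy inverse and the two simplicial homotopies can be chosen natural in~$S\in\Or G$ as well. Geometric realization and addition of a disjoint basepoint then yield a natural (in $S\in\Or G$) pointed homotopy equivalence of functors $\Or G\to\CT$, together with a natural inverse and natural homotopies.

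Second, the functor $X_+\sma_{\Or G}(-)$ preserves identities and composition, and it commutes with coproducts since coends commute with colimits. Consequently it sends natural transformations with natural homotopy inverses to honest homotopy equivalences, giving a homotopy equivalence
\[
\bigvee_{[c]\in\conj G}\; X_+\sma_{\Or G}\bigl(EZ_G\langle c\rangle\timesd_{Z_G\langle c\rangle}(-)^{\langle c\rangle}\bigr)_+
\;\TO\;
X_+\sma_{\Or G}\real{CN_\bullet(\oid{G}{-})}_+\,.
\]

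Third, I would identify each wedge summand on the left. The $EZ_G\langle c\rangle$-factor and the quotient by~$Z_G\langle c\rangle$ pull out of the coend because they act only on the value of the functor, not on the $\Or G$-variable. What remains is the coend
\[
\int^{S\in\Or G} \map(S,X)^G_+\sma \map(G/\langle c\rangle,S)^G_+\,,
\]
where I used $(G/H)^{\langle c\rangle}\cong\map(G/\langle c\rangle,G/H)^G$. Co-Yoneda collapses this coend to $\map(G/\langle c\rangle,X)^G_+=X^{\langle c\rangle}_+$, producing the natural isomorphism
\[
X_+\sma_{\Or G}\bigl(EZ_G\langle c\rangle\timesd_{Z_G\langle c\rangle}(-)^{\langle c\rangle}\bigr)_+
\;\cong\;
\bigl(EZ_G\langle c\rangle\timesd_{Z_G\langle c\rangle}X^{\langle c\rangle}\bigr)_+\,,
\]
which assembles into the top horizontal equivalence in~\eqref{eq:computation-X-sma-CN}.

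Finally, the bottom row is obtained by running the same argument with the subfunctor $CN_{\bullet,\CF}(\oid{G}{-})$, which is legitimate because \autoref{computation-CN} explicitly asserts that the equivalence is natural in~$S$ and compatible with the decomposition~\eqref{eq:decomposition-CN}. Commutativity of the square then reduces to the tautology that projection to the wedge summands with $\langle c\rangle\in\CF$ on the source corresponds to $\id\sma\pr_\CF$ on the target. The only genuinely delicate point is confirming that the simplicial homotopies in \autoref{computation-CN} are truly natural across morphisms of $G$-sets, so that $X_+\sma_{\Or G}(-)$ preserves the equivalence; once that is granted, every other step is a formal coend manipulation.
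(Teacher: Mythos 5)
Your proposal is correct and follows essentially the same path as the paper's own proof: apply $X_+\sma_{\Or G}(-)$ to the naturally varying simplicial homotopy equivalence of \autoref{computation-CN}, realize, pull the $EZ_G\langle c\rangle\timesd_{Z_G\langle c\rangle}(-)$-factor out of the coend, and collapse the remaining coend by Yoneda. The only cosmetic difference is in the bottom row, where the paper simply observes that both vertical maps in~\eqref{eq:computation-X-sma-CN} are retractions, so the bottom horizontal map is a retract of the top equivalence and hence itself an equivalence, while you re-run the co-Yoneda identification for the $\CF$-part directly using the compatibility with~\eqref{eq:decomposition-CN}; both routes are fine.
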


\begin{proof}
The simplicial homotopy equivalence from \autoref{computation-CN} induces the horizontal homotopy equivalence in the following display.
\[
\begin{tikzcd}[column sep=small, row sep=scriptsize]
\ds X\timesd_{\Or G} \real*{\smallcoprod_{[c]\in\conj G} E_\bullet Z_G\langle c \rangle \timesd_{Z_G\langle c \rangle} \map(G/\langle c \rangle,-)^G}
\arrow[r, "\simeq"]
&
\ds X\timesd_{\Or G}\real{CN_{\bullet}(\oid{G}{-})}
\\
\ds X\timesd_{\Or G}
\smallcoprod_{[c]\in\conj G}
\Bigl(
EZ_G\langle c \rangle \timesd_{Z_G\langle c \rangle} \map(G/\langle c \rangle,-)^G
\Bigr)
\arrow[u, "\ts\cong"', "\ts\one"]
\\
\ds
\smallcoprod_{[c]\in\conj G}
EZ_G\langle c \rangle \timesd_{Z_G\langle c \rangle}
\Bigl(
X\timesd_{\Or G}\map(G/\langle c \rangle,-)^G
\Bigr)
\arrow[u, "\ts\cong"', "\ts\two"]
\arrow[phantom, r, "\cong"]
\arrow[draw=none, r, shift left, "\ts\three"]
&
\ds
\smallcoprod_{[c]\in\conj G}
EZ_G\langle c \rangle \timesd_{Z_G\langle c \rangle} X^{\langle c \rangle}
\end{tikzcd}
\]
The isomorphism~$\one$ comes from the fact that geometric realization commutes with colimits and finite products, $\two$ comes from associativity and distributivity properties, and $\three$ comes from the Yoneda lemma.
Adding disjoint basepoints gives the top horizontal homotopy equivalence in diagram~\eqref{eq:computation-X-sma-CN}.
Since the vertical maps in~\eqref{eq:computation-X-sma-CN} are retractions, also the bottom horizontal map is a homotopy equivalence.
\end{proof}

We are now ready to prove the following result, which is the model case of all our splitting and isomorphisms results for assembly maps.
Notice that, using the equivalence~\eqref{eq:free-loop}, this result also gives a computation of the free loop space~$\CL BG$.

\begin{proposition}
\label{split-CN}
Let $G$ be any group.
Consider the following commutative diagram of pointed $S^1$-spaces.
\begin{equation}
\label{eq:split-CN}
\begin{tikzcd}[column sep=large]
\ds EG(\CF)_+\sma_{\Or G}\real{CN_{\bullet}(\oid{G}{-})}_+
\arrow[r, "\asbl"]
\arrow[d, "\id\sma\pr_\CF"']
&
\ds\real{CN_{\bullet}(G)}_+
\arrow[d, "\pr_\CF"]
\\
\ds EG(\CF)_+\sma_{\Or G}\real{CN_{\bullet,\CF}(\oid{G}{-})}_+
\arrow[r, "\asbl"']
&
\ds\real{CN_{\bullet,\CF}(G)}_+
\end{tikzcd}
\end{equation}
After forgetting the $S^1$-action, the left-hand map and the bottom map are homotopy equivalences.
If $\CF$ contains all cyclic groups, then the right-hand map is an isomorphism.
\end{proposition}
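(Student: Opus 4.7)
The plan is to prove the three claims of \autoref{split-CN} in order, each by reducing to \autoref{computation-X-sma-CN} and \autoref{computation-CN}. Since all three assertions concern the underlying non-equivariant homotopy type (or a simplicial isomorphism, in one case), the $S^1$-action plays no role in the argument. I would dispose of the right-hand vertical map first: if $\CF$ contains every cyclic subgroup of~$G$, then $\conj_\CF G = \conj G$ by definition, so the decomposition~\eqref{eq:decomposition-CN} identifies $CN_{\bullet,\CF}(G)$ with $CN_\bullet(G)$ and $\pr_\CF$ with the identity cyclic map, which proves the third claim.

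For the left-hand vertical map, I would apply \autoref{computation-X-sma-CN} with $X = EG(\CF)$. The diagram~\eqref{eq:computation-X-sma-CN} identifies $\id \sma \pr_\CF$, via the horizontal homotopy equivalences, with the wedge-summand projection from the sum indexed by $[c] \in \conj G$ to the sum indexed by $[c] \in \conj_\CF G$ of the pointed spaces $(EZ_G\langle c \rangle \timesd_{Z_G\langle c \rangle} EG(\CF)^{\langle c \rangle})_+$. The point is that $EG(\CF)^{\langle c \rangle}$ is empty whenever $\langle c \rangle \notin \CF$, so the wedge summands being discarded consist only of the basepoint, and the projection is a homotopy equivalence.

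The bottom horizontal map is the subtlest step. I would invoke \autoref{computation-X-sma-CN} once more, but now exploit naturality of diagram~\eqref{eq:computation-X-sma-CN} in~$X$ under the projection $EG(\CF) \to \pt = G/G$. On the right, the natural map from $EG(\CF)_+ \sma_{\Or G} \real{CN_{\bullet,\CF}(\oid{G}{-})}_+$ to $\pt_+ \sma_{\Or G} \real{CN_{\bullet,\CF}(\oid{G}{-})}_+ \cong \real{CN_{\bullet,\CF}(G)}_+$, using the isomorphism~\eqref{eq:G/H} with $H = G$, is by definition the bottom assembly map in~\eqref{eq:split-CN}. On the left, the induced map is the wedge over $[c] \in \conj_\CF G$ of the maps $(EZ_G\langle c \rangle \timesd_{Z_G\langle c \rangle} EG(\CF)^{\langle c \rangle})_+ \to BZ_G\langle c \rangle_+$ coming from $EG(\CF)^{\langle c \rangle} \to \pt$. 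For each $[c] \in \conj_\CF G$ the space $EG(\CF)^{\langle c \rangle}$ is contractible and $EZ_G\langle c\rangle$ is a free contractible $Z_G\langle c\rangle$-space, so each such map, and therefore the bottom horizontal map of~\eqref{eq:split-CN}, is a homotopy equivalence. There is no serious obstacle here; it is bookkeeping on top of \autoref{computation-X-sma-CN}, the only point requiring care being that the horizontal homotopy equivalences of~\eqref{eq:computation-X-sma-CN} are natural enough in~$X$ to legitimize the last step—which is part of the naturality asserted in that proposition.
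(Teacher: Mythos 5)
Your proposal is correct and follows essentially the same route as the paper: apply \autoref{computation-X-sma-CN} with $X=EG(\CF)$, use that $EG(\CF)^{\langle c\rangle}=\emptyset$ iff $\langle c\rangle\notin\CF$ to identify the left-hand map with the identity on the source model, and for the bottom map use naturality in $X$ under $EG(\CF)\to\pt$ together with the fact that for $\langle c\rangle\in\CF$ the map $EZ_G\langle c\rangle\times EG(\CF)^{\langle c\rangle}\to EZ_G\langle c\rangle\times\pt$ is a non-equivariant equivalence of free $Z_G\langle c\rangle$-CW-spaces, hence a homotopy equivalence after passing to quotients.
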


\begin{proof}
Apply \autoref{computation-X-sma-CN} to~$X=EG(\CF)$.
Since $EG(\CF)^{\langle c \rangle}=\emptyset$ if and only if $\langle c \rangle\notin\CF$,
the vertical map on the left in diagram~\eqref{eq:split-CN} corresponds under the homotopy equivalences of \autoref{computation-X-sma-CN} to the identity.

Recall that the horizontal assembly maps are the maps induced by the projection $EG(\CF)\TO\pt$ and the isomorphism $\pt_+\sma_{\Or G}\real{CN_{\bullet,\CF}(\oid{G}{-})}_+\cong\real{CN_{\bullet,\CF}(G)}_+$.
If $\langle c \rangle\in\CF$, the equivariant map
\(
EZ_G\langle c \rangle\times EG(\CF)^{\langle c \rangle}
\TO
EZ_G\langle c \rangle\times\pt
\)
is a non-equivariant homotopy equivalence of free $Z_G\langle c \rangle$-CW-spaces, and hence it remains a homotopy equivalence after taking quotients of the $Z_G\langle c \rangle$-actions.
Using \autoref{computation-X-sma-CN}, this shows that the horizontal map at the bottom of diagram~\eqref{eq:split-CN} is a homotopy equivalence.
Finally, the last statement is clear from the definition of the projection~$\pr_\CF$.
\end{proof}

\section{Topological Hochschild homology}
\label{THH}

In this section we prove the splitting theorem for assembly maps for topological Hochschild homology, \autoref{THH-and-C}\ref{i:THH}.
More precisely, we establish the following result.

\begin{theorem}
\label{split-THH}
Let $G$ be any group and $\spec{A}$ any very well pointed symmetric ring spectrum.
Consider the following commutative diagram in~$\CW\CT^{S^1}$.
\[
\begin{tikzcd}[column sep=large]
\ds EG(\CF)_+\sma_{\Or G}\THH    (\spec{A}[\oid{G}{-}])
\arrow[r, "\asbl"]
\arrow[d, "\id\sma\pr_\CF"']
&
\THH    (\spec{A}[G])
\arrow[d, "\pr_\CF"]
\\
\ds EG(\CF)_+\sma_{\Or G}\THH_\CF(\spec{A}[\oid{G}{-}])
\arrow[r, "\asbl"']
&
\THH_\CF(\spec{A}[G])
\end{tikzcd}
\]
The left-hand map and the bottom map are $\pi_*^1$-isomorphisms.
If $\CF$ contains all cyclic groups, then the right-hand map is an isomorphism.
\end{theorem}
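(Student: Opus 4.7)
The plan is to run the proof of \autoref{split-CN} verbatim, one level up in~$\CW\CT^{S^1}$, after establishing an $S^1$-equivariant decomposition of $\THH(\spec{A}[\oid{G}{-}])$ indexed by conjugacy classes of elements of~$G$. All subsequent steps will then be formal consequences of \autoref{computation-CN} and the model-categorical lemmas of \autoref{DELTA-LAMBDA-GOOD}.

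The first step is to observe that $\THH(\spec{A}[\oid{G}{S}])$ is the geometric realization of a cyclic object in~$\CW\CT$ whose cyclic bar construction has, at level~$q$, a wedge summand for each tuple of composable morphisms in~$\oid{G}{S}$. Since morphisms in~$\oid{G}{S}$ are labelled by elements of~$G$, this index set is exactly $CN_q(\oid{G}{S})$, and the cyclic map~\eqref{eq:maptoconG} lifts to a cyclic map from the THH object to the constant cyclic set~$\conj G$. Splitting into preimages yields a natural wedge decomposition
\[
\THH(\spec{A}[\oid{G}{S}])
=
\bigvee_{[c]\in\conj G}\THH_{[c]}(\spec{A}[\oid{G}{S}])
\]
in~$\CW\CT^{S^1}$. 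One then defines $\THH_\CF(\spec{A}[\oid{G}{-}])$ as the wedge of the summands with $\langle c\rangle\in\CF$, together with the retraction $\pr_\CF$ and section $\ii_\CF$ exactly as in~\eqref{eq:retraction-CN}.

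The second step is to prove the $\THH$-analogue of \autoref{computation-X-sma-CN}: for every $G$-space~$X$ there is a natural commutative square whose horizontal maps are $\pi_*^1$-isomorphisms, identifying $X_+\sma_{\Or G}\THH(\spec{A}[\oid{G}{-}])$ with the wedge, indexed by $[c]\in\conj G$, of
\[
\bigl(EZ_G\langle c\rangle\timesd_{Z_G\langle c\rangle}X^{\langle c\rangle}\bigr)_+ \sma \THH_{[c]}(\spec{A}[\oid{G}{\,(G/\langle c\rangle)}])
\,,
\]
and similarly in the $\CF$-variant. At each simplicial degree this reduces to \autoref{computation-CN} smashed with the appropriate wedge of copies of~$\spec{A}$. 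Because $\spec{A}$ is very well pointed, the defining cyclic objects are levelwise Segal good, so \autoref{real-lemma-WT} transfers the simplicial-degree equivalence to a $\pi_*^1$-isomorphism of realizations. Under these identifications, $\id\sma\pr_\CF$ becomes the obvious projection onto the summands with $\langle c\rangle\in\CF$.

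Applying the above to $X=EG(\CF)$ finishes the proof: since $EG(\CF)^{\langle c\rangle}$ is contractible when $\langle c\rangle\in\CF$ and empty otherwise, the left-hand vertical map corresponds to the identity on the common wedge summands, and the bottom horizontal map becomes, on each such summand, the map induced by the $Z_G\langle c\rangle$-equivariant homotopy equivalence $EZ_G\langle c\rangle\times EG(\CF)^{\langle c\rangle}\to EZ_G\langle c\rangle$ between free $Z_G\langle c\rangle$-CW-complexes. When $\Cyc\subseteq\CF$ the set $\conj_\CF G$ equals $\conj G$ and $\pr_\CF$ is the identity. The main obstacle is the cyclic-level decomposition in the first step: one must handle B\"okstedt's model with care, verifying that the iterated homotopy colimits used to define $\THH$ distribute over the wedge decomposition indexed by $\conj G$ in a way compatible with the cyclic structure, the $S^1$-action on the realization, and the functoriality in~$S$. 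Once this is in place, the combinatorial backbone of \autoref{split-CN} transports cleanly to spectra.
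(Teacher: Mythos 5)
Your overall plan correctly tracks the paper's conceptual structure (decompose by conjugacy classes, reduce to the cyclic-nerve case of \autoref{split-CN}, apply to $X=EG(\CF)$), but the key technical engine is missing and one of the stated identifications is wrong.

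First, a minor issue: there is no point-set-level wedge decomposition of $\THH(\spec{A}[\oid{G}{S}])$ over $\conj G$. B\"okstedt's model has the shape $\hocolim_{\CI^{[q]}}\map\bigl(cn_{[q]}\spec{S}(\vec{x}),\;B\sma cn_{[q]}\spec{A}(\vec{x})\sma CN_q(\oid{G}{S})_+\bigr)$, so the disjoint-union decomposition of $CN_q(\oid{G}{S})$ sits \emph{inside} a mapping space, and $\map(S, Y\vee Z)$ is not $\map(S,Y)\vee\map(S,Z)$. What one actually gets, and what the paper uses, is the retraction system $\pr_\CF\circ\ii_\CF=\id$ of~\eqref{eq:retraction-cn} and~\eqref{eq:retraction-THH}; the wedge decomposition only holds stably.

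Second, and more seriously: your second step hides the actual heart of the argument. You write that, at each simplicial degree, the identification ``reduces to \autoref{computation-CN} smashed with the appropriate wedge of copies of~$\spec{A}$,'' and that Segal goodness then transfers it across realization. But the simplicial-degree comparison one needs is the map~$\hm_q$ which, objectwise in~$\CI^{[q]}$, is the map of~\eqref{eq:reinziehen} moving the discrete factor $CN_q(\CC)_+$ \emph{inside} the mapping space. That map is only $(2(a-x)+1)$-connected, not an isomorphism, so nothing ``reduces'' at the point-set level. Establishing that $\hm_q$ is a $\pi_*$-isomorphism for an arbitrary very well pointed $\spec{A}$ is exactly the content of \autoref{THH-CN}, and the paper's proof requires cofibrant replacement, cellular induction over $FI$-cell complexes, and a connectivity estimate showing $\hm_q(S^n)$ is $(2n-2(q+1)m-1)$-connected on generators $F_mZ$ (using \cite{Shipley}). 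Segal goodness (\autoref{real-lemma-WT}) only converts a levelwise $\pi_*$-isomorphism of cyclic $\CW$-spaces into one of realizations; it does not produce the levelwise statement. Once $\hm$ and its restriction $\hm_\CF$ are known, the paper's proof is a formality: apply $\THH(\spec{A})\sma-$ to the square of \autoref{split-CN} and transport along $\hm$. You have effectively proposed the formality without the lemma it depends on, and you flag the wrong difficulty (compatibility of $\hocolim$ with the conjugacy-class decomposition, which is routine since colimits commute with colimits).

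Finally, your second-step formula is incorrect: the $[c]$-summand of $X_+\sma_{\Or G}\THH(\spec{A}[\oid{G}{-}])$ should be identified (up to $\pi_*^1$-isomorphism) with $\THH(\spec{A})\sma\bigl(EZ_G\langle c\rangle\timesd_{Z_G\langle c\rangle}X^{\langle c\rangle}\bigr)_+$, not with $\bigl(EZ_G\langle c\rangle\timesd_{Z_G\langle c\rangle}X^{\langle c\rangle}\bigr)_+\sma\THH_{[c]}(\spec{A}[\oid{G}{\,(G/\langle c\rangle)}])$; the latter double-counts the classifying space and misplaces the orbit, since $\THH_{[c]}(\spec{A}[\oid{G}{\,(G/\langle c\rangle)}])\simeq\THH(\spec{A})\sma\bigl(EZ_G\langle c\rangle\timesd_{Z_G\langle c\rangle}(G/\langle c\rangle)^{\langle c\rangle}\bigr)_+$ by \autoref{THH-CN} and \autoref{computation-CN}, which is not a factor you want to smash back in.
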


We begin by recalling the definition of topological Hochschild homology, following~\cite{B-THH} and~\cite{DMcC-THH}.

Let $\CI$ be the category with objects the finite sets $x=\{1,2,\dotsc,x\}$ for all~$x\ge0$, and morphisms all the injective functions.
Ordered concatenation $(x,y)\longmapsto x\sqcup y\cong x+y$ defines a strict cocartesian monoidal structure on~$\CI$, symmetric up to natural isomorphism.

Let $\spec{D}$ be a symmetric spectral category; compare~\autoref{SYMMETRIC}.
Let $q\in\IN$.
In order to simplify the notation below, we write $\CI^{[q]}$ for~$\CI^{q+1}$.
Define $cn_{[q]}\spec{D}$ to be the functor
\begin{align*}
\MOR{cn_{[q]}\spec{D}}{\CI^{[q]}&}{\ \CT}
\\
(x_0,\dotsc,x_q)
&\longmapsto
\quad\bigvee_{\mathclap{\substack{d_0,\dotsc,d_q\\\text{in}\,\obj\spec{D}}}}\quad
\spec{D}(d_0,d_q)_{x_0}\sma
\spec{D}(d_1,d_0)_{x_1}\sma
\dotsb\sma
\spec{D}(d_q,d_{q-1})_{x_q}
\,.
\end{align*}
In the special case when $\spec{D}$ is the sphere spectrum~$\spec{S}$, notice that $cn_{[q]}\spec{S}(x_0,\dotsc,x_q)=S^{x_0}\sma\dotsb\sma S^{x_q}$.
Then define
\begin{align*}
\MOR{M_{[q]}\spec{D}}{\CI^{[q]}}{{}&\CW\CT}
\\
\vec{x}=(x_0,\dotsc,x_q)
\longmapsto{}
&
\map\bigl(cn_{[q]}\spec{S}(\vec{x}),\;-\sma cn_{[q]}\spec{D}(\vec{x})\bigr)
\\
&=
\map\bigl(S^{x_0}\sma\dotsb\sma S^{x_q},\;-\sma cn_{[q]}\spec{D}(x_0,\dotsc,x_q)\bigr)
,
\end{align*}
where $-$ denotes the variable in~$\CW$.

The key observation for the definition of topological Hochschild homology is that the functors $M_{[q]}\spec{D}$ assemble to a cyclic object in the category of natural modules in~$\CW\CT$ defined in \autoref{NAT-MOD}:
\begin{equation}
\label{eq:I-M}
\MOR{\bigl(\CI^{[\bullet]},M_{[\bullet]}\spec{D}\bigr)}{\Lambda^\op}{\Mod_{\CW\CT}}
\,,\qquad
[q]\longmapsto\bigl(\CI^{[q]},M_{[q]}\spec{D}\bigr)
\,.
\end{equation}
This is explained in detail in~\cite{Dundas}*{Sections~4.2.2.3--4 on pages~151--153}.

\begin{definition}[Topological Hochschild homology]
Let $\spec{D}$ be a symmetric spectral category.
Then $THH_\bullet(\spec{D})\in\obj\Lambda^\op(\CW\CT)$ is defined as the composition of \eqref{eq:I-M} and the functor $\MOR{\hocolim}{\Mod_{\CW\CT}}{\CW\CT}$ in~\eqref{eq:hocolim-WT}:
\[
THH_q(\spec{D})=\hocolim_{\CI^{[q]}}M_{[q]}\spec{D}
\,.
\]
This defines a functor
\[
\MOR{THH_\bullet(-)}{\Sigma\Sp\D\Cat}{\Lambda^\op(\CW\CT)}
\,,
\]
and its composition with the geometric realization functor $\MOR{\real{-}}{\Lambda^\op\CT}{\CT^{S^1}}$ gives
\[
\MOR{\THH(-)=\real{THH_\bullet(-)}}{\Sigma\Sp\D\Cat}{\CW\CT^{S^1}}
.
\]
\end{definition}

Now let $\spec{A}$ be a symmetric ring spectrum.
Any ordinary category~$\CC$ defines a symmetric spectral category~$\spec{A}[\CC]$ by base change along the lax symmetric monoidal functor
\[
\MOR{\spec{A}\sma-_+}{\Sets}{\Sigma\Sp}
\,,\qquad
X\longmapsto\spec{A}\sma X_+
\,;
\]
compare \autoref{ENRICHED}.

\begin{lemma}
\label{cn(A[C])}
Let $\spec{A}$ be a symmetric ring spectrum, and let $\CC$ be a small category.
Then there is a natural isomorphism of functors $\CI^{[q]}\TO\CT$
\begin{equation}
\label{eq:cn(A[C])}
cn_{[q]}(\spec{A}[\CC])(?)
\cong
cn_{[q]}\spec{A}(?)\sma CN_q(\CC)_+
\end{equation}
and a natural isomorphism of functors $\Lambda^\op\TO\Mod_{\CW\CT}$
\begin{equation}
\label{eq:M(A[C])}
\bigl(\CI^{[\bullet]},M_{[\bullet]}(\spec{A}[\CC])\bigr)
\cong
\bigl(\CI^{[\bullet]},
\map\bigl(cn_{[\bullet]}\spec{S}(?),\;-\sma cn_{[\bullet]}\spec{A}(?)\sma CN_\bullet(\CC)_+\bigr)
\bigr)
\,,
\end{equation}
where $?$ denotes the variable in~$\CI^{[\bullet]}$ and $-$ the variable in~$\CW$.
\end{lemma}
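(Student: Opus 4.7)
The plan is to first establish the pointwise isomorphism~\eqref{eq:cn(A[C])} by a direct smash-product manipulation, then to deduce~\eqref{eq:M(A[C])} from it essentially by applying $\map(cn_{[q]}\spec{S}(?),-\sma(-))$, and finally to verify that the resulting isomorphism intertwines the cyclic structures.

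For the pointwise step, unpack definitions. By construction of the base change, $\spec{A}[\CC](c,c')=\spec{A}\sma\CC(c,c')_+$, so $(\spec{A}[\CC](c,c'))_x=\spec{A}_x\sma\CC(c,c')_+$. Substituting into the wedge that defines $cn_{[q]}(\spec{A}[\CC])(\vec x)$, each summand indexed by $(d_0,\dotsc,d_q)$ becomes
\[
\bigl(\spec{A}_{x_0}\sma\CC(d_0,d_q)_+\bigr)\sma\dotsb\sma\bigl(\spec{A}_{x_q}\sma\CC(d_q,d_{q-1})_+\bigr).
\]
Using the symmetry of $\sma$ in $\CT$ to gather the spectrum levels on one side and the identification $X_+\sma Y_+\cong(X\times Y)_+$ to gather the categorical data on the other, this rewrites as
\[
\bigl(\spec{A}_{x_0}\sma\dotsb\sma\spec{A}_{x_q}\bigr)\sma\bigl(\CC(d_0,d_q)\times\CC(d_1,d_0)\times\dotsb\times\CC(d_q,d_{q-1})\bigr)_+ .
\]
The first factor is $cn_{[q]}\spec{A}(\vec x)$ and no longer depends on $\vec d$, so it can be pulled out of the wedge over $(d_0,\dotsc,d_q)$; using $\bigvee_{\alpha}(Y_\alpha)_+\cong(\coprod_\alpha Y_\alpha)_+$ on what remains identifies the wedge of products with $CN_q(\CC)_+$, giving~\eqref{eq:cn(A[C])}. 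Naturality in $\vec x\in\CI^{[q]}$ is automatic because the $\CI$-variables enter only through the spectrum levels of $\spec{A}$.

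With~\eqref{eq:cn(A[C])} in hand, the natural-module identity~\eqref{eq:M(A[C])} follows by applying $\map(cn_{[q]}\spec{S}(?),-\sma(-))$ to the second factor; since the indexing category $\CI^{[q]}$ is unchanged, this is the identity on the first component of the pair in $\Mod_{\CW\CT}$, hence a genuine isomorphism there. The remaining work is to check compatibility with the cyclic structure on $[\bullet]\in\Lambda^\op$. For a face operator $d_i$ with $i<q$, the relevant structure map in $\spec{A}[\CC]$ is the pairing $(\spec{A}\sma\CC(d,d')_+)\sma(\spec{A}\sma\CC(d',d'')_+)\TO\spec{A}\sma\CC(d,d'')_+$, and after the same symmetry rearrangement this factors as the multiplication $\spec{A}\sma\spec{A}\TO\spec{A}$ smashed with composition in $\CC$; these are exactly the face maps of $cn_{[\bullet]}\spec{A}$ and of $CN_\bullet(\CC)_+$ respectively. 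Degeneracies $s_i$ split identically via the unit $\spec{S}\TO\spec{A}$ and identity arrows of $\CC$. The last face $d_q$ and the cyclic operator $t_q$ additionally invoke a cyclic permutation of tensor factors, which appears on both sides and is induced by the symmetries of $(\Sigma\Sp,\sma)$ and of $(\CT,\sma)$.

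The main obstacle I anticipate is essentially bookkeeping in this last step: one must fix coherent conventions for the symmetric-monoidal coherences in $\Sigma\Sp$ and in $\CT$ so that the splitting of each cyclic structure map into a spectral and a categorical factor is strictly natural in $[\bullet]\in\Lambda^\op$, not merely natural up to canonical isomorphism. Once a single convention is adopted, every structure map reduces to the rearrangement already performed in the pointwise step, and the resulting isomorphism is then manifestly natural in both~$\CC$ and the ring spectrum~$\spec{A}$.
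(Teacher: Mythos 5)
Your proposal is correct and matches the paper's proof: the same smash-product rearrangement gives the pointwise isomorphism~\eqref{eq:cn(A[C])}, and~\eqref{eq:M(A[C])} follows by applying $\map(cn_{[q]}\spec{S}(?),-\sma(-))$. The paper dispatches the cyclic-structure compatibility that you verify in detail with the phrase ``the rest is then clear,'' so your version is just a more explicit rendering of the same argument.
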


\begin{proof}
Evaluated at $?=(x_0,\dotsc,x_q)\in\obj\CI^{[q]}$, the isomorphism~\eqref{eq:cn(A[C])} is given by
\begin{multline*}
\quad\bigvee_{\mathclap{c_0,\dotsc,c_q}}\quad
\spec{A}_{x_0}\sma\CC(c_0,c_q)_+\sma
\spec{A}_{x_1}\sma\CC(c_1,c_0)_+\sma
\dotsb\sma
\spec{A}_{x_q}\sma\CC(c_q,c_{q-1})_+
\cong
\\
\cong
\bigl(\spec{A}_{x_0}\sma\spec{A}_{x_1}\sma\dotsb\sma\spec{A}_{x_q}\bigr)
\sma
\Bigl(\:
\smallcoprod_{c_0,\dotsc,c_q}\!
\CC(c_0,c_q)\times\CC(c_1,c_0)\times\dotsb\times\CC(c_q,c_{q-1})
\,\Bigr)_+
\,.
\end{multline*}
The rest is then clear.
\end{proof}

Now we specialize to~$\CC=\oid{G}{S}$ for a group~$G$ and a $G$-set~$S$.
Using the decomposition~\eqref{eq:decomposition-CN} of the cyclic nerve of action groupoids, from~\eqref{eq:cn(A[C])} we obtain a decomposition
\begin{equation*}
cn_{[\bullet]}(\spec{A}[\oid{G}{S}])
\cong
\bigvee_{[c]\in\conj G}
cn_{[\bullet],[c]}(\spec{A}[\oid{G}{S}])
\,,
\end{equation*}
where we define
\[
cn_{[\bullet],[c]}(\spec{A}[\oid{G}{S}])
=
\bigl(cn_{[\bullet]}\spec{A}\bigr)\sma CN_{\bullet,[c]}(\oid{G}{S})_+
\,.
\]
Similarly, given a family~$\CF$ of subgroups of~$G$, we define~$cn_{[\bullet],\CF}(\spec{A}[\oid{G}{S}])$.
There are maps
\begin{equation}
\label{eq:retraction-cn}
\begin{tikzcd}
cn_{[\bullet]}    (\spec{A}[\oid{G}{S}])
\arrow[r, shift left, "\pr_{\CF}"]
&
cn_{[\bullet],\CF}(\spec{A}[\oid{G}{S}])
\arrow[l, shift left, "\ii_{\CF}"]
\end{tikzcd}
\end{equation}
satisfying $\pr_{\CF} \circ \ii_{\CF} = \id$.
Then we define
\begin{align*}
\MOR{M_{[q],\CF}(\spec{A}[\oid{G}{S}])}{\CI^{[q]}}{{}&\CW\CT}
\\
\vec{x}
\
\longmapsto{}
&
\map\bigl(cn_{[q]}\spec{S}(\vec{x}),\;-\sma cn_{[q]}\spec{A}(\vec{x})\sma CN_{q,\CF}(\oid{G}{S})_+\bigr)
.
\end{align*}
As $[q]$ varies, these functors assemble to a cyclic object
\[
\MOR{\bigl(\CI^{[\bullet]},M_{[\bullet],\CF}(\spec{A}[\oid{G}{S}])\bigr)}{\Lambda^\op}{\Mod_{\CW\CT}}
\,.
\]
\begin{definition}
The $\CF$-parts of topological Hochschild homology are defined by
\begin{align*}
THH_{q,\CF}(\spec{A}[\oid{G}{S}])&=\hocolim_{\CI^{[q]}}M_{[q],\CF}(\spec{A}[\oid{G}{S}])
\\
\shortintertext{and}
\THH_\CF(\spec{A}[\oid{G}{S}])&=\real{THH_{\bullet,\CF}(\spec{A}[\oid{G}{S}])}
\,.
\end{align*}
The maps~\eqref{eq:retraction-cn} induce maps
\begin{equation}
\label{eq:retraction-THH}
\begin{tikzcd}
\THH    (\spec{A}[\oid{G}{S}])
\arrow[r, shift left, "\pr_{\CF}"]
&
\THH_\CF(\spec{A}[\oid{G}{S}])
\arrow[l, shift left, "\ii_{\CF}"]
\end{tikzcd}
\end{equation}
satisfying $\pr_{\CF} \circ \ii_{\CF} = \id$.
\end{definition}

\autoref{split-THH} follows from \autoref{split-CN} using the following result, which in the case of discrete rings is due to Hesselholt and Madsen~\cite{HM-top}*{Theorem~7.1 on page~81}; see also~\cite{H-survey}*{Proposition~3 on page~81}.
In fact, they additionally prove that the map~$\hm$ in~\eqref{eq:THH-CN} is a $\pi_*^C$-isomorphism for every finite cyclic subgroup $C$ of~$S^1$.
Even though this stronger equivariant statement can also be generalized here, we restrict to the case $C=1$, since this is all we need.

\begin{theorem}
\label{THH-CN}
Let $\spec{A}$ be a very well pointed symmetric ring spectrum, and let $\CC$ be a small category.
Then there is a natural $\pi_*^1$-isomorphism in $\CW\CT^{S^1}$
\begin{equation}
\label{eq:THH-CN}
\MOR{\hm}
{\THH(\IA)\sma\real{CN_\bullet(\CC)}_+}
{\THH(\spec{A}[\CC])}
\,.
\end{equation}
If $\CC=\oid{G}{S}$, then $\hm$ restricts to a $\pi_*^1$-isomorphism
\begin{equation}
\label{eq:THH-CN-F-part}
\MOR{\hm_\CF}
{\THH(\IA)\sma\real{CN_{\bullet,\CF}(\oid{G}{S})}_+}
{\THH_\CF(\spec{A}[\oid{G}{S}])}
\end{equation}
which commutes with the projections and inclusions in~\eqref{eq:retraction-CN} and~\eqref{eq:retraction-THH}.
\end{theorem}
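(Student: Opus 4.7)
The plan is to construct $\hm$ as the geometric realization of a cyclic map $\hm_\bullet$ of cyclic $\CW$-spaces, and then to reduce the $\pi_*^1$-isomorphism claim to a level-by-level check via the realization lemma (\autoref{real-lemma-WT}).

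\textbf{Construction.} At each simplicial level~$q$, the isomorphism of \autoref{cn(A[C])} presents $cn_{[q]}(\spec{A}[\CC])$ as the smash product of $cn_{[q]}\spec{A}$ with the $\CI^{[q]}$-constant pointed set~$CN_q(\CC)_+$. Combining this with the universal assembly transformation $\map(X, Y) \sma Z \to \map(X, Y \sma Z)$ applied to $Z = CN_q(\CC)_+$, and then taking $\hocolim_{\CI^{[q]}}$—which commutes with smashing against a constant—produces a natural map
\[
\hm_q \colon THH_q(\spec{A}) \sma CN_q(\CC)_+ \longrightarrow THH_q(\spec{A}[\CC]).
\]
Compatibility with the face, degeneracy, and cyclic operators makes $\hm_\bullet$ a map in~$\Lambda^\op(\CW\CT)$. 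Very well pointedness of~$\spec{A}$ ensures that both sides are levelwise Segal good (following the bookkeeping of~\cite{HM-top}), so geometric realization commutes with the simplicial smash product in the source, and $|\hm_\bullet|$ becomes the desired map $\hm \colon \THH(\spec{A}) \sma \real{CN_\bullet(\CC)}_+ \to \THH(\spec{A}[\CC])$; the cyclic structure provides the $S^1$-equivariance.

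\textbf{Level-wise weak equivalence.} By \autoref{real-lemma-WT}, the $\pi_*^1$-isomorphism statement reduces to checking that each~$\hm_q$ is a $\pi_*$-isomorphism. Writing $Y \sma CN_q(\CC)_+ = \bigvee_{CN_q(\CC)} Y$ and using that wedges commute with $\hocolim$, the map~$\hm_q$ identifies with the natural comparison
\[
\bigvee_{c \in CN_q(\CC)} \hocolim_{\vec{x}}\, \map\bigl(cn_{[q]}\spec{S}(\vec{x}),\ -\sma cn_{[q]}\spec{A}(\vec{x})\bigr) \longrightarrow \hocolim_{\vec{x}}\, \map\Bigl(cn_{[q]}\spec{S}(\vec{x}),\ -\sma \bigvee_{c} cn_{[q]}\spec{A}(\vec{x})\Bigr).
\]
For fixed~$\vec{x}$ this assembly of a wedge into a mapping space fails to be an isomorphism when $CN_q(\CC)$ is infinite, but the $\CI^{[q]}$-hocolim stabilizes it: compactness of the spheres $cn_{[q]}\spec{S}(\vec{x}) = S^{x_0} \sma \dotsb \sma S^{x_q}$ implies that in the colimit every map factors through a finite sub-wedge, so that the right-hand side computes $\bigoplus_{c} \pi_*(THH_q(\spec{A}))$. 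This is the argument underlying \cite{HM-top}*{proof of Theorem~7.1}, transferred verbatim to the symmetric-spectral setting.

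\textbf{$\CF$-part and main obstacle.} For $\CC = \oid{G}{S}$, the decomposition~\eqref{eq:decomposition-CN} and the retractions~\eqref{eq:retraction-CN},~\eqref{eq:retraction-THH} are natural with respect to each construction above, so $\hm$ restricts to the $\CF$-part to give $\hm_\CF$, which is a $\pi_*^1$-isomorphism by applying the same compactness argument to the wedge summand indexed by~$\conj_\CF G$. The main technical obstacle is making the compactness/stabilization step in the second paragraph genuinely precise in the $\CW\CT$-enriched setting where smash products and hocolims interact with mapping spaces through the $\CI^{[q]}$-indexing; the essential ingredient is the combination of very well pointedness of~$\spec{A}$ (for Segal goodness) with the familiar behavior of spheres as compact objects in the stable category.
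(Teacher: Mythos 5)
The construction of $\hm$ (via the assembly map $\map(S,A)\sma C\to\map(S,A\sma C)$ combined with the isomorphism of \autoref{cn(A[C])}) and the reduction to a levelwise statement via Segal goodness and \autoref{real-lemma-WT} match the paper. The gap is in the levelwise check. Your "compactness stabilizes the wedge comparison" argument is implicitly a connectivity estimate, and you cite \cite{HM-top}*{Theorem~7.1} as a template. But Hesselholt--Madsen work with FSPs, which are built into a connective and convergent framework; the present theorem is for arbitrary \emph{very well pointed} symmetric ring spectra, where the levels~$\spec{A}_x$ have no a~priori connectivity control. If $cn_{[q]}\spec{A}(\vec{x})$ has unbounded negative connectivity as $\vec{x}$ varies, neither "factoring through a finite sub-wedge" nor the implied stable comparison on finite sub-wedges gives a $\pi_*$-isomorphism. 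Your closing remark about "making the compactness step precise" mislocates the problem: the issue is not precision of compactness, but the absence of connectivity estimates.

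The paper resolves this by inserting a reduction you omit entirely: since $THH_q$ and hence $\hm_q$ depend on $\spec{A}$ only up to levelwise weak equivalence, one replaces $\spec{A}$ by a cofibrant model $\spec{A}_c\to\spec{A}$ (a stable fibration and $\pi_*$-isomorphism, hence a level equivalence by \cite{MMSS}*{Proposition~9.9(iii)}), notes that cofibrant symmetric spectra are retracts of $FI$-cell complexes, and then argues by cellular induction following \cite{Shipley}. The induction is anchored at the free symmetric spectra $F_m Z$ with $Z=S^{d-1}_+$ or $D^d_+$, whose $x$-th level is $(x-m-1)$-connected; this makes the assembly map $\hm_q(\vec{x})(S^n)$ at least $(2n-2(q+1)m-1)$-connected, uniformly in~$\vec{x}$, and homotopy colimits preserve connectivity. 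The linear-in-$n$ slope~$2$ connectivity is exactly what makes the comparison a $\pi_*$-isomorphism. To repair your proof you would need to add the cofibrant replacement, the reduction to $FI$-cell complexes, and the base-case connectivity estimate; the wedge decomposition over $CN_q(\CC)$ then becomes unnecessary, since the connectivity bound for the assembly map $\map(S^x,A)\sma C\to\map(S^x,A\sma C)$ already holds uniformly for any discrete pointed set~$C$ regardless of its cardinality.
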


\begin{proof}
First, we define the map~$\hm$ in~\eqref{eq:THH-CN}.
Given pointed spaces $S$, $A$, and~$C$, consider the natural map
\begin{equation}
\label{eq:reinziehen}
\map(S,A)\sma C\TO\map(S,A\sma C)
\end{equation}
that is adjoint to $f\longmapsto f\sma\id_C$.
Since we need it later, we observe that if $S\cong S^x$, $A$ is $a$-connected, and $C$ is discrete, then the map~\eqref{eq:reinziehen} is $(2(a-x)+1)$-connected.
The map~\eqref{eq:reinziehen} induces a natural transformation of functors $\Lambda^\op\TO\Mod_{\CW\CT}$
\begin{align*}
\Bigl(\CI^{[\bullet]},M_{[\bullet]}(\spec{A})\sma CN_\bullet(\CC)_+\Bigr)
=
\Bigl(\CI^{[\bullet]},
\map\bigl(cn_{[\bullet]}\spec{S}(?),\;&-\sma cn_{[\bullet]}\spec{A}(?)\bigr)\sma CN_\bullet(\CC)_+
\Bigr)
\\[-\smallskipamount]
&\big\downarrow
\\[-\smallskipamount]
\Bigl(\CI^{[\bullet]},
\map\bigl(cn_{[\bullet]}\spec{S}(?),\;&-\sma cn_{[\bullet]}\spec{A}(?)\sma CN_\bullet(\CC)_+\bigr)
\Bigr)
\,,
\end{align*}
where $?$ denotes the variable in~$\CI^{[\bullet]}$ and $-$ the variable in~$\CW$.
Using the isomorphism~\eqref{eq:M(A[C])} from \autoref{cn(A[C])} and applying the  functor $\MOR{\hocolim}{\Mod_{\CW\CT}}{\CW\CT}$ from~\eqref{eq:hocolim-WT}, we obtain a map in~$\Lambda^\op(\CW\CT)$
\begin{equation}
\label{eq:THH-CN-bullet}
\MOR{\hm_\bullet}
{THH_\bullet(\spec{A})\sma CN_\bullet(\CC)_+
\cong\hocolim_{\CI^{[\bullet]}}\bigl(M_{[\bullet]}(\spec{A})\sma CN_\bullet(\CC)_+\bigr)}
{THH_\bullet(\spec{A}[\CC])}
\,,
\end{equation}
where the first isomorphism comes from the fact that $CN_\bullet(\CC)$ is constant on~$\CI^{[\bullet]}$.
Taking geometric realizations and using the fact that geometric realization and smash products commute, we obtain the map~$\hm$ in~\eqref{eq:THH-CN}.

If $\CC=\oid{G}{S}$, it is clear from the definitions that $\hm$ commutes with the projections and inclusions in~\eqref{eq:retraction-CN} and~\eqref{eq:retraction-THH}.

Next, we show that $\hm$ is a $\pi_*^1$-isomorphism.
Since retracts of $\pi_*^1$-isomorphisms are $\pi_*^1$-isomorphisms, this then implies that also $\hm_\CF$ is a $\pi_*^1$-isomorphism in the case when $\CC=\oid{G}{S}$.

The assumption that $\spec{A}$ is very well pointed implies that the same is true for~$\spec{A}[\CC]$.
Then, by \autoref{THH-Segal-good}\ref{i:THH-Segal-good} below, both the source and the target of~$\hm_\bullet$ in~\eqref{eq:THH-CN-bullet} are levelwise Segal good.
For the source, we also use Fact~\ref{facts-cofib}\ref{i:cofib-smash}.
Therefore, by \autoref{real-lemma-WT}, it is enough to show that $\hm_\bullet$ is a $\pi_*$-isomorphism in each simplicial degree.
For a fixed~$q\ge0$, both the source and the target of~$\hm_q$ are defined for arbitrary symmetric spectra, not just for symmetric ring spectra, and we proceed to prove that $\hm_q$ is a $\pi_*$-isomorphism in~$\CW\CT$ for every symmetric spectrum~$\spec{A}$.

We claim that it is enough to show that $\hm_q$ is a $\pi_*$-isomorphism for cofibrant symmetric spectra.
To prove this claim, take a cofibrant replacement
\(
\MOR{\gamma}{\spec{A}_{c}}{\spec{A}}
\)
in the stable model category of symmetric spectra, with $\gamma$ a stable fibration and a $\pi_*$-isomorphism.
From~\cite{MMSS}*{Proposition~9.9(iii) on page~472} it follows that~$\gamma$ is a levelwise weak equivalence.
Inspecting the definitions, and using the fact that homotopy colimits preserve weak equivalences, we conclude that $\gamma$ induces levelwise weak equivalences $THH_q(\spec{A}_{c})\TO THH_q(\spec{A})$ and $THH_q(\spec{A}_{c}[\CC])\TO THH_q(\spec{A}[\CC])$.
Therefore, if $\hm_q$ is a $\pi_*$-isomorphism for~$\spec{A}_{c}$, then the same is true for~$\spec{A}$, establishing the claim.

So it remains to show that $\hm_q$ is a $\pi_*$-isomorphism for cofibrant symmetric spectra.
By~\cite{MMSS}*{Theorem~6.5(iii) on page~461}, cofibrant symmetric spectra are retracts of $FI$-cell complexes in the sense of~\cite{MMSS}*{Definition~5.4 on page~457}.
Therefore it is enough to show that $\hm_q$ is a $\pi_*$-isomorphism for $FI$-cell complexes.
We argue by cellular induction.
The base case is treated below.
The induction steps are then carried out like in~\cite{Shipley}*{Lemma~4.3.2 and proof of Proposition~4.2.3 on pages~179--180}, thus completing the proof.

We show that $\hm_q$ is a $\pi_*$-isomorphism for a free symmetric spectrum $\spec{A}={F}_{m}Z$, where $Z=S^{d-1}_+$ or~$D^d_+$ and $m,d\ge0$.
By definition
\[
({F}_{m}Z)_x=
\begin{cases}
\ds\Sigma_{x+}\sma_{\Sigma_{x-m}}S^{x-m}\sma Z
&\text{if $x\ge m$,}
\\
\pt
&\text{if $0\le x<m$,}
\end{cases}
\]
and therefore $({F}_{m}Z)_x$ is at least $(x-m-1)$-connected.

Recall from~\eqref{eq:THH-CN-bullet} that $\hm_q(B)$ is defined by taking $\hocolim_{\CI^{[q]}}$ of the natural map
\begin{align}
\map\bigl(cn_{[q]}\spec{S}(\vec{x}),\;B\sma&\,cn_{[q]}{F}_{m}Z(\vec{x})\bigr)\sma CN_q(\CC)_+
\\[-\smallskipamount]
\label{eq:THH-CN-before-hocolim}
&\big\downarrow
\\[-\smallskipamount]
\map\bigl(cn_{[q]}\spec{S}(\vec{x}),\;B\sma&\,cn_{[q]}{F}_{m}Z(\vec{x})\sma CN_q(\CC)_+\bigr)
\end{align}
as in~\eqref{eq:reinziehen}, where $\vec{x}=(x_0,\dotsc,x_q)\in\obj\CI^{[q]}$ and $B\in\obj\CW$.
Let $x=x_0+\dotsb+x_q$.
Then, evaluated at~$B=S^n$, the map~\eqref{eq:THH-CN-before-hocolim} is at least $(2n-2(q+1)m-1)$-connected, as observed after~\eqref{eq:reinziehen}.
Since homotopy colimits preserve connectivity of maps (e.g., see~\cite{Dundas}*{Lemma~A.7.3.1 on page~373}), the same is true for~$\hm_q(S^n)$, and this then implies that $\hm_q$ is a $\pi_*$-isomorphism.
Here we use the fact that a map $\MOR{\psi}{\WT{X}}{\WT{Y}}$ in~$\CW\CT$ is a $\pi_*$-isomorphism provided that there exists a non-decreasing function $\MOR{\lambda}{\IN}{\IZ}$ such that $\lim_{n\to\infty}\lambda(n)=\infty$ and $\psi(S^n)$ is $(n+\lambda(n))$-connected for each $n\ge0$.

Now for the base case of the induction one needs to allow different free spectra in the different smash factors.
More precisely, one replaces
$cn_{[q]}{F}_{m}Z(\vec{x})$
by
${F}_{m_0}Z_0(x_0)\sma{F}_{m_1}Z_1(x_1)\sma\dotsb\sma{F}_{m_q}Z_q(x_q)$
and $(q+1)m$ by~$m_0+m_1+\dotsb+m_q$ throughout.
The argument above then works unchanged.
\end{proof}

\begin{example}
\label{THH-free-loop}
Combining \autoref{THH-CN} with \autoref{computation-CN}, we obtain an isomorphism in the non-equivariant stable homotopy category
\[
\THH_\CF(\spec{A}[G])
\simeq
\bigvee_{[c]\in\conj_\CF G}
\THH(\spec{A})\sma BZ_G\langle c\rangle_+
\,.
\]
In particular, if $G$ is finite, then $\pi_n(\THH(\spec{S}[G]))\tensor_\IZ\IQ=0$ for all~$n>0$, since $\THH(\spec{S})\simeq\spec{S}$.
\end{example}

In the proof above, as well as in the proof of \autoref{hofib-R}, we use the following lemma.
Some versions of it are certainly known to the experts.
In particular, part~\ref{i:THH-Segal-good} is stated and used in \cite{HM-top}*{proof of Proposition~2.4 on page~40}.
For the sake of completeness, we provide here some details.
The edgewise subdivision construction~$\sd_c$ used in part~\ref{i:THH-fix-Segal-good} is recalled before \autoref{THH-fix}.

\begin{lemma}
\label{THH-Segal-good}
Assume that the symmetric spectral category $\spec{D}$ is very well pointed.
Then:
\begin{enumerate}
\item
\label{i:THH-Segal-good}
The cyclic $\CW$-space $THH_\bullet(\spec{D})$ is levelwise Segal good.
\item
\label{i:THH-fix-Segal-good}
For any finite cyclic subgroup~$C$ of~$S^1$ and any $E\in\obj\CW^C$, the simplicial $\CW$-space $(\sh^{E}\sd_c THH_\bullet(\spec{D}))^C$ is levelwise Segal good.
\end{enumerate}
If $\spec{A}$ is a very well pointed symmetric ring spectrum and $\spec{D}=\spec{A}[\oid{G}{S}]$, then the statements in \ref{i:THH-Segal-good} and \ref{i:THH-fix-Segal-good} also hold for the $\CF$-parts.
\end{lemma}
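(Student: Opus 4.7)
The plan is to show, in each part, that every simplicial degeneracy is levelwise a cofibration of $\CW$-spaces, by tracing it back to the unit cofibration $S^0 \hookrightarrow \spec{D}(d,d)(S^0)$ guaranteed by the very well pointed hypothesis. For part~\ref{i:THH-Segal-good}, I would first unfold the cyclic structure of $(\CI^{[\bullet]}, M_{[\bullet]}\spec{D})$ as a functor $\Lambda^\op \to \Mod_{\CW\CT}$ to identify the $i$-th degeneracy $s_i$: at the $cn_{[q]}\spec{D}$ level it is the natural transformation that, in the $i$-th smash factor, is given by insertion of the unit $S^0 = cn_{[1]}\spec{S}(0) \hookrightarrow \spec{D}(d_i,d_i)_0 = cn_{[1]}\spec{D}(0)$, and is the identity on the remaining factors. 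Very well pointedness makes this a cofibration of underlying spaces; smashing with the remaining $cn_{[q]}\spec{D}$-factors and with an arbitrary $B \in \obj\CW$ preserves this by Fact~\ref{facts-cofib}\ref{i:cofib-smash}; wedges over tuples of objects of $\spec{D}$ preserve cofibrations; and the $\hocolim$ over $\CI^{[q+1]}$ preserves objectwise cofibrations between well-pointed spaces. This shows $THH_\bullet(\spec{D})$ is levelwise Segal good, paralleling \cite{HM-top}*{proof of Proposition~2.4 on page~40}. For the $\CF$-part in the case $\spec{D} = \spec{A}[\oid{G}{S}]$, the retraction~\eqref{eq:retraction-cn} together with \autoref{cn(A[C])} induces the retraction~\eqref{eq:retraction-THH} of $THH_\bullet(\spec{A}[\oid{G}{S}])$ onto $THH_{\bullet,\CF}(\spec{A}[\oid{G}{S}])$; since retracts of cofibrations are cofibrations, the $\CF$-part is levelwise Segal good as well.

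For part~\ref{i:THH-fix-Segal-good}, writing $c = |C|$, I would recall that the $c$-fold edgewise subdivision satisfies $(\sd_c THH_\bullet(\spec{D}))_q = THH_{c(q+1)-1}(\spec{D})$, with $C$-action cyclically permuting the $c$ equal-length blocks of smash factors, and with simplicial degeneracy $s_i$ obtained by iterating the original degeneracy $c$ times and thereby inserting an identity simultaneously in $c$ slots forming a single free $C$-orbit. Passing to $C$-fixed points, this degeneracy becomes a single smashed-in copy of the unit cofibration $S^0 \hookrightarrow \spec{D}(d,d)(S^0)$, via the diagonal identification of the free $C$-orbit with one copy; by very well pointedness this is a cofibration, and the remaining $C$-fixed smash factors together with the shift $\sh^E$, which smashes with the fixed pointed $C$-CW-space $E$, preserve cofibrations by the equivariant analogue of Fact~\ref{facts-cofib}\ref{i:cofib-smash}. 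The $\CF$-part assertion follows from the corresponding retraction, which commutes with $\sd_c$ and with $C$-fixed points.

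I expect the main technical obstacle to be the bookkeeping in part~\ref{i:THH-fix-Segal-good}: one must verify carefully that the degeneracies of $\sd_c THH_\bullet(\spec{D})$ decompose $C$-equivariantly as insertions of units along the free $C$-orbits of slots, and that this structure descends correctly to $C$-fixed points as a single unit-insertion per orbit. Once this identification is secured, the rest of the argument is a formal consequence of the very well pointed hypothesis combined with standard cofibration-preservation properties of equivariant smash products and of the bar constructions underlying $\hocolim$ and geometric realization.
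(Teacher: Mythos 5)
Your overall strategy matches the paper's: trace each degeneracy back to the unit cofibration $S^0\hookrightarrow\spec{D}(d,d)_0$ coming from the very well pointed hypothesis, then propagate via smash, wedge, and $\hocolim$. For part~\ref{i:THH-Segal-good} the gap is small but real: $THH_q(\spec{D})$ is not a $\hocolim$ of the smash products $cn_{[q]}\spec{D}(\vec{x})$ but of $M_{[q]}\spec{D}(\vec{x})(A)=\map\bigl(cn_{[q]}\spec{S}(\vec{x}),A\sma cn_{[q]}\spec{D}(\vec{x})\bigr)$, so after the $cn$-level analysis you must still pass the cofibration through $\map\bigl(cn_{[q]}\spec{S}(\vec{x}),-\bigr)$. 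The paper does this via Fact~\ref{facts-cofib}\ref{i:cofib-mapK} (mapping out of a compact space preserves cofibrations); your outline stops at the smash level and treats the $\CW$-variable as another smash factor, which conflates $cn_{[q]}$ with $M_{[q]}$. Your treatment of the $\CF$-parts via the retraction~\eqref{eq:retraction-THH} is correct and is essentially how the paper's final claim is justified.

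For part~\ref{i:THH-fix-Segal-good} the gap is more serious. You propose to ``pass to $C$-fixed points'' of the degeneracy and read off a single unit-insertion via the diagonal. But the levels of $(\sh^E\sd_c THH_\bullet(\spec{D}))^C$ are homotopy colimits of $\bigl(\sh^E M_{c[q]}\spec{D}(c\vec{x})(A)\bigr)^C=\map^C\bigl(cn_{c[q]}\spec{S}(c\vec{x}),E\sma A\sma cn_{c[q]}\spec{D}(c\vec{x})\bigr)$, i.e.\ spaces of $C$-equivariant maps. This is \emph{not} $\map$ applied to the $C$-fixed smash product, so describing what the degeneracy does on $\bigl(cn_{c[q]}\spec{D}\bigr)^C$ does not by itself describe the map on $\map^C(\dots)$, and the phrase ``the shift $\sh^E$ smashes with $E$'' only describes the inner target, not the $\map^C$. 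The paper's route avoids this entirely: it first exhibits the decomposition $cn_{c[q]}\spec{D}(c\vec{x})\cong\bigvee_{(d^1,\dots,d^c)}\bigwedge_{g\in C}\spec{B}_{g(d^1,\dots,d^c)}(\vec{x})$, shows the degeneracy is a \emph{$C$-equivariant} cofibration at the $cn$-level via Fact~\ref{facts-cofib}\ref{i:cofib-complicated} (wedge over a $G$-set of smash powers), pushes this through $\sh^E$ and $\map(\text{compact},-)$ equivariantly via Fact~\ref{facts-cofib}\ref{i:cofib-mapK-equi}, and only then takes $C$-fixed points, using that fixed points of equivariant cofibrations are cofibrations. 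You need this detour: without first establishing the equivariant cofibration before the $\map^C$, the ``single unit-insertion on fixed points'' picture is not a proof.
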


\begin{proof}
\ref{i:THH-Segal-good}
For every $A \in \obj\CW$, $i \in \{ 0, \ldots , q\}$, and $\vec{x} \in \obj \CI^{[q]}$ the  map
\[
\MOR{s_i}{A \sma cn_{[q]} \spec{D} (\vec{x} )}{ A \sma cn_{[q+1]} \spec{D} (s_i(\vec{x}) ) }
\]
is a cofibration.
This uses the assumption that $\spec{D}$ is very well pointed, that the object-repeating map $\MOR{s_i}{ \obj \spec{D}^{[q]} }{\obj \spec{D}^{[q+1]}}$ is injective, and Facts~\ref{facts-cofib}\ref{i:cofib-smash}, \ref{i:cofib-smash-well-pointed} and~\ref{i:cofib-vee} below.
Fact \ref{facts-cofib}\ref{i:cofib-mapK} implies that also the map $\MOR{s_i}{M_{[q]} \spec{D} (\vec{x})(A)}{M_{[q+1]}\spec{D} (s_i(\vec{x}))(A)}$ is a cofibration.
Since $\MOR{s_i}{\CI^{[q]}}{\CI^{[q+1]}}$ only inserts a copy of~$0=\emptyset$, the assumptions of Fact~\ref{facts-cofib}\ref{i:cofib-hocolim} are verified and we conclude that
\[
\MOR{s_i}{\hocolim_{\CI^{[q]}} M_{[q]} \spec{D} ( - ) (A)}{\hocolim_{\CI^{[q+1]}} M_{[q+1]} \spec{D} ( s_i( - )) (A) }
\]
is a cofibration.

\ref{i:THH-fix-Segal-good}
Observe that
\[
cn_{c[q]} \spec{D} (c \vec{x} ) =
\bigvee_{\substack{(d^1, d^2 , \ldots , d^c)\\\text{in}\,\obj \spec{D}^{c[q]}}}
\bigwedge_{g \in C} \spec{B}_{g(d^1,d^2, \ldots , d^c)} ( \vec{x} )
\,,
\]
where the cyclic group $C$ operates by cyclic block permutations on
\[
(d^1, d^2 , \dots , d^c) = ((d^1_0 , d^1_1 , \ldots , d^1_q), (d^2_0 , d^2_1 , \ldots , d^2_q),  \ldots , (d^c_0 , d^c_1 , \ldots , d^c_q)) \in \obj \spec{D} ^{c[q]}
\]
and
\[
\spec{B}_{(d^1,d^2, \ldots , d^c)} (\vec{x}) = \spec{D} (d_0^1,d_q^c )_{x_0} \sma \spec{D} (d_1^1,d_0^1 )_{x_1} \sma \ldots \sma \spec{D} (d_q^1,d_{q-1}^1 )_{x_q}.
\]
The assumption together with Fact~\ref{facts-cofib}\ref{i:cofib-smash}, \ref{i:cofib-smash-well-pointed}, \ref{i:cofib-complicated}, and \ref{i:cofib-mapK-equi} yield that
\[
\MOR{s_i}{\bigl(\sh^E M_{c[q]} \spec{D} ( \vec{x})(A)\bigr)^C}{\bigl(\sh^E M_{c[q+1]} \spec{D} ( \vec{x} )(A)\bigr)^C}
\]
is a cofibration.
The claim now follows from \eqref{eq:sd-THH-fix} and again Fact~\ref{facts-cofib}\ref{i:cofib-hocolim}.
\end{proof}

The proof of \autoref{THH-Segal-good} used the following basic facts.
We emphasize that, even though these facts are concerned with constructions in pointed spaces, the term cofibration always refers to the unpointed version.

\begin{facts}
\label{facts-cofib}
Suppose $A\TO X$ is a pointed map in $\CT$ that is a cofibration.
\begin{enumerate}
\item \label{i:cofib-smash}
For every $Z\in\obj\CT$ the induced map $A \sma Z \TO X \sma Z$ is a cofibration.
\item \label{i:cofib-smash-well-pointed}
If $X$ and $Z$ are well pointed, then so is $X \sma Z$.
\item \label{i:cofib-mapK}
If $Z\in\obj\CT$ is compact, then the induced map $\map(Z,A) \TO \map(Z,X)$ is a cofibration.
\item \label{i:cofib-mapK-equi}
If in \ref{i:cofib-smash} or \ref{i:cofib-mapK} we
additionally assume that $Z\in\obj\CT^G$ is a $G$-space and $A \TO X$ is a $G$-equivariant cofibration, then the induced map is a $G$-equivariant cofibration.
\end{enumerate}
Suppose $A_s \TO X_s$, $s \in S$ is a family of pointed maps in $\CT$ that are cofibrations.
\begin{enumerate}[resume*]
\item \label{i:cofib-vee}
If all spaces $A_s$ and $X_s$ are well pointed, then the induced map $\bigvee_{s \in S} A_s \TO \bigvee_{s \in S} X_s$ is a cofibration.
\item \label{i:cofib-complicated}
If $S$ is a $G$-set and all spaces $A_s$ and $X_s$ are well pointed, then the induced map
\[
\bigvee_{s \in S} \bigwedge_{g \in G} A_{gs} \TO \bigvee_{s \in S} \bigwedge_{g \in G} X_{gs}
\]
is a $G$-equivariant cofibration.
\end{enumerate}
Let $\MOR{(f, \nu)}{(\CC,A)}{(\CD,X)}$  be a morphism in $\Mod_{\CT}$; see \autoref{NAT-MOD}.
\begin{enumerate}[resume*]
\item \label{i:cofib-hocolim}
Suppose the functor $\MOR{f}{\CC}{\CD}$ induces an injective function on objects and for all $c$,~$c' \in \obj \CC$ the function $\CC ( c , c') \TO \CD ( f(c) , f(c') )$ is injective.
If for all $c \in \obj \CC$ the spaces $A(c)$ and $X(c)$ are well-pointed and the natural transformation $\MOR{\nu_c}{A(c)}{X(f(c))}$ is a  cofibration, then the induced map
\[
\hocolim_\CC A  \TO \hocolim_\CD X
\]
is a cofibration.
\end{enumerate}
\end{facts}

We can now prove the main result of this section.

\begin{proof}[Proof of \autoref{split-THH}]
Apply the functor $\THH(\IA)\sma-$ to the diagram in \autoref{split-CN} to obtain the following diagram.
\begin{equation}
\label{eq:THH-sma-square}
\begin{tikzcd}[column sep=large]
\ds\THH(\spec{A})\sma\Bigl(EG(\CF)_+\sma_{\Or G}\real{CN_{\bullet}(\oid{G}{-})}_+\Bigr)
\arrow[r, "\id\sma\asbl"]
\arrow[d, "\id\sma\id\sma\pr_\CF"']
&
\ds\THH(\spec{A})\sma\real{CN_{\bullet}(G)}_+
\arrow[d, "\,\id\sma\pr_\CF"]
\\
\ds\THH(\spec{A})\sma\Bigl(EG(\CF)_+\sma_{\Or G}\real{CN_{\bullet,\CF}(\oid{G}{-})}_+\Bigr)
\arrow[r, "\id\sma\asbl"']
&
\ds\THH(\spec{A})\sma\real{CN_{\bullet,\CF}(G)}_+
\end{tikzcd}
\end{equation}
Then consider the maps
\[
\hspace{-.8ex}\begin{tikzcd}
\ds\THH(\spec{A})\sma\Bigl(EG(\CF)_+\sma_{\Or G}\real{CN_{\bullet,\CF}(\oid{G}{-})}_+\Bigr)
\arrow[d, "\ts\cong\ "']
\\
\ds EG(\CF)_+\sma_{\Or G}\Bigl(\THH(\spec{A})\sma\real{CN_{\bullet,\CF}(\oid{G}{-})}_+\Bigr)
\arrow[r, "\id\sma\hm_\CF"']
&
\ds EG(\CF)_+\sma_{\Or G}\THH_\CF(\spec{A}[\oid{G}{-}])
\,,
\end{tikzcd}
\]
where the first isomorphism uses the associativity and commutativity of smash products, and the second map is induced by~\eqref{eq:THH-CN-F-part}, and similarly at the other corners of diagram~\eqref{eq:THH-sma-square}.
By \autoref{THH-CN} these maps induce an objectwise $\pi_*^1$-isomorphism between the commutative square~\eqref{eq:THH-sma-square} and the one in \autoref{split-THH}.
The result then follows from \autoref{split-CN}.
\end{proof}

\begin{corollary}
\label{split-THH-hC}
Let~$C$~be any finite cyclic subgroup of~$S^1$.
Consider the following commutative diagram in~$\CW\CT$.
\[
\begin{tikzcd}[column sep=large]
\ds EG(\CF)_+\sma_{\Or G}\THH    (\spec{A}[\oid{G}{-}])_{hC}
\arrow[r, "\asbl"]
\arrow[d, "\id\sma\pr_\CF"']
&
\THH    (\spec{A}[G])_{hC}
\arrow[d, "\pr_\CF"]
\\
\ds EG(\CF)_+\sma_{\Or G}\THH_\CF(\spec{A}[\oid{G}{-}])_{hC}
\arrow[r, "\asbl"']
&
\THH_\CF(\spec{A}[G])_{hC}
\end{tikzcd}
\]
The left-hand map and the bottom map are $\pi_*$-isomorphisms.
If $\CF$ contains all cyclic groups, then the right-hand map is an isomorphism.
\end{corollary}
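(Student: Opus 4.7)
The plan is to deduce this corollary directly from \autoref{split-THH} by applying the homotopy-orbits functor $(-)_{hC}$ to the diagram appearing there, using two observations: first, that taking $(-)_{hC}$ commutes with the assembly construction $EG(\CF)_+\sma_{\Or G}-$; second, that $(-)_{hC}$ converts $\pi_*^1$-isomorphisms of $S^1$-spectra into (genuine) $\pi_*$-isomorphisms of spectra.

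First I would verify the compatibility. For any functor $\MOR{\WT{X}}{\Or G}{\CW\CT^{S^1}}$, there is a natural isomorphism
\[
EG(\CF)_+\sma_{\Or G}\bigl(EC_+\sma_C\WT{X}(-)\bigr)
\cong
EC_+\sma_C\bigl(EG(\CF)_+\sma_{\Or G}\WT{X}(-)\bigr)
\,,
\]
coming from the associativity and commutativity of smash products and coends, together with the fact that $EG(\CF)$ carries the trivial $S^1$-action and $EC$ carries the trivial $G$-action. Applying this to $\WT{X}=\THH(\spec{A}[\oid{G}{-}])$ and to $\THH_\CF(\spec{A}[\oid{G}{-}])$, one identifies the diagram in the corollary with the one obtained by applying $(-)_{hC}$ levelwise to the diagram in \autoref{split-THH}.

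Next I would invoke the standard fact that if $\MOR{f}{\WT{X}}{\WT{Y}}$ is a $\pi_*^1$-isomorphism in $\CW\CT^{S^1}$, then after restriction to $\CW\CT^C$ the induced map $\MOR{f_{hC}}{\WT{X}_{hC}}{\WT{Y}_{hC}}$ is a $\pi_*$-isomorphism. This holds because $EC$ is a free $C$-CW-complex, so the Borel construction $EC_+\sma_C-$ is computed by a skeletal homotopy colimit whose cells are free; the associated conditionally convergent spectral sequence (of the form $H_s(BC;\pi_t(\WT{X}))\Rightarrow\pi_{s+t}(\WT{X}_{hC})$) depends only on the underlying non-equivariant homotopy groups of $\WT{X}$.

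Combining these two steps: the left-hand vertical map and the bottom horizontal map in \autoref{split-THH} are $\pi_*^1$-isomorphisms, hence the corresponding maps in the corollary are $\pi_*$-isomorphisms; and the right-hand map is a genuine isomorphism when $\Cyc\subseteq\CF$, a property preserved by any functor, in particular by $(-)_{hC}$. The only genuinely delicate point is the point-set commutation in the first step — here one must take care that the chosen model of $(-)_{hC}$ (as the Borel construction $EC_+\sma_C-$) is built from honest smash products in $\CW\CT$ so that no implicit fibrant replacement is needed, but this is immediate from the setup of \autoref{SPECTRA}, since homotopy colimits in $\CW\CT$ are defined levelwise.
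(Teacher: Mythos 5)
Your proposal is correct and follows essentially the same approach as the paper's proof: the paper also applies $C$-homotopy orbits to the diagram of Theorem~\ref{split-THH}, citing the two facts that $C$-homotopy orbits send $\pi_*^1$-isomorphisms to $\pi_*$-isomorphisms and that they commute up to isomorphism with smash products over the orbit category. You have merely spelled out the justifications (the commutation of $EC_+\sma_C-$ with the coend, and the free-cell/spectral-sequence argument for the $\pi_*$-claim) that the paper leaves implicit.
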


\begin{proof}
Apply $C$-homotopy orbits to the diagram in \autoref{split-THH}.
Since $C$-homotopy orbits send $\pi_*^1$-isomorphisms to $\pi_*$-isomorphisms, and $C$-homotopy orbits commute up to isomorphisms with smash products over the orbit category, the result follows.
\end{proof}


\section{Homotopy fiber of the restriction map}
\label{HOFIB-R}

Fix a prime~$p$.
As $n\ge1$ varies, the $\Cp{n}$-fixed points of~$\THH(\spec{D})$ are related by maps
\begin{equation}
\label{eq:R-and-F}
\MOR{R,F}{\THH(\spec{D})^{C_{p^{n}}}}{\THH(\spec{D})^{C_{p^{n-1}}}}
\,,
\end{equation}
called restriction and Frobenius.
The Frobenius map~$F$ is the inclusion of fixed points.
The definition of the restriction map~$R$ is more involved, and is reviewed in detail below.

The goal of this section is to establish the following result, which gives a point-set level description of the homotopy fiber of the restriction map.
This plays a key role in the proof of our splitting theorems; compare \autoref{no-R-revisited}.

Recall that, given~$\WT{X}\in\obj\CW\CT^{S^1}$ and a finite subgroup~$C$ of~$S^1$, the fixed points~$\WT{X}^C\in\obj\CW\CT$ are defined levelwise: $\WT{X}^C(A)=\WT{X}(A)^C$ for every~$A\in\obj\CW$.
The $\CW$-space $(\sh^{ES^1_+}\WT{X})^C$ is defined in \autoref{S1}.
The projection $\MOR{\pr}{ES^1}{\pt}$ induces a map $\MOR{\pr_*}{(\sh^{ES^1_+}\WT{X})^C}{(\sh^{S^0}\WT{X})^C\cong\WT{X}^C}$.

\begin{theorem}
\label{hofib-R}
If $\spec{D}$ is very well pointed, then for each~$n\geq1$
\[
\bigl(\sh^{ES^1_+}\THH(\spec{D})\bigr)^{C_{p^{n  }}}
\xrightarrow{\pr_*}
                  \THH(\spec{D})^{C_{p^{n  }}}
\TO[R]
                  \THH(\spec{D})^{C_{p^{n-1}}}
\]
is a stable homotopy fibration sequence in~$\CW\CT$.
\end{theorem}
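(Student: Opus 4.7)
The plan is to reduce to a levelwise statement on the underlying cyclic object and then assemble the result via \autoref{real-hofib}.

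First, I would exploit the standard edgewise subdivision $\sd_c$: there is a natural $S^1$-equivariant homeomorphism $\real{\sd_c X_\bullet}\cong\real{X_\bullet}$ for any cyclic object $X_\bullet$, under which the $C_c$-fixed points of the realization coincide with the realization of the levelwise fixed-point simplicial object $(\sd_c X_\bullet)^{C_c}$. Applying this to $X_\bullet = THH_\bullet(\spec{D})$ and combining with the definitions in \autoref{shift} and \autoref{S1}, the three $\CW$-spaces in the statement are identified with the realizations of the simplicial $\CW$-spaces
\begin{align*}
\WT{X}_\bullet &= \bigl(\sh^{ES^1_+}\sd_{p^n}THH_\bullet(\spec{D})\bigr)^{C_{p^n}},\\
\WT{Y}_\bullet &= \bigl(\sd_{p^n}THH_\bullet(\spec{D})\bigr)^{C_{p^n}},\\
\WT{Z}_\bullet &= \bigl(\sd_{p^{n-1}}THH_\bullet(\spec{D})\bigr)^{C_{p^{n-1}}},
\end{align*}
where $\pr_*$ is induced levelwise by the projection $ES^1\to\pt$ and $R$ is induced levelwise by the cyclotomic structure maps of $THH_\bullet(\spec{D})$.

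Next, I would show that for each simplicial degree $q$ the resulting sequence $\WT{X}_q\to\WT{Y}_q\to\WT{Z}_q$ is a stable homotopy fibration sequence in $\CW\CT$. This is the point-set incarnation, at a single simplicial degree, of the fundamental fibration sequence of $\THH$ due to \BHM{} and Hesselholt-Madsen. Concretely, the cyclotomic structure of $THH$ identifies $\WT{Z}_q$ with a geometric $C_p$-fixed-point model applied to the $C_{p^n}$-equivariant $\CW$-space $\sd_{p^n}THH_q(\spec{D})$; the desired fibration sequence then follows from the classical isotropy separation cofibration sequence, combined with the fact that $(\sh^{E}\WT{V})^{C_{p^n}}$ serves as a point-set model for the homotopy orbits $\WT{V}_{hC_{p^n}}$, via the Adams isomorphism of \cite{RV} applied to $E=\res_{C_{p^n}\to S^1}ES^1_+$, which restricts to a free contractible $C_{p^n}$-space. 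The map $\pr_*$ encodes in these terms the canonical comparison from the Borel-type construction to the fixed points.

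Finally, I would assemble the degreewise fibration sequences into a fibration sequence of realizations by invoking \autoref{real-hofib}, whose Segal-good hypothesis is supplied by \autoref{THH-Segal-good}\ref{i:THH-fix-Segal-good} for all three simplicial $\CW$-spaces $\WT{X}_\bullet$, $\WT{Y}_\bullet$, $\WT{Z}_\bullet$. The main obstacle, I expect, is the careful point-set construction in the second step, made compatibly with the simplicial structure maps. Specifically, one needs an explicit realization of two ingredients at each level: the Adams-isomorphism identification of $(\sh^{E}\WT{V})^{C_{p^n}}$ as the Borel-type fiber, for which the orthogonal-spectrum machinery of \cite{RV} is the key input; and the cyclotomic structure of $\THH$ presented at the simplicial level, for which the framework of \cite{HM-top} supplies the compatible identifications. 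Once these are arranged, \autoref{real-hofib} delivers the conclusion.
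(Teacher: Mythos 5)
Your overall reduction strategy — subdivide edgewise, present the three terms as realizations of the simplicial $\CW$-spaces $\WT{X}_\bullet$, $\WT{Y}_\bullet$, $\WT{Z}_\bullet$ you name, verify Segal goodness via \autoref{THH-Segal-good}\ref{i:THH-fix-Segal-good}, and assemble by \autoref{real-hofib} — matches the paper exactly. The divergence, and the gap, is in how you propose to establish the degreewise fibration sequence in each simplicial degree $q$.

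The paper's degreewise argument is elementary and does not invoke genuine equivariant stable homotopy theory at all. It is \autoref{pre-hofib-R}, applied through \autoref{cor-pre-hofib-R} and \autoref{cor-cor-pre-hofib-R}: for a pointed $\Cp{n}$-CW-space $X$ and a pointed $\Cp{n}$-space $Y$, map the equivariant cofibration sequence $X^{C_p}\to X\to X/X^{C_p}$ into the projection $\MOR{\pr}{E_+\sma Y}{Y}$ with $E$ a model for $E\Cp{n}$; since $X/X^{C_p}$ is $\Cp{n}$-free and $\pr$ is a non-equivariant equivalence, one obtains a levelwise point-set fibration sequence
\[
\map\bigl(X, E_+\sma Y\bigr)^{\Cp{n}}\xrightarrow{\pr_*}\map(X,Y)^{\Cp{n}}\TO[r]\map\bigl(X^{C_p},Y^{C_p}\bigr)^{\Cp{n}/C_p}
\]
with the actual maps $\pr_*$ and $r$ identified. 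Applying this to the constituents $M_{p^n[q]}\spec{D}(p^n\vec{x})$ of the homotopy colimit and using that $\hocolim$ preserves stable (co)fibration sequences in $\CW\CT$ yields the degreewise fibration with precisely the maps $\pr_*$ and $R$ appearing in the statement.

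Your proposed route through the isotropy separation sequence and the Adams isomorphism of \cite{RV} has a genuine gap as stated. The Adams isomorphism in \cite{RV} is a zig-zag of natural $\pi_*$-isomorphisms in the category of orthogonal spectra, not a point-set identification; so $(\sh^{E}\WT{V})^{\Cp{n}}$ does not literally ``serve as a point-set model'' for $\WT{V}_{h\Cp{n}}$, and one cannot read off the asserted fibration sequence in $\CW\CT$, with the specific map $\pr_*$, from the homotopy-category statement without additional work identifying $\pr_*$ across the zig-zag. You would also need to first promote $\sd_{p^n}THH_q(\spec{D})$ to a genuine $\Cp{n}$-equivariant object before geometric fixed points and isotropy separation even make sense, which is extra machinery the paper deliberately avoids here. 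Moreover, the paper's own Adams-isomorphism result for $\THH$ is \autoref{THH-Adams}, whose proof \emph{uses} \autoref{hofib-R} as input; so the logical direction you propose runs opposite to the paper's and would either be circular in its framework or require reproving a substantial chunk of \cite{RV} independently. The fix is to replace the entire second paragraph of your proposal with the elementary mapping-space argument of \autoref{pre-hofib-R}.
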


We proceed to explain the definition of~$R$; compare~\citelist{\cite{BHM}*{page~495} \cite{DMcC-THH}*{Definition~1.5.2 on page~255}}.
It begins with the following observation, whose proof is straightforward.
Recall the definition of natural modules in~$\CW\CT$ from \autoref{NAT-MOD} and the functor $\MOR{\hocolim}{\Mod_{\CW\CT}}{\CW\CT}$ in~\eqref{eq:hocolim-WT}.

\begin{lemma}
\label{hocolim-fix}
Let $G$ be a discrete group acting on a natural module~$(\CC,\spec{X})\in\obj\Mod_{\CW\CT}$.
Then there is a natural isomorphism in~$\CW\CT$
\begin{equation}
\label{eq:hocolim-fix}
\bigl(\hocolim(\CC,\spec{X})\bigr)^G
\cong
\hocolim\bigl((\CC,\spec{X})^G\bigr)
\,.
\end{equation}
where
\begin{equation}
\label{eq:module-fix}
(\CC,\spec{X})^G
=
\bigl(\CC^G,(-)^G\circ\spec{X}_{|\CC^G}\bigr)
\,.
\end{equation}
\end{lemma}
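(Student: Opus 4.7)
The plan is to use the explicit simplicial bar-construction model for $\hocolim$ and to verify at each simplicial level that taking $G$-fixed points produces the analogous construction for $(\CC,\spec{X})^G$. Recall that $\hocolim(\CC, \spec{X}) = E\CC_+ \sma_\CC \spec{X}$ is naturally isomorphic to the geometric realization of the simplicial $\CW$-space $B_\bullet(\pt, \CC, \spec{X})$ with
\[
B_q(\pt, \CC, \spec{X}) = \bigvee_{c_0 \to c_1 \to \dotsb \to c_q} \spec{X}(c_0),
\]
wedge summed over $q$-chains of composable morphisms in~$\CC$, with face and degeneracy maps coming from composition in~$\CC$, from the action of morphisms on~$\spec{X}$, and from insertion of identities. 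A $G$-action on $(\CC, \spec{X})$ in $\Mod_{\CW\CT}$ induces a compatible levelwise $G$-action on this bar construction: $G$ permutes the indexing chains through its action on~$\CC$, and operates on each summand through its action on~$\spec{X}$.

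Next I would check that taking $G$-fixed points commutes with each ingredient of the construction. Since $G$ is discrete and we work in compactly generated weak Hausdorff spaces, geometric realization of a simplicial $\CW$-space commutes with $G$-fixed points; this is the standard observation that a non-basepoint point of $|B_\bullet|$ corresponds to a non-degenerate simplex together with an interior point of the standard simplex, and such a point is $G$-fixed if and only if the simplex is. Fixed points also distribute over wedges indexed by $G$-sets with compatible actions on the summands, namely
\[
\bigl(\bigvee_{\alpha \in A} Y_\alpha\bigr)^G = \bigvee_{\alpha \in A^G} Y_\alpha^G,
\]
because a non-basepoint element in the summand indexed by $\alpha$ is $G$-fixed exactly when $G$ stabilizes $\alpha$ and the element is fixed in $Y_\alpha$. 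Both observations pass levelwise through the $\CW$-variable, since the operations in $\CW\CT$ are defined objectwise in~$\CW$.

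Finally I would identify the fixed chains: a chain $c_0 \to c_1 \to \dotsb \to c_q$ is $G$-fixed if and only if every object and every morphism appearing in it is $G$-fixed, which is precisely the data of a $q$-chain in~$\CC^G$. For such a chain the wedge summand $\spec{X}(c_0)$ inherits a $G$-action whose fixed points are $\spec{X}(c_0)^G = \bigl((-)^G \circ \spec{X}_{|\CC^G}\bigr)(c_0)$. Combining the three observations yields a natural levelwise isomorphism
\[
B_q(\pt, \CC, \spec{X})^G \cong B_q\bigl(\pt, \CC^G, (-)^G \circ \spec{X}_{|\CC^G}\bigr),
\]
and passing to realizations produces the desired natural isomorphism in~$\CW\CT$. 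The argument is essentially bookkeeping and I do not anticipate any serious obstacle; the only point that requires mild care is the commutation of $G$-fixed points with geometric realization, which is standard for a discrete group~$G$ acting on a simplicial object in compactly generated spaces.
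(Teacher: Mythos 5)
The paper labels this lemma's proof ``straightforward'' and does not actually supply one, so there is no argument in the text to compare against; your bar-construction approach is surely the one intended, and it is correct. One point is worth tightening. The bare assertion that ``geometric realization of a simplicial $\CW$-space commutes with $G$-fixed points'' is too strong as stated: for a general simplicial object in $\CT$ there is no Eilenberg--Zilber decomposition into non-degenerate pieces, so the canonical-representative argument you invoke does not apply, and one should not expect $|Y_\bullet|^G \cong |Y_\bullet^G|$ without hypotheses. What saves the argument here is the specific shape of $B_\bullet(\pt,\CC,\spec{X})$: for each $A\in\CW$ and each $q$, the space $B_q(\pt,\CC,\spec{X})(A)$ is a wedge indexed by the set of $q$-chains in $\CC$ (i.e.\ by $N_q\CC$), the degeneracies are inclusions of sub-wedges acting as the identity on each fiber $\spec{X}(c_0)(A)$, and $G$ acts by permuting summands according to its simplicial action on $N_\bullet\CC$. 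Hence a non-basepoint element of $|B_\bullet(\pt,\CC,\spec{X})(A)|$ does have a unique representative given by a non-degenerate chain $\sigma$, a non-basepoint element of the corresponding fiber, and an interior barycentric coordinate, and $G$ visibly permutes such representatives. That is exactly what your argument needs, but it relies on the wedge-over-a-simplicial-set structure of the bar construction rather than on a general property of simplicial spaces, so you should restrict the claim accordingly rather than invoking it as a blanket fact.
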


Now let $C$ be a finite cyclic subgroup of~$S^1$ and let $c$ be its order.
We denote by $c[\bullet]=[\bullet]\sqcup\dotsb\sqcup[\bullet]$ the concatenation of $[\bullet]$ with itself $c$~times, and for every $\vec{x}\in\obj\CI^{[\bullet]}$ we let $c\vec{x}=(\vec{x},\vec{x},\dotsc,\vec{x})\in\obj\CI^{c[\bullet]}$.
Recall the edgewise subdivision construction~$\sd_c$ from~\cite{BHM}*{Section~1, pages~467--471}.

\begin{lemma}
\label{THH-fix}
For any symmetric spectral category~$\spec{D}$, there is a natural isomorphism in~$\CW\CT$
\begin{equation}
\label{eq:real-THH-fix}
\THH(\spec{D})^C
\cong
\real*{\bigl(\sd_c THH_\bullet(\spec{D})\bigr)^C}
\end{equation}
and a natural isomorphism in~$\Delta^\op(\CW\CT)$
\begin{equation}
\label{eq:sd-THH-fix}
\bigl(\sd_c THH_\bullet(\spec{D})\bigr)^C
\cong
\hocolim\Bigl(\CI^{[\bullet]},\bigl(M_{c[\bullet]}\spec{D}(c\,-)\bigr)^C\Bigr)
\,.
\end{equation}
\end{lemma}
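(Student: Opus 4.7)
My plan for the proof has two parts corresponding to the two displayed isomorphisms. For~\eqref{eq:real-THH-fix}, I would invoke the defining property of \BHM's edgewise subdivision $\sd_c$ from~\cite{BHM}*{Section~1, pages~467--471}: for any cyclic object $X_\bullet$ in~$\CT$ there is a natural homeomorphism $\real{\sd_c X_\bullet}\TO[\cong]\real{X_\bullet}$ which is $C$\=/equivariant when $C\subseteq S^1$ acts on the source via the simplicial $C$\=/action on $\sd_c X_\bullet$ arising from the cyclic structure and on the target via the canonical $S^1$\=/action on realizations of cyclic objects. Applying this degreewise in~$\CW$ to the cyclic $\CW$\=/space $THH_\bullet(\spec{D})$ and then taking $C$\=/fixed points yields a natural isomorphism $\THH(\spec{D})^C\cong\real{\sd_c THH_\bullet(\spec{D})}^C$. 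To finish I would use the standard fact that geometric realization of simplicial spaces in~$\Top$ commutes with fixed points of finite group actions---realization is an iterated colimit along closed inclusions, and such colimits commute with $(-)^C$ for finite~$C$---which lets me move the fixed points inside the realization to obtain~\eqref{eq:real-THH-fix}.

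For~\eqref{eq:sd-THH-fix}, I would unravel the definitions to identify in each simplicial degree
\[
\bigl(\sd_c THH_\bullet(\spec{D})\bigr)_q
= THH_{c(q+1)-1}(\spec{D})
= \hocolim_{\CI^{c[q]}} M_{c[q]}\spec{D}\,,
\]
and then trace the explicit formulas for $\sd_c$ and for the cyclic structure on $THH_\bullet(\spec{D})$ to see that the simplicial $C$\=/action corresponds precisely to cyclic permutation of the $c$ blocks of~$[q]$ inside~$c[q]$, acting on the indexing category~$\CI^{c[q]}$ by permutation of factors and on the diagram~$M_{c[q]}\spec{D}$ by the compatible diagonal action. \autoref{hocolim-fix} then identifies the $C$\=/fixed points with the homotopy colimit over~$(\CI^{c[q]})^C$ of the restricted-and-fixed diagram. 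The fixed category $(\CI^{c[q]})^C$ under block cyclic permutation is exactly the diagonal, which is naturally isomorphic to~$\CI^{[q]}$ via $\vec{x}\mapsto c\vec{x}=(\vec{x},\dotsc,\vec{x})$; under this identification the restricted diagram composed with $(-)^C$ becomes $\vec{x}\mapsto\bigl(M_{c[q]}\spec{D}(c\vec{x})\bigr)^C$, which is exactly the right-hand side of~\eqref{eq:sd-THH-fix} in each simplicial degree, and the identification is natural and compatible with the simplicial structure.

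I expect the main technical obstacle to be the careful bookkeeping of the $C$\=/action: verifying that the simplicial $C$\=/action on~$\sd_c THH_\bullet(\spec{D})$ arising from the cyclic structure matches, face/degeneracy by face/degeneracy, the block-permutation action on~$\CI^{c[q]}$ together with the corresponding diagonal action on~$M_{c[q]}\spec{D}$. This reduces to unpacking the definitions of $\sd_c$ and of the cyclic operators on the construction $cn_{[\bullet]}\spec{D}$ underlying~$M_{[\bullet]}\spec{D}$, following the well-established pattern going back to~\cite{BHM}.
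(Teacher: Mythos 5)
Your proposal takes essentially the same route as the paper: for~\eqref{eq:real-THH-fix} both arguments invoke B\"okstedt--Hsiang--Madsen's natural $C$-equivariant homeomorphism $\real{\sd_c X_\bullet}\cong\real{X_\bullet}$ for cyclic objects and then pass fixed points through the realization, and for~\eqref{eq:sd-THH-fix} both apply \autoref{hocolim-fix} (together with~\eqref{eq:module-fix}) degreewise and identify the fixed category $(\CI^{c[\bullet]})^C$ with $\CI^{[\bullet]}$ via $\vec{x}\mapsto c\vec{x}$. The technical bookkeeping you flag at the end is precisely the point the paper leaves implicit, so your outline is a faithful expansion of the paper's argument.
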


\begin{proof}

The first isomorphism is true for any cyclic space; see~\cite{BHM}*{Section~1, pages~467--471}.
For the second, we have
\begin{align*}
\bigl(\sd_c THH_\bullet(\spec{D})\bigr)^C
&=
\Bigl(\hocolim\bigl(\CI^{c[\bullet]},M_{c[\bullet]}\spec{D}\bigr)\Bigr)^C
&\text{by definition}
\\
&\cong
\hocolim\Bigl(\bigl(\CI^{c[\bullet]},M_{c[\bullet]}\spec{D}\bigr)^C\Bigr)
&\text{by \eqref{eq:hocolim-fix}}
\\
&=
\hocolim\Bigl(\bigl(\CI^{c[\bullet]}\bigr)^C,\bigl(M_{c[\bullet]}\spec{D}_{|(\CI^{c[\bullet]})^C}\bigr)^C\Bigr)
&\text{by \eqref{eq:module-fix}}
\\
&\cong
\hocolim\Bigl(\CI^{[\bullet]},\bigl(M_{c[\bullet]}\spec{D}(c\,-)\bigr)^C\Bigr)
\,,
\end{align*}
where in the last isomorphism we use the identification
\[
\MOR[\cong]{c\,-}{\CI^{[\bullet]}}{\bigl(\CI^{c[\bullet]}\bigr)^C}
\,,\qquad\vec{x}\longmapsto c\vec{x}
\,.
\qedhere
\]
\end{proof}

Now we apply \autoref{THH-fix} for $C=C_{p^n}$ and $C=C_{p^{n-1}}$.
In order to define the map~$R$ in~\eqref{eq:R-and-F} the key ingredient is then a map in~$\CW\CT$
\begin{equation}
\label{eq:r-map}
\begin{tikzcd}[column sep=-.5em, row sep=scriptsize]
M_{p^{n}[q]}\spec{D}(p^{n}\vec{x})^{C_{p^{n}}}
\arrow[d, "\ r"]
&=&
\map\Bigl(cn_{p^{n}[q]}\spec{S}(p^{n}\vec{x}),\;-\sma cn_{p^{n}[q]}\spec{D}(p^{n}\vec{x})\Bigr)^{C_{p^{n}}}
\arrow[d, "\ r", shorten <=-1ex, shorten >=-1ex]
\\
M_{p^{n-1}[q]}\spec{D}(p^{n-1}\vec{x})^{C_{p^{n-1}}}
\!
&=&
\map\Bigl(cn_{p^{n-1}[q]}\spec{S}(p^{n-1}\vec{x}),\;-\sma cn_{p^{n-1}[q]}\spec{D}(p^{n-1}\vec{x})\Bigr)^{C_{p^{n-1}}}
\end{tikzcd}
\end{equation}
for any~$\vec{x}\in\obj\CI^{[q]}$.
For any~$A\in\obj\CW$ the map~$r$ in~\eqref{eq:r-map} is defined by considering the following maps.
\begin{equation}
\label{eq:res-to-fix}
\begin{tikzcd}[row sep=scriptsize]
\map\Bigl(cn_{p^{n}[q]}\spec{S}(p^{n}\vec{x}),\;A\sma cn_{p^{n}[q]}\spec{D}(p^{n}\vec{x})\Bigr)^{C_{p^{n}}}
\arrow[d, "\ f\mapsto f^{C_p}", shorten <=-1ex, shorten >=-1ex]
\\
\map\Bigl(\bigl(cn_{p^{n}[q]}\spec{S}(p^{n}\vec{x})\bigr)^{C_p},\;A\sma \bigl(cn_{p^{n}[q]}\spec{D}(p^{n}\vec{x})\bigr)^{C_p}\Bigr)^{C_{p^{n}}/C_p}
\\
\map\Bigl(cn_{p^{n-1}[q]}\spec{S}(p^{n-1}\vec{x}),\;A\sma cn_{p^{n-1}[q]}\spec{D}(p^{n-1}\vec{x})\Bigr)^{C_{p^{n-1}}}
\arrow[u, "\cong\ ", "\ \Delta"', shorten <=-1ex, shorten >=-1ex]
\end{tikzcd}
\end{equation}
The first map is given by restricting to the $C_p$-fixed points, using the fact that the action on~$A$ is trivial.
For the second map, notice that there is a $C_{p^n}/C_p$-equivariant homeomorphism
\begin{equation}
\label{eq:Delta-map}
\MOR[\cong]{\Delta_{\spec{D}}}%
{cn_{p^{n-1}[q]}\spec{D}(p^{n-1}\vec{x})}%
{\bigl(cn_{p^{n}[q]}\spec{D}(p^{n}\vec{x})\bigr)^{C_p}}
\,.
\end{equation}
For $\spec{D}=\spec{S}$, this specializes to the $C_{p^n}/C_p$-equivariant homeomorphism
\[
\MOR[\cong]{\Delta_{\spec{S}}}%
{(S^{x_0} \wedge \dots \wedge S^{x_{q}})^{\wedge p^{n-1}}}%
{((S^{x_0} \wedge \dots \wedge S^{x_{q}})^{\wedge p^n})^{C_p}}
\,.
\]
The homeomorphisms $\Delta_{\spec{S}}$ and~$\Delta_{\spec{D}}$, together with the identification $C_{p^n}/C_p \cong C_{p^{n-1}}$ that takes every element of order~$p^n$ on the circle to its $p$-th power, induce the homeomorphism~$\Delta$ in~\eqref{eq:res-to-fix}.
Finally, the key map~$r$ in~\eqref{eq:r-map} is defined as the composition of the top map with the inverse of the bottom map in~\eqref{eq:res-to-fix}.

It is not hard to verify that, as $[q]$ varies, the maps~$r$ assemble to a cyclic map
\[
\Bigl(\CI^{[\bullet]},\bigl(M_{p^{n}[\bullet]}\spec{D}(p^{n}\,-)\bigr)^{C_{p^{n}}}\Bigr)
\TO
\Bigl(\CI^{[\bullet]},\bigl(M_{p^{n-1}[\bullet]}\spec{D}(p^{n-1}\,-)\bigr)^{C_{p^{n-1}}}\Bigr)
\]
in $\Lambda^\op\Mod_{\CW\CT}$.
Using \eqref{eq:sd-THH-fix} and \eqref{eq:real-THH-fix}, we thus get the desired map~$R$ in~\eqref{eq:R-and-F}.

One of the key properties of the $R$ and~$F$ maps is that they commute:
\begin{equation}
\label{eq:R-and-F-commute}
\begin{tikzcd}
\ds\THH(\spec{D})^{C_{p^{n+1}}}
\arrow[r, "R", pos=.4]
\arrow[d, "F"', shift right=1em]
&
\ds\THH(\spec{D})^{C_{p^{n\phantom{+1}}}}
\arrow[d, "F", shift right=1em]
\\
\ds\THH(\spec{D})^{C_{p^{n\phantom{+1}}}}
\arrow[r, "R"', pos=.4, shorten <=-1em]
&
\ds\THH(\spec{D})^{C_{p^{n-1}}}
\mathrlap{\,;}
\end{tikzcd}
\end{equation}
e.g., see~\cite{Dundas}*{Section~6.2.3 on pages~237--240}.

\begin{warning}
\label{no-R}
In general, for $\spec{D}=\spec{A}[\oid{G}{S}]$ there is no map
\[
\begin{tikzcd}[column sep=large]
\ds R_\CF\colon \THH_\CF(\spec{A}[\oid{G}{S}])^{C_{p^{n}}}
\arrow[dotted, r, "\text{\raisebox{1.5ex}{\tiny\dbend}}" description]
&
\ds\THH_\CF(\spec{A}[\oid{G}{S}])^{C_{p^{n-1}}}
\,.
\end{tikzcd}
\]
The problem is with the wrong-way homeomorphism~$\Delta$ in~\eqref{eq:res-to-fix}.
The homeomorphism $\Delta_{\spec{A}[\oid{G}{S}]}$ in~\eqref{eq:Delta-map} does restrict to a map between the $\CF$-parts.
But this restricted map is not surjective without further conditions on~$G$ and~$\CF$.
For example, take $\spec{A}=\spec{S}$, $S=\pt$, $n=1$, and~$q=0$.
Identify~$\oid{G}{\pt}=G$.
Then the map~$\Delta_{\spec{S}[G]}$ restricts to
\begin{equation}
\label{eq:warning}
cn_{[0],\CF}\spec{S}[G](0)
\TO
\bigl(cn_{p[0],\CF}\spec{S}[G](0,\dotsc,0)\bigr)^{C_p}
\,.
\end{equation}
Under the identification
\[
cn_{p[0],\CF}\spec{S}[G](0,\dotsc,0)
\cong
\SET{(g_1,\dotsc,g_p)\in G\times\dotsb\times G}{\langle g_1\dotsb g_p\rangle\in\CF}_+
\,,
\]
in the case~$\CF=1$ the map~\eqref{eq:warning} corresponds to the inclusion
\[
S^0\TO\SET{g\in G}{g^p=1}_+
\,.
\]
\end{warning}
\medskip

Next we identify the homotopy fiber of~$R$.
The first ingredient is the following.

\begin{lemma}
\label{pre-hofib-R}
Let $E$ be a model for~$EC_{p^n}$, i.e., a contractible free $C_{p^n}$-CW-space.
Let $X$ be a pointed $C_{p^n}$-CW-space and~$Y$ a pointed $C_{p^n}$-space.
Then there is a natural homotopy fibration sequence
\[
\map\bigl(X, E_+ \sma Y\bigr)^{C_{p^n}}
\xrightarrow{\pr_*}
\map\bigl(X, Y\bigr)^{C_{p^n}}
\TO[r]
\map\bigl(X^{C_p}, Y^{C_p}\bigr)^{C_{p^n}/C_p}
\,,
\]
where $\pr_*$ is induced by the projection $\MOR{\pr}{E_+ \sma Y}{Y}$, and $r$ takes each $C_{p^n}$-equivariant map $\MOR{f}{X}{Y}$ to its restriction $\MOR{f^{C_p}}{X^{C_p}}{Y^{C_p}}$ to the $C_p$-fixed parts.
\end{lemma}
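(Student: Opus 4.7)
The plan is to derive the fibration sequence from a cofiber sequence on the target side, smashed with $Y$, using that $\map(X,-)^{\Cp{n}}$ converts cofibrations of pointed $\Cp{n}$-CW-spaces into Serre fibrations because $X$ is a pointed $\Cp{n}$-CW-complex.

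First I would form the cofiber sequence $E_+\TO S^0\TO\widetilde{E}$ of pointed $\Cp{n}$-spaces, where $\widetilde{E}$ is the mapping cone of the equivariant projection $\MOR{\pr}{E}{\pt}$. The two properties of $\widetilde{E}$ I will use are: (i) $\widetilde{E}$ is non-equivariantly contractible, since $E$ is; and (ii) for every pointed $\Cp{n}$-space $Z$, the map $Z^{C_p}\TO(\widetilde{E}\sma Z)^{C_p}$ induced by $S^0\hookrightarrow\widetilde{E}$ is a weak equivalence. Property (ii) follows by taking $C_p$-fixed points of the cofiber sequence $E_+\sma Z\TO Z\TO\widetilde{E}\sma Z$ and observing that $(E_+\sma Z)^{C_p}=\pt$: the diagonal $C_p$-action on $E_+\sma Z$ has only the basepoint as a fixed point, because $E$ is $C_p$-free.

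Next, smashing the cofiber sequence with $Y$ and applying $\map(X,-)^{\Cp{n}}$ produces the homotopy fibration sequence
\[
\map(X,E_+\sma Y)^{\Cp{n}}\TO[\pr_*]\map(X,Y)^{\Cp{n}}\TO\map(X,\widetilde{E}\sma Y)^{\Cp{n}}.
\]
It remains to identify the rightmost term with $\map(X^{C_p},Y^{C_p})^{\Cp{n}/C_p}$ in a way compatible with $r$. The inclusion $X^{C_p}\hookrightarrow X$ is a $\Cp{n}$-cofibration; moreover, because $C_p$ acts trivially on $\Cp{n}/H$ if and only if $C_p\le H$, every non-basepoint cell of the cofiber $X/X^{C_p}$ is $\Cp{n}$-free. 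Combined with property (i), this forces $\map(X/X^{C_p},\widetilde{E}\sma Y)^{\Cp{n}}$, which is assembled cellwise from copies of $\map(S^k,\widetilde{E}\sma Y)$, to be contractible. Hence restriction along $X^{C_p}\hookrightarrow X$ yields a weak equivalence
\[
\map(X,\widetilde{E}\sma Y)^{\Cp{n}}\TO[\simeq]\map(X^{C_p},\widetilde{E}\sma Y)^{\Cp{n}}.
\]
Since $C_p$ acts trivially on $X^{C_p}$, every equivariant map out of $X^{C_p}$ lands in the $C_p$-fixed points, so the target equals $\map(X^{C_p},(\widetilde{E}\sma Y)^{C_p})^{\Cp{n}/C_p}$, which by property (ii) is weakly equivalent to $\map(X^{C_p},Y^{C_p})^{\Cp{n}/C_p}$.

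The last step is a naturality check: under this chain of equivalences, the composite $\map(X,Y)^{\Cp{n}}\TO\map(X,\widetilde{E}\sma Y)^{\Cp{n}}\simeq\map(X^{C_p},Y^{C_p})^{\Cp{n}/C_p}$ is homotopic to $r$, because the map $Y\TO\widetilde{E}\sma Y$ restricts on $C_p$-fixed points to the equivalence of property (ii), and restriction along $X^{C_p}\hookrightarrow X$ of a $\Cp{n}$-equivariant map is exactly its restriction to the $C_p$-fixed-point part. The main obstacle I anticipate is the bookkeeping of point-set cofibration hypotheses needed to apply the cofiber-to-fiber duality throughout; since $E$ and $X$ are $\Cp{n}$-CW this reduces to standard CW-pair arguments, and any additional conditions on $Y$ can be imposed via a fibrant-replacement that does not affect the weak homotopy type of the mapping spaces involved.
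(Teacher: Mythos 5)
Your target-side decomposition contains a genuine gap at the very first step.  The claim that ``$\map(X,-)^{\Cp{n}}$ converts cofibrations of pointed $\Cp{n}$-CW-spaces into Serre fibrations because $X$ is a pointed $\Cp{n}$-CW-complex'' is false, and with it the assertion that smashing the cofiber sequence $E_+\TO S^0\TO\widetilde E$ with $Y$ and applying $\map(X,-)^{\Cp{n}}$ produces a homotopy fibration sequence.  The functor $\map(X,-)$ with $X$ cofibrant preserves fibrations; it does not send cofibrations in the target variable to fibrations.  A quick sanity check: take $X=S^0$, so $\map(S^0,-)=\id$; a cofibration is then certainly not turned into a Serre fibration.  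Equivalently, the map $E_+\sma Y\TO Y$ is not a fibration (over a non-basepoint its fiber is $E_+\simeq S^0$, over the basepoint it is a point), so $\map(X,E_+\sma Y)\TO\map(X,Y)$ has no reason to be a fibration, and the cofiber of this map of mapping spaces need not compute the base of a fibration sequence.  The identification of a target-side cofiber sequence with a fiber sequence after applying $\map(X,-)$ is a stable phenomenon; it is valid for spectra (and would cleanly prove \autoref{cor-cor-pre-hofib-R} or the $\CW\CT$-level statement \autoref{cor-pre-hofib-R}), but \autoref{pre-hofib-R} is an unstable statement about actual mapping spaces and this passage does not hold there.

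The correct dual principle, and the one the paper uses, is that $\map(-,Z)$ sends cofibrations in the \emph{source} variable to fibrations.  The paper therefore decomposes $X$, not $Y$: it maps the $\Cp{n}$-equivariant cofibration sequence $X^{C_p}\TO X\TO X/X^{C_p}$ into both $E_+\sma Y$ and $Y$, obtaining two vertical fibration sequences of mapping spaces, and then reads off the desired sequence from that ladder using that $X/X^{C_p}$ is $\Cp{n}$-free and $E_+\sma Y\TO Y$ is a non-equivariant equivalence, and that $\map(X^{C_p},E_+\sma Y)^{\Cp{n}}$ is a point.  The second half of your argument (freeness of $X/X^{C_p}$, the identifications $(\widetilde E\sma Y)^{C_p}\cong Y^{C_p}$ and $\map(X^{C_p},Y)^{\Cp{n}}\cong\map(X^{C_p},Y^{C_p})^{\Cp{n}/C_p}$, and the compatibility with $r$) is sound and overlaps substantially with the paper's; it is the initial ``cofiber sequence $\Rightarrow$ fiber sequence after $\map(X,-)$'' step that needs to be replaced by a source-side decomposition of $X$.
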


\begin{proof}
By mapping the $C_{p^n}$-equivariant cofibration sequence $X^{C_p} \to X \to X/X^{C_p}$ into the $C_{p^n}$-equivariant map $\MOR{\pr}{E_+\sma Y}{Y}$ we get the following map of vertical fibration sequences.
\[
\begin{tikzcd}[row sep=scriptsize]
\map\bigl(X/X^{C_p}, E_+ \sma Y\bigr)^{C_{p^n}}
\arrow[d, "\cong"]
\arrow[r, "\simeq"]
&
\map\bigl(X/X^{C_p}, Y\bigr)^{C_{p^n}}
\arrow[d]
\\
\map\bigl(X, E_+ \sma Y\bigr)^{C_{p^n}}
\arrow[d]
\arrow[r, "\pr"]
&
\map\bigl(X, Y\bigr)^{C_{p^n}}
\arrow[d]
\\
\map\bigl(X^{C_p}, E_+ \sma Y\bigr)^{C_{p^n}}
\arrow[r]
&
\map\bigl(X^{C_p}, Y\bigr)^{C_{p^n}}
\end{tikzcd}
\]
The top map is a weak equivalence because the pointed $X_{p^n}$-CW-space $X/X^{C_p}$ is $C_{p^n}$-free and $E_+ \sma Y \TO Y$ is a non-equivariant equivalence.
The upper left-hand vertical map is an isomorphism because the lower left-hand space is a single point.

The desired homotopy fibration sequence appears as the middle horizontal map, followed by the lower right-hand vertical map composed with the identification $\map(X^{C_p}, Y)^{C_{p^n}} \cong \map(X^{C_p}, Y^{C_p})^{C_{p^n}/C_p}$.
\end{proof}

Recall the definition of $\sh^E\WT{X}\in\obj\CW\CT^C$ for $E\in\obj\CW_C$ and $\WT{X}\in\obj\CW\CT^C$; compare \autoref{shift}.

\begin{corollary}
\label{cor-pre-hofib-R}
Let $E$ be a model for~$EC_{p^n}$, $X$ a pointed $C_{p^n}$-CW-space, and~$Y$ a pointed $C_{p^n}$-space.
Consider the continuous functor $\MOR{\spec{M}=\map(X,-\sma Y)}{\CW}{\CT^{C_{p^n}}}$.
Then there is a levelwise homotopy fibration sequence in~$\CW\CT$
\[
\bigl(\sh^{E_+}\spec{M}\bigr)^{C_{p^n}}
\xrightarrow{\pr_*}
\spec{M}^{C_{p^n}}
\TO[r]
\map(X^{C_p}, -\sma Y^{C_p})^{C_{p^n}/C_p}
\,.
\]
\end{corollary}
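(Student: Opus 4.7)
The plan is to apply \autoref{pre-hofib-R} levelwise at each $A \in \obj\CW$, with the $C_{p^n}$-space $Y$ of that lemma replaced by $A \sma Y$ (with trivial action on $A$), and then assemble the resulting fibration sequences into a map of $\CW$-spaces.

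First I would compute the three $\CW$-spaces in the statement at an object $A \in \obj\CW$. The only nontrivial identification is the left-hand term: unpacking \autoref{shift}, one has $(\sh^{E_+}\spec{M})(A) = \spec{M}(E_+ \sma A) = \map(X, E_+ \sma A \sma Y)$, and a direct computation of the composite $C_{p^n}$-action—combining the functoriality of $\spec{M}$ in $E_+ \sma A$ with the pre-existing $C_{p^n}$-action coming from $X$ and $Y$—shows that the total action is the diagonal one on $\map(X, E_+ \sma A \sma Y)$ with trivial action on $A$. Taking $C_{p^n}$-fixed points gives $\map\bigl(X, E_+ \sma (A \sma Y)\bigr)^{C_{p^n}}$, where $A \sma Y$ is viewed as a pointed $C_{p^n}$-space with trivial action on the $A$-factor. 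The middle and right-hand terms are manifestly $\map(X, A \sma Y)^{C_{p^n}}$ and $\map(X^{C_p}, A \sma Y^{C_p})^{C_{p^n}/C_p}$ evaluated at $A$, using $(A \sma Y)^{C_p} = A \sma Y^{C_p}$.

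Next, applying \autoref{pre-hofib-R} to the admissible triple $(X, A \sma Y, E)$ yields a homotopy fibration sequence whose three terms and two maps are exactly the values at $A$ of the sequence in the corollary. Since smashing with $A$ and taking mapping spaces are continuously natural in $A \in \obj\CW$, and the maps $\pr_*$ and $r$ are defined by composition with natural maps, the constructions assemble to a diagram of $\CW$-spaces that is levelwise a homotopy fibration sequence, which is exactly the claim. The only step that requires care is the first one, namely bookkeeping the diagonal $G$-action of \autoref{shift} and checking it matches the mapping-space action required to invoke \autoref{pre-hofib-R}; once that identification is in place, the rest is formal and there is no substantial obstacle.
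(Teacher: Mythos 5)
Your proposal is correct and matches the paper's (unstated but implicit) argument: the paper's proof of this corollary is simply the observation that \autoref{pre-hofib-R} applies levelwise with $Y$ replaced by $A\sma Y$. Your careful unpacking of \autoref{shift} to verify that $(\sh^{E_+}\spec{M})^{C_{p^n}}(A)\cong\map\bigl(X,E_+\sma(A\sma Y)\bigr)^{C_{p^n}}$ with the diagonal action trivial on $A$, together with the identification $(A\sma Y)^{C_p}=A\sma Y^{C_p}$, is exactly the bookkeeping the paper elides with ``follows at once.''
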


\begin{proof}
This follows at once from \autoref{pre-hofib-R}.
\end{proof}

\begin{corollary}
\label{cor-cor-pre-hofib-R}
Let $n\geq1$ and let $E$ be a model for~$EC_{p^n}$.
Then for all~$[q]\in\obj\Lambda^\op$ and all~$\vec{x}\in\obj\CI^{[q]}$ there is a levelwise homotopy fibration sequence in~$\CW\CT$
\[
\bigl(\sh^{E_+}M_{p^{n  }[q]}\spec{D}(p^{n  }\vec{x})\bigr)^{C_{p^{n  }}}
\xrightarrow{\pr_*}
\bigl(         M_{p^{n  }[q]}\spec{D}(p^{n  }\vec{x})\bigr)^{C_{p^{n  }}}
\TO[r]
\bigl(         M_{p^{n-1}[q]}\spec{D}(p^{n-1}\vec{x})\bigr)^{C_{p^{n-1}}}
\!.
\]
\end{corollary}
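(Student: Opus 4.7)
The plan is to apply \autoref{cor-pre-hofib-R} to the specific pointed $C_{p^n}$-spaces $X = cn_{p^{n}[q]}\spec{S}(p^{n}\vec{x})$ and $Y = cn_{p^{n}[q]}\spec{D}(p^{n}\vec{x})$, where $C_{p^n}$ acts by cyclic block permutation of the $p^n$ smash factors, together with any chosen model $E$ for $EC_{p^n}$. With these choices, the functor $\spec{M} = \map(X,-\sma Y) \colon \CW \to \CT^{C_{p^n}}$ equals $M_{p^{n}[q]}\spec{D}(p^{n}\vec{x})$ by definition of $M_{[\bullet]}\spec{D}$. Furthermore $X = (S^{x_0}\sma\dotsb\sma S^{x_q})^{\sma p^n}$ is a representation sphere, hence admits a pointed $C_{p^n}$-CW structure, so the hypotheses of \autoref{cor-pre-hofib-R} are satisfied.

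\autoref{cor-pre-hofib-R} then produces a levelwise homotopy fibration sequence in $\CW\CT$
\[
\bigl(\sh^{E_+}M_{p^{n}[q]}\spec{D}(p^{n}\vec{x})\bigr)^{C_{p^{n}}}
\xrightarrow{\pr_*}
\bigl(M_{p^{n}[q]}\spec{D}(p^{n}\vec{x})\bigr)^{C_{p^{n}}}
\TO
\map\bigl(X^{C_p},\,-\sma Y^{C_p}\bigr)^{C_{p^{n}}/C_p},
\]
whose first two terms already match those in the statement. To identify the third term with $\bigl(M_{p^{n-1}[q]}\spec{D}(p^{n-1}\vec{x})\bigr)^{C_{p^{n-1}}}$ I would invoke the $C_{p^{n}}/C_p$-equivariant homeomorphisms $\Delta_{\spec{S}}$ and $\Delta_{\spec{D}}$ of \eqref{eq:Delta-map}, together with the canonical identification $C_{p^{n}}/C_p \cong C_{p^{n-1}}$ that sends an element of order $p^n$ on the circle to its $p$-th power. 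These are precisely the ingredients out of which the map $r$ in \eqref{eq:res-to-fix} is built, so by construction the resulting map of the fibration sequence is exactly $r$.

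I do not anticipate any real obstacle: the argument is essentially a translation of the definition of $M_{[\bullet]}\spec{D}$ together with the observation that the wrong-way homeomorphism $\Delta$ appearing in the definition of $R$ is exactly what converts the conclusion of \autoref{cor-pre-hofib-R} into the statement of \autoref{cor-cor-pre-hofib-R}. The only verification worth noting is that the $C_{p^n}$-action on the sphere $X$ is cellular, which is standard for cyclic permutations of smash factors of spheres.
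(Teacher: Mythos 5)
Your proposal is correct and is essentially identical to the paper's own (terser) proof, which simply applies \autoref{cor-pre-hofib-R} and identifies the right-hand term via the homeomorphism~$\Delta$ of~\eqref{eq:res-to-fix}. Your verification that $cn_{p^n[q]}\spec{S}(p^n\vec{x})$ is a $C_{p^n}$-representation sphere and that the resulting map agrees with~$r$ by construction fills in the same details the paper leaves implicit.
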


\begin{proof}
This follows from the previous \autoref{cor-pre-hofib-R}, identifying the term on the right using the homeomorphism~$\Delta$ in~\eqref{eq:res-to-fix}.
\end{proof}

We are now ready to prove the main result of this section.

\begin{proof}[Proof of \autoref{hofib-R}]
Consider $ES^1$ as a $\Cp{n}$-space by restricting the action, and notice that it is a model for~$E\Cp{n}$.
Recall that in~$\CW\CT$ stable homotopy fibration sequences and stable homotopy cofibration sequences are the same, and therefore they are preserved by homotopy colimits.
So by applying $\hocolim_{\CI^{[q]}}$ to the (levelwise and hence) stable homotopy fibration sequence from \autoref{cor-cor-pre-hofib-R} 
we get a sequence in~$\Delta^\op(\CW\CT)$
\[
\bigl(\sh^{ES^1_+}\sd_{p^{n}}THH_\bullet(\spec{D})\bigr)^{C_{p^{n  }}}
\xrightarrow{\pr_*}
\bigl(\sd_{p^{n}}            THH_\bullet(\spec{D})\bigr)^{C_{p^{n  }}}
\TO[R]
\bigl(\sd_{p^{n-1}}          THH_\bullet(\spec{D})\bigr)^{C_{p^{n-1}}}
\]
which in every simplicial degree is a stable homotopy fibration sequence in~$\CW\CT$.
To identify the term on the left here we used the fact that~\eqref{eq:sd-THH-fix} generalizes to
\[
\big(\sh^E\sd_c THH_\bullet(\spec{D})\bigr)^C
\cong
\hocolim\Bigl(\CI^{[\bullet]},\bigl(\sh^E M_{c[\bullet]}\spec{D}(c\,-)\bigr)^C\Bigr)
\,.
\]
Since we are assuming that $\spec{D}$ is very well pointed, \autoref{THH-Segal-good}\ref{i:THH-fix-Segal-good} ensures that we can apply \autoref{real-hofib}.
The fact that~\eqref{eq:real-THH-fix} generalizes to
\[
\bigl(\sh^E\THH(\spec{D})\bigr)^C
\cong
\real*{\bigl(\sh^E\sd_c THH_\bullet(\spec{D})\bigr)^C}
\]
finishes the proof.
\end{proof}


\section{Adams isomorphism and the fundamental fibration sequence}
\label{ADAMS}

The main result of this section is the following theorem.
Together with \autoref{hofib-R}, it identifies up to natural $\pi_*$-isomorphisms the homotopy fiber of the restriction map.
We emphasize that this identification is natural before passing to the stable homotopy category.
This subtlety is essential when dealing with assembly maps.

Denote by $\MOR{\forget}{\CW\CT^C}{\Th\CT^C}$ the forgetful functor from $C$-equivariant $\CW$-spaces to orthogonal $C$-spectra.

\begin{theorem}
\label{THH-Adams}
Let $C$ be a finite cyclic subgroup of~$S^1$.
\begin{enumerate}
\item\label{i:THH-Adams}
If $\spec{D}$ is an objectwise strictly connective, objectwise convergent, and very well pointed symmetric spectral category, then there is a zig-zag of natural $\pi_*^1$-isomorphisms of orthogonal spectra between
\[
\bigl(\forget\THH(\spec{D})\bigr)_{hC}
\AND
\bigl(\forget\sh^{ES^1_+}\THH(\spec{D})\bigr)^{C}
.
\]
For each $n\ge1$, in the stable homotopy category there is an exact triangle
\[
     \bigl(\forget\THH(\spec{D})\bigr)_{hC_{p^n}}
\TO
           \forget\THH(\spec{D})^{C_{p^{n  }}}
\TO[R]
           \forget\THH(\spec{D})^{C_{p^{n-1}}}
\,,
\]
which is referred to as the fundamental fibration sequence.
\item\label{i:THH-Adams-F-part}
Let $\spec{D}=\spec{A}[\oid{G}{S}]$.
Assume that the symmetric ring spectrum~$\spec{A}$ is \cplus.
Then there is a zig-zag of $\pi_*^1$-isomorphisms of orthogonal spectra between
\[
\bigl(\forget\THH_\CF(\spec{A}[\oid{G}{S}])\bigr)_{hC}
\AND
\bigl(\forget\sh^{ES^1_+}\THH_\CF(\spec{A}[\oid{G}{S}])\bigr)^{C}
\]
that is natural in~$S$ and commutes with the projection and inclusion maps induced by~\eqref{eq:retraction-THH}.
\end{enumerate}
\end{theorem}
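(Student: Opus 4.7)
The plan is to invoke the natural point-set model of the Adams isomorphism constructed in \cite{RV}, which for a sufficiently well-behaved orthogonal $S^1$-spectrum $\WT{Y}$ produces a zig-zag of natural $\pi_*^1$-isomorphisms between $(\forget\WT{Y})_{hC}$ and $(\forget\sh^{ES^1_+}\WT{Y})^C$ for any finite cyclic subgroup $C \le S^1$. The task here is to verify that $\THH(\spec{D})$, and later $\THH_\CF(\spec{A}[\oid{G}{S}])$, meets the input hypotheses of that construction, and then to combine the resulting zig-zag with Theorem \ref{hofib-R} in order to extract the fundamental fibration sequence.

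For part (i), I would first verify that $\THH(\spec{D})$ is strictly connective, convergent, and levelwise well pointed as a $\CW$-space, so that $\forget\THH(\spec{D})$ is a suitable input for the Adams isomorphism of \cite{RV}. Objectwise strict connectivity of $\spec{D}$ transports through the smash products $cn_{[q]}\spec{D}$, through the mapping-space functor defining $M_{[q]}\spec{D}$, and through the homotopy colimit over $\CI^{[q]}$ to give strict connectivity of $THH_q(\spec{D})$, and hence of its realization; objectwise convergence does likewise. Very well pointedness of $\spec{D}$ supplies Segal goodness of $THH_\bullet(\spec{D})$ by Lemma \ref{THH-Segal-good}\ref{i:THH-Segal-good}, which is needed so that geometric realization interacts well with the fixed-point and equivariant shift functors appearing in the Adams construction. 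Combining the resulting zig-zag with Theorem \ref{hofib-R}, which identifies at the point-set level the homotopy fiber of $R\colon \THH(\spec{D})^{C_{p^n}} \to \THH(\spec{D})^{C_{p^{n-1}}}$ with $(\sh^{ES^1_+}\THH(\spec{D}))^{C_{p^n}}$, and passing to the stable homotopy category, gives the fundamental fibration sequence.

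For part (ii) the extra ingredient is to propagate the zig-zag to the $\CF$-part with naturality in $S$ and compatibility with the retraction $(\pr_\CF,\ii_\CF)$ of \eqref{eq:retraction-THH}. When $\spec{A}$ is \cplus, a direct inspection shows that $\spec{A}[\oid{G}{S}]$ is objectwise strictly connective, convergent, and very well pointed uniformly in $S$, using that base change along $\spec{A}\sma(-)_+$ preserves these properties and that the morphism sets of $\oid{G}{S}$ are discrete. Hence part (i) applies to $\spec{A}[\oid{G}{S}]$. Because the Adams zig-zag of \cite{RV} is natural in $\CW\CT^{S^1}$ and $\THH_\CF$ is, by construction, a natural retract of $\THH$ via $\pr_\CF\circ\ii_\CF=\id$, applying $\pr_\CF$ and $\ii_\CF$ to the zig-zag for the full $\THH(\spec{A}[\oid{G}{S}])$ yields a zig-zag for $\THH_\CF(\spec{A}[\oid{G}{S}])$ that is automatically compatible with these maps; naturality in $S$ follows from the naturality of $\oid{G}{-}$, of the decomposition \eqref{eq:decomposition-CN}, of $\THH$, and of the Adams zig-zag itself.

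The hardest step will be the precise translation of the input conditions of \cite{RV} into the objectwise conditions on $\spec{D}$ stated here, and in particular the verification that the equivariant shift $\sh^{ES^1_+}$ of Definition \ref{shift} matches the shift appearing in the Adams isomorphism after passing from $\CW\CT^{S^1}$ to orthogonal $S^1$-spectra through $\forget$. Care is also required in the retract argument for the $\CF$-parts, since the interaction between $\pr_\CF$ and the $C_p$-fixed-point functor is delicate, as flagged in Warning \ref{no-R}; one must check that these subtleties do not spoil naturality of the zig-zag at the point-set level before passing to the homotopy category.
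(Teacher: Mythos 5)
Your overall plan matches the paper's: invoke the natural Adams isomorphism model from \cite{RV}, verify the hypotheses for $\THH(\spec{D})$, combine with \autoref{hofib-R} for the fibration sequence, and deduce the $\CF$-part statement from the full one by the retraction $\pr_\CF\circ\ii_\CF=\id$. However, there are two places where the proposal as written would not go through without additional work.

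First, \autoref{Adams} (the point-set Adams isomorphism used in the paper) takes as input a \emph{finite} free $C$-CW-space~$E$, because the underlying result of Blumberg comparing $E_+\sma\WT{X}$ with $\sh^{E_+}\WT{X}$ is only available for finite $C$-CW-spaces. But $ES^1$ is not finite, so you cannot apply the Adams zig-zag directly with $E=ES^1_+$ as your proposal does. The paper's proof handles this by decomposing $ES^1\cong\colim_{i\in\IN}E_i$ as a colimit along closed embeddings of finite free $C$-CW-complexes, running the Adams zig-zag separately for each $E_i$, passing to homotopy colimits on both sides, and then observing that the projection $\hocolim\to\colim$ is a $\pi_*$-isomorphism because the homotopy colimits are taken levelwise along closed embeddings. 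This colimit step is genuinely necessary and is missing from your argument.

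Second, the hypothesis of \autoref{Adams} is not that $\forget\THH(\spec{D})$ is connective, convergent, or well pointed; it is that $\forget\sh^{E_+}\THH(\spec{D})$ is an orthogonal $C$-$\Omega$-spectrum. Your proposal tries to propagate the connectivity and convergence of $\spec{D}$ objectwise through $cn_{[q]}$, the mapping-space construction, and the homotopy colimit, but it is not at all obvious that this yields the $\Omega$-spectrum condition after equivariant shifting --- that is a delicate equivariant statement. The paper instead invokes \autoref{THH-Omega}, which repackages an argument of Hesselholt and Madsen precisely for this purpose (with very well pointedness entering through \autoref{THH-Segal-good}\ref{i:THH-fix-Segal-good}, not \ref{i:THH-Segal-good} as your sketch suggests). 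Without a replacement for \autoref{THH-Omega}, the verification of the hypotheses of \autoref{Adams} is incomplete. Once these two points are repaired, your treatment of part~(ii) --- propagating the conditions to $\spec{A}[\oid{G}{S}]$, retracting via $\pr_\CF$ and $\ii_\CF$, and invoking naturality --- is essentially the paper's argument.
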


The proof of \autoref{THH-Adams} is based on the following key result, which in turn uses two main ingredients.
The first is the natural model for the Adams isomorphism developed by the second and last authors in~\cite{RV}, and explained in the course of the proof below.
The second is a result by Blumberg~\cite{Blumberg}, giving an equivariant generalization of an unpublished result of Lydakis~\cite{Lydakis}*{Proposition~11.7 on page~36}; see also~\cite{MMSS}*{Proposition~17.6 on page~494}.
Notice that the assumption on $\forget(\sh^{E_+}\WT{X})$ below is satisfied whenever $\WT{X}$ is fibrant in the absolute stable model structure on~$\CW_C\CT_C$ by \cite{Blumberg}*{Theorem~1.3(6) on page~2263}.

\begin{theorem}
\label{Adams}
Let $C$ be any finite group.
Let $\WT{X}\in\obj\CW\CT^C$ and let $E$ be a finite free $C$-CW-space.
Assume that $\forget\sh^{E_+}\WT{X}$ is an orthogonal $C$-$\Omega$-spectrum.
Then there is a zig-zag of natural $\pi_*$-isomorphisms of orthogonal spectra between
\[
E_+\sma_C \forget\WT{X}
\AND
\bigl(\forget\sh^{E_+}\WT{X}\bigr)^C
.
\]
\end{theorem}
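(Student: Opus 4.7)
The plan is to deduce the statement from the strictly natural, point-set level model of the Adams isomorphism for orthogonal $C$-spectra developed by two of the authors in~\cite{RV}. After applying the forgetful functor $\MOR{\forget}{\CW\CT^C}{\Th\CT^C}$, the argument takes place entirely in orthogonal $C$-spectra, which is the natural setting for the machinery of~\cite{RV}.

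The key input from~\cite{RV} is a natural zigzag of $\pi_*$-isomorphisms of orthogonal spectra relating $E_+\sma_C Y$ (homotopy orbits) to $Y^C$ (genuine fixed points) for an orthogonal $C$-spectrum $Y$ that is free on the finite $C$-CW-complex $E$; since the adjoint representation of a finite group vanishes, no representation-sphere twist appears. To apply this in our situation, I would first relate the shift construction $\sh^{E_+}$ of \autoref{shift} to a smash product: unwinding the definitions, $(\forget\sh^{E_+}\WT{X})(V) = \WT{X}(E_+\sma S^V)$ with the diagonal $C$-action, while $(E_+\sma\forget\WT{X})(V)=E_+\sma\WT{X}(S^V)$. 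The spectrum-level structure maps supply a natural comparison transformation between these two orthogonal $C$-spectra, and the assumption that $\forget\sh^{E_+}\WT{X}$ is a $C$-$\Omega$-spectrum is exactly what guarantees that on $C$-fixed points this comparison becomes a $\pi_*$-isomorphism. Combining this with the zigzag produced by~\cite{RV} yields the desired natural chain
\[
E_+\sma_C\forget\WT{X}
\;\simeq\;
(E_+\sma\forget\WT{X})^C
\;\simeq\;
(\forget\sh^{E_+}\WT{X})^C.
\]
By the cited theorem of Blumberg, the $\Omega$-spectrum hypothesis is automatic whenever $\WT{X}$ is fibrant in the absolute stable model structure on $\CW_C\CT_C$, so the theorem applies in practice after a fibrant replacement in that category.

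The main obstacle is to ensure \emph{strict naturality} of the zigzag at the point-set level, not merely in the stable homotopy category. This is essential for the intended applications, where the zigzag must be combined with the stable homotopy fibration sequence of \autoref{hofib-R} to identify the fundamental fibration sequence of \autoref{THH-Adams} and, ultimately, to compare assembly maps for $\THH$ and its variants. Classical arguments for the Adams isomorphism produce only an equivalence in the stable homotopy category, which is insufficient for these purposes. The construction of~\cite{RV} is engineered precisely to yield a point-set, functorial zigzag; accordingly, the remaining work amounts to verifying that the conventions for $\sh^{E_+}$ introduced in \autoref{shift} and for point-set fixed points of orthogonal $C$-spectra align with those of~\cite{RV} after passing through $\forget$, and that both functoriality in $\WT{X}$ and the action of $C$ are respected throughout.
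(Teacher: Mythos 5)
There is a genuine gap in your proposed zig-zag, concentrated in the intermediate term
\[
E_+\sma_C\forget\WT{X}
\;\simeq\;
\bigl(E_+\sma\forget\WT{X}\bigr)^C
\;\simeq\;
\bigl(\forget\sh^{E_+}\WT{X}\bigr)^C
\,.
\]
If $E$ is a free $C$-CW-space and $C$ is non-trivial, then for every inner product space $V$ the pointed $C$-space
$\bigl(E_+\sma\forget\WT{X}\bigr)(V)=E_+\sma\WT{X}(S^V)$
(with diagonal $C$-action) has trivial $C$-fixed points: any non-basepoint $C$-fixed point $[e,x]$ would require $ce=e$ for all $c\in C$, which freeness forbids.
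So the middle term, taken as a levelwise point-set $C$-fixed point orthogonal spectrum, is the trivial spectrum, and your first claimed $\pi_*$-isomorphism is simply false.
Relatedly, your second claimed $\pi_*$-isomorphism cannot be rescued by the $\Omega$-spectrum hypothesis on $\forget\sh^{E_+}\WT{X}$.
Blumberg's theorem does give a $\underline{\pi}_*$-isomorphism $E_+\sma\forget\WT{X}\TO\forget\sh^{E_+}\WT{X}$, but a $\underline{\pi}_*$-isomorphism induces a $\pi_*$-isomorphism on $C$-fixed points only when \emph{both} orthogonal $C$-spectra are $\Omega$-spectra; the source $E_+\sma\forget\WT{X}$ is not one (its fixed points are trivial as just observed), so the hypothesis on the target alone does not suffice.

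You have also slightly misremembered the output of~\cite{RV}: the natural Adams map constructed there is
$\MOR{A}{EC_+\sma_C\spec{Y}}{\Q{\CU}(\spec{Y})^C}$,
landing in the $C$-fixed points of the bifunctorial replacement $\Q{\CU}$ (with respect to a complete $C$-universe $\CU$), not in $\spec{Y}^C$.
This replacement is not cosmetic: it is precisely what turns the naive point-set fixed points of a free spectrum, which vanish, into the genuine equivariant fixed points that carry the content of the Adams isomorphism.
The actual proof therefore has to pass from $E_+\sma_C\forget\WT{X}$ through $EC_+\sma_C(E_+\sma\cofr\forget\WT{X})$ and into $\Q{\CU}(\cdot)^C$, then use functoriality of $\Q{\CU}$, cofibrant replacement, and Blumberg's $\underline{\pi}_*$-isomorphism to compare $\Q{\CU}(\cofr(E_+\sma\forget\WT{X}))^C$ with $\Q{\CU}(\cofr\forget\sh^{E_+}\WT{X})^C$; only at the very end is the $\Omega$-spectrum hypothesis on $\forget\sh^{E_+}\WT{X}$ invoked, to descend from $\Q{\CU}$-fixed points (and from the cofibrant replacement) back to $(\forget\sh^{E_+}\WT{X})^C$ itself.
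Your overall outline correctly identifies the necessary ingredients (the natural Adams model of~\cite{RV}, Blumberg's $\underline{\pi}_*$-isomorphism, and the observation about fibrancy in $\CW_C\CT_C$), but the three-term zig-zag you wrote down is not a valid reduction of the theorem because its middle term is trivial; reinstating the $\Q{\CU}$- and $\cofr$-replacements is unavoidable.
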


\begin{proof}
Choose a complete $C$-universe~$\CU$.
In the category~$\Th\CT^C$ of orthogonal $C$-spectra, consider the bifunctorial replacement construction $\Q{\CU}$ and the natural map $\MOR{r}{\spec{Y}}{\Q{\CU}(\spec{Y})}$ from~\cite{RV}*{Theorem~1.1 on pages~1494--1495}.
When $\spec{Y}$ is good in the sense of~\cite{RV}*{Definition~5.5 on pages~1511--1512} (compare also~\cite{HM-top}*{Appendix~A on pages~96--98}), then $\Q{\CU}(\spec{Y})$ is a $C$-$\Omega$-spectrum and $r$ is a \piiso.
In particular, if $\MOR{f}{\spec{Y}}{\spec{Y}'}$ is a \piiso{} of good orthogonal $C$-spectra, then $\Q{\CU}(f)$ is a \piiso{} and therefore a level equivalence of $C$-$\Omega$-spectra.
Recall also that \piiso s of $C$-$\Omega$-spectra induce $\pi_*$-isomorphisms between the $C$-fixed points; see for example~\cite{RV}*{Lemma~6.20(iii) on page~1522}.
Here a \piiso{} is a map $\MOR{f}{\spec{Y}}{\spec{Y}'}$ such that $\MOR{\pi_n^H(f)}{\pi_n^H(\spec{Y})}{\pi_n^H(\spec{Y}')}$ is an isomorphism for all $n\in\IZ$ and all subgroups $H$ of~$C$.

Consider also a cofibrant replacement functor~$\cofr$ in the stable model category of orthogonal $C$-spectra (compare~\cite{MM}*{Section~III.4 on pages~47-51}), and let $\MOR{\gamma}{\cofr\spec{Y}}{\spec{Y}}$ be the corresponding \piiso{} and fibration.
Notice that cofibrant spectra are good by \cite{RV}*{Lemma~5.10(i) on page~1514}.
Recall that the fibrant objects are the $C$-$\Omega$-spectra.
In particular, if $\spec{Y}$ is a $C$-$\Omega$-spectrum, then so is~$\cofr\spec{Y}$.

We have now the following zig-zag of natural maps of orthogonal spectra, which we explain below.
\[
\hspace{-1.1ex}
\begin{tikzcd}[row sep=small, column sep=1.7em]
\ds E_+\sma_C\forget\WT{X}
&
\ds E_+\sma_C\cofr\forget\WT{X}
\arrow[l, "\ts\one"']
&
\ds EC_+\sma_C(E_+\sma\cofr\forget\WT{X})
\arrow[l, "\ts\two"']
\arrow[r, "\ts\bthree"]
&
\ds\Q{\CU}\bigl(E_+\sma \cofr\forget\WT{X}\bigr)^C
\\
&
&
&
\mathclap{\ds\Q{\CU}\bigl(\cofr(E_+\sma\cofr\forget\WT{X})\bigr)^C}
\arrow[u, "\ts\,\four"', shorten <=-.5ex, shorten >=-.75ex]
\arrow[d, "\ts\,\five",  shorten <=-.5ex, shorten >=-1ex]
\\
\ds\bigl(\forget\sh^{E_+}\WT{X}\bigr)^C
&
\ds\bigl(\cofr\forget\sh^{E_+}\WT{X}\bigr)^C
\arrow[l, "\ts\eight"]
\arrow[r, "\ts\seven"']
&
\ds\Q{\CU}\bigl(\cofr\forget\sh^{E_+}\WT{X}\bigr)^C
&
\ds\Q{\CU}\bigl(\cofr(E_+\sma\forget\WT{X})\bigr)^C
\arrow[l, "\ts\bsix"]
\end{tikzcd}
\]
The maps labeled $\one$, $\four$, $\five$, and~$\eight$ are all induced by~$\gamma$, and the map~$\seven$ by~$r$.
The map~$\two$ comes from the isomorphism $EC_+\sma_C(E_+\sma\cofr\forget\WT{X})\cong(EC\times E)_+\sma_C\cofr\forget\WT{X}$ and the projection~$EC\TO\pt$.
Since $E$ is free, $\two$ is a homotopy equivalence and $E_+\sma\cofr\forget\WT{X}$ is $C$-free in the sense of~\cite{RV}*{(1.6) on page~1495}.
Moreover, since $\cofr\forget\WT{X}$ is good, also $E_+\sma\cofr\forget\WT{X}$ is good by~\cite{RV}*{Lemma~5.10(v) on page~1515}.

Using the facts recalled in the first two paragraphs of the proof, we see that $\four$, $\five$, $\seven$, and $\eight$ are all $\pi_*$-isomorphisms.
Since $E$ is free, $\one$ is a $\pi_*$-isomorphism by~\cite{RV}*{Lemma~5.13(iv) on page~1517}.
It remains to describe the maps $\bthree$ and~$\bsix$, which are the key steps in the argument.

By~\cite{RV}*{Main Theorem~1.7 on page~1496}, for every good and $C$-free orthogonal $C$-spectrum~$\spec{Y}$ there is natural $\pi_*$-isomorphism
\[
\MOR{A}{EC_+\sma_C\spec{Y}}{\Q{\CU}(\spec{Y})^C}
\,.
\]
The map~$A$ induces in the stable homotopy category the Adams isomorphism of Lewis, May, and Steinberger~\cite{LMS}*{Theorem~II.7.1 on page~97}, which in the special case of suspension spectra was proved by Adams~\cite{Adams}*{Theorem~5.4 on page~500}.
We emphasize that the map~$A$ is natural before passing to the homotopy category.
This explains the natural $\pi_*$-isomorphism~$\bthree$.

By~\cite{Blumberg}*{Proposition~3.6 on page~2275}, the natural assembly map
\[
E_+\sma\WT{X}\TO\sh^{E_+}\WT{X}
\]
is a \piiso{} for every $\WT{X}\in\obj\CW\CT^C$ and for every finite $C$-CW-space~$E$.
This result in fact holds for any pointed finite $C$-CW-space~$B$ instead of~$E_+$.
This gives a \piiso{} $E_+\sma\forget\WT{X}\cong\forget(E_+\sma\WT{X})\TO\forget\sh^{E_+}\WT{X}$.
Using the facts above, this induces the natural $\pi_*$-isomorphism~$\bsix$, thus completing the proof.
\end{proof}

In order to apply \autoref{Adams} to the case $\WT{X}=\THH(\spec{D})$ we need the following result proven by Hesselholt and Madsen~\cite{HM-top}.

\begin{proposition}
\label{THH-Omega}
Let $C$ be a finite cyclic subgroup of~$S^1$, and let $E\in\obj\CW_C$.
\begin{enumerate}
\item\label{i:THH-Omega}
Assume that the symmetric spectral category~$\spec{D}$ is objectwise strictly connective, objectwise convergent, and very well pointed.
Then $\forget\sh^E\THH(\spec{D})$ is an orthogonal $C$-$\Omega$-spectrum.
\item\label{i:THH-Omega-F-parts}
The same is true for $\forget\sh^E\THH_\CF(\spec{A}[\oid{G}{S}])$ provided that the symmetric ring spectrum~$\spec{A}$ is \cplus.
\end{enumerate}
\end{proposition}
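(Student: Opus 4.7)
The result is an extension of the non-equivariant Bökstedt approximation lemma to the $C$-equivariant setting with the shift $\sh^E$ built in. The plan is to combine the equivariant Bökstedt estimate established by Hesselholt-Madsen~\cite{HM-top}*{Proposition~2.4} with the fixed-point formulas from \autoref{THH-fix} and their shifted variants that already appeared in the proof of \autoref{hofib-R}.

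For part \ref{i:THH-Omega}, what is required is that for every subgroup $H\le C$ of order~$h$ and every inclusion $V\subseteq W$ of finite dimensional $H$-representations in a complete $C$-universe, the adjoint structure map
\[
\bigl(\sh^E\THH(\spec{D})(S^V)\bigr)^H
\TO
\Omega^{W-V}\bigl(\sh^E\THH(\spec{D})(S^W)\bigr)^H
\]
is a weak equivalence. Using the extension of \eqref{eq:real-THH-fix} and \eqref{eq:sd-THH-fix} used in the proof of \autoref{hofib-R}, together with the fact that realization of a levelwise Segal good simplicial $\CW$-space commutes with loops (\autoref{THH-Segal-good}\ref{i:THH-fix-Segal-good} and \autoref{real-lemma-WT}), it suffices to verify the analogous statement in each simplicial degree~$q$, where the relevant $\CW$-space is $\hocolim_{\CI^{[q]}}\bigl(\sh^E M_{h[q]}\spec{D}(h\,-)\bigr)^H$. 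The connectivity hypothesis on~$\spec{D}$, combined with the very well pointed hypothesis (which guarantees the right homotopical behaviour of wedges and homotopy colimits through \autoref{facts-cofib}), and the convergence hypothesis, then yield a connectivity estimate for the adjoint structure map of $\bigl(\sh^E M_{h[q]}\spec{D}(h\vec{x})\bigr)^H$ that depends on $\min(\vec{x})$ and tends to infinity; the extra smash factor~$E$, being a fixed object of~$\CW_C$, only contributes a bounded correction. Passing to $\hocolim_{\CI^{[q]}}$ and then to the geometric realization preserves this limiting behaviour and yields the desired weak equivalence.

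For part \ref{i:THH-Omega-F-parts}, I first observe that if $\spec{A}$ is \cplus, then the symmetric spectral category $\spec{A}[\oid{G}{S}]$, with morphism spectra $\spec{A}\sma\mor_{\oid{G}{S}}(-,-)_+$, is automatically objectwise strictly connective, objectwise convergent, and very well pointed, since each of these three properties is preserved under smashing with a pointed discrete set. Hence part \ref{i:THH-Omega} applies and gives the conclusion for $\forget\sh^E\THH(\spec{A}[\oid{G}{S}])$. Now the decomposition $cn_{h[\bullet]}(\spec{A}[\oid{G}{S}])\cong\bigvee_{[c]}cn_{h[\bullet],[c]}(\spec{A}[\oid{G}{S}])$ is preserved by the cyclic $C$-action, because cyclic permutation of a tuple $(g_0,\ldots,g_q)$ does not change the conjugacy class of the product $g_0g_1\cdots g_q$. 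Consequently $\forget\sh^E\THH_\CF(\spec{A}[\oid{G}{S}])$ is a wedge summand of $\forget\sh^E\THH(\spec{A}[\oid{G}{S}])$ as orthogonal $C$-spectra, and the $C$-$\Omega$-spectrum property passes to wedge summands since the structure maps respect the summand decomposition.

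The main obstacle is the careful bookkeeping in the equivariant Bökstedt estimate with the shift. In the non-shifted case this is precisely the content of \cite{HM-top}*{Proposition~2.4}; the key input there is that, after edgewise subdivision, the $H$-action on $cn_{h[q]}\spec{D}(h\vec{x})$ is by cyclic block permutation of smash factors, so the fixed-point space can be estimated using the ordinary approximation lemma applied to a single block. Incorporating the shift by a fixed $E\in\obj\CW_C$ does not change the estimate in any essential way, since $E$ is a fixed finite-type $C$-CW-space whose cells contribute only bounded connectivity shifts that are absorbed by the convergence of~$\spec{D}$.
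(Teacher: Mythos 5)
Your proposal follows essentially the same route as the paper: cite Hesselholt--Madsen's Proposition~2.4 for part~\ref{i:THH-Omega}, then for part~\ref{i:THH-Omega-F-parts} verify the hypotheses for $\spec{A}[\oid{G}{S}]$ and observe that the $\CF$-part is a retract (equivalently, a wedge summand via $\pr_\CF$ and~$\ii_\CF$), and retracts of $\Omega$-spectra are $\Omega$-spectra.

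Two small points worth flagging. First, your assertion that the three properties of \autoref{cplus} are ``preserved under smashing with a pointed discrete set'' glosses over the only nontrivial verification: for an infinite morphism set the natural map $\bigvee\Omega\spec{A}_{x+1}\TO\Omega\bigvee\spec{A}_{x+1}$ is not an isomorphism, so convergence does not literally carry over with the same function~$\lambda$. What one actually checks (as the paper does) is that $\bigvee\spec{A}_x\TO\Omega\bigvee\spec{A}_{x+1}$ is roughly $\min(x+\lambda(x),2x)$-connected, where the $2x$ term comes precisely from the strict connectivity hypothesis on~$\spec{A}$; so $\spec{A}[\oid{G}{S}]$ is still objectwise convergent, but with a degraded convergence function, and strict connectivity is doing real work here. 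Second, $E\in\obj\CW_C$ is a pointed countable $C$-CW-space, not a finite-type one (the relevant application is $E=ES^1_+$, which is not finite type); this does not break the argument, since the B\"okstedt colimit is over the $\CI^{[q]}$ variable and the connectivity estimate is uniform in the evaluation variable, but the stated justification should not invoke finiteness of~$E$.
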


\begin{proof}
A version of part~\ref{i:THH-Omega} is proved in~\cite{HM-top}*{proof of Proposition~2.4 on page~40} in the setting of functors with smash products, or FSPs.
An inspection of their argument, together with \autoref{THH-Segal-good}\ref{i:THH-fix-Segal-good}, shows that the proof works also under the stated assumptions on~$\spec{D}$.

If $\spec{A}$ is \cplus, then $\spec{A}[\CC]$ is objectwise strictly connective, objectwise convergent, and very well pointed for any small category~$\CC$.
To verify the second property, notice that if $\spec{A}_x$ is $(x-1)$-connected and $\spec{A}_x\TO\Omega\spec{A}_{x+1}$ is $(x+\lambda(x))$-connected, then $\bigvee\spec{A}_x\TO\Omega\bigvee\spec{A}_{x+1}$ is roughly $\min(x+\lambda(x),2x)$-connected.
Here the wedge sum is indexed by any morphism set in~$\CC$.
The other properties are clear.

Therefore part~\ref{i:THH-Omega} applies to~$\spec{D}=\spec{A}[\oid{G}{S}]$, and since retracts of $\Omega$-spectra are again $\Omega$-spectra, part~\ref{i:THH-Omega-F-parts} follows.
\end{proof}

We can now conclude the proof of \autoref{THH-Adams}.

\begin{proof}[Proof of \autoref{THH-Adams}]
We first prove part~\ref{i:THH-Adams}.
Notice that the second claim in~\ref{i:THH-Adams} follows immediately from the first claim and \autoref{hofib-R}.
Consider $ES^1$ as a $C$-space by restricting the action.
Decompose $ES^1\cong\colim_{i\in\IN}E_i$ as a colimit along closed embeddings of finite free $C$-CW-complexes~$E_i$.
For each~$i$ take the zig-zag of natural $\pi_*$-isomorphisms given by \autoref{Adams} and \autoref{THH-Omega}, and then pass to the homotopy colimit to obtain a zig-zag of natural $\pi_*$-isomorphisms between
\[
\hocolim_{i\in\IN}\Bigl(E_{i+}\sma_C\forget\THH(\spec{D})\Bigr)
\AND
\hocolim_{i\in\IN}\Bigl(\bigl(\forget\sh^{E_{i+}}\THH(\spec{D})\bigr)^C\Bigr)
\,.
\]
Using well known properties of closed embeddings (e.g., see~\cite{RV}*{Facts~5.7 on pages~1512--1513}), we see that on both sides the homotopy colimit is taken levelwise along closed embeddings.
Therefore the natural projection $\hocolim\TO\colim$ is on both sides a $\pi_*$-isomorphism.
Finally, notice that we have isomorphisms
\begin{align*}
\colim_{i\in\IN}\Bigl(E_{i+}\sma_C\forget\THH(\spec{D})\Bigr)
&\cong
ES^1_+\sma_C\forget\THH(\spec{D})
,
\\
\colim_{i\in\IN}\Bigl(\bigl(\forget\sh^{E_{i+}}\THH(\spec{D})\bigr)^C\Bigr)
&\cong
\bigl(\forget\sh^{ES^1_+}\THH(\spec{D})\bigr)^C
\,.
\end{align*}
This proves part~\ref{i:THH-Adams}.
As explained in the proof of \autoref{THH-Omega}\ref{i:THH-Omega-F-parts}, the assumption on~$\spec{A}$ implies that part~\ref{i:THH-Adams} applies to~$\spec{D}=\spec{A}[\oid{G}{S}]$.
Since retracts of $\pi_*^1$-isomorphisms are $\pi_*^1$-isomorphisms, and using naturality of the zig-zag in \autoref{Adams}, part~\ref{i:THH-Adams-F-part} follows.
\end{proof}

We close this section with the following result, which is an immediate consequence of \autoref{split-THH-hC} and \autoref{THH-Adams}\ref{i:THH-Adams-F-part}.

\begin{corollary}
\label{split-THH-sh-fix}
Assume that the symmetric ring spectrum $\spec{A}$ is \cplus.
Let~$C$ be a finite cyclic subgroup of~$S^1$.
Consider the following commutative diagram in~$\CW\CT$.
\[
\begin{tikzcd}
\ds EG(\CF)_+\sma_{\Or G}\bigl(\sh^{ES^1_+}\THH    (\spec{A}[\oid{G}{-}])\bigr)^C
\arrow[r, "\asbl"]
\arrow[d, "\id\sma\pr_\CF"']
&
\bigl(\sh^{ES^1_+}\THH    (\spec{A}[G])\bigr)^C
\arrow[d, "\pr_\CF"]
\\
\ds EG(\CF)_+\sma_{\Or G}\bigl(\sh^{ES^1_+}\THH_\CF(\spec{A}[\oid{G}{-}])\bigr)^C
\arrow[r, "\asbl"']
&
\bigl(\sh^{ES^1_+}\THH_\CF(\spec{A}[G])\bigr)^C
\end{tikzcd}
\]
The left-hand map and the bottom map are $\pi_*$-isomorphisms.
If $\CF$ contains all cyclic groups, then the right-hand map is an isomorphism.
\end{corollary}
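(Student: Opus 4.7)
The plan is to transport all three assertions of the corollary across the zig-zag of natural $\pi_*^1$-isomorphisms furnished by \autoref{THH-Adams}\ref{i:THH-Adams-F-part}, which compares the two functorial constructions $\bigl(\sh^{ES^1_+}\THH_\CF(\spec{A}[\oid{G}{-}])\bigr)^C$ and $\THH_\CF(\spec{A}[\oid{G}{-}])_{hC}$ (and their counterparts without the $\CF$-subscript).

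First I would invoke the naturality statement in \autoref{THH-Adams}\ref{i:THH-Adams-F-part}: because the zig-zag is natural in $S \in \Sets^G$, evaluating at $S = G/H$ for $G/H \in \obj\Or G$ produces a zig-zag of natural transformations of functors $\Or G \to \IN\Sp$ between $\bigl(\sh^{ES^1_+}\THH_\CF(\spec{A}[\oid{G}{-}])\bigr)^C$ and $\THH_\CF(\spec{A}[\oid{G}{-}])_{hC}$, and similarly in the unrestricted case. Moreover, the naturality with respect to the projection and inclusion maps of \eqref{eq:retraction-THH} ensures that this zig-zag intertwines the two vertical maps $\pr_\CF$ in our diagram with the corresponding vertical maps $\pr_\CF$ in the diagram of \autoref{split-THH-hC}.

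Next, I would smash each of these natural transformations with $EG(\CF)_+$ over $\Or G$. Since the functor $EG(\CF)_+\sma_{\Or G}-$ is modeled by a homotopy colimit over $\Or G(\CF)$ (see the end of \autoref{ASSEMBLY}) and homotopy colimits preserve objectwise $\pi_*$-isomorphisms, the zig-zag becomes, after smashing, a zig-zag of $\pi_*$-isomorphisms connecting the four corners of the diagram in the corollary with the corresponding four corners of the diagram in \autoref{split-THH-hC}, and this zig-zag intertwines the assembly maps and the $\pr_\CF$ maps on both sides. In particular, the $\pi_*$-isomorphism statements for the left-hand vertical map and the bottom horizontal map follow at once from the analogous $\pi_*$-isomorphism statements in \autoref{split-THH-hC}.

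Finally, for the last sentence, observe that when $\CF$ contains all cyclic groups \autoref{split-THH} already gives that $\pr_\CF\colon\THH(\spec{A}[G])\to\THH_\CF(\spec{A}[G])$ is an actual isomorphism in $\CW\CT^{S^1}$; since both the equivariant shift $\sh^{ES^1_+}(-)$ and the $C$-fixed-point functor are defined levelwise, they preserve isomorphisms, so the right-hand vertical map of the diagram is an isomorphism. There is no serious obstacle here — the only point that requires minor care is to check that the zig-zag of \autoref{THH-Adams}\ref{i:THH-Adams-F-part} is sufficiently functorial in $S \in \Sets^G$ to descend to a zig-zag of $\Or G$-natural transformations, and this is exactly what the naturality clause of that theorem provides.
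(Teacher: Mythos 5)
Your proposal is correct and follows essentially the same route as the paper, which deduces the corollary ``immediately'' from \autoref{split-THH-hC} via the naturality of the zig-zag in \autoref{THH-Adams}\ref{i:THH-Adams-F-part}. You have also correctly observed a point the paper glosses over: the final assertion (that $\pr_\CF$ is an honest isomorphism when $\Cyc\subseteq\CF$) cannot be transported across a mere $\pi_*$-isomorphism zig-zag, but instead follows directly from \autoref{split-THH} together with the fact that $\sh^{ES^1_+}(-)$ and $(-)^C$ are defined levelwise and hence preserve isomorphisms.
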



\section{\BHM's functor \texorpdfstring{$C$}{C}}
\label{FUNCTOR-C}

B\"okstedt, Hsiang, and Madsen define in~\cite{BHM}*{(5.14) on page~497} an infinite loop space~$C(X;p)$ for any space~$X$ and any prime~$p$.
In this section we define a $\CW$-space $\C(\spec{D};p)$ for any symmetric spectral category~$\spec{D}$, which generalizes their original construction in the following sense:
if $X=BG$ for a discrete group~$G$ and~$\spec{D}=\spec{S}[G]$, then $C(BG;p)$ and $\C(\spec{S}[G];p)$ are isomorphic in the stable homotopy category.

Consider an $S^1$-equivariant $\CW$-space $\WT{X}\in\obj\CW\CT^{S^1}$.
We denote by~$\F(\WT{X};p)$ the $\CW$-space defined as the homotopy limit in~$\CW\CT$ of the $C_{p^n}$-fixed points of~$\WT{X}$ along the inclusions~$F$:
\[
\F(\WT{X};p)=\holim_{n\in\IN}\bigl(\:
\dotsb\TO
\WT{X}^{C_{p^{n}}}
\TO[F]
\WT{X}^{C_{p^{n-1}}}
\TO
\dotsb
\TO
\WT{X}^{C_{p}}
\TO[F]
\WT{X}
\:\bigr)
\,.
\]
Recall from~\autoref{SPECTRA} that homotopy limits in~$\CW\CT$ are defined by first applying a fibrant replacement functor and then taking levelwise homotopy limits.

\begin{definition}[\BHM's functor~$C$]
\label{C}
Given a symmetric spectral category~$\spec{D}$ and a prime~$p$, we define
\[
\C(\spec{D};p)
=
\F\Bigl(\sh^{ES^1_+}\THH(\spec{D});p\Bigr)
\,.
\]
If $\spec{D}=\spec{A}[\oid{G}{S}]$ we define
\[
\C_\CF(\spec{A}[\oid{G}{S}];p)
=
\F\Bigl(\sh^{ES^1_+}\THH_\CF(\spec{A}[\oid{G}{S}]);p\Bigr)
\,.
\]
\end{definition}

Thus we obtain from~\eqref{eq:retraction-THH} maps
\begin{equation}
\label{eq:retraction-C}
\begin{tikzcd}
\C    (\spec{A}[\oid{G}{S}];p)
\arrow[r, shift left, "\pr_{\CF}"]
&
\C_\CF(\spec{A}[\oid{G}{S}];p)
\arrow[l, shift left, "\ii_{\CF}"]
\end{tikzcd}
\end{equation}
satisfying $\pr_{\CF} \circ \ii_{\CF} = \id$.

We also use the~$\F$-construction to define
\begin{equation}
\label{eq:TF}
\TF(\spec{D};p)
=
\F\bigl(\THH(\spec{D});p\bigr)
\,;
\end{equation}
compare~\cite{HM-top}*{(20) on page~56}.
The projection $\MOR{\pr}{ES^1}{\pt}$ induces a map
\begin{equation*}
\label{eq:C->TF}
 \C(\spec{D};p)
\TO
\TF(\spec{D};p)
\,.
\end{equation*}

\begin{remark}
By \autoref{THH-Omega}\ref{i:THH-Omega-F-parts}, $\forget\sh^E\THH_\CF(\spec{A}[\oid{G}{S}])$ is an orthogonal $C_{p^n}$-$\Omega$-spectrum for each~$n$, provided that $\spec{A}$ is \cplus.
Therefore its $C_{p^n}$-fixed points are orthogonal $\Omega$-spectra; e.g., see~\cite{RV}*{Lemma~6.20(ii) on page~1522}.
It follows that also $(\sh^E\THH(\spec{A}[\oid{G}{S}]))^{C_{p^{n}}}$ is a fibrant $\CW$-space by~\cite{Blumberg}*{Theorem~1.3(2) on page~2263},
and therefore no fibrant replacement would be needed in defining the homotopy limit.
\end{remark}

We are now ready to prove our main splitting result, \autoref{THH-and-C}\ref{i:C} and \autoref{splitting}.
The deduction of this result from~\autoref{split-THH-sh-fix} is based on the main theorem of~\cite{LRV}.

\begin{theorem}
\label{split-C}
Assume that the symmetric ring spectrum $\spec{A}$ is \cplus.
Let $\CF\subseteq\Fin$ be a family of finite subgroups of~$G$ and let $N\geq0$.
Assume that the following condition holds.
\begin{enumerate}[label=\({[}\Alph*'_{\CF,\,\le N+1}{]}\)]
\item
For every~$C\in\CF$ and for every $1\leq s\leq N+1$, the integral group homology $H_s(BZ_GC;\IZ)$ of the centralizer of~$C$ in~$G$ is an almost finitely generated abelian group.
\end{enumerate}
Then the assembly maps
\[
EG(\CF)_+\sma_{\Or G}\C(\spec{A}[\oid{G}{-}];p)
\xrightarrow{\ \asbl\ }
\C(\spec{A}[G];p)
\]
for all primes~$p$ and
\[
EG(\CF)_+\sma_{\Or G}\Bigl(\THH(\spec{A}[\oid{G}{-}])\!\times\!\!\prodp\!\!\C(\spec{A}[\oid{G}{-}];p)\Bigr)
\xrightarrow{\asbl}
\THH(\spec{A}[G])\!\times\!\!\prodp\!\!\C(\spec{A}[G];p)
\]
are $\pi_n^\IQ$-injective for all~$n\leq N$.
\end{theorem}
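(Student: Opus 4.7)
My plan is to deduce this from \autoref{split-THH-sh-fix} by taking homotopy limits along the Frobenius maps to pass from $(\sh^{ES^1_+}\THH(-))^{C_{p^n}}$ to $\C(-;p)=\F(\sh^{ES^1_+}\THH(-);p)$, and then to control the interaction of these homotopy limits with the functor $EG(\CF)_+\sma_{\Or G}-$ by means of the main theorem of~\cite{LRV}.

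First, the assembly map for $\C(\spec{A}[\oid{G}{-}];p)$ factors as
\[
EG(\CF)_+\sma_{\Or G}\C(\spec{A}[\oid{G}{-}];p)
\TO
\holim_n\Bigl(EG(\CF)_+\sma_{\Or G}\bigl(\sh^{ES^1_+}\THH(\oid{G}{-})\bigr)^{C_{p^n}}\Bigr)
\TO
\C(\spec{A}[G];p),
\]
where the first map is the canonical comparison between $EG(\CF)_+\sma_{\Or G}-$ and the homotopy limit defining $\F(-;p)$, and the second map is the homotopy limit of the levelwise assembly maps. By the commutative square of \autoref{split-THH-sh-fix}, post-composing each level of the second map with the retraction~$\pr_\CF$ of~\eqref{eq:retraction-THH} yields a $\pi_*$-isomorphism; since the retractions $\pr_\CF$ and their sections $\ii_\CF$ are natural and satisfy $\pr_\CF\circ\ii_\CF=\id$, the induced map $\holim_n\pr_\CF$ remains split surjective, and therefore the $\holim_n$ of the levelwise assembly maps is $\pi_*$-injective. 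It thus remains to prove $\pi_n^\IQ$-injectivity of the first comparison map for $n\leq N$.

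This comparison step is the main technical point and the only place where the assumption \techA{\CF}{N+1} enters. Using \autoref{THH-CN} together with the decomposition~\eqref{eq:decomposition-CN}, and applying \autoref{computation-X-sma-CN} to $X=EG(\CF)$ (which uses $EG(\CF)^{\langle c\rangle}\simeq\pt$ if $\langle c\rangle\in\CF$ and $EG(\CF)^{\langle c\rangle}=\emptyset$ otherwise), each entry of the tower
\[
n\longmapsto EG(\CF)_+\sma_{\Or G}\bigl(\sh^{ES^1_+}\THH(\oid{G}{-})\bigr)^{C_{p^n}}
\]
is identified, up to natural $\pi_*^1$-isomorphism, with a wedge indexed by $(c)\in(\conj_\CF G)$ of $\sh^{ES^1_+}$-shifted $C_{p^n}$-fixed point spectra built from $\THH(\spec{A})$ and the centralizer classifying spaces $BZ_G\langle c\rangle$. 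The main theorem of~\cite{LRV} asserts precisely that, under the hypothesis that $H_s(BZ_G\langle c\rangle;\IZ)$ is almost finitely generated for $1\leq s\leq N+1$, the canonical map commuting $EG(\CF)_+\sma_{\Or G}-$ past the above homotopy limit over Frobenius maps is $\pi_n^\IQ$-injective for $n\leq N$, which yields the required injectivity. The product assembly map for $\THH\times\prodp\C$ then follows by combining the above with \autoref{split-THH}: since $EG(\CF)_+\sma_{\Or G}-$ commutes with finite products up to $\pi_*$-isomorphism and behaves well with respect to the countable product~$\prodp$, injectivity of the product assembly map follows from split injectivity of the $\THH$-assembly map (\autoref{split-THH}, requiring no homological assumption) together with the $\pi_n^\IQ$-injectivity of each $\C(-;p)$-assembly map. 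The main obstacle is the identification of the tower entries and the invocation of~\cite{LRV} to control the failure of $\holim_n$ to commute with the colimit-like functor $EG(\CF)_+\sma_{\Or G}-$ in low degrees.
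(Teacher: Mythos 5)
Your overall strategy matches the paper's: factor the assembly map for $\C$ through the interchange map
\[
t\colon EG(\CF)_+\sma_{\Or G}\C(\spec{A}[\oid{G}{-}];p)
\TO
\holim_n\Bigl(EG(\CF)_+\sma_{\Or G}\bigl(\sh^{ES^1_+}\THH(\spec{A}[\oid{G}{-}])\bigr)^{C_{p^n}}\Bigr),
\]
use \autoref{split-THH-sh-fix} plus the retraction $\pr_\CF$ to see that the $\holim$ of levelwise assembly maps is $\pi_*$-injective, and then control $t$ rationally via~\cite{LRV}. However, there are two real gaps.

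First, the result from~\cite{LRV} that the paper invokes is Addendum~1.3, which has four hypotheses (A)--(D), not just the almost-finite-generation condition (D) on $H_s(BZ_GC;\IZ)$ that you quote. Crucially, condition~(C) requires that $\CF$ contain only \emph{finitely many conjugacy classes} of subgroups. Your proposal never makes this finiteness reduction; you apply ``the main theorem of LRV'' directly to an arbitrary $\CF\subseteq\Fin$, where the wedge over $(c)\in\conj_\CF G$ may be infinite. The paper first proves the statement under the additional assumption that $\CF$ has finitely many conjugacy classes, and then removes that assumption by writing $\CF=\bigcup_\CJ\CE$ as a directed union of subfamilies finite up to conjugacy, observing that $\hocolim_{\CE\in\CJ}EG(\CE)\simeq EG(\CF)$, and using that $\pi_*(-)$ and $-\otimes_\IZ\IQ$ commute with directed homotopy colimits and that directed colimits are exact. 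Without some such argument your invocation of LRV is not justified.

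Second, your treatment of the product over all primes is too vague to be a proof. Saying that $EG(\CF)_+\sma_{\Or G}-$ ``behaves well with respect to the countable product $\prodp$'' is precisely the nontrivial point: the smash product over $\Or G$ does not commute with infinite products in general, and this is exactly the kind of failure that LRV Addendum~1.3 quantifies. The paper handles $\prodp$ by a \emph{second} application of LRV's Addendum, now with $\CC$ the set of primes viewed as a discrete category (so hypothesis~(A) holds with the constant $E\CC=\pt$), and hypotheses (B)--(D) verified as before. You need to make this second application explicit; split injectivity of the $\THH$-factor plus injectivity of each $\C(-;p)$-factor does not by itself give injectivity of the assembly map for $\THH\times\prodp\C$, because $EG(\CF)_+\sma_{\Or G}\prodp(-)$ is not a priori comparable to $\prodp\bigl(EG(\CF)_+\sma_{\Or G}-\bigr)$.

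A minor remark: the explicit identification of the tower entries as wedges via \autoref{THH-CN}, \eqref{eq:decomposition-CN}, and \autoref{computation-X-sma-CN} is not how the paper verifies the LRV hypotheses --- the paper checks (A)--(D) directly (using \autoref{THH-Adams} for the connectivity condition~(B) and \cite{LRV}*{Proposition~1.7} for~(D)). Your more explicit route is reasonable in spirit, but note that it only reduces to the right form after the finiteness reduction above, and you would still need to verify the connectivity hypothesis~(B), which you do not address.
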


\begin{proof}
We first prove the theorem under the additional assumption that $\CF$ contains only finitely many conjugacy classes.
We begin with~$\C(\spec{A}[G];p)$.
Consider the following commutative diagram in~$\CW\CT$.
\[
\begin{tikzcd}[column sep=4em]
\ds EG(\CF)_+\sma_{\Or G}\C(\spec{A}[\oid{G}{-}];p)
\arrow[r, "\asbl"]
\arrow[d, "t"']
&
\C    (\spec{A}[G];p)
\arrow[d, equal]
\\
\ds\holim_{m\in\IN}\Bigl(
EG(\CF)_+\sma_{\Or G}\bigl(\sh^{ES^1_+}\THH    (\spec{A}[\oid{G}{-}])\bigr)^{C_{p^m}}
\Bigr)
\arrow[r, "\holim(\asbl)", "\ts\three"']
\arrow[d, "\holim(\id\sma\pr_\CF)"', "\ts\one"]
&
\C    (\spec{A}[G];p)
\arrow[d, "\pr_\CF"]
\\
\ds\holim_{m\in\IN}\Bigl(
EG(\CF)_+\sma_{\Or G}\bigl(\sh^{ES^1_+}\THH_\CF(\spec{A}[\oid{G}{-}])\bigr)^{C_{p^m}}
\Bigr)
\arrow[r, "\holim(\asbl)"', "\ts\two"]
&
\C_\CF(\spec{A}[G];p)
\end{tikzcd}
\]
The bottom square is obtained by taking the homotopy limit of the diagrams from \autoref{split-THH-sh-fix}.
Therefore the maps $\one$ and~$\two$ are $\pi_*$-isomorphisms, and so $\three$ is $\pi_*$-injective.

The map~$t$ is the natural map that interchanges the order of smashing over~$\Or G$ and taking $\holim_\IN$.
We proceed to verify the assumptions (A) to~(D) of~\cite{LRV}*{Addendum~1.3 on page~140}, which imply that~$\pi_n(t)$ is an almost isomorphism, and so in particular a rational isomorphism, for all~$n\leq N$.
For assumption~(A), the category~$\CC=\IN$ has an obvious $1$-dimensional model for~$E\IN$; compare~\cite{LRV}*{Example~7.1 on page~162}.
For~(B), \autoref{THH-Adams} and the fact that homotopy orbits preserve connectivity imply that $(\sh^{ES^1_+}\THH(\spec{A}[\oid{G}{-}]))^{C_{p^m}}$ is always $(-1)$-connected.
For $X=EG(\CF)$, (C) is implied by our assumptions on~$\CF$.
And finally (D) follows from our assumptions on~$H_s(BZ_GH;\IZ)$ combined with~\cite{LRV}*{Proposition~1.7 on page~142}.

A similar argument applies to
\[
\MOR{t}
{EG(\CF)_+\sma_{\Or G}\smallprod_p\C(\spec{A}[\oid{G}{-}];p)}
{\smallprod_p\Bigl(EG(\CF)_+\sma_{\Or G}\C(\spec{A}[G];p)\Bigr)}
\]
This time the category~$\CC$ is the set of all prime numbers considered as a discrete category, and the constant functor~$\pt$ clearly satisfies assumption~(A).

Next we explain how to drop the assumption that $\CF$ contains only finitely many conjugacy classes.
Consider the set~$\CJ$ of subfamilies $\CE\subset\CF$ that are finite up to conjugacy.
Notice that $\CJ$ is a directed poset with respect to inclusion, and that
$\CF=\bigcup_\CJ \CE$.
Appropriate models for~$EG(\CE)$ yield a functor from $\CJ$ to~$G$-spaces and a $G$-equivalence
\begin{equation}
\label{eq:EGF-as-hocolim}
\hocolim_{\CE\in\CJ}EG(\CE)
\TO[\simeq]
EG(\CF)
\,.
\end{equation}
Now there is the following commutative diagram, as well as a corresponding diagram for $\THH(\spec{A}[G])\!\times\!\smallprod_p\!\C(\spec{A}[G];p)$.
\[
\begin{tikzcd}[column sep=-2em, row sep=large]
&
\ds\hocolim_\CJ\Bigl(EG(\CE)_+\sma_{\Or G}\C(\spec{A}[\oid{G}{-}];p)\Bigr)
\arrow[rd, "\hocolim(\asbl_\CE)"]
\arrow[ld, "\ts\four"']
\\
\ds EG(\CF)_+\sma_{\Or G}\C(\spec{A}[\oid{G}{-}];p)
\arrow[rr, "\asbl_\CF"']
&&
\ds\C(\spec{A}[G];p)
\end{tikzcd}
\]
Here~$\four$ is induced by the equivalence in~\eqref{eq:EGF-as-hocolim} and the fact that homotopy colimits commute up to isomorphism with smash products over the orbit category, and therefore $\four$ is a $\pi_*$-isomorphism.
Since the functors $\pi_*(-)$ and $-\otimes_\IZ\IQ$ both commute with directed (homotopy) colimits, and directed colimits are exact, it follows from the argument above that $\hocolim(\asbl_\CE)$ is injective on~$\pi_n(-)\tensor_\IZ\IQ$ for all $n\leq N$.
Therefore the same is true for~$\asbl_\CF$, as claimed.
\end{proof}

The proof of the Splitting \autoref{splitting} is now complete.
In fact, Theorems~\ref{split-THH} and~\ref{split-C} establish more general splitting results than what is needed for the \autoref{main-technical}.
In \autoref{no-R-revisited} below we explain why our method of proof cannot be used to obtain similar splitting results for topological cyclic homology without additional assumptions on the group and the family.


\section{Topological cyclic homology}
\label{TC}

Topological cyclic homology of a symmetric spectral category~$\spec{D}$ at a prime~$p$ is defined as
\[
\TC(\spec{D};p)=\holim_{n\in\obj\RFcat}\THH(\spec{D})^{C_{p^n}}
.
\]
Here $\RFcat$ is the category with set of objects~$\IN$ and where the morphisms from $m$ to~$n$ are the pairs $(i,j)\in\IN\times\IN$ with $i+j=m-n$.
The functoriality in $\RFcat$ comes from the fact that the maps $\MOR{R,F}{\THH(\spec{D})^{C_{p^{n}}}}{\THH(\spec{D})^{C_{p^{n-1}}}}$ commute; see diagram~\eqref{eq:R-and-F-commute}.
So we get a functor by sending the morphisms $(0,1)$ to the $R$-maps, and the morphisms~$(1,0)$ to the $F$-maps.
By forgetting the $R$-maps, we get a natural map
\begin{equation}
\label{eq:TC->TF}
\TC(\spec{D};p)\TO\TF(\spec{D};p)
\,,
\end{equation}
whose target is defined in~\eqref{eq:TF}.

\begin{remark}
\label{no-R-revisited}
In general, for $\spec{D}=\spec{A}[\oid{G}{S}]$, there is no $R$-map between the fixed points of $\THH_\CF(\spec{A}[\oid{G}{S}])$; see~\autoref{no-R}.
Therefore, without additional assumptions on $G$ and~$\CF$, we cannot define $\TC_\CF(\spec{A}[\oid{G}{S}];p)$ and retraction maps as in~\eqref{eq:retraction-C}, and prove a splitting result for~$\TC$ analogous to \autoref{split-C}.
To see why our method works for $\C$ but not for~$\TC$, consider the following diagram, where we use the abbreviation $\BT=\THH(\spec{A}[\oid{G}{-}])$ and $E=ES^1_+$.
\[
\begin{tikzcd}[column sep=0]
&
\BT_{\CF,h\Cp{n}}
\arrow[leftrightarrow, rr, "\cdot" description, pos=.3, shorten >=1.75em]
&
&
\mathllap{\sh^E}{\BT_\CF}^{\Cp{n}}
\arrow[rr]
\arrow[dd]
&
&
{\BT_\CF}^{\Cp{n}}
\arrow[dotted, rr, "\text{\raisebox{1.5ex}{\tiny\dbend}}" description]
\arrow[dd]
&
&
{\BT_\CF}^{\Cp{n-1}}
\arrow[dd]
\\
\mspace{20mu}\BT_{h\Cp{n}}
\arrow[leftrightarrow, rr, "\cdot" description, pos=.32, shorten >=1.75em]
\arrow[ur] 
&
&
\mathllap{\sh^E}{\BT}^{\Cp{n}}
\arrow[rr, crossing over]
\arrow[ur] 
\arrow[phantom, ul, "\ts\bone"]
\arrow[phantom, dr, "\ts\btwo"]
&
&
\BT^{\Cp{n}}
\arrow[rr, crossing over]
\arrow[ur] 
&
&
\BT^{\Cp{n-1}}
\arrow[ur] 
\\
&
&
&
\mathllap{\sh^E}{\BT_\CF}^{\Cp{n-1}}
\arrow[rr]
&
&
{\BT_\CF}^{\Cp{n-1}}
\arrow[dotted, rr]
&
&
{\BT_\CF}^{\Cp{n-2}}
\\
&
&
\mathllap{\sh^E}{\BT}^{\Cp{n-1}}
\arrow[rr]
\arrow[ur] 
\arrow[leftarrow, uu]
&
&
\BT^{\Cp{n-1}}
\arrow[rr]
\arrow[ur] 
\arrow[leftarrow, uu, crossing over]
&
&
\BT^{\Cp{n-2}}
\arrow[ur] 
\arrow[leftarrow, uu, crossing over]
\end{tikzcd}
\]
The diagonal maps are the projections to the $\CF$-parts.
The vertical maps are the inclusions of fixed points~$F$.
The commutative diagram~$\bone$ comes from \autoref{THH-Adams}, where the symbol $\leftarrow\cdot\rightarrow$ indicates a zig-zag of natural $\pi_*$-isomorphisms, and it allows us to deduce splitting results for~$(\sh^E\BT)^{\Cp{n}}$ from the ones for~$\BT$; see \autoref{split-THH-sh-fix}.

On the front face, the other horizontal maps are the homotopy fibration sequences from \autoref{hofib-R}.
However, notice that in the back, for the $\CF$-parts, the $R$-maps are missing, and we do not have homotopy fibration sequences.
Therefore we cannot argue by induction and obtain splitting results for the fixed points~$\BT^{\Cp{n}}$, and then pass to the homotopy limit to get results for~$\TC$.

On the other hand, since $\btwo$ obviously commutes, we can pass to the homotopy limit over~$n$ with respect to the $F$-maps, and (using the results from~\cite{LRV}) deduce splitting results for~$\C$ from the ones for~$(\sh^E\BT)^{\Cp{n}}$; see \autoref{split-C}.
Notice that we do not need any compatibility between the squares $\bone$ and~$\btwo$, as they enter the proof at separate stages.
The missing cube on the left and its commutativity are not needed in the proof.

Notice also that if we simply took $\hofib R$ instead of~$(\sh^E\BT)^{\Cp{n}}$, we would not even obtain the existence, let alone the commutativity, of the corresponding diagram~$\btwo$, since there are no $R$-maps for the $\CF$-parts.
This shows the importance of the point-set model for $\hofib R$ given in \autoref{hofib-R}.

In the case of spherical coefficients~$\spec{A}=\spec{S}$, the $R$-map splits and the splitting induces a map between the $\CF$-parts, but other difficulties arise, as explained in \autoref{S_F}.

Finally, we mention that in a separate paper~\cite{tc} we show that under additional assumptions on $G$ and~$\CF$ it is possible to fill in the missing $R$-maps in the diagram above, thus obtaining splitting results for~$\TC$, too.

This remark represents years of discussions among the authors.
\end{remark}


\section{Spherical coefficients}
\label{A=S}

We now analyze the special case~$\spec{D}=\spec{S}[\CC]$ for a small category~$\CC$.
In this case, the key observation is that the $R$-maps split.
This result is well known; compare \cite{BHM}*{proof of Theorem~5.17, pages~500--501} and~\cite{Dundas}*{Lemma~6.2.5.1 on page~245}.
We give a proof below to highlight that the splitting is natural in~$\CC$.
We then use this fact to construct a zig-zag of natural transformations from $\TC(\IS[\CC];p)$ to $\C(\IS[\CC];p)$ where the wrong-way maps are all $\pi_*$-isomorphisms.

\begin{proposition}
\label{R-and-S}
If $\spec{D}=\spec{S}[\CC]$ for a small category $\CC$, then the $R$-map splits, i.e., there is a map of $\CW$-spaces
\[
\MOR{S}{\THH(\spec{S}[\CC])^{C_{p^{n-1}}}}{\THH(\spec{S}[\CC])^{C_{p^{n}}}}
\]
such that~$R \circ S=\id$.
The map $S$ is natural in $\CC$.
The maps $S$ and $F$ commute.
\end{proposition}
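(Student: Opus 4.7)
The approach exploits the decomposition from \autoref{cn(A[C])}, which for $\spec{D}=\spec{S}[\CC]$ writes $cn_{[q]}(\spec{S}[\CC])$ as a smash product of the spherical test object $cn_{[q]}\spec{S}$ and the discrete pointed cyclic nerve $CN_q(\CC)_+$. The essential new ingredient is the \emph{$p$-fold diagonal} for cyclic sets: for any cyclic set $X_\bullet$, the assignment
\[
(x_0,\dotsc,x_q)\longmapsto(x_0,\dotsc,x_q,x_0,\dotsc,x_q,\dotsc,x_0,\dotsc,x_q)
\]
($p$ copies) defines a cyclic (hence simplicial) map $\delta_p\colon X_q\to X_{p(q+1)-1}$ whose image lies in the $C_p$-fixed simplices and which realizes to the canonical $S^1$-equivariant homeomorphism $|X_\bullet|\xrightarrow{\cong}|sd_p X_\bullet|^{C_p}$. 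Applied to $X_\bullet=CN_\bullet(\CC)$ it is manifestly natural in~$\CC$, and under the decomposition of \autoref{cn(A[C])} the wrong-way homeomorphism $\Delta_{\spec{S}[\CC]}$ of~\eqref{eq:Delta-map} factors as the smash product of $\Delta_{\spec{S}}$ with the bijection induced by $\delta_p$.

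The plan is to construct $S$ at the level of the $M$-module diagrams by producing an explicit simplicial section $s_{[\bullet]}$ of the map $r$ from~\eqref{eq:r-map}. On the sphere coordinates one extends the $C_{p^{n-1}}$-equivariant input by $C_{p^n}$-equivariance using the natural self-smash identification $cn_{p^n[q]}\spec{S}(p^n\vec{x})\cong(cn_{p^{n-1}[q]}\spec{S}(p^{n-1}\vec{x}))^{\sma p}$; on the cyclic-nerve coordinates one smashes with $\delta_p$. The resulting $s_{[q]}$ is a $C_{p^n}$-equivariant section of the $C_p$-fixed-point restriction appearing as the top map in~\eqref{eq:res-to-fix}, and composed with $\Delta_{\spec{S}[\CC]}$ it sections $r_{[q]}$. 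Because $\delta_p$ is cyclic, the levelwise sections assemble into a morphism in $\Lambda^\op\Mod_{\CW\CT}$; taking $\hocolim_{\CI^{c[\bullet]}}$ and then geometric realization---invoking \autoref{THH-Segal-good} and \autoref{real-lemma-WT} to control Segal goodness through both operations---produces the desired map $S\colon\THH(\spec{S}[\CC])^{C_{p^{n-1}}}\to\THH(\spec{S}[\CC])^{C_{p^n}}$.

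The identity $R\circ S=\id$ then reduces, by chasing diagram~\eqref{eq:res-to-fix}, to the tautological fact that $\delta_p$ is a section of the natural projection $CN_{p(q+1)-1}(\CC)^{C_p}\to CN_q(\CC)$ together with the fact that $\Delta_{\spec{S}}$ is a homeomorphism. Naturality of $S$ in $\CC$ is immediate from that of $\delta_p$. The commutation relation between $S$ and $F$ follows by direct inspection, since $F$ is the tautological inclusion of fixed points and $S$ is built entirely from natural point-set maps with no wrong-way homeomorphisms. The main technical obstacle I anticipate is the verification that the proposed $s_{[q]}$ is genuinely $C_{p^n}$-equivariant at the $M$-module level: this is primarily a bookkeeping exercise with the group extension $1\to C_p\to C_{p^n}\to C_{p^{n-1}}\to1$, but is delicate enough that it cannot be carried out for a general symmetric ring spectrum, the essential obstruction being precisely the phenomenon flagged by~\autoref{no-R}; the discreteness of $CN_\bullet(\CC)$ and the factorization of $\Delta_{\spec{S}[\CC]}$ through $\delta_p$ are exactly what makes the spherical case special.
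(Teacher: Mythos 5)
Your approach is the same as the paper's: both produce a levelwise section of the restriction-to-$C_p$-fixed-points map at the top of \eqref{eq:res-to-fix} and then compose with the homeomorphism $\Delta$, and your $p$-fold diagonal $\delta_p$ is precisely the cyclic-nerve component of $\Delta_{\spec{S}[\CC]}$. The one step you leave heuristic---extending the $C_{p^n}/C_p$-equivariant input to a $C_{p^n}$-equivariant map ``via the self-smash identification''---the paper makes explicit through the orthogonal decomposition $\IR[C_{p^n}]=U\oplus V$ with $V=\IR[C_{p^n}]^{C_p}$: writing $cn_{p^n[q]}\spec{S}(p^n\vec{x})\cong S^{U\otimes\IR[\real{\vec{x}}]}\sma S^{V\otimes\IR[\real{\vec{x}}]}$, one smashes the input with the identity on $S^{U\otimes\IR[\real{\vec{x}}]}$ and then applies the inclusion of the $C_p$-fixed part of the cyclic nerve into the full cyclic nerve, which is the precise mechanism behind the ``delicate'' equivariance bookkeeping you flag and what sidesteps the non-splitting of $1\to C_p\to C_{p^n}\to C_{p^{n-1}}\to 1$.
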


\begin{proof}
We split the map $r$ in~\eqref{eq:res-to-fix} by a map $s$ that is natural in $A\in\obj\CW$ and $\vec{x}\in\obj\CI^{[q]}$.
As $[q]$ varies, these maps~$s$ assemble to a cyclic map that after homotopy colimits and realizations gives rise to $S$.

For the tuple of finite sets $\vec{x}=(x_0 , \ldots , x_q) \in\obj\CI^{[q]}$ let $|\vec{x}|=x_0 \sqcup \ldots \sqcup x_q$ denote the concatenation.
There is an orthogonal decomposition of $C_{p^n}$-representations
\[
\IR[C_{p^n}] = U \oplus V,
\]
where $\IR[C_{p^n}]$ denotes the regular $C_{p^n}$-representation, $V = \IR[ C_{p^n} ]^{C_p} $ its $C_{p}$-fixed points and $U$ the orthogonal complement.
Since there is an isomorphism of $C_{p^n}$-sets $|p^{n} \vec{x}| \cong C_{p^n} \times \real{\vec{x}}$,
the obvious isomorphism
\[
cn_{[q]} \spec{S} ( \vec{x} ) = S^{\IR[x_0]} \sma \ldots \sma S^{\IR[x_q]} \cong
S^{\IR [ \real{\vec{x}} ]}
\]
generalizes to a $C_{p^n}$-equivariant isomorphism
\[
cn_{p^n [q]} \spec{S} ( p^n \vec{x} )
\cong S^{U \otimes \IR [ \real{\vec{x}} ]} \sma S^{V \otimes \IR [ \real{\vec{x}} ]}.
\]
Using \eqref{eq:cn(A[C])} the inclusion of $C_p$-fixed points
\begin{align*}
\left( cn_{p^n[q]} \spec{S} (p^n \vec{x}) \right)^{C_p}
&\TO
cn_{p^n[q]} \spec{S} (p^n \vec{x} )
\\
\left( A \sma cn_{p^n[q]} \spec{S} [\CC](p^n \vec{x}) \right)^{C_p}
&\TO
A \sma cn_{p^n[q]} \spec{S} [\CC](p^n \vec{x} )
\end{align*}
can be identified with the inclusions
\begin{align*}
S^{V \otimes \IR [ \real{\vec{x}} ]}
&\TO
S^{U \otimes \IR [ \real{\vec{x}} ]} \sma S^{V \otimes \IR [ \real{\vec{x}} ]}
\\
S^{V \otimes \IR [ \real{\vec{x}} ]} \sma CN_{p^nq} (\CC)_+^{C_p} \sma A
&\TO
S^{U \otimes \IR [ \real{\vec{x}} ]} \sma S^{V \otimes \IR [ \real{\vec{x}} ]} \sma CN_{p^n q} (\CC)_+ \sma A
\, .
\end{align*}
Now in order to split the upper map in~\eqref{eq:res-to-fix} we  extend each $C_{p^n} / C_p$-equivariant map
\[
S^{V \otimes \IR [ \real{\vec{x}} ]} \TO
S^{V \otimes \IR [ \real{\vec{x}} ]} \sma  CN_{p^nq} (\CC)_+^{C_p} \sma A
\]
to the $C_{p^n}$-equivariant map
\[
S^{U \otimes \IR [ \real{\vec{x}} ]}  \sma S^{V \otimes \IR [ \real{\vec{x}} ]} \TO
S^{U \otimes \IR [ \real{\vec{x}} ]}  \sma S^{V \otimes \IR [ \real{\vec{x}} ]} \sma CN_{p^nq} (\CC)_+ \sma A
\]
obtained by smashing with the identity on $S^{U \otimes \IR [ \real{\vec{x}} ]}$ and composing with the map induced from the inclusion
$CN_{p^nq} (\CC)^{C_p} \TO CN_{p^nq} (\CC) $.
Composing with the homeomorphism~$\Delta$ in~\eqref{eq:res-to-fix} we obtain the desired map~$s$.
\end{proof}

\begin{warning}
\label{S_F}
As opposed to the case of the $R$-maps explained in \autoref{no-R}, it is in fact possible to define a map
\[
\MOR{S_\CF}{\THH_\CF(\spec{S}[\oid{G}{S}])^{C_{p^{n-1}}}}{\THH_\CF(\spec{S}[\oid{G}{S}])^{C_{p^{n}}}}
\,,
\]
and we have $S_\CF\circ F=F\circ S_\CF$.
However, the diagrams
\[
\begin{tikzcd}[row sep=scriptsize]
\THH(\spec{S}[\oid{G}{S}])^{C_{p^{n-1}}}
\arrow[r, "S"]
\arrow[d, "\pr_\CF"']
&
\THH(\spec{S}[\oid{G}{S}])^{C_{p^{n}}}
\arrow[d, "\pr_\CF"]
\\
\THH_\CF(\spec{S}[\oid{G}{S}])^{C_{p^{n-1}}}
\arrow[r, "S_\CF"']
&
\THH_\CF(\spec{S}[\oid{G}{S}])^{C_{p^{n}}}
\end{tikzcd}
\]
and
\[
\begin{tikzcd}[row sep=scriptsize]
\THH(\spec{S}[\oid{G}{S}])
\arrow[r, "F\circ S"]
\arrow[d, "\pr_\CF"']
&
\THH(\spec{S}[\oid{G}{S}])
\arrow[d, "\pr_\CF"]
\\
\THH_\CF(\spec{S}[\oid{G}{S}])
\arrow[r, "F\circ S_\CF"']
&
\THH_\CF(\spec{S}[\oid{G}{S}])
\end{tikzcd}
\]
do not commute without additional assumptions on $G$ and~$\CF$, not even in the stable homotopy category.
It seems to us that the commutativity of the latter diagram in the case $\CF=1$ is used implicitly in the survey~\cite{Madsen}*{proof of Theorem~4.5.2 on pages 283--284}.
The original proof of that theorem~\cite{BHM}*{Theorem~9.13 on page~535} avoids this problem.
\end{warning}

Now consider the following diagram for $n\ge2$.
\[
\hspace{-1ex}\begin{tikzcd}[row sep=0ex]
\ds\bigl(\sh^{ES^1_+}\THH(\spec{S}[\CC])\bigr)^{C_{p^{n  }}}
\arrow[rrd, "\pr_*", shorten <=-1.75em]
\arrow[dddd, "F_{n  }", shift right=1em]
\\
&
&
\ds                  \THH(\spec{S}[\CC])^{C_{p^{n  }}}
\arrow[r, "R_{n  }", shorten <=-.25em]
\arrow[ld, "q_{n  }", shift right]
\arrow[dddd, "F_{n  }", shift right=1em]
&
\ds                  \THH(\spec{S}[\CC])^{C_{p^{n-1}}}
\arrow[l, "S_{n  }", pos=.54, shorten >=-.5em, bend left=18, shorten >=-.5em]
\arrow[dddd, "F_{n-1}"]
\\
&
\mathllap{\ds\hocofib(S_{n  })}
\\
\vphantom{x}
\\
\ds\bigl(\sh^{ES^1_+}\THH(\spec{S}[\CC])\bigr)^{C_{p^{n-1}}}
\arrow[rrd, "\pr_*", shorten <=-1.75em]
\\
&
&
\ds                  \THH(\spec{S}[\CC])^{C_{p^{n-1}}}
\arrow[r, "R_{n-1}", shorten <=-.25em, pos=.4]
\arrow[ld, "q_{n-1}", shift right]
&
\ds                  \THH(\spec{S}[\CC])^{C_{p^{n-2}}}
\arrow[l, "S_{n-1}", pos=.54, shorten >=-.5em, bend left=18, shorten >=-.5em]
\\
&
\mathllap{\ds\hocofib(S_{n-1})}
\arrow[leftarrow, uuuu, crossing over, shift left=2.5em, pos=.6, "f_{n  }"']
\end{tikzcd}
\]
Here $q_n$ is the natural map to the homotopy cofiber.
By \autoref{hofib-R}, the sequence with $\pr_*$ and~$R_n$ is a stable homotopy fibration sequence.
So \autoref{R-and-S} implies that the composition~$q_n\circ\pr_*$ is a $\pi_*$-isomorphism.
As $n$ varies, the maps $R_n$ and~$S_n$ commute with the inclusions~$F_n$.
Therefore there are induced vertical maps~$f_n$ and all the squares in the diagram above commute.
Now let
\[
\C'(\spec{S}[\CC];p)=\holim_{n\ge2}\bigl(\:
\dotsb
\TO
\hocofib(S_{n  })
\TO[f_{n  }]
\hocofib(S_{n-1})
\TO
\dotsb
\:\bigr)
\,.
\]
Then the maps~$q_n$ induce a map
\(
\TF(\spec{S}[\CC];p)
\TO
\C'(\spec{S}[\CC];p)
\)
such that the composition
\begin{equation}
\label{eq:C->C'}
\C (\spec{S}[\CC];p)
\TO
\TF(\spec{S}[\CC];p)
\TO
\C'(\spec{S}[\CC];p)
\end{equation}
is a natural $\pi_*$-isomorphism.

Hence we get a zig-zag of natural transformations of functors $\Cat\TO\CW\CT$
\begin{equation}
\label{eq:alpha}
\TC(\spec{S}[\CC];p)
\TO
\TF(\spec{S}[\CC];p)
\TO
\C'(\spec{S}[\CC];p)
\xleftarrow{\:\simeq\:}
\C (\spec{S}[\CC];p)
\,,
\end{equation}
where the first map comes from~\eqref{eq:TC->TF} and the last map is the natural $\pi_*$\=/isomorphism~\eqref{eq:C->C'} above.
The natural map induced in the stable homotopy category by the zig-zag~\eqref{eq:alpha} is denoted~$\alpha$.

The rest of this section is devoted to the following result, which generalizes from groups to arbitrary small categories a fundamental theorem  due to~\cite{BHM}*{Theorem~5.17 on page~500}; see also~\cite{Rognes-2primary}*{Theorem~1.16 on page~885} and~\cite{Dundas}*{Section~6.4.2.3 on pages~259--261}.

\begin{theorem}
\label{TC(SC)}
For any small category~$\CC$ and any prime~$p$, in the stable homotopy category there is a homotopy cartesian square
\begin{equation}
\label{eq:TC(SC)}
\begin{tikzcd}[column sep=large]
 \TC(\spec{S}[\CC];p)
\arrow[r, "\alpha"]
\arrow[d, "\pr"']
&
  \C(\spec{S}[\CC];p)
\arrow[d, "\pr"]
\\
\THH(\spec{S}[\CC])
\arrow[r, "1-F_1\circ S_1"']
&
\THH(\spec{S}[\CC])
\mathrlap{\,.}
\end{tikzcd}
\end{equation}
\end{theorem}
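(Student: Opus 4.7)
The plan is to adapt the proof of \cite{BHM}*{Theorem~5.17}, which establishes this statement for $\CC$ a discrete group, to arbitrary small categories. Every construction involved will be functorial in $\CC$; in particular, the section $S$ of $R$ produced by Proposition~\ref{R-and-S} is natural in $\CC$, and this is essentially the only new ingredient needed beyond the group case.

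First I would establish the general fiber sequence
\[
\TC(\spec{D};p) \TO \TF(\spec{D};p) \TO[\id - R^*] \TF(\spec{D};p)
\]
valid in the stable homotopy category for any symmetric spectral category~$\spec{D}$, where $R^*$ denotes the self-map of $\TF(\spec{D};p) = \holim_n \THH(\spec{D})^{C_{p^n}}$ induced by the restriction maps $R_n$; this uses that $R$ and $F$ commute as in~\eqref{eq:R-and-F-commute}. This is the standard description of the $\RFcat$-homotopy limit as the homotopy equalizer of the identity and $R^*$ on the $F$-tower $\TF$.

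Next I would specialize to $\spec{D} = \spec{S}[\CC]$ and use Proposition~\ref{R-and-S} to split each $R_n$ via the section $S_n$, naturally in $\CC$ and compatibly with $F$. By Theorem~\ref{hofib-R} the homotopy fiber of $R_n$ is $(\sh^{ES^1_+}\THH(\spec{S}[\CC]))^{C_{p^n}}$, and since $R_n$ admits the section $S_n$ the corresponding sequence splits, so the homotopy cofiber of $S_n$ is identified with the same object. This is precisely the identification yielding the $\pi_*$-isomorphism $\C(\spec{S}[\CC];p) \TO[\simeq] \C'(\spec{S}[\CC];p)$ of~\eqref{eq:C->C'}. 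Assembling these splittings across all $n$, and combining with the fiber sequence of the previous step, exhibits $\TC(\spec{S}[\CC];p)$ as a homotopy pullback in which $\C(\spec{S}[\CC];p)$ appears in the upper right corner and $\THH(\spec{S}[\CC])$ in the remaining two; the right vertical map is the canonical projection $\pr$ from $\C$ to its bottom level, induced by $ES^1 \to \pt$ together with passing to the $n = 0$ component.

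The main obstacle, and the point where care is required, is the identification of the bottom horizontal arrow with $1 - F_1 \circ S_1$. Unwinding the definitions, the self-map $R^*$ acts on the bottom $n=0$ component $\THH(\spec{S}[\CC])$ by first applying the section $S_1 \colon \THH(\spec{S}[\CC]) \to \THH(\spec{S}[\CC])^{C_p}$ used to identify $\hocofib S_n$ with $\hofib R_n$, and then applying the Frobenius $F_1 \colon \THH(\spec{S}[\CC])^{C_p} \to \THH(\spec{S}[\CC])$ coming from the structure maps of the $F$-tower. Thus $\id - R^*$ restricts on the bottom level to $1 - F_1 \circ S_1$, and, after inverting the natural $\pi_*$-isomorphism~\eqref{eq:C->C'}, one recovers the square in the statement. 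This index and sign bookkeeping is exactly the subtle step in \cite{BHM}*{Theorem~5.17}; the computation carries over essentially verbatim to our setting because all of $\THH$, $R$, $F$, $S$, and the fundamental fiber sequence are natural in $\CC$.
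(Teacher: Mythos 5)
Your overall strategy is aligned with the paper's, and you correctly identify the new ingredient beyond the group case, namely the naturality in $\CC$ of the section $S$ from Proposition~\ref{R-and-S}. You also correctly set up the fiber sequence $\TC\to\TF\to\TF$, which is the standard description of the $\RFcat$-homotopy limit. However, the proposal has a genuine gap precisely at the step you call ``the main obstacle,'' and the way you try to dispatch it is not correct.

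Your claim that ``unwinding the definitions, the self-map $R^*$ acts on the bottom $n=0$ component $\THH(\spec{S}[\CC])$ by first applying $S_1$ \ldots and then applying $F_1$'' does not hold as stated. The map $R^*$ on $\TF=\holim_F\THH(\spec{S}[\CC])^{C_{p^n}}$ projects to level $n$ by applying $R_{n+1}$ to the level-$(n+1)$ component; it has nothing directly to do with $S$ or $F_1\circ S_1$. The identification of $F_1\circ S_1$ as the relevant self-map of $\THH(\spec{S}[\CC])$ requires the explicit $\pi_*$-isomorphisms $\rho_n$ from a wedge $(\sh^{E}\T)^{C_{p^n}}\vee\dotsb\vee(\sh^{E}\T)^{C_p}\vee\THH(\spec{S}[\CC])$ to $\THH(\spec{S}[\CC])^{C_{p^n}}$, compatible with both $R$ and $F$, and then a term-by-term rearrangement of $F_1-R_1$ under $\rho_1$, $\rho_0$. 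This is not a matter of notation; it is the computational heart of the argument.

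Moreover, even granted a fiber sequence $\hoeq(\id,F_1\circ S_1)\to\TC(\spec{S}[\CC];p)\to\C(\spec{S}[\CC];p)$, this is weaker than the statement of the theorem. A fiber sequence tells you the homotopy fiber of $\alpha$, but the theorem asserts a specific commuting square in the stable homotopy category, with the canonical projections $\MOR{\pr}{\TC(\spec{S}[\CC];p)}{\THH(\spec{S}[\CC])}$ and $\MOR{\pr}{\C(\spec{S}[\CC];p)}{\THH(\spec{S}[\CC])}$ as the vertical arrows and $1-F_1\circ S_1$ as the bottom arrow. Upgrading the fiber sequence to this square requires an additional argument: one restricts the $\RFcat$-homotopy limit to the full subcategory on $\{0,1\}$ to compare $\TC$ with $\hoeq(R_1,F_1)$, identifies $\hoeq(R_1,F_1)\simeq\hofib(F_1-R_1)$ as a homotopy pullback of $(\sh^{E}\T)^{C_p}$ and $\THH(\spec{S}[\CC])$ over $\THH(\spec{S}[\CC])$ using the $\rho$-decomposition, and then checks that the composite projections match the canonical ones. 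The paper itself flags this explicitly: the fiber-sequence portion follows B\"okstedt--Hsiang--Madsen, but establishing the homotopy cartesian square requires a further argument. Your proposal treats this as ``index and sign bookkeeping,'' but in fact it is the part of the proof that is genuinely new relative to the fiber-sequence statement, and it is missing.
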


\begin{proof}
The proof in~\cite{BHM}*{Section~5} can be made more precise with our point-set models for all the relevant spectra and maps.
In detail, from \autoref{R-and-S} and \autoref{hofib-R} we get inductively $\pi_*$-isomorphisms
\[
\MOR{\rho_n}
{\bigl(\sh^E\T\bigr)^{\Cp{n}}\vee\bigl(\sh^E\T\bigr)^{\Cp{n-1}}\vee\!\dotsb\!\vee\bigl(\sh^E\T\bigr)^{\Cp{}}\vee\THH(\spec{S}[\CC])}
{\THH(\spec{S}[\CC])^{\Cp{n}}}
,
\]
where we use the abbreviation $\sh^E\T=\sh^{ES^1_+}\THH(\spec{S}[\CC])$.
Here $\rho_n$ is the sum of the maps
$\MOR{S^{n-m}\circ \pr_*}{(\sh^E\T)^{C_{p^m}}}{\THH(\spec{S}[\CC])^{C_{p^n}}}$
for $1 \le m \le n$ and
$\MOR{S^n}{\THH(\spec{S}[\CC])}{\THH(\spec{S}[\CC])^{C_{p^n}}}$.

Moreover, the following diagrams commute.
\[
\begin{tikzcd}[column sep=0]
\ds\bigl(\sh^E\T\bigr)^{\Cp{n}}
\arrow[phantom, r, "\vee"]
\arrow[dr, "F_n" description]
&
\ds\bigl(\sh^E\T\bigr)^{\Cp{n-1}}
\arrow[phantom, r, "\vee"]
&
\dotsb
\arrow[phantom, r, "\vee"]
\arrow[phantom, d, "\ddots" pos=.4]
&
\ds\bigl(\sh^E\T\bigr)^{\Cp{}}
\arrow[phantom, r, "\vee"]
\arrow[dr, "\pr_*\!\circ F_1" description]
&
\THH(\spec{S}[\CC])
\arrow[rr, "\rho_n"]
\arrow[d, "F_1\circ S_1" description]
&
\mspace{25mu}
&
\ds\THH(\spec{S}[\CC])^{\Cp{n}}
\arrow[d, "F_n"', shift right=3]
\\
&
\ds\bigl(\sh^E\T\bigr)^{\Cp{n-1}}
\arrow[phantom, r, "\vee"]
&
\dotsb
\arrow[phantom, r, "\vee"]
&
\ds\bigl(\sh^E\T\bigr)^{\Cp{}}
\arrow[phantom, r, "\vee"]
&
\THH(\spec{S}[\CC])
\arrow[rr, "\rho_{n-1}"']
&
&
\ds\THH(\spec{S}[\CC])^{\Cp{n-1}}
\\
\ds\bigl(\sh^E\T\bigr)^{\Cp{n}}
\arrow[phantom, r, "\vee"]
&
\ds\bigl(\sh^E\T\bigr)^{\Cp{n-1}}
\arrow[phantom, r, "\vee"]
\arrow[d, "\id" description, shift right=1em, pos=.535]
&
\dotsb
\arrow[phantom, r, "\vee"]
\arrow[phantom, d, "\vdots" pos=.4]
&
\ds\bigl(\sh^E\T\bigr)^{\Cp{}}
\arrow[phantom, r, "\vee"]
\arrow[d, "\id" description, shift right=.5em, pos=.535]
&
\THH(\spec{S}[\CC])
\arrow[rr, "\rho_n"]
\arrow[d, "\id" description]
&
\mspace{25mu}
&
\ds\THH(\spec{S}[\CC])^{\Cp{n}}
\arrow[d, "R_n"', shift right=3]
\\
&
\ds\bigl(\sh^E\T\bigr)^{\Cp{n-1}}
\arrow[phantom, r, "\vee"]
\arrow[u, bend right, shift left=1em, "\id" description]
&
\dotsb
\arrow[phantom, r, "\vee"]
&
\ds\bigl(\sh^E\T\bigr)^{\Cp{}}
\arrow[phantom, r, "\vee"]
\arrow[u, bend right, shift left=.45em, "\id" description]
&
\THH(\spec{S}[\CC])
\arrow[rr, "\rho_{n-1}"']
\arrow[u, bend right, shift right=.07em, "\id" description]
&
&
\ds\THH(\spec{S}[\CC])^{\Cp{n-1}}
\arrow[u, bend right, shift left=2, "S_n"']
\end{tikzcd}
\]
The downward vertical map on the left in the second diagram projects the first summand to the base point.

For each $n\ge1$ consider the following commutative diagram in $\CW\CT$, with horizontal stable homotopy (co)fibration sequences.
\[
\begin{tikzcd}[column sep=-.5em]
\THH(\spec{S}[\CC])
\arrow[rr, "S^{n+1}"]
\arrow[equal, dr]
\arrow[dd, "F_1\circ S_1" description, near start]
&&
\THH(\spec{S}[\CC])\mathrlap{^{C_{p^{n+1}}}}
\arrow[rr, shorten <=2em]
\arrow[dr, "R_{n+1}", pos=.66]
\arrow[dd, "F_{n+1}" description, near start]
&&
\hocofib{(S^{n+1})}\hspace{-2.5em}
\arrow[dd, "\overline{F}_{n+1}" description, near start]
\arrow[dr, "\overline{R}_{n+1}", pos=.66]
\\
&
\THH(\spec{S}[\CC])
\arrow[rr, crossing over, "S^n"', near start]
&&
\THH(\spec{S}[\CC])\mathrlap{^{C_{p^n}}}
\arrow[rr, crossing over, shorten <=1.1em]
&&
\hocofib{(S^{n})}
\arrow[dd, "\overline{F}_n" description, near start]
\\
\THH(\spec{S}[\CC])
\arrow[rr, "S^n"', near start]
\arrow[equal, dr]
&&
\THH(\spec{S}[\CC])\mathrlap{^{C_{p^n}}}
\arrow[rr, shorten <=1.1em]
\arrow[dr, "R_n", pos=.66]
&&
\hocofib{(S^{n})}\hspace{-2.5em}
\arrow[dr, "\overline{R}_n", pos=.66]
\\
&
\THH(\spec{S}[\CC])
\arrow[rr, "S^{n-1}"']
\arrow[uu, leftarrow, crossing over, "F_1\circ S_1" description, near end]
&&
\THH(\spec{S}[\CC])\mathrlap{^{C_{p^{n-1}}}}
\arrow[rr, shorten <=2em]
\arrow[uu, leftarrow, crossing over, "F_n" description, near end]
&&
\hocofib{(S^{n-1})}
\end{tikzcd}
\]
Passing to homotopy limits over the $R$-maps, we get another commutative diagram with horizontal stable homotopy fibration sequences.
\[
\begin{tikzcd}
\THH(\spec{S}[\CC])
\arrow[r, "S^\infty"]
\arrow[d, "F_1\circ S_1"']
&
\TR(\spec{S}[\CC]; p)
\arrow[r]
\arrow[d, "F"']
&
\hocofib(S^\infty)
\arrow[d, "\overline{F}"']
\\
\THH(\spec{S}[\CC])
\arrow[r, "S^\infty"]
&
\TR(\spec{S}[\CC]; p)
\arrow[r]
&
\hocofib(S^\infty)
\end{tikzcd}
\]
Here
\[
\TR(\spec{S}[\CC];p)=\holim_{n\in\IN}\bigl(\:
\dotsb
\TO
\THH(\spec{S}[\CC])^{C_{p^n}}
\TO[\!R_n]
\THH(\spec{S}[\CC])^{C_{p^{n-1}}}
\TO
\dotsb
\:\bigr)
\]
is defined analogously to $\TF(\spec{S}[\CC]; p)$, replacing the Frobenius $F$-maps with the restriction $R$-maps; compare~\eqref{eq:TF} and~\cite{HM-top}*{(20) on page~56}.
Taking homotopy fixed points for the vertical self-maps, or equivalently their homotopy equalizers with the identity maps, gives the stable homotopy fibration sequence
\[
\hoeq(\id, F_1\circ S_1)
\TO[S^\infty]
\TC(\spec{S}[\CC]; p)
\TO[\alpha]
\hoeq(\id, \overline{F})
\,.
\]
Here we use the well known equivalent ways to compute homotopy limits over~$\RFcat$; e.g., see~\cite{Dundas}*{Section~6.4.2.1 on pages~255--256}.

The $\pi_*$-isomorphisms $\rho_n$ induce compatible $\pi_*$-isomorphisms
\[
\MOR{\overline{\rho}_n}
{(\sh^E\T)^{C_{p^n}} \vee \dots \vee (\sh^E\T)^{C_p}}
{\hocofib(S^n)}
\]
with homotopy limit
\[
\MOR{\overline{\rho}_\infty}
{\holim_{n \in \IN} \bigl(
(\sh^E\T)^{C_{p^n}} \vee \dots \vee (\sh^E\T)^{C_p}
\bigr)}
{\hocofib(S^\infty)}
\,,
\]
and the inclusion of wedge sums into products induces a $\pi_*$-isomorphism
\[
\holim_{n \in \IN} \bigl(
(\sh^E\T)^{C_{p^n}} \vee \dots \vee (\sh^E\T)^{C_p}
\bigr)
\TO
\smallprod_{n=1}^\infty (\sh^E\T)^{C_{p^n}}
\,,
\]
such that $\overline{F}$ corresponds to the product of the $\MOR{F_n}{(\sh^E\T)^{C_{p^n}}}{(\sh^E\T)^{C_{p^{n-1}}}}$.
Hence we have a natural zig-zag of $\pi_*$-isomorphisms between the target $\hoeq(\id, \overline{F})$ of $\alpha$ and
\[
\hoeq\bigl(\id, {\smallprod}_n F_n\bigr)
  = \holim_{n\ge1} \, (\sh^E\T)^{C_{p^n}} \simeq \C(\spec{S}[\CC]; p) \,,
\]
by the usual model for a sequential homotopy limit.

So far, this argument follows \cite{BHM}*{Section~5}, and establishes the left-hand vertical homotopy fiber sequence of~(5.19) on page~502 in that paper.
To construct the homotopy cartesian square~\eqref{eq:TC(SC)}, an additional argument is needed, which we now describe.


Restricting the homotopy limit defining $\TC(\spec{S}[\CC]; p)$ to the full subcategory of $\RFcat$ generated by the objects $\{0,1\} \subset \IN$, we get the middle vertical map in the following commutative diagram in $\CW\CT$.
\begin{equation}
\label{jr-upper}
\begin{tikzcd}
\hoeq(\id, F_1\circ S_1)
\arrow[r, "S^\infty"]
\arrow[equal, d]
&
\TC(\spec{S}[\CC]; p)
\arrow[r, "\alpha"]
\arrow[d, "\res"']
&
\hoeq(\id, \overline{F})
\simeq\C(\spec{S}[\CC]; p)
\arrow[d, shift right=3em]
\\
\hoeq(\id, F_1\circ S_1)
\arrow[r, "S_1"']
&
\hoeq(R_1, F_1)
\arrow[r, "q_1"']
&
\hocofib(S_1)
\simeq(\sh^E\T)^{C_p}
\end{tikzcd}
\end{equation}
The right-hand vertical map factors in the canonical way through $\hocofib(S^\infty)$.
The left-hand lower map is induced by $\MOR{S_1}{\THH(\spec{S}[\CC])}{\THH(\spec{S}[\CC])^{C_p}}$ and the identity on $\THH(\spec{S}[\CC])$.
The right-hand lower map is induced by $\MOR{q_1}{\THH(\spec{S}[\CC])^{C_p}}{\hocofib(S_1)}$ and $\THH(\spec{S}[\CC])\TO\pt$.
Hence the lower row is a stable homotopy fibration sequence, and the right-hand square is homotopy cartesian in $\CW\CT$.

In the stable homotopy category, $\hoeq(R_1, F_1)$ is isomorphic to $\hofib(F_1-R_1)$, and $F_1-R_1$ corresponds under $\rho_1$ and $\rho_0$ to the difference of $\pr_*\!\circ F_1 + F_1\circ S_1$ and $0 + \id$, which equals the difference of $\pr_*\!\circ F_1 + 0$ and $0 + (\id - F_1\circ S_1)$, as maps $(\sh^E\T)^{C_p} \vee \THH(\spec{S}[\CC]) \TO \THH(\spec{S}[\CC])$.
Hence $\hofib(F_1-R_1)$ is also isomorphic to the homotopy pullback of these two maps, and we have a homotopy cartesian square
\begin{equation}
\label{jr-lower}
\begin{tikzcd}[column sep=large]
\hoeq(R_1, F_1)
\arrow[r, "q_1"]
\arrow[d, "R_1"']
&
(\sh^E\T)^{C_p}
\arrow[d, "\pr_*\!\circ F_1"]
\\
\THH(\spec{S}[\CC])
\arrow[r, "\id - F_1\circ S_1"']
&
\THH(\spec{S}[\CC])
\end{tikzcd}
\end{equation}
in the stable homotopy category.

Now combine the homotopy cartesian squares in \eqref{jr-upper} and~\eqref{jr-lower}.
The composition $R_1 \circ \res$ is the canonical projection $\MOR{\pr}{\TC(\spec{S}[\CC]; p)}{\THH(\spec{S}[\CC])}$, and the composition $\hoeq(\id, \overline{F}) \TO \hocofib(S_1) \simeq (\sh^E\T)^{C_p} \TO \THH(\spec{S}[\CC])$ corresponds to the canonical projection $\MOR{\pr}{\C(\spec{S}[\CC]; p)}{\THH(\spec{S}[\CC])}$.
So we obtain the asserted homotopy cartesian square~\eqref{eq:TC(SC)}.
\end{proof}

The following easy consequence of \autoref{TC(SC)} is very important for the proof of the Detection \autoref{detection}.

\begin{corollary}
\label{TC->TxC}
If $G$ is a finite group, then the map
\begin{equation}
\label{eq:(pr,alpha)}
\MOR{(\pr,\alpha)}
{\TC(\spec{S}[G];p)}
{\THH(\spec{S}[G])
\times
\C(\spec{S}[G];p)}
\end{equation}
is $\pi_0^\IQ$-injective and a $\pi_n^\IQ$-isomorphism for all~$n\ge1$.
\end{corollary}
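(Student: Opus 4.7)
The plan is to feed the homotopy cartesian square~\eqref{eq:TC(SC)} of \autoref{TC(SC)} into its associated Mayer-Vietoris long exact sequence of homotopy groups, and then exploit the vanishing of the rationalized positive homotopy groups of~$\THH(\spec{S}[G])$ for finite~$G$ that was recorded in \autoref{THH-free-loop}.

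In detail, write $\T=\THH(\spec{S}[G])$, $\BC=\C(\spec{S}[G];p)$, and $h=1-F_1\circ S_1\colon\T\to\T$. Since the square in \autoref{TC(SC)} is homotopy cartesian in the stable homotopy category, there is a stable homotopy fibration sequence
\[
\TC(\spec{S}[G];p)
\xrightarrow{(\alpha,\pr)}
\BC\times\T
\xrightarrow{\pr-h}
\T
\]
and hence a long exact sequence
\[
\cdots\TO\pi_{n+1}\T\TO\pi_n\TC(\spec{S}[G];p)\xrightarrow{(\alpha_*,\pr_*)}\pi_n\BC\oplus\pi_n\T\xrightarrow{\pr_*-h_*}\pi_n\T\TO\pi_{n-1}\TC(\spec{S}[G];p)\TO\cdots.
\]
Tensoring with~$\IQ$ preserves exactness. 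The key input is \autoref{THH-free-loop}: since $G$ is finite and $\THH(\spec{S})\simeq\spec{S}$, we have $\pi_n(\T)\tensor_\IZ\IQ=0$ for every~$n\ge1$.

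For any~$n\ge1$, both $\pi_n(\T)\tensor_\IZ\IQ$ and $\pi_{n+1}(\T)\tensor_\IZ\IQ$ vanish, so the rationalized sequence collapses to
\[
0\TO\pi_n\TC(\spec{S}[G];p)\tensor_\IZ\IQ\xrightarrow{(\alpha_*,\pr_*)\tensor\IQ}\pi_n\BC\tensor_\IZ\IQ\oplus 0\TO 0,
\]
which shows that $(\pr,\alpha)$ is a $\pi_n^\IQ$-isomorphism. For $n=0$, only $\pi_1(\T)\tensor_\IZ\IQ$ vanishes (while $\pi_0(\T)\tensor_\IZ\IQ$ need not), so the rationalized sequence starts with
\[
0\TO\pi_0\TC(\spec{S}[G];p)\tensor_\IZ\IQ\xrightarrow{(\alpha_*,\pr_*)\tensor\IQ}\pi_0\BC\tensor_\IZ\IQ\oplus\pi_0\T\tensor_\IZ\IQ,
\]
proving that $(\pr,\alpha)$ is $\pi_0^\IQ$-injective.

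There is essentially no obstacle: the only ingredients are \autoref{TC(SC)} (which is the substantive result, already proved above) and the rational vanishing in \autoref{THH-free-loop}. The only mild subtlety is bookkeeping the order of the factors, since $(\pr,\alpha)$ and $(\alpha_*,\pr_*)$ differ by a swap that is harmless for checking injectivity and isomorphism.
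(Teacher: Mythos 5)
Your proof is correct and follows exactly the paper's own argument: apply the Mayer-Vietoris long exact sequence to the homotopy cartesian square from \autoref{TC(SC)} and invoke the rational vanishing $\pi_n(\THH(\spec{S}[G]))\tensor_\IZ\IQ=0$ for $n\ge1$ from \autoref{THH-free-loop}. You have simply written out in full the details that the paper leaves implicit.
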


\begin{proof}
Apply \autoref{TC(SC)} to $\CC=G$ and consider the Mayer-Vietoris sequence coming from the homotopy cartesian square~\eqref{eq:TC(SC)}.
From \autoref{THH-free-loop} we know that $\pi_n(\THH(\spec{S}[G]))\tensor_\IZ\IQ=0$ for all~$n\geq1$ if $G$ is finite, and the claim follows.
\end{proof}

\begin{remark}
In fact, it can be shown that the map~\eqref{eq:(pr,alpha)} is not $\pi^\IQ_{-1}$-injective.
\end{remark}


\section{Equivariant homology theories and Chern characters}
\label{CHERN}

In this section we review the notion of an equivariant homology theory and a very general way of producing examples; see \autoref{equiv-homolgy-theory}.
Next we recall under which conditions equivariant Chern characters can be used to compute rationalized equivariant homology theories; see \autoref{Chern} and \autoref{rel-Chern}.
These results are key to the proof of the Detection \autoref{detection} in \autoref{PROOF-DETECTION}.

Given a group~$G$, a $G$-homology theory~$\CH^G_*$ is a collection of functors $\CH^G_n$, one for every $n\in\IZ$, from the category of $G$-CW-complex pairs to abelian groups, satisfying the obvious $G$-equivariant analogues of the familiar axioms in the non-equivariant setting; see~\cite{L-Chern}*{Section~1, pages~197--198}.
An \emph{equivariant homology theory}~$\CH^?_*$ is a collection of $G$-homology theories~$\CH^G_*$, one for every group~$G$, together with induction isomorphisms
\begin{equation}
\label{eq:ind_alpha}
\MOR[\cong]{\ind_\alpha}{\CH^H_*(X,A)}{\CH^G_*(\ind_\alpha X,\ind_\alpha A)}
\end{equation}
for every group homomorphism $\MOR{\alpha}{H}{G}$ and every $H$-CW-complex pair $(X,A)$ on which $\ker\alpha$ acts freely.
The induction isomorphisms are required to be functorial in~$\alpha$ and to be compatible with conjugation and boundary homomorphisms; see~\cite{L-Chern}*{Section~1, pages~198--199}.

The following result provides a framework for constructing equivariant homology theories.
Recall the action groupoid functor $\MOR{\oid{G}{-}}{\Sets^G}{\Groupoids}$ from \autoref{ACTION-GROUPOID}.
Recall also that, given a functor $\MOR{\BE}{\Or G}{\IN\Sp}$, its continuous left Kan extension $\MOR{\BE_\%}{\Top^G}{\IN\Sp}$ along the inclusion $\Or G\hookrightarrow\Top^G$ is defined as follows.
For any $G$-space~$X$ let $\BE_\%(X)=X_+\sma_{\Or G}\BE$ be the coend of the functor
\[
(\Or G)^\op\times\Or G
\TO\IN\Sp
\,,
\quad
(G/H,G/K)\longmapsto X^H_+\sma\BE(G/K)
\,;
\]
compare \autoref{KAN}.

\begin{theorem}
\label{equiv-homolgy-theory}
Let $\MOR{\BE}{\Groupoids}{\IN\Sp}$ be a functor.
Assume that $\BE$ preserves equivalences, in the sense that $\BE$ sends equivalences of groupoids to $\pi_*$-isomorphisms.
For any group~$G$, consider the composition
\[
\Or G
\xrightarrow{\oid{G}{-}}
\Groupoids
\xrightarrow{\ \BE\ }
\IN\Sp
\,,
\]
and for any $G$-CW-pair $(X,A)$ define
\[
H^G_*(X,A;\BE)
=
\pi_*\bigl(\BE(\oid{G}{-})_\%(X/A)\bigr)
=
\pi_*\Bigl(X/A_+\sma_{\Or G}\BE(\oid{G}{-})\Bigr)
\,.
\]
Then $H^?_*(-;\BE)$ is an equivariant homology theory.
\end{theorem}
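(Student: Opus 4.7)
The plan is to verify separately (a) that for each group $G$ the functor $H^G_*(-;\BE)$ is a $G$\=/homology theory, and (b) that one can construct induction isomorphisms $\ind_\alpha$ for every group homomorphism $\MOR{\alpha}{H}{G}$ satisfying the compatibility axioms.

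For part (a), fix $G$ and set $\BE^G=\BE\circ\oid{G}{-}\colon\Or G\TO\IN\Sp$.
The assignment $(X,A)\longmapsto X/A_+\sma_{\Or G}\BE^G$ is a functor from $G$-CW pairs to naive spectra.
The axioms of a $G$\=/homology theory follow by standard arguments from general properties of continuous left Kan extensions along $\Or G\hookrightarrow\Top^G$; compare \autoref{ASSEMBLY}.
Concretely: $G$\=/homotopy invariance is immediate because $-_+\sma_{\Or G}\BE^G$ sends a $G$\=/homotopy to a homotopy of spectrum maps; excision and the long exact sequence of a pair follow from the fact that $X/A_+\sma_{\Or G}\BE^G$ preserves pushouts along $G$\=/cofibrations and turns mapping cones into stable cofiber sequences (since smashing over $\Or G$ with a fixed functor to $\IN\Sp$ is a left adjoint in each variable of a Quillen pair); the disjoint union axiom follows because coends and homotopy groups of naive spectra commute with arbitrary wedges. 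No hypothesis on $\BE$ beyond functoriality enters here, since $\BE^G$ is already a functor $\Or G\TO\IN\Sp$.

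For part (b), fix $\MOR{\alpha}{H}{G}$. The key point is the construction of a natural transformation of functors $\Sets^H\TO\Groupoids$
\[
\MOR{\hm_\alpha}{\oid{H}{-}}{\oid{G}{\ind_\alpha(-)}}
\,,
\]
which on an $H$\=/set $S$ sends $s\in S=\obj\oid{H}{S}$ to $[1,s]\in G\times_H S=\obj\oid{G}{\ind_\alpha S}$ and a morphism $h\in\mor_{\oid{H}{S}}(s,hs)$ to $\alpha(h)\in\mor_{\oid{G}{\ind_\alpha S}}([1,s],[1,hs])=\mor_{\oid{G}{\ind_\alpha S}}([1,s],[\alpha(h),s])$.
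Composing with $\BE$ and passing to continuous left Kan extensions, $\hm_\alpha$ induces a natural map of spectra
\[
\MOR{\ind_\alpha}{X/A_+\sma_{\Or H}\BE^H}{\ind_\alpha(X/A)_+\sma_{\Or G}\BE^G}
\]
for every $H$-CW pair $(X,A)$, and hence a natural homomorphism $\ind_\alpha$ on homotopy groups.
The central claim is that $\ind_\alpha$ is an isomorphism when $\ker\alpha$ acts freely on $(X,A)$.
By the long exact sequence, the five lemma, the disjoint union axiom, and a standard cellular induction along the skeleta of $(X,A)$, this reduces to the case $(X,A)=(H/K,\emptyset)$ for a subgroup $K\leq H$ with $K\cap\ker\alpha=1$.
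In that case $\ind_\alpha(H/K)=G/\alpha(K)$, and the map
\[
\MOR{\hm_\alpha}{\oid{H}{(H/K)}}{\oid{G}{(G/\alpha(K))}}
\]
is an equivalence of groupoids precisely because $\alpha$ restricts to an isomorphism $K\cong\alpha(K)$ and every object of the target is isomorphic to $[1,eK]$.
Since $\BE$ sends equivalences of groupoids to $\pi_*$\=/isomorphisms by assumption, this yields the desired isomorphism after identifying $H/K_+\sma_{\Or H}\BE^H\cong\BE(\oid{H}{(H/K)})$ and likewise for $G$, via~\eqref{eq:G/H}.

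The main obstacle is the reduction to orbits in the last paragraph, which requires verifying that $\hm_\alpha$ is compatible with disjoint unions, cofibrations, and the long exact sequence well enough to invoke the five lemma.
This is the technically delicate step, but it is by now routine and parallels the argument in~\cite{Davis-Lueck}*{Section~4}.
Functoriality of $\ind_\alpha$ in $\alpha$, compatibility with inner conjugation (where $\hm_{\inn(g)}$ becomes the identity natural transformation up to a canonical $2$\=/cell in $\Groupoids$ which maps to a homotopy after applying $\BE$), and compatibility with the boundary maps of pairs are all formal consequences of the naturality of $\hm_\alpha$ and of the construction of the long exact sequence, completing the verification.
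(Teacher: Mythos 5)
Your proposal is correct and takes essentially the same route as the paper (and the reference \cite{LR-survey} it cites): the functor $\hm_\alpha$ you construct is precisely the paper's $\overline{\alpha}$ from~\eqref{eq:over-alpha}, the functor $\ind_\alpha\colon\Or H\TO\Or G$, $H/K\mapsto G/\alpha(K)$, is the same, and the key observation that $\overline{\alpha}$ restricts to an equivalence of connected groupoids $\oid{H}{\,(H/K)}\TO\oid{G}{\,(G/\alpha(K))}$ when $K\cap\ker\alpha=1$ (whence $\BE$ preserving equivalences gives the $\pi_*$-isomorphism on orbits) is exactly where the hypothesis on $\BE$ enters. You spell out the cellular-induction reduction to orbits, which the paper defers to the reference, while being a bit terser than the paper on the Kan-extension bookkeeping — namely the comparison map $\BE(\oid{G}{\ind_\alpha-})_\%\TO\BE(\oid{G}{-})_\%\circ\ind_\alpha$, which the paper packages as an instance of \autoref{Marco's-trick}; that is worth making explicit, but it is a matter of presentation rather than a gap.
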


\begin{proof}
This is proved in~\cite{LR-survey}*{Proposition~157 on page~796}.
We explain here in a more conceptual way how the induction homomorphisms~\eqref{eq:ind_alpha} are defined.
Let $\MOR{\alpha}{H}{G}$ be a group homomorphism.
Define a functor $\MOR{\ind_\alpha}{\Or H}{\Or G}$ by $H/K\longmapsto G/\alpha(K)$.
Consider the following diagram.
\[
\begin{tikzcd}[column sep=tiny]
\Or H
\arrow[hook, rrr]
\arrow[drr, "\ind_\alpha" description]
\arrow[ddr, "\oid{H}{-}"']
&
&
&
\Top^H
\arrow[drr, "\ind_\alpha" description]
\\
\null
\arrow[Rightarrow, rr, shorten <=1.95em, shorten >=.05em, near end, "\overline{\alpha}"']
&
&
\Or G
\arrow[hook,rrr]
\arrow[dl, near start, "\oid{G}{-}"]
\arrow[Rightarrow, ur, shorten <=.75em, shorten >=.75em, "\nu"]
&
&
&
\Top^G
\\
&
\mathclap{\Groupoids}
\arrow[rrr, shorten <=2.5em, shorten >=1em, "\BE"', pos=.58]
&
&
&
\mathclap{\IN\Sp}
\end{tikzcd}
\]
Here $\overline{\alpha}$ is the natural transformation defined in~\eqref{eq:over-alpha}, and $\nu$ is the natural isomorphisms $G\times_H H/K\cong G/\alpha(K)$.
Then we get the following natural transformations of functors $\Top^H\TO\IN\Sp$
\[
\BE(\oid{H}{-})_\%
\TO
\BE(\oid{G}{\ind_\alpha-})_\%
\TO
\BE(\oid{G}{-})_\%\circ\ind_\alpha
\,.
\]
The first natural transformation is induced by~$\overline{\alpha}$, the second one by the version of \autoref{Marco's-trick} for topological categories.
Passing to homotopy groups, the composition of these natural transformations defines the induction homomorphisms~\eqref{eq:ind_alpha}.
\end{proof}

Next we want to recall some of the main results of~\cite{L-Chern} about equivariant Chern characters.
We first introduce some more notation.

Let $\CH^?_*$ be an equivariant homology theory.
Given a group~$G$, consider the category~$\Sub G(\Fin)$ defined right after \autoref{Whitehead}.
Then for any $n\in\IZ$ we get a functor
\begin{equation}
\label{eq:SubG->Ab}
\MOR{\CH^G_n(G/-)}{\Sub G(\Fin)}{\Ab}
\,,\qquad
H\longmapsto\CH^G_n(G/H)
\,;
\end{equation}
compare~\cite{L-Chern}*{paragraph after Lemma~3.7 on page~207}.
If $\CH^?_*=H^?_*(-;\BE)$ for some functor $\MOR{\BE}{\Groupoids}{\IN\Sp}$ as in \autoref{equiv-homolgy-theory}, then this is also proved in~\cite{LRV}*{Lemma~3.11 on page~152}.
Notice that in this case we have $H^G_*(G/H;\BE)\cong\pi_*\BE(H)$, where we identify any group~$H$ with the groupoid~$\oid{H}{\pt}$.

We denote by $\FGI$ the category of finite groups and injective group homomorphisms.
Given any equivariant homology theory~$\CH^?_*$ and any $n\in\IZ$ we get a functor
\begin{equation}
\label{eq:cov-obj}
\MOR{M_*}{\FGI}{\Ab}
\,,\qquad
G\longmapsto\CH^G_n(\pt)
\,,
\end{equation}
where for every injective group homomorphism $\MOR{\alpha}{H}{G}$ we define~$M(\alpha)$ as the composition
\begin{equation}
\label{eq:cov-mor}
M_*(H)=\CH^H_n(\pt)
\xrightarrow{\ind_\alpha}
\CH^G_n(G/\alpha(H))
\xrightarrow{\CH^G_n(\pr)}
\CH^G_n(\pt)=M_*(G)
\,.
\end{equation}
Notice that, if $\CH^?_*=H^?_*(-;\BE)$ for some functor $\MOR{\BE}{\Groupoids}{\IN\Sp}$ as in \autoref{equiv-homolgy-theory}, then the functor~$M_*$ coincides with the composition
\begin{equation}
\label{cov-special}
\FGI
\xhookrightarrow{\quad}
\Groupoids
\xrightarrow{\ \BE\ }
\IN\Sp
\xrightarrow{\:\pi_n\:}
\Ab
\,,
\qquad
G\longmapsto\pi_n\BE(G)
\,.
\end{equation}

A \emph{Mackey functor} consists of a pair of functors
\[
\MOR{M_*}{\FGI}{\Ab}
\AND
\MOR{M^*}{(\FGI)^\op}{\Ab}
\]
that agree on objects, i.e., $M_*(G)=M^*(G)$ for every finite group~$G$, and that satisfy the following axioms.
\begin{enumerate}
\item
If $\MOR{\alpha}{H}{G}$ is an isomorphism, then $M^*(\alpha)$ is the inverse of~$M_*(\alpha)$.
\item
If $\MOR{\alpha=\conj_g}{G}{G}$, $x\longmapsto gxg^{-1}$, for some~$g\in G$, then \[M_*(\conj_g)=\id_{M(G)}\,.\]
\item (Double coset formula).
If $H$ and~$K$ are subgroups of~$G$, 
let $\CJ=K\backslash G/H$.
For every $j\in\CJ$, choose a representative~$g_j\in j=KgH$, and define $L_j=H\cap g_j^{\smash{-1}}Kg_j$ 
and $\MOR{c_j=\conj_{g_j}}{L_j}{K}$.
Then
\[
M^*(K\hookrightarrow G)\circ M_*(H\hookrightarrow G)
=
\sum_{j\in\CJ} M_*(c_j)\circ M^*(L_j\hookrightarrow H)
\,.
\]
\end{enumerate}

We say that an equivariant homology theory~$\CH^?_*$ \emph{has a Mackey structure} if, for every $n\in\IZ$, the functor defined in \eqref{eq:cov-obj} and~\eqref{eq:cov-mor} extends to a Mackey functor.

A fundamental example of a Mackey functor is the \emph{Burnside ring}~$A$; see for example~\cite{tD-transf}*{(2.18) on pages~18--19}.
For every finite group~$G$, $A(G)$ is the Grothendieck ring of the set of isomorphism classes of finite $G$-sets, with addition given by the disjoint union and multiplication given by the cartesian product.
As an abelian group, $A(G)$ is the free abelian group with basis given by the isomorphism classes of the orbits~$G/H$.
There is an injective ring homomorphism
\[
\MOR{\varphi_G}{A(G)}{\smallprod_{(H)}\IZ}
\,,
\]
where the product is indexed over the conjugacy classes of subgroups of~$G$.
The composition of~$\varphi_G$ with the projection to the factor indexed by the conjugacy class of a subgroup~$H$ maps $S$ to~$\#(S^H)$, the number of elements in the $H$-fixed point set.
Moreover, $\varphi_G\tensor_\IZ\IQ$ is an isomorphism.
Now let $e_G\in\prod_{(H)}\IZ$ be the idempotent that corresponds to the factor indexed by the conjugacy class~$(G)$.
We then define the idempotent
\[
\Theta_G=\Bigl(\varphi_G\tensor_\IZ\IQ\Bigr)^{-1}(e_G\tensor1)
\in A(G)\tensor_\IZ\IQ
\]
in the rationalized Burnside ring of~$G$.

The Burnside ring~$A$ has a natural action on any other Mackey functor~$M$, in the precise sense of~\cite{L-Chern}*{Section~7, pages~222--223}.
In particular, the idempotent $\Theta_G\in A(G)\tensor_\IZ\IQ$ acts on~$M(G)\tensor_\IZ\IQ$ for every~$G$.

We are now ready to formulate the following result.

\begin{theorem}
\label{Chern}
Keep the notation and assumption of \autoref{equiv-homolgy-theory}.
Assume that the equivariant homology theory $H^?_*(-;\BE)$ has a Mackey structure, i.e., for every $n\in\IZ$ the composition~\eqref{cov-special} extends to a Mackey functor.
Then for every proper $G$-CW-complex~$X$ there are isomorphisms
\begin{equation}
\label{eq:Chern}
\begin{tikzcd}
\ds
\adjustlimits
\bigoplus_{s+t=n}
\bigoplus_{(H)\in(\Fin)}
H_s\bigl(X^H     /Z_G H;\IQ\bigr)\tensor_{\IQ[W_G H]}S_t(H;\BE)
\arrow[d, "\mathit{h}\,"']
\\
\ds
\bigoplus_{s+t=n}
H_s\bigl(X^{\ts-}/Z_G -;\IQ\bigr)\tensor_{\IQ[\Sub G(\Fin)]}\Bigl(\pi_t\BE(-)\tensor_\IZ\IQ\Bigr)
\arrow[d, "\mathit{c}\,"']
\\
\ds
H^G_n(X;\BE)\tensor_\IZ\IQ
\mathrlap{\,.}
\end{tikzcd}
\end{equation}
Their composition $\mathit{ch}=\mathit{c}\circ\mathit{h}$ is called Chern character.
Both $\mathit{h}$ and~$\mathit{c}$ are natural in~$X$, and $\mathit{h}$ respects the outer sum decompositions.
Here, for every finite subgroup $H$ of~$G$,
\begin{equation}
\label{eq:S_t(H;E)}
S_t(H;\BE)
=
\coker\Biggl(\;
{\bigoplus_{K\lneqq H}
 \pi_t\BE(K)\tensor_\IZ\IQ}
\TO
{\pi_t\BE(H)\tensor_\IZ\IQ}
\Biggr)
.
\end{equation}
Moreover, for every finite cyclic subgroup $C$ of~$G$ there is a natural isomorphism
\begin{equation}
\label{eq:idempotent}
S_t(C;\BE)
\cong
\Theta_C\Bigl(
\pi_t\BE(C)\tensor_\IZ\IQ
\Bigr)
.
\end{equation}
\end{theorem}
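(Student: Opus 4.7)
The plan is to deduce \autoref{Chern} from the general equivariant Chern character machinery of the first author in~\cite{L-Chern}. The assumption that $H^?_*(-;\BE)$ carries a Mackey structure (i.e., that the covariant functor~\eqref{cov-special} extends to a Mackey functor) is precisely the input required by that framework, and the restriction to proper $G$-CW-complexes ensures that only finite isotropy contributes, so all Burnside ring idempotents in play act on the relevant coefficient groups.

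First, for the isomorphism~$\mathit{c}$, I would invoke the main theorem of~\cite{L-Chern} (Theorem~0.3 and the construction preceding it), which produces for any proper equivariant homology theory with a Mackey structure a natural transformation from the Bredon-type homology on the right of~$\mathit{c}$ in~\eqref{eq:Chern} to~$H^G_n(X;\BE)\tensor_\IZ\IQ$, and shows it is an isomorphism. The core of the argument compares both sides on orbits~$G/H$ with~$H$ finite, where the claim reduces to a computation of $\pi_t\BE(H)\tensor_\IZ\IQ$ via the Mackey structure, and then extends along skeletal filtrations using an Atiyah-Hirzebruch style comparison. Naturality in~$X$ is built into the construction.

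Second, for the isomorphism~$\mathit{h}$, the point is to decompose the tensor product over~$\IQ[\Sub G(\Fin)]$ as a direct sum indexed by conjugacy classes of finite subgroups~$(H)$, with the $(H)$-summand contributing only its ``Artin defect'' $S_t(H;\BE)$ of~\eqref{eq:S_t(H;E)}. I would obtain this by Möbius inversion on~$\Sub G(\Fin)$, filtering by subgroup order: for each~$H$, the cokernel defining $S_t(H;\BE)$ isolates the genuinely new contribution at level~$H$, and the residual $W_GH=N_GH/(Z_GH\cdot H)$-action matches the tensor factor $\tensor_{\IQ[W_GH]}$ on the left of~\eqref{eq:Chern}. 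This splitting is carried out, in essentially this form, in~\cite{L-Chern}*{Sections~5--7}; the fact that $\mathit{h}$ respects the outer sum decomposition is visible from the construction because the Möbius inversion is applied degreewise in the coefficient functor.

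Finally, for~\eqref{eq:idempotent}, I would use that the idempotent~$\Theta_C$ in the rationalized Burnside ring~$A(C)\tensor_\IZ\IQ$ is, by construction, the projection onto the factor of $\smallprod_{(D\le C)}\IQ$ indexed by~$(C)$ itself. Via the Mackey action of~$A$ on $\pi_t\BE(-)\tensor_\IZ\IQ$, the image of~$\Theta_C$ is therefore exactly the complement (in the sense of splitting off) of the span of images of inductions from proper subgroups~$D\lneqq C$; by the double coset formula this complement is canonically isomorphic to~$S_t(C;\BE)$. This is precisely the content of~\cite{L-Chern}*{Lemma~7.4 and Theorem~0.3}. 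The main technical task is to verify that the constructions and Mackey-module structures match those in~\cite{L-Chern} at the level of natural isomorphisms; since we have already set up~$H^?_*(-;\BE)$ via \autoref{equiv-homolgy-theory} with the induction homomorphisms defined through $\overline\alpha$ and~$\nu$, this compatibility is direct and the theorem follows.
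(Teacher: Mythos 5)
Your proposal is correct and follows essentially the same route as the paper: the theorem is deduced from the equivariant Chern character machinery of \cite{L-Chern}, with the Mackey structure as the input, $\mathit{c}$ built from the induction structure and shown to be an isomorphism via the filtration/flatness argument, $\mathit{h}$ obtained by splitting off the Artin defect summands indexed by conjugacy classes of finite subgroups, and \eqref{eq:idempotent} obtained from the action of the rationalized Burnside ring. Two small corrections to align with the paper's actual proof: the key intermediate fact the paper isolates is that the Mackey structure makes $\pi_t\BE(-)\tensor_\IZ\IQ$ a projective (hence flat) $\IQ[\Sub G(\Fin)]$-module (\cite{L-Chern}*{Theorem~5.2}), which is what simultaneously yields $\mathit{h}$ (together with \cite{L-Chern}*{(2.12)}) and allows the interchange of homology and tensor product needed for \cite{L-Chern}*{Theorem~4.4} to prove $\mathit{c}$ an isomorphism; and the isomorphism \eqref{eq:idempotent} is proved in \cite{LR-cyclic}*{Lemma~7.4}, not in \cite{L-Chern}.
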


\begin{proof}
The existence of a Mackey structure implies that $\pi_t\BE(-)\tensor_\IZ\IQ$ is a projective and hence flat $\IQ[\Sub G(\Fin)]$-module by~\cite{L-Chern}*{Theorem~5.2 on page~216}.
The same theorem together with~\cite{L-Chern}*{(2.12) on page~205} yields the isomorphism~$\mathit{h}$.
We remark that the definition of~$\mathit{h}$ depends on choices~\cite{L-Chern}*{(2.10) on page~204}, and so $\mathit{h}$ is a priori not natural in~$\BE$.

The map~$\mathit{c}$ is constructed in~\cite{L-Chern}*{Section~4, in particular Lemma~4.3(b) on page~211}.
It is defined for any equivariant homology theory, and its definition only depends on the induction structure (no choices).
The flatness condition above allows to interchange homology and tensor product in the source of~$\mathit{c}$.
Then \cite{L-Chern}*{Theorem~4.4 on page~213}, together with the isomorphism at the beginning of that proof, implies that $\mathit{c}$ is an isomorphism.

Finally, the isomorphism~\eqref{eq:idempotent} is proved in~\cite{LR-cyclic}*{Lemma~7.4 on page~622}.
\end{proof}

\begin{example}
\label{EG(F)-computation}
If $X=EG(\CF)$ for a family $\CF$ of finite subgroups of~$G$, then the maps
\[
EG(\CF)^H/Z_GH \FROM EZ_GH\timesd_{Z_GH} EG(\CF)^H \TO EZ_GH/Z_GH
\]
induce an isomorphism
\[
H_s\bigl(EG(\CF)^H/Z_GH;\IQ\bigr)
\cong
H_s(BZ_GH;\IQ)
\,;
\]
compare~\cite{LRV}*{proof of Lemma~8.1 on page~164}.
Under the Chern character isomorphisms~\eqref{eq:Chern}, an inclusion $\CF\subseteq\CF'$ of families of finite subgroups induces the inclusion of the summands indexed by $(H)\in\CF$, which in particular is always injective.
\end{example}

\begin{example}
\label{S_t(H;K)-computation}
If $R$ is an arbitrary discrete ring and $\BE=\K^{\ge0}(R[\oid{G}{-}])$, then $S_t(H;\K^{\ge0}(R[\oid{G}{-}]))=0$ for any $t\ge0$ and any non-cyclic finite subgroup~$H$ of~$G$ by~\cite{LR-cyclic}*{Lemma~7.2 on page~621}.
In particular, the inclusion $\FCyc\subseteq\Fin$ induces an isomorphism
\[
H^G_n\bigl(EG(\FCyc);\K^{\ge0}(R[\oid{G}{-}])\bigr)\tensor_\IZ\IQ
\TO[\cong]
H^G_n\bigl(EG(\Fin);\K^{\ge0}(R[\oid{G}{-}])\bigr)\tensor_\IZ\IQ
\,.
\]
\end{example}

Next we investigate the naturality of \autoref{Chern} in the functor~$\BE$.
Because of the subtleties explained in \autoref{tau-sigma-Mackey}, we also need to consider natural transformations that do not necessarily respect the Mackey structures.

\begin{addendum}
\label{rel-Chern}
Keep the notation and assumption of \autoref{equiv-homolgy-theory}.
Consider another equivalence-preserving functor $\MOR{\BE'}{\Groupoids}{\IN\Sp}$ and a natural transformation $\MOR{\tau}{\BE}{\BE'}$.
Assume that both $H^?_*(-;\BE)$ and~$H^?_*(-;\BE')$ have Mackey structures.
\begin{enumerate}
\item
\label{i:tau-ind}
The induced homomorphism
\[
\MOR{H^G_n(X;\tau)\tensor_\IZ\IQ}%
    {H^G_n(X;\BE )\tensor_\IZ\IQ}%
    {H^G_n(X;\BE')\tensor_\IZ\IQ}
\]
is compatible with the isomorphisms~$\mathit{c}$ in~\eqref{eq:Chern}, even when $\tau$ does not respect the Mackey structures.

\item
\label{i:tau-Chern}
If $\tau$ induces a natural transformation of Mackey functors, then there are choices of the maps~$\mathit{h}$  in~\eqref{eq:Chern} for which $H^G_n(X;\tau)\tensor_\IZ\IQ$ is compatible with the isomorphisms~$\mathit{ch}=\mathit{c}\circ\mathit{h}$ for any $\FCyc$-G-CW-complex~$X$.

\item
\label{i:tau-Chern-inj}
If $\tau$ induces a natural transformation of Mackey functors, and the homomorphism
\(
{\pi_t\BE (C)\tensor_\IZ\IQ}
\TO
{\pi_t\BE'(C)\tensor_\IZ\IQ}
\)
is injective for all finite cyclic subgroups $C$ of~$G$ and all $t\in\IZ$, then $H^G_n(X;\tau)\tensor_\IZ\IQ$ is injective for any $\FCyc$-G-CW-complex~$X$.
\end{enumerate}
\end{addendum}

\begin{proof}
First of all, notice that $\tau$ induces a natural transformation of equivariant homology theories $H^G(-;\BE)\TO H^G(-;\BE')$ which is compatible with the induction structures.
Therefore it induces a natural transformation between the functors $\Sub G(\Fin)\TO\Ab$ defined in~\eqref{eq:SubG->Ab}.
Since the definition of the map~$\mathit{h}$ only depends on the induction structure, \ref{i:tau-ind} follows.

Furthermore, since also definition~\eqref{eq:S_t(H;E)} only depends on the induction structure, $\tau$ always induces a map~$S_t(H;\BE)\TO S_t(H;\BE')$.
Now assume that $X$ is a $\FCyc$-G-CW-complex~$X$.
Then in the source of~$\mathit{h}$ we only have to consider finite cyclic subgroups $H=C$, and in this case we have the isomorphism~\eqref{eq:idempotent}.

If we additionally assume that $\tau$ induces a natural transformation of Mackey functors, then we get a commutative diagram
\begin{equation}
\label{eq:tau-Chern-inj}
\begin{tikzcd}
\ds\Theta_C\smash{\Bigl(
\pi_t\BE(C)\tensor_\IZ\IQ
\Bigr)}
\arrow[r, hook]
\arrow[d]
&
\ds\pi_t\BE(C)\smash{\tensor_\IZ\IQ}
\arrow[r, two heads]
\arrow[d]
&
S_t(C,\BE)
\arrow[d]
\\
\ds\Theta_C\smash{\Bigl(}
\pi_t\BE'(C)\tensor_\IZ\IQ
\smash{\Bigr)}
\arrow[r, hook]
&
\ds\pi_t\BE'(C)\tensor_\IZ\IQ
\arrow[r, two heads]
&
S_t(C,\BE')
\end{tikzcd}
\end{equation}
for any finite cyclic subgroup~$C$.
The composition of the horizontal maps is an isomorphism.
The inverse of this isomorphism, followed by the inclusion of the image of~$\Theta_C$, then gives compatible choices of the sections needed in~\cite{L-Chern}*{(2.10) on page~204} to define the map~$\mathit{h}$.
This proves~\ref{i:tau-Chern}.

Diagram~\eqref{eq:tau-Chern-inj} also shows that the two vertical maps on the outside are injective provided that the one in the middle is injective.
Then \ref{i:tau-Chern-inj} follows because the Weyl group~$W_G H$ is finite for every finite group~$H$, hence $\IQ[W_G H]$ is semisimple, and the functor $M\tensor_{\IQ[W_G H]}-$ preserves injectivity for every~$M$.
\end{proof}


\section{Enriched categories of modules over a category}
\label{MODULES}

The goal of this technical section is to show that the formalism of modules over a category (for example, see~\cite{L-LNM}*{Section~9, page~162}) extends to the enriched setting.
For our main applications in the following sections, it would be enough to consider spectral categories only.
But since we actually need two different models of spectral categories (with enrichments in either $\Sigma\Sp$ or~$\Gamma\CS_*$),
and since the proofs are not any easier in these special cases, we proceed in complete generality.
In the case of pre-additive categories, the constructions in this section can be simplified, because products and coproducts are the same.
Since this is not the case in categories of spectra, we resort to machinery from enriched category theory, which we now recall.
The standard modern reference for this material (and much more) is~\cite{Kelly}.

Fix a closed symmetric monoidal category $(\CV,\odot,\Ione)$; compare \autoref{ENRICHED}.
We always assume that $\CV$ is both complete and cocomplete, as in all the examples in~\ref{ENRICHED}.
We denote by~$\CV\D\Cat$ the 2-category of small $\CV$-categories, $\CV$-functors, and $\CV$-natural transformations.
To distinguish between $\CV$-categories and $\Sets$-categories, we call the latter ordinary categories and denote them $\CC$, $\CD$, \ldots, whereas we use the symbols $\SX$, $\SY$, \ldots\ to denote $\CV$-categories.

Let
\[
\MOR{V=\CV(\Ione,-)}{\CV}{\Sets}
\]
be the underlying set functor.
Notice that in some of our examples ($\Cat$, $\Sigma\Sp$, $\Gamma\CS_*$) the functor $V$ is neither faithful nor conservative.
Since we are assuming that $\CV$ is cocomplete, $V$ has a left adjoint
\[
\MOR{\Ione[-]}{\Sets}{\CV}
\,,\qquad
S\longmapsto\Ione[S]=\smallcoprod_S\Ione
\,.
\]
Both functors $\Ione$ and~$V$ are lax monoidal, and base change along them (see \autoref{ENRICHED}) induces a 2-adjunction
\[
\begin{tikzcd}[column sep=large]
\CV\D\Cat
\arrow[r, shift left, "V"]
&
\Cat\,;
\arrow[l, shift left, "\Ione{[-]}"]
\end{tikzcd}
\]
compare~\cite{Kelly}*{Section~2.5 on page~35}.

We make extensive use of the $\CV$-categories of functors $\CV\D\fun(\Ione[\CC],\SX)$, where $\CC$ is an ordinary small category, and $\SX$ is a $\CV$-category.
To simplify the notation we simply write $\CV\D\fun(\CC,\SX)$ in this case, and  this $\CV$-category can be explicitly described as follows.
The objects are precisely the functors from $\CC$ to the underlying category~$V\SX$, i.e., $\obj\CV\D\fun(\CC,\SX)=\obj\fun(\CC,V\SX)$.
Given two such functors $X$ and~$Y$, the $\CV$-object of morphisms from~$X$ to~$Y$ is defined as the end in~$V\CV$ of the functor
\[
\CC^\op\times\CC
\xrightarrow{X^\op\times Y}
V\SX^\op\times V\SX
\xrightarrow{\SX(-,-)}
V\CV
\,.
\]
Notice that
\begin{equation}
\label{eq:VVfun}
V\bigl(\CV\D\fun(\CC,\SX)\bigr)\cong\fun(\CC,V\SX)
\,.
\end{equation}

Given a functor~$\MOR{\alpha}{\CC}{\CD}$, the functor $\MOR{\res_\alpha}{\fun(\CD,V\SX)}{\fun(\CC,V\SX)}$ given by restriction along~$\alpha$ is clearly a $\CV$-functor
\begin{equation}
\label{res-enriched}
\MOR{\res_\alpha}{\CV\D\fun(\CD,\SX)}{\CV\D\fun(\CC,\SX)}
\,.
\end{equation}

Now assume that $\SX$ is $\CV$-complete and $\CV$-cocomplete, which is equivalent to assuming that $\SX$ is $\CV$-tensored and $\CV$-cotensored and that the underlying category~$V\SX$ is complete and cocomplete; see~\cite{Kelly}*{Theorem~3.73 on page~54}.
Then the assumptions on~$\SX$ imply that the left Kan extension along~$\alpha$
\[
\MOR{\Lan_\alpha}{\fun(\CC,V\SX)}{\fun(\CD,V\SX)}
\,,\qquad
X\longmapsto
\Bigl(d\mapsto\CD(\alpha(-),d)\tensor_\CC X(-)\Bigr)
\]
exists and agrees with the $\CV$-enriched left Kan extension, and therefore defines a $\CV$-functor
\[
\MOR{\Lan_\alpha}{\CV\D\fun(\CC,\SX)}{\CV\D\fun(\CD,\SX)}
\]
which is left 2-adjoint to the $\CV$-functor~$\res_\alpha$ in~\eqref{res-enriched}; compare~\cite{Kelly}*{second paragraph on page~50}.

Next, we recall some basic facts about Kan extensions that are needed later.
The first two are well-known, and the third is also used in~\cite{RV}*{proof of Proposition~8.3 on page~1528}.

\begin{fact}
\label{LanLan}
Given functors $\MOR{\alpha}{\CB}{\CC}$ and $\MOR{\beta}{\CC}{\CD}$, then there is a natural isomorphism $\Lan_{\beta\circ\alpha}\cong\Lan_\beta\circ\Lan_\alpha$.
\end{fact}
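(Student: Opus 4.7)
The plan is to deduce the natural isomorphism purely formally from the adjunction property of left Kan extensions, without ever writing down an explicit coend formula. Recall from the paragraph just before the fact that for $\SX$ both $\CV$-complete and $\CV$-cocomplete, the $\CV$-functor $\Lan_\alpha$ is left $2$-adjoint to $\res_\alpha$, so that for every $\MOR{\gamma}{\CB}{\CD}$ one has a $2$-adjunction $\Lan_\gamma\dashv\res_\gamma$ on the $\CV$-categories of functors.

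First I would observe that restriction is strictly functorial in the source category: directly from the definition of $\res_\alpha$ and $\res_\beta$ as precomposition, one has the equality (not just a natural isomorphism) $\res_{\beta\circ\alpha}=\res_\alpha\circ\res_\beta$ as $\CV$-functors $\CV\D\fun(\CD,\SX)\TO\CV\D\fun(\CB,\SX)$. Next, since composition of left adjoints is a left adjoint to the composition of the right adjoints in the reverse order, the $\CV$-functor $\Lan_\beta\circ\Lan_\alpha$ is left $2$-adjoint to $\res_\alpha\circ\res_\beta=\res_{\beta\circ\alpha}$. But $\Lan_{\beta\circ\alpha}$ is also left $2$-adjoint to $\res_{\beta\circ\alpha}$ by the general statement recalled above. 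Uniqueness of adjoints (in the enriched $2$-categorical sense, cf.\ \cite{Kelly}*{Section~1.11}) then produces the desired $\CV$-natural isomorphism
\[
\Lan_{\beta\circ\alpha}\cong\Lan_\beta\circ\Lan_\alpha
\,.
\]

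This argument is almost purely formal; the only thing to be a little careful about is the hypothesis on $\SX$. The existence of each Kan extension in sight, as well as its coincidence with the $\CV$-enriched Kan extension and its left $2$-adjointness to restriction, all rely on $\SX$ being $\CV$-complete and $\CV$-cocomplete, which was the blanket assumption under which left Kan extensions were introduced in the paragraph preceding the fact. No other obstacle arises, and no explicit calculation with ends or coends is required.
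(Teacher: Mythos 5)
The paper states Fact~\ref{LanLan} without proof, describing it merely as ``well-known''; so there is no in-text argument to compare against. Your proposed argument is the standard and correct one: restriction is strictly contravariantly functorial, $\res_{\beta\circ\alpha}=\res_\alpha\circ\res_\beta$; each $\Lan_\gamma$ is left $2$-adjoint to $\res_\gamma$; composites of left adjoints are left adjoint to the reversed composite of right adjoints; and uniqueness of ($\CV$-enriched) adjoints yields the natural isomorphism. You also correctly flag the only hypothesis that needs to be in play, namely that $\SX$ is $\CV$-complete and $\CV$-cocomplete so that the enriched Kan extensions exist and are genuine enriched left adjoints, exactly the standing assumption under which the paper introduces $\Lan_\alpha$. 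No gap.
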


\begin{fact}
\label{Lan-equivalence}
If $\alpha$ is an equivalence of categories, then there are natural isomorphisms $\res_\alpha\Lan_\alpha\cong\id$ and $\Lan_\alpha\res_\alpha\cong\id$.
\end{fact}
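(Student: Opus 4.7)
The plan is to exploit uniqueness of adjoints to reduce both claimed isomorphisms to the trivial identity $\res_{\id}=\id$. Pick a quasi-inverse $\beta\colon\CD\to\CC$ to $\alpha$, with natural isomorphisms $\beta\alpha\cong\id_\CC$ and $\alpha\beta\cong\id_\CD$. Since restriction is contravariantly functorial in the indexing functor, i.e.\ $\res_{\gamma\circ\delta}=\res_\delta\circ\res_\gamma$, these induce natural isomorphisms of $\CV$-functors
\[
\res_\alpha\res_\beta\cong\res_{\beta\alpha}\cong\id
\AND
\res_\beta\res_\alpha\cong\res_{\alpha\beta}\cong\id.
\]
Consequently $\res_\alpha$ and $\res_\beta$ are mutually quasi-inverse equivalences of $\CV$-categories, and in particular each is simultaneously a left and a right $\CV$-adjoint of the other.

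Now $\Lan_\alpha$ is, by construction, a left $2$-adjoint of $\res_\alpha$, and by the previous paragraph $\res_\beta$ is also a left $2$-adjoint of $\res_\alpha$. Uniqueness of left adjoints up to canonical natural isomorphism—valid in the enriched setting under the standing completeness/cocompleteness hypotheses on $\SX$ recalled just before the statement—then yields a natural isomorphism $\Lan_\alpha\cong\res_\beta$ of $\CV$-functors. Composing on either side with $\res_\alpha$ gives
\[
\res_\alpha\Lan_\alpha\cong\res_\alpha\res_\beta\cong\id
\AND
\Lan_\alpha\res_\alpha\cong\res_\beta\res_\alpha\cong\id,
\]
which is the desired conclusion.

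There is essentially no obstacle in this argument: the only point requiring a moment of care is that everything must be carried out at the level of $\CV$-enriched functors and $\CV$-natural transformations, rather than just their underlying ordinary analogues. This is unproblematic because the adjunction $\Lan_\alpha\dashv\res_\alpha$ exists as a genuine $\CV$-adjunction under the hypotheses on $\SX$, and the proof that left adjoints are unique up to unique isomorphism transfers verbatim from the ordinary to the $\CV$-enriched setting (cf.\ Kelly's book). Alternatively, one can avoid invoking uniqueness of adjoints by applying \autoref{LanLan} together with the obvious isomorphism $\Lan_{\id}\cong\id$ to $\beta\alpha\cong\id_\CC$ and $\alpha\beta\cong\id_\CD$ directly; either route yields the result with essentially no computation.
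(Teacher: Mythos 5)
Your argument is correct, but it takes a genuinely different route from the paper. The paper does not prove \autoref{Lan-equivalence} directly by abstract adjoint nonsense; instead it records it as a corollary of \autoref{Marco's-trick}\ref{vertical-eq}, which is itself established by an explicit computation with the bimodules $\modu{}{\CB}{\alpha}$, $\modu{\gamma}{\CC}{}$, etc.\ and tensor products over categories (take $\CA=\CB=\CC$, $a=g=\id$, $b=d=\alpha$, $\nu=\id$ to get $\res_\alpha\Lan_\alpha\cong\id$, and $\CA=\CC$, $\CB=\CC=\CD$, $a=g=\alpha$, $b=d=\id$, $\nu=\id$ to get $\Lan_\alpha\res_\alpha\cong\id$). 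Your proof instead observes that $\res_\alpha$ and $\res_\beta$ are strictly inverse up to natural isomorphism, so $\res_\beta$ is a left $\CV$-adjoint of $\res_\alpha$, and then invokes uniqueness of enriched left adjoints to identify $\Lan_\alpha\cong\res_\beta$. Both arguments are valid. The paper's route has the advantage of reusing machinery it needs anyway — \autoref{Marco's-trick} is the engine behind the double coset formula in \autoref{double-coset}, so deriving \autoref{Lan-equivalence} from it costs nothing — and it produces the isomorphisms as concrete maps of coends rather than via an existence-and-uniqueness argument. Your route is more self-contained and conceptually standard, at the price of an appeal to uniqueness of $\CV$-adjoints (Kelly), which the paper never needs to invoke. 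Your alternative sketch at the end via \autoref{LanLan} is also fine, provided one adds the step that a left adjoint which is an equivalence has its right adjoint as a quasi-inverse (or again invokes uniqueness of right adjoints to identify $\res_\alpha\cong\Lan_\beta$), since $\Lan_\beta\Lan_\alpha\cong\id$ alone does not immediately yield the statements about $\res_\alpha$.
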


\begin{fact}
\label{Marco's-trick}
Consider a diagram
\[
\begin{tikzcd}
\CA
\arrow[r, "\alpha"]
\arrow[d, "\gamma"']
&
\CB
\arrow[d, "\beta"]
\\
\CC
\arrow[r, "\delta"']
\arrow[d, "X"']
\arrow[Rightarrow, ru, shorten <=1em, shorten >=1em, "\nu"]
&
\CD
\\
V\SX
\end{tikzcd}
\]
where $\CA$, $\CB$, $\CC$, and~$\CD$ are ordinary small categories, and let $\nu$ be a natural transformation from $\delta\gamma$ to $\beta\alpha$.
Then $\nu$ induces a natural transformation
\[
\MOR{\nu_*}{\Lan_\alpha\res_\gamma{X}}{\res_\beta\Lan_\delta{X}}
\]
as we proceed to explain.
For a functor $\MOR{\delta}{\CC}{\CD}$ define
\begin{align*}
\MOR{\modu{}{\CD}{\delta}}{\CC^\op\times\CD}{\Sets}
,\quad
(c,d)&\mapsto\CD\bigl(\delta(c),d\bigr)
\,,
\\
\MOR{\modu{\delta}{\CD}{}}{\CD^\op\times\CC}{\Sets}
,\quad
(d,c)&\mapsto\CD\bigl(d,\delta(c)\bigr)
\,.
\end{align*}
Notice that by Yoneda Lemma, for any functor $\MOR{Y}{\CD}{V\SX}$, we have $\res_\delta{Y}\cong\modu{\delta}{\CD}{}\tensor_\CD{Y}$.
Recall that
\(
\Lan_\delta{X}=\CD_\delta\tensor_\CC{X}
\).

Therefore we have
\begin{align*}
\Lan_\alpha\res_\gamma{X}
&\cong
\modu{}{\CB}{\alpha} \tensor_\CA \bigl(\modu{\gamma}{\CC}{} \tensor_\CC{X}\bigr)
\cong
\bigl(\modu{}{\CB}{\alpha} \tensor_\CA \modu{\gamma}{\CC}{}\bigr) \tensor_\CC{X}
\,,\AND
\\
\res_\beta\Lan_\delta{X}
&\cong
\modu{\beta}{\CD}{} \tensor_\CD \bigl(\modu{}{\CD}{\delta} \tensor_\CC{X}\bigr)
\cong
\bigl(\modu{\beta}{\CD}{} \tensor_\CD \modu{}{\CD}{\delta}\bigr) \tensor_\CC{X}
\cong
\modu{\beta}{\CD}{\delta} \tensor_\CC{X}
\,.
\end{align*}
Now let $\overline{\nu}$ be the composition
\[
\overline{\nu}
\colon
\modu{}{\CB}{\alpha} \tensor_\CA \modu{\gamma}{\CC}{}
\TO
\modu{\beta}{\CD}{\beta\alpha} \tensor_\CA \modu{\beta\alpha}{\CD}{\delta}
\TO
\modu{\beta}{\CD}{} \tensor_\CD \modu{}{\CD}{\delta}
\cong
\modu{\beta}{\CD}{\delta}
\]
where the first map is induced by the functors $\beta$ and~$\delta$ together with the natural transformation~$\nu$.
Then define~$\nu_*$ by applying $-\tensor_\CC{X}$ to~$\overline{\nu}$ and using the isomorphisms above.
Furthermore:
\begin{enumerate}
\item\label{overline-nu-iso}
If $\overline{\nu}$ is a natural isomorphism of functors $\CC^\op\times\CB\TO\Sets$, then $\nu_*$ is a natural isomorphism of functors $\CB\TO V\SX$, natural in~${X}$.
\item\label{vertical-eq}
If $\alpha$ and~$\delta$ are equivalences of categories and $\nu$ is a natural isomorphism, then $\nu_*$ is an isomorphism.
\end{enumerate}
\end{fact}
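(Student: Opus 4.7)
The plan is to treat (i) and (ii) separately, reducing (ii) to (i).

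For (i), the observation is that, by construction, $\nu_*$ is obtained from $\overline{\nu}$ by applying the functor $-\tensor_\CC X$, composed with the two displayed natural isomorphisms that identify $\Lan_\alpha\res_\gamma X$ with $\bigl(\modu{}{\CB}{\alpha}\tensor_\CA\modu{\gamma}{\CC}{}\bigr)\tensor_\CC X$ and $\res_\beta\Lan_\delta X$ with $\modu{\beta}{\CD}{\delta}\tensor_\CC X$. Since the coend $-\tensor_\CC X$ is a colimit and hence preserves isomorphisms in its first variable, a natural isomorphism $\overline{\nu}$ of functors $\CC^\op\times\CB\TO\Sets$ tensored with $X$ remains a natural isomorphism of functors $\CB\TO V\SX$; naturality in $X$ is built into the construction. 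So (i) is essentially formal.

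For (ii), the plan is to reduce to (i) by showing that the extra hypotheses force $\overline{\nu}$ itself to be a natural isomorphism. Choose pseudo-inverses $\MOR{\alpha'}{\CB}{\CA}$ and $\MOR{\delta'}{\CD}{\CC}$ together with their unit-counit natural isomorphisms. Fix $(c,b)\in\obj\CC\times\obj\CB$. On the source of $\overline{\nu}(c,b)$, use the natural isomorphism $\CB(\alpha(a),b)\cong\CA(a,\alpha'(b))$ and the coYoneda (density) lemma to obtain
\[
\int^{a\in\CA}\CB(\alpha(a),b)\times\CC(c,\gamma(a))
\cong
\int^{a\in\CA}\CA(a,\alpha'(b))\times\CC(c,\gamma(a))
\cong
\CC\bigl(c,\gamma\alpha'(b)\bigr).
\]
Dually, on the target, $\CD(\delta(c),\beta(b))\cong\CC(c,\delta'\beta(b))$. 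The given natural isomorphism $\nu$ produces, by composing with $\alpha'$ on the right, $\delta'$ on the left, and using the unit-counit isomorphisms, a natural isomorphism $\gamma\alpha'\cong\delta'\beta$, and the induced map $\CC(c,\gamma\alpha'(b))\TO\CC(c,\delta'\beta(b))$ is an isomorphism by the Yoneda lemma.

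The main step, and the only real obstacle, is the bookkeeping required to verify that these two identifications are genuinely natural in $c$ and $b$ and that, under them, the map $\overline{\nu}(c,b)$ (which sends $[f\colon\alpha(a)\to b,\;g\colon c\to\gamma(a)]$ to $\beta(f)\circ\nu_a\circ\delta(g)$) is indeed carried to the map induced by the natural isomorphism $\gamma\alpha'\cong\delta'\beta$. This is a straightforward but slightly intricate diagram chase through the coend relations and the triangle identities for the equivalences $\alpha$ and $\delta$. Once this is established, $\overline{\nu}$ is a natural isomorphism of functors $\CC^\op\times\CB\TO\Sets$, and part (i) then delivers the conclusion that $\nu_*$ is an isomorphism.
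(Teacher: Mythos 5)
Your proposal takes essentially the same approach as the paper: part~(i) is observed to follow formally from the construction because $-\tensor_\CC X$ preserves natural isomorphisms, and part~(ii) is reduced to part~(i) by showing that the hypotheses force $\overline{\nu}$ to be a natural isomorphism. The paper's own proof is terse (it simply states that the assumptions imply $\overline{\nu}$ is an isomorphism), and your argument with pseudo-inverses, the coYoneda lemma, and the resulting natural isomorphism $\gamma\alpha'\cong\delta'\beta$ correctly supplies the details that the paper leaves implicit.
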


\begin{proof}
\ref{overline-nu-iso} follows directly from the construction.
\ref{vertical-eq} follows from~\ref{overline-nu-iso} by observing that the assumptions imply that $\overline{\nu}$ is an isomorphism.
Notice also that \autoref{Lan-equivalence} follows from~\ref{vertical-eq}.
\end{proof}

Now fix also a monoid $\IA$ in~$\CV$, or in other words, a $\CV$-category with exactly one object.
Let $\IA\D\Mod$ be the $\CV$-category of modules over~$\IA$, and notice that $
\IA\D\Mod\cong\CV\D\fun(\IA,\CV)$.
Notice that $\IA\D\Mod$ is $\CV$-complete and $\CV$-cocomplete;
see for example~\cite{Kelly}*{Proposition~3.75 on page~54}.
Therefore the discussion above about left Kan extensions applies to functors with values in~$\IA\D\Mod$.

\begin{definition}[$\IA$-modules over~$\CC$]
\label{all-mod}
Given an ordinary small category~$\CC$, define
\[
\SM_\IA(\CC)=\CV\D\fun\bigl(\CC,\IA\D\Mod\bigr)
\]
to be the $\CV$-category of $\CV$-functors from~$\CC$ to the $\CV$-category of $\IA$-modules.
The objects of this category are called \emph{$\IA$-modules over~$\CC$}.
\end{definition}

Next, we want to define what it means for an $\IA$-module over~$\CC$ to be free and finitely generated.
Notice that for the underlying category of~$\SM_\IA(\CC)$ we have
\[
V\SM_\IA(\CC)\cong\fun\bigl(\CC,V(\IA\D\Mod)\bigr)
\]
by~\eqref{eq:VVfun}.
There are two pairs of adjoint functors
\[
\begin{tikzcd}[column sep=large]
V\IA\D\Mod
\arrow[r, shift left, "u"]
&
V\CV
\arrow[r, shift left, "V"]
\arrow[l, shift left, "\IA\odot-"]
&
\Sets
\arrow[l, shift left, "{\Ione[-]}"]
\end{tikzcd}
\]
where the left adjoints are displayed at the bottom, and they induce adjoint functors between the ordinary functor categories
\[
\begin{tikzcd}[column sep=large]
\fun(\CC,V\IA\D\Mod)
\arrow[r, shift left, "u_*"]
&
\fun(\CC,V\CV)
\arrow[r, shift left, "V_*"]
\arrow[l, shift left, "(\IA\odot-)_*"]
&
\fun(\CC,\Sets)\,.
\arrow[l, shift left, "{(\Ione[-])_*}"]
\end{tikzcd}
\]
We use the abbreviation $\IA[X]=\IA\odot\Ione[X]$.
Furthermore, the inclusion~$\iota$ of the discrete subcategory $\obj\CC$ into~$\CC$ induces another pair of adjoint functors
\[
\begin{tikzcd}[column sep=large]
\fun(\CC,\Sets)
\arrow[r, shift left, "\iota^*"]
&
\fun(\obj\CC,\Sets)\,,
\arrow[l, shift left, "L"]
\end{tikzcd}
\]
where the left adjoint~$L$ is defined as follows.
Any $X\in\obj\fun(\obj\CC,\Sets)$ is just a collection~$\{X(c)\}_{c\in\obj\CC}$ of sets indexed by the objects of~$\CC$.
The functor~$LX$ is then defined as
\[
LX=\adjustlimits\smallcoprod_{c\in\obj\CC}\smallcoprod_{X(c)}\CC(c,-)
\,.
\]
Composing these functors gives an adjunction
\[
\begin{tikzcd}[column sep=large]
\fun(\CC,V\IA\D\Mod)
\arrow[r, shift left, "U"]
&
\fun(\obj\CC,\Sets)\,.
\arrow[l, shift left, "F"]
\end{tikzcd}
\]
Notice that, given $X$ as above, then $FX$ is isomorphic to
\begin{equation}
\label{eq:fgf}
\adjustlimits\smallcoprod_{c\in\obj\CC}\smallcoprod_{X(c)}\IA[\CC(c,-)]
\,.
\end{equation}

\begin{definition}[Finitely generated and free $\IA$-modules over~$\CC$]
\label{fgf-mod}
If $M\in\obj\SM_\IA(\CC)$ is an $\IA$-module over~$\CC$, we say that $M$~is \emph{free} if it is isomorphic in $V\SM_\IA(\CC)=\fun(\CC,V\IA\D\Mod)$ to~$FX$ for some~$X\in\obj\fun(\obj\CC,\Sets)$, and it is \emph{finitely generated free} if $X(c)$ is finite for all~$c\in\obj\CC$ and nonempty only for finitely many~$c\in\obj\CC$.
We let
\[
\SF_\IA(\CC)
\]
be the full $\CV$-subcategory of~$\SM_\IA(\CC)$ whose objects are the finitely generated free modules.
\end{definition}

\begin{example}
\label{fgf-on-one-obj}
There is a $\CV$-functor
\(
\IA[\CC]\TO\SF_\IA(\CC)
\),
\(
c\longmapsto\IA[\CC(c,-)]
\).
\end{example}

Since $\CV$ has all coproducts, the same is true in~$\IA\D\Mod$, and therefore also in~$\SM_\IA(\CC)$, as coproducts are computed objectwise in functor categories.
The coproduct of two finitely generated free modules is again finitely generated free by inspection, and therefore also $\SF_\IA(\CC)$ has all finite coproducts.
Notice that if $\CC$ is small, then $\SF_\IA(\CC)$ has a small skeleton.

Now let $\MOR{\alpha}{\CC}{\CD}$ be a functor.
The left Kan extension along~$\alpha$ gives a $\CV$-functor
\(
\MOR{\Lan_\alpha}{\SM_\IA(\CC)}{\SM_\IA(\CD)}
\).
By inspection, one verifies that $\Lan_\alpha$ restricts to a functor
\begin{equation}
\label{eq:pseudo-fun-fgf-mod}
\MOR{\SF_\IA(\alpha)}{\SF_\IA(\CC)}{\SF_\IA(\CD)}
\,,
\end{equation}
and that it preserves coproducts.
In fact, if $M\in\obj\SF_\IA(\CC)$ is isomorphic to~\eqref{eq:fgf} for some $X\in\obj\fun(\obj\CC,\Sets)$, then
\begin{equation}
\label{eq:fgf-mor}
\SF_\IA(\alpha)(M)
\cong\adjustlimits\smallcoprod_{d\in\obj\CD}\smallcoprod_{c\in\alpha^{-1}(d)}\smallcoprod_{X(c)}\IA[\CD(d,-)]
\,.
\end{equation}
Since Kan extension are functorial up to natural isomorphisms, in the sense of \autoref{LanLan}, we get a pseudo-functor
\[
\MOR{\SF_\IA}{\Cat}{\CV\D\CAT^\copr}
,
\]
where we denote by $\CV\D\CAT^\copr$ the $2$-category of skeletally small $\CV$-categories with finite coproducts, coproduct preserving $\CV$-functors, and $\CV$-natural transformations.
For the definition of pseudo-functor, see for example~\cite{Borceux}*{Definition~7.5.1 on pages~296--297}.

For the applications in the next section, it is necessary to work with small categories, not just skeletally small ones, and with functors, not just pseudo-functors.
This can be achieved as follows.
First, choose a functorial coproduct in~$\IA\D\Mod$.
Then, let $\SF'_\IA(\CC)$ be the small full subcategory of~$\SF_\IA(\CC)$ whose objects are precisely the ones in~\eqref{eq:fgf}.
Finally, using~\eqref{eq:fgf-mor}, it is clear how to define~$\SF'_\IA(\alpha)$ in order to get a strict functor
\[
\MOR{\SF'_\IA}{\Cat}{\CV\D\Cat^\copr}
,
\]
where $\CV\D\Cat^\copr$ is the full subcategory of the small $\CV$-categories with finite coproducts.
In order to simplify the exposition, we do not distinguish between the equivalent constructions $\SF_\IA$ and~$\SF'_\IA$ in the next section.


\section{Constructing homology theories with Mackey structures}
\label{MACKEY}

The main result of this section, \autoref{constr-Mackey}, gives a very general procedure to construct equivariant homology theories with Mackey structures.
This is used in \autoref{TRC} to construct the main diagram~\eqref{eq:main-diagram} and show that the results of \autoref{CHERN} apply.
A special case of \autoref{constr-Mackey} for additive categories was proved in~\cite{LR-cyclic}*{Proposition~6.5 on page~620}.

As in the previous section, we fix a closed symmetric monoidal category $(\CV,\odot,\Ione)$ and a monoid~$\IA$ in~$\CV$.
We assume that $\CV$ is complete and cocomplete.
We denote by $\CV\D\Cat^\copr$ the category of small $\CV$-categories with finite coproducts.

Let $\MOR{\BT}{\CV\D\Cat^\copr}{\IN\Sp}$ be a functor; see \autoref{examples-of-T} below.
For any group~$G$, we consider the composition
\begin{equation}
\label{eq:OrG->VCat->NSp}
\Or G
\xhookrightarrow{\quad}
\Sets^G
\xrightarrow{\,\oid{G}{-}\,}
\Groupoids
\xrightarrow{\ \SF_\IA\ }
\CV\D\Cat^\copr
\xrightarrow{\ \BT\ }
\IN\Sp
\end{equation}
and the corresponding $G$-homology theory $H^G_*(-;\BT\SF_\IA)$ from \autoref{equiv-homolgy-theory}.
Here $\SF_\IA$ is the restriction to~$\Groupoids$ of the functor constructed at the end of the last section, which sends an ordinary small category to the $\CV$-category of finitely generated and free $\IA$-modules over it.
We can now state the main result of this section.

\begin{theorem}
\label{constr-Mackey}
Let $\MOR{\BT}{\CV\D\Cat^\copr}{\IN\Sp}$ be a functor.
Assume that $\BT$ preserves equivalences and products, in the sense that $\BT$ sends equivalences to $\pi_*$-isomorphisms and the natural map
\(
\BT(\SX\times\SY)\TO\BT(\SX)\times\BT(\SY)
\)
is a $\pi_*$\=/isomorphism for all $\SX$ and~$\SY$.
Then the equivariant homology theory~$H^?_*(-;\BT\SF_\IA)$ has a Mackey structure.
\end{theorem}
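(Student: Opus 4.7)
The plan is to extend the covariant functor $M_*(G) = \pi_n\BT\SF_\IA(G)$ to a Mackey functor by producing a contravariant companion $M^*$ and verifying the three Mackey axioms, closely following the additive template of \cite{LR-cyclic}*{Proposition~6.5} but adapted to the $\CV$-enriched setting.

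First I would construct the restriction maps. For every injective homomorphism $\MOR{\alpha}{H}{G}$ between finite groups, choose a set $S$ of left coset representatives for $\alpha(H)\backslash G$ and use it to define a coproduct-preserving $\CV$-functor $\MOR{\res_\alpha}{\SF_\IA(G)}{\SF_\IA(H)}$ that sends the generator $\IA[G]\in\obj\SF_\IA(G)$ to $\bigsqcup_{s\in S}\IA[H]$, with morphism spaces assembled from the coset decomposition of $G$ as a free $H$-set. Different choices of $S$ yield naturally isomorphic $\CV$-functors, so $M^*(\alpha):=\pi_n\BT(\res_\alpha)$ is independent of all choices. Pseudo-functoriality of $\alpha\mapsto\res_\alpha$ upgrades $M^*$ to a contravariant functor on $\FGI$ with the same value as $M_*$ on objects.

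Axioms~(i) and~(ii) are handled quickly. When $\alpha$ is an isomorphism, $\SF_\IA(\alpha)$ and $\res_\alpha$ are $\CV$-equivalences mutually inverse up to $\CV$-natural isomorphism, so by the hypothesis that $\BT$ preserves equivalences we obtain $M^*(\alpha)=M_*(\alpha)^{-1}$. For inner conjugation $c_g$ on $G$, the element $g$ itself furnishes a $\CV$-natural isomorphism from $c_g$ to $\id$ as endofunctors of $\oid{G}{\pt}$, inducing a $\CV$-natural isomorphism $\SF_\IA(c_g)\cong\id$ and hence $M_*(c_g)=\id$.

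The main obstacle is axiom~(iii), the double coset formula. With notation as in that axiom, the decomposition $G=\bigsqcup_{j\in J}Kg_jH$ produces a canonical isomorphism of $K$-$H$-bisets $G\cong\bigsqcup_j K\timesd_{L_j}H$, where $L_j$ acts on $K$ through $c_j=\conj_{g_j}$ and on $H$ by inclusion. Transporting this biset decomposition through $\IA[-]$ and $\SF_\IA$ yields a $\CV$-natural isomorphism
\[
\res_{K\hookrightarrow G}\circ\SF_\IA(H\hookrightarrow G)
\ \cong\
\smallcoprod_{j\in J}\SF_\IA(c_j)\circ\res_{L_j\hookrightarrow H}
\]
of coproduct-preserving $\CV$-functors $\SF_\IA(H)\to\SF_\IA(K)$, where the coproduct on the right is taken pointwise in $\SF_\IA(K)$. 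To translate this functor-level coproduct into an additive sum on $\pi_n$, I would factor the right-hand side as the tuple of components $\bigl(\SF_\IA(c_j)\circ\res_{L_j}\bigr)_j\colon\SF_\IA(H)\to\SF_\IA(K)^J$ followed by the coproduct functor $\MOR{\smallcoprod}{\SF_\IA(K)^J}{\SF_\IA(K)}$. Applying $\BT$ and invoking the hypothesis that $\BT$ preserves products, $\pi_n\BT$ of the tuple splits as the tuple of the individual $\pi_n\BT$'s; moreover, since the composition of the inclusion of the $i$-th factor with the coproduct functor is the identity, $\pi_n\BT(\smallcoprod)$ is identified with the fold/sum map on $\prod_j\pi_n\BT\SF_\IA(K)$. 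Combining these identifications produces
\[
M^*(K\hookrightarrow G)\circ M_*(H\hookrightarrow G)
\ =\ \sum_{j\in J}M_*(c_j)\circ M^*(L_j\hookrightarrow H),
\]
as required.
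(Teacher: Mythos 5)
Your proposal is correct in outline, and reaches the same conclusion, but it organizes the key axiom (iii) differently from the paper. The paper defines $\Res_\alpha$ for any cofinite $G$-set $T$ as the (choice-free) restriction $\res_{\underline\alpha}$ along the groupoid functor $\underline\alpha\colon\oid{H}{\res_\alpha T}\to\oid{G}{T}$, and it establishes the double coset identity at the level of $\CV$-categories once and for all via \autoref{double-coset}, whose proof is a pullback diagram of cofinite $G$-sets combined with \autoref{Marco's-trick}. The passage from this categorical statement to the additive double coset formula on $\pi_n$ then routes through two auxiliary lemmas: \autoref{finite-add}, which identifies $\SF_\IA(\oid{K}{S_1\amalg S_2})$ with a product of $\CV$-categories, and \autoref{eval-on-S}, which uses the product-preserving hypothesis on $\BT$ to compare $\pi_*\BT$ of that product with the equivariant homology of a disjoint union. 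You instead work directly at the group level $\SF_\IA(H)$, $\SF_\IA(K)$, invoke the $K$-$H$-biset decomposition $G\cong\bigsqcup_j K\times_{L_j}H$, and obtain the sum on $\pi_n$ by factoring the coproduct of functors as a tuple into $\SF_\IA(K)^J$ followed by the fold functor, then using the product-preserving hypothesis to identify $\pi_n\BT$ of the fold with the addition map. This is tidier in that you avoid introducing $\SF_\IA(\oid{K}{-})$ on general cofinite sets, and it makes the role of the product-preserving hypothesis slightly more transparent, but it gives up the paper's uniform treatment over all cofinite $G$-sets (which the paper later exploits to discuss the restriction structure as well). The biset picture and the pullback-of-orbits picture are of course the same mathematics; the step you gloss over --- ``transporting the biset decomposition through $\IA[-]$ and $\SF_\IA$'' --- is precisely what \autoref{double-coset} and \autoref{Marco's-trick} make precise, so a full proof along your lines would need an analogous careful argument there. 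Two small points: your restriction depends on a choice of coset representatives whereas the paper's $\res_{\underline\alpha}$ is canonical, and in the fold argument you also need that $\BT$ of the terminal $\CV$-category is $\pi_*$-trivial (which does follow from the product-preserving hypothesis, since $\{0\}\cong\{0\}\times\{0\}$) to know $\pi_n\BT$ of the zero functor vanishes.
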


\begin{example}
\label{examples-of-T}
Examples of functors~$\T$ that satisfy the assumptions of \autoref{constr-Mackey} are given by the composition
\[
\Gamma\CS_*\D\Cat^\copr\TO\Sym\Mon\Gamma\CS_*\D\Cat\xrightarrow{\ \K\ }\IN\Sp
\,,
\]
where the first functor is the evident inclusion and the second is the algebraic $K$-theory functor from~\eqref{eq:Dundas-K-functor}, and by
\[
\Sigma\Sp_*\D\Cat^\copr\TO\Sigma\Sp_*\D\Cat\xrightarrow{\TC(-;p)}\IN\Sp
\,,
\]
where the first functor is the forgetful one.
\end{example}

In order to prove \autoref{constr-Mackey}, we analyze the composition
\[
\Sets^G
\xrightarrow{\,\oid{G}{-}\,}
\Groupoids
\xrightarrow{\ \SF_\IA\ }
\CV\D\Cat^\copr
\]
through which \eqref{eq:OrG->VCat->NSp} factors.
The following examples illustrate this construction and guide our intuition.

\begin{example}
For any subgroup $H$ of~$G$, $\SF_\IA(\oid{G}{\,(G/H)})$ is equivalent to the $\CV$-category of finitely generated free $\IA[H]$-modules.
\end{example}

\begin{example}
\label{Sets-pt}
Consider the special case $(\CV,\odot,\Ione)=(\Sets,\times,\pt)$ and $\IA=\pt$, and notice that then $\pt\D\Mod=\Sets$.
Then for any $G$-set~$S$ the categories
\[
\SF_{\pt}(\oid{G}{S})
\AND
\coffree{G}\downarrow S
\]
are equivalent, where $\coffree{G}\downarrow S$ is the comma category of cofinite and free $G$-sets over~$S$.

To see this, consider first the category~$\SM_{\pt}(\oid{G}{S})$.
There is an equivalence of categories
\begin{equation}
\label{eq:slice-cat}
\SM_{\pt}(\oid{G}S{})\TO\Sets^G\downarrow S
\,,
\end{equation}
sending a functor $\MOR{X}{\oid{G}{S}}{\Sets}$ to the $G$-set $\smallcoprod_{s\in S} X(s)$ together with the obvious $G$-map to~$S$ sending any $x\in X(s)$ to~$s$.
In the opposite direction, a $G$-map $\MOR{p}{T}{S}$ is sent to the functor $s\mapsto p^{-1}(\{s\})$.

It is then easy to see that the equivalence of categories given by~\eqref{eq:slice-cat} induces an equivalence between $\SF_{\pt}(\oid{G}{S})$ and $\coffree{G}$.
\end{example}

We now establish a couple of preliminary results.
The first one follows immediately from the definitions.

\begin{lemma}
\label{finite-add}
For any group~$G$ and any pair of cofinite $G$-sets $S_1$ and~$S_2$ there is a natural isomorphism of $\CV$-categories with coproducts
\[
\MOR{\varphi}{\SF_\IA\bigl(\oid{G}{\,(S_1\amalg S_2)\bigr)}}%
{\SF_\IA(\oid{G}{S_1})\times\SF_\IA(\oid{G}{S_2})}
\,.
\]
\end{lemma}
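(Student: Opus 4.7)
The plan is to reduce the lemma to a corresponding splitting statement for $\SF_\IA$ applied to a disjoint union of ordinary small categories, and then to exhibit the isomorphism on the level of the explicit description \eqref{eq:fgf} of finitely generated free modules.

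First I would observe that coproducts in $\Sets^G$ are preserved by the action groupoid functor: the $G$-set $S_1 \amalg S_2$ has no equivariant paths between the two summands, so there is a canonical isomorphism of groupoids $\oid{G}{(S_1\amalg S_2)} \cong \oid{G}{S_1} \amalg \oid{G}{S_2}$. Thus it suffices to establish, for any two small categories $\CC_1$ and $\CC_2$, a natural isomorphism of $\CV$-categories with finite coproducts
\[
\SF_\IA(\CC_1 \amalg \CC_2) \cong \SF_\IA(\CC_1) \times \SF_\IA(\CC_2).
\]

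The functor $\varphi$ in one direction is given by restricting along the two inclusions $\iota_j\colon \CC_j \hookrightarrow \CC_1 \amalg \CC_2$, namely $M \longmapsto (\res_{\iota_1} M, \res_{\iota_2} M)$. The inverse~$\psi$ sends a pair $(N_1, N_2)$ to the coproduct $\Lan_{\iota_1}N_1 \amalg \Lan_{\iota_2}N_2$ computed in $\SM_\IA(\CC_1 \amalg \CC_2)$. The key point I would invoke is that $(\CC_1 \amalg \CC_2)(c,d) = \emptyset$ whenever $c$ and $d$ lie in different summands; together with the explicit formula \eqref{eq:fgf}, this shows that $\varphi$ lands in $\SF_\IA(\CC_1)\times\SF_\IA(\CC_2)$ and that $\psi$ lands in $\SF_\IA(\CC_1 \amalg \CC_2)$. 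The same emptiness fact makes it immediate that $\varphi \circ \psi$ and $\psi \circ \varphi$ are naturally isomorphic to the identities, and that $\varphi$ and $\psi$ preserve finite coproducts.

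There is essentially no obstacle: the morphism $\CV$-objects in $\SF_\IA(\CC_1 \amalg \CC_2)$ are computed as ends in $\CV$ over the morphism sets of $\CC_1 \amalg \CC_2$, and these ends factor as products of ends over $\CC_1$ and $\CC_2$ by the same emptiness observation, which gives $\varphi$ the structure of a $\CV$-functor and makes it fully faithful. Naturality of $\varphi$ in $S_1$ and $S_2$ is automatic, since all the constructions are built tautologically from the pair $(\CC_1,\CC_2)$. The only point requiring a moment's thought is compatibility with the strictified version $\SF'_\IA$ from the end of \autoref{MODULES}, which amounts to checking that the chosen functorial coproducts on $\IA\D\Mod$ are respected by $\varphi$ and $\psi$; this is immediate from \eqref{eq:fgf-mor}.
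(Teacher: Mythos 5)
Your proof is correct and is the natural expansion of the paper's one-line remark that this lemma ``follows immediately from the definitions'': the action groupoid functor sends disjoint unions of $G$-sets to disjoint unions of groupoids, a $\CV$-functor out of a disjoint union is literally a pair of $\CV$-functors, and the explicit formula~\eqref{eq:fgf} shows that finitely generated free modules decompose accordingly. One small inefficiency: your $\varphi$ (restriction along the two inclusions) already has a literal strict inverse given by gluing a pair of functors along the disjoint union, so there is no need to route through $\Lan_{\iota_j}$ and a coproduct, which only yields an inverse up to canonical natural isomorphism rather than on the nose.
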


\begin{lemma}
\label{eval-on-S}
Under the assumptions of \autoref{constr-Mackey},
for any group~$G$ and any cofinite $G$-set~$S$ there is a natural isomorphism
\[
H^G_*(S;\BT\SF_\IA)
\cong
\pi_*\bigl(\BT\SF_\IA(\oid{G}{S})\bigr)
\,.
\]
\end{lemma}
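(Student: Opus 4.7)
The plan is to reduce to the case where $S$ is a single orbit $G/H$, and then use additivity in $S$ on both sides of the desired isomorphism. Every cofinite $G$-set $S$ decomposes as a finite disjoint union $S = \coprod_{i=1}^{n} G/H_i$ of orbits, so it suffices to handle these two cases.

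For the orbit case $S = G/H$, the claim is immediate from the natural isomorphism \eqref{eq:G/H}: applying it to the functor $\BE = \BT\SF_\IA(\oid{G}{-}) \colon \Or G \to \IN\Sp$ and passing to homotopy groups gives
\[
H^G_*(G/H; \BT\SF_\IA) = \pi_*\bigl(G/H_+ \sma_{\Or G} \BT\SF_\IA(\oid{G}{-})\bigr) \cong \pi_* \BT\SF_\IA(\oid{G}{\,(G/H)}).
\]

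For additivity, on the left-hand side note that $H^G_*(-; \BT\SF_\IA)$ is a $G$-homology theory by \autoref{equiv-homolgy-theory}, so it sends finite disjoint unions of $G$-sets to finite direct sums; this also follows directly from the identification $(S_1 \amalg S_2)_+ \cong S_{1+} \vee S_{2+}$ and the fact that smash products over $\Or G$ commute with wedges. On the right-hand side, the groupoid $\oid{G}{\,(S_1 \amalg S_2)}$ is the disjoint union of $\oid{G}{S_1}$ and $\oid{G}{S_2}$, and \autoref{finite-add} gives a natural isomorphism $\SF_\IA(\oid{G}{\,(S_1 \amalg S_2)}) \cong \SF_\IA(\oid{G}{S_1}) \times \SF_\IA(\oid{G}{S_2})$ in $\CV\D\Cat^\copr$. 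Applying $\BT$ and using the assumption that $\BT$ preserves products up to $\pi_*$-isomorphism, we obtain
\[
\pi_* \BT\SF_\IA(\oid{G}{\,(S_1 \amalg S_2)}) \cong \pi_* \BT\SF_\IA(\oid{G}{S_1}) \oplus \pi_* \BT\SF_\IA(\oid{G}{S_2}).
\]

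To assemble these ingredients into a natural isomorphism, observe that the functor $\BT\SF_\IA(\oid{G}{-})$ is defined on all of $\Sets^G$, not just on $\Or G$, so there is a canonical assembly-type map $S_+ \sma_{\Or G} \BT\SF_\IA(\oid{G}{-}) \to \BT\SF_\IA(\oid{G}{S})$ that is natural in the $G$-set $S$. This map is the identification \eqref{eq:G/H} on orbits, and the two additivity statements above show that it induces an isomorphism on $\pi_*$ for every finite disjoint union of orbits, i.e., for every cofinite $G$-set. No step presents a real obstacle; the only mild subtlety is bookkeeping the naturality, which follows because both sides are visibly functorial in $S \in \Sets^G$ and the comparison map is defined uniformly.
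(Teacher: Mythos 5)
Your proposal is correct and takes essentially the same approach as the paper: decompose the cofinite $G$-set into finitely many orbits, identify the left-hand side on orbits via~\eqref{eq:G/H}, and use additivity on both sides (coend commutes with wedges on the left; \autoref{finite-add} plus the hypothesis that $\BT$ turns finite products into $\pi_*$-isomorphisms on the right), assembled via the natural comparison map coming from the Kan extension's universal property. The paper packages the same content as an explicit diagram chase in which the composite wedge-to-product map is recognized as a $\pi_*$-isomorphism in $\IN\Sp$, but the underlying argument is the one you give.
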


\begin{proof}
Recall that, by construction,
\(
H^G_*(S;\BT\SF_\IA)=\pi_*\bigl((\BT\SF_\IA(\oid{G}{-}))_\%(S)\bigr)
\),
where $(\BT\SF_\IA(\oid{G}{-}))_\%$ is the continuous left Kan extension of~\eqref{eq:OrG->VCat->NSp} along the inclusion $\Or G\hookrightarrow\Top^G$.
Choose an isomorphism $\MOR{f}{S}{G/H_1\amalg\dotsb\amalg G/H_\ell}$.
We then get the following maps.
\[
\begin{tikzcd}[column sep=tiny]
\ds\bigl(\BT\SF_\IA(\oid{G}{-})\bigr)_\%(S)
\arrow[r, "\cong"]
&
\ds\bigl(\BT\SF_\IA(\oid{G}{-})\bigr)_\%(
G/H_1
\amalg\dotsb\amalg
G/H_\ell)
\arrow[leftarrow, d, "\ts\,\one", "\cong\,"']
\\
&
\ds
\BT\SF_\IA\bigl(\oid{G}{\,(G/H_1)}\bigr)
\vee\dotsb\vee
\BT\SF_\IA\bigl(\oid{G}{\,(G/H_\ell)}\bigr)
\arrow[d, "\ts\,\two"]
\\
\ds\BT\SF_\IA(\oid{G}{S})
\arrow[r, "\cong"]
&
\ds
\BT\SF_\IA\bigl(\oid{G}{\,(G/H_1
\amalg\dotsb\amalg
                      G/H_\ell)}\bigr)
\arrow[d, "\ts\,\three", "\cong\,"']
\\
&
\ds
\BT\Bigl(
   \SF_\IA\bigl(\oid{G}{\,(G/H_1)}\bigr)
\times\dotsb\times
   \SF_\IA\bigl(\oid{G}{\,(G/H_\ell)}\bigr)
\Bigr)
\arrow[d, "\ts\,\four"]
\\
&
\ds
\BT\SF_\IA\bigl(\oid{G}{\,(G/H_1)}\bigr)
\times\dotsb\times
\BT\SF_\IA\bigl(\oid{G}{\,(G/H_\ell)}\bigr)
\end{tikzcd}
\]
The horizontal isomorphisms are induced by~$f$.
The isomorphism~$\one$ follows from the fact that the discrete category with $\ell$ objects given by the inclusions $G/H_i\TO{G/H_1\amalg\dotsb\amalg G/H_\ell}$ is cofinal in~$\Or G\downarrow({G/H_1\amalg\dotsb\amalg G/H_\ell})$.
The maps $\two$ and~$\four$ are the natural ones induced by functoriality, and $\three$ is induced by the isomorphism~$\varphi$ from \autoref{finite-add}.
We are assuming that $\four$ is a $\pi_*$-isomorphism.
The composition $\four\circ\three\circ\two$ is the natural map from the finite coproduct to the product in~$\IN\Sp$, which is also a $\pi_*$-isomorphism.
So we conclude that $\two$ is a $\pi_*$-isomorphism, proving the result.
\end{proof}

Now consider a group homomorphism $\MOR{\alpha}{H}{G}$.
For any $H$-set~$S$ there is a functor
\begin{equation}
\label{eq:over-alpha}
\MOR{\overline{\alpha}}{\oid{H}{S}}{\oid{G}{\ind_\alpha S}},
\end{equation}
defined on objects by $s\mapsto[1,s]$ and on morphisms by~$h\mapsto\alpha(h)$.
On the other hand, for any $G$-set~$T$ there is a functor
\[
\MOR{\underline{\alpha}}{\oid{H}{\res_\alpha T}}{\oid{G}{T}},
\]
which is the identity on objects and $\alpha$ on morphisms.

Using $\overline{\alpha}$ we define
\begin{equation}
\label{eq:Ind}
\MOR{\Ind_\alpha=\Lan_{\overline{\alpha}}} 
{\SF_\IA(\oid{H}{S})}{\SF_\IA(\oid{G}{\ind_\alpha S})}
\end{equation}
as in \eqref{eq:pseudo-fun-fgf-mod}.
On the other hand, restriction along $\underline{\alpha}$ defines a functor
\[
\MOR{\Res_\alpha=\res_{\underline{\alpha}}}%
{\SM_\IA(\oid{G}{T})}{\SM_\IA(\oid{H}{\res_\alpha T})}
\,.
\]
If we assume that $\alpha$ is injective and $\#(G/\alpha(H))<\infty$, then $\Res_\alpha$ yields a functor
\begin{equation}
\label{eq:Res}
\MOR{\Res_\alpha=\res_{\underline{\alpha}}}%
{\SF_\IA(\oid{G}{T})}{\SF_\IA(\oid{H}{\res_\alpha T})}
\,.
\end{equation}
These induction and restriction functors \eqref{eq:Ind} and~\eqref{eq:Res} for cofinite equivariant sets satisfy the following double coset formula.

\begin{lemma}[Double coset formula]
\label{double-coset}
Let $H$ and~$K$ be subgroups of~$G$ with $\#(G/K)<\infty$, and let $\MOR{a}{H}{G}$ and $\MOR{b}{K}{G}$ be the inclusions.
Let $\CJ=K\backslash G/H$.
For every $j\in\CJ$, choose a representative~$g_j\in j=KgH$, define $L_j=H\cap g_j^{\smash{-1}}Kg_j$, and let $\MOR{d_j}{L_j}{H}$ be the inclusion and $\MOR{c_j=\conj_{g_j}}{L_j}{K}$.
There is a commutative diagram
\[
\begin{tikzcd}[row sep=scriptsize, column sep=small]
\smash{\ds\smallcoprod_{j\in\CJ}} G/L_j
\arrow[rd, "\cong" description]
\arrow[rrrd, bend left=18, "(c_j)"]
\arrow[rddd, bend right,   "(d_j)"']
\\
&
G/H\times G/K
\arrow[rr]
\arrow[dd]
&&
G/K
\arrow[dd, "b"]
\\
\\
&
G/H
\arrow[rr, "a"']
&&
G/G
\end{tikzcd}
\]
whose inner and outer squares are pullbacks, where we write $a$, $b$, $d_j$ also for the maps of orbits induced by the inclusions, and $\MOR{c_j}{G/L_j}{G/K}$ sends $xL_j$ to~$xg_j^{\smash{-1}}K$.
Then, for any cofinite $H$-set~$S$, the diagram
\begin{equation}
\label{eq:double-coset}
\begin{tikzcd}[column sep=6em, row sep=1.1em]
\ds\smash{\smallprod_{j\in\CJ}} \SF_\IA(\oid{L_j}{\res_{d_j} S})
\arrow[r, "\prod\Ind_{c_j}"]
&
\ds\smash{\smallprod_{j\in\CJ}} \SF_\IA(\oid{K}{\ind_{c_j}\res_{d_j} S})
\\
&
\ds\SF_\IA\Bigl(\oid{K}{\smash{\smallcoprod_{j\in\CJ}} \ind_{c_j} \res_{d_j} S}\Bigr)
\arrow[u, "\cong", "\ \varphi"']
\\
&
\ds\SF_\IA(\oid{K}{\res_b \ind_a S})
\arrow[u, "\cong", "\ \SF_\IA^K(\oid{K}{w^{-1}})"']
\\
\\
\ds\SF_\IA(\oid{H}{S})
\arrow[uuuu, "(\Res_{d_j})\ "]
\arrow[r, "\Ind_a"']
&
\ds\SF_\IA(\oid{G}{\ind_a S})
\arrow[uu, "\ \Res_b"']
\end{tikzcd}
\end{equation}
commutes up to natural isomorphism, where
\[
\MOR[\cong]{w}{\smallcoprod_{j\in\CJ}\ind_{c_j} \res_{d_j} S}{\res_b \ind_a S}
\]
is the natural $K$-isomorphism sending $[k,s]\in\ind_{c_j} \res_{d_j} S=K\times_{L_j} \res_{d_j} S$ to $[kg_j,s]\in\res_b \ind_a S=\res_b (G\times_H S)$,
and $\varphi$ is the natural isomorphism from \autoref{finite-add}.
\end{lemma}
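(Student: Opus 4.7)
The plan is to reduce the statement to the set-theoretic Mackey double coset decomposition and then transport it through the construction $\SF_\IA$ using the enriched Kan-extension formalism from Section~\ref{MODULES}, especially \autoref{Marco's-trick}. The pullback square of $G$-sets given in the statement is the Mackey decomposition $w\colon\smallcoprod_{j\in\CJ}\ind_{c_j}\res_{d_j}S\xrightarrow{\cong}\res_b\ind_aS$, and the task is to show that this set-level equivalence lifts, via the action-groupoid functor $\oid{G}{-}$ and the free-module functor $\SF_\IA$, to the claimed natural isomorphism between two $\CV$-functors.

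First I would unpack the definitions. By~\eqref{eq:Ind} and~\eqref{eq:Res} we have $\Ind_a=\Lan_{\overline{a}}$, $\Res_b=\res_{\underline{b}}$, and analogously $\Ind_{c_j}=\Lan_{\overline{c_j}}$, $\Res_{d_j}=\res_{\underline{d_j}}$, where $\overline{\alpha},\underline{\alpha}$ are the functors of action groupoids introduced just before~\eqref{eq:Ind}. Since $\SF_\IA^{K}(\oid{K}{w^{-1}})$ and $\varphi$ are isomorphisms of $\CV$-categories, after pre-composing with the isomorphism $\oid{K}{w}$ of action groupoids it is enough to produce a natural isomorphism between the $\CV$-functors
\[
\res_{\underline{b}}\circ\Lan_{\overline{a}}
\AND
\SF_\IA^{K}(\oid{K}{w})\circ\varphi^{-1}\circ\Bigl(\prod_j\Lan_{\overline{c_j}}\Bigr)\circ\bigl(\res_{\underline{d_j}}\bigr)_{j\in\CJ}
\]
from $\SF_\IA(\oid{H}{S})$ to $\SF_\IA(\oid{K}{\res_b\ind_a S})$. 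Applying \autoref{Marco's-trick} to the square with top edge $\overline{a}$, right edge identity, bottom edge $\underline{b}$, and left edge an appropriately chosen functor, rewrites the left-hand composite as Kan extension against the bimodule $\modu{\overline{a}}{\oid{G}{\ind_a S}}{\underline{b}}$. Similarly, iterating \autoref{LanLan} and \autoref{Marco's-trick} on the right-hand path presents it as Kan extension against $\bigsqcup_{j\in\CJ}\modu{\overline{d_j}}{\oid{L_j}{\res_{d_j}S}}{\underline{c_j}}$, transported along $\oid{K}{w}$.

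The core computation is then the identification of these two bimodules. For $s\in S$ and $t=[k,s']\in\res_b\ind_aS$ with $s'\in\res_{d_j}S$ (using the decomposition provided by~$w$), a morphism $[1,s]\to t$ in $\oid{G}{\ind_a S}$ lying over the $K$-action corresponds to an element $g\in G$ with $g\cdot[1,s]=[kg_j,s']$, i.e. to an element of $\{g\in G:\exists\,h\in H,\,g=kg_jh\text{ and }hs=s'\}$. Sorting $g$ by its double coset $KgH=j\in\CJ$ produces the desired natural bijection to $\smallcoprod_{j\in\CJ}\mor_{\oid{K}{\ind_{c_j}\res_{d_j}S}}([1,s],[k,s'])$. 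This $G$\=/equivariant bijection is exactly the map $\overline{\nu}$ of \autoref{Marco's-trick}\ref{overline-nu-iso} in the present situation, so the induced $\nu_*$ is the required natural isomorphism, and it is natural in~$S$ by construction. Finally, passing from bimodules of action groupoids to $\CV$-functors on $\SF_\IA$ is immediate from the pseudo-functoriality of $\SF_\IA$ established at the end of Section~\ref{MODULES}, together with the fact that $\SF_\IA$ sends disjoint unions to products (\autoref{finite-add}), which accounts for the presence of $\varphi$.

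The main technical obstacle is the bookkeeping needed to align the two applications of \autoref{Marco's-trick}: one has to choose the auxiliary functors and natural transformations so that the squares fit together, and to verify that the resulting natural isomorphism of bimodules is compatible with the cofiniteness-dependent identification $\res_b\ind_aS\cong\smallcoprod_j\ind_{c_j}\res_{d_j}S$ rather than with some reshuffling of the double-coset indexing (e.g.\ by a different choice of representatives $g_j$). Once these coherences are set up, the conclusion is a direct consequence of \autoref{Marco's-trick}\ref{overline-nu-iso}, since $\overline{\nu}$ is manifestly a bijection of sets.
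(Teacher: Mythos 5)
Your overall strategy is the same as the paper's — identify the pullback square with the Mackey decomposition, push everything through the action-groupoid functor and $\SF_\IA$, and conclude using \autoref{Marco's-trick} — but the paper's execution is considerably simpler, and your version has some slips that would cause trouble if you tried to carry it out.

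The paper makes one crucial observation that you do not: it suffices to check commutativity of diagram~\eqref{eq:double-coset} after post-composing with the projection $\prod_j\SF_\IA(\oid{K}{\ind_{c_j}\res_{d_j}S})\to\SF_\IA(\oid{K}{\ind_{c_j}\res_{d_j}S})$ for each fixed~$j$. This reduces the whole statement to a \emph{single} instance of \autoref{Marco's-trick} per $j$: one writes down a square of action groupoids with horizontal edges $\overline{c_j}$ (top) and $\overline{a}$ (bottom), left edge $\underline{d_j}$, and right edge the composite $\underline{b}\circ\oid{K}{w}\circ\oid{K}{\incl_j}$, and notices that it commutes only up to the natural isomorphism $\nu=g_j$ (concretely, going one way around sends $s$ to $[1,s]$ and the other to $[g_j,s]$). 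Since $\overline{a}$ and $\overline{c_j}$ are equivalences, \autoref{Marco's-trick}\ref{vertical-eq} gives the desired natural isomorphism with no explicit bimodule computations. Your plan — rewrite both full composites as Kan extensions against bimodules, then compare those bimodules and account for $\varphi$ and $\oid{K}{w}$ by hand — should in principle succeed, but it is precisely the ``bookkeeping obstacle'' you flag at the end, and it is avoidable.

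There are also local problems in your bimodule description that would need fixing. The expression $\modu{\overline{d_j}}{\oid{L_j}{\res_{d_j}S}}{\underline{c_j}}$ has the decorations backwards: the functors in play are $\underline{d_j}\colon\oid{L_j}{\res_{d_j}S}\to\oid{H}{S}$ and $\overline{c_j}\colon\oid{L_j}{\res_{d_j}S}\to\oid{K}{\ind_{c_j}\res_{d_j}S}$, not $\overline{d_j}$ and $\underline{c_j}$ (which do not even have the right source/target). The phrase ``a morphism $[1,s]\to t$ in $\oid{G}{\ind_aS}$ lying over the $K$-action'' is imprecise; the bimodule for $\res_{\underline{b}}\Lan_{\overline{a}}$ is genuinely all of $\oid{G}{\ind_a S}(\overline{a}(s),t)$, with the $K$- and $H$-actions coming from pre- and post-composition via $\underline{b}$ and $\overline{a}$, not from any restriction of the hom-set. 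And the asserted bijection to $\coprod_j\mor_{\oid{K}{\ind_{c_j}\res_{d_j}S}}([1,s],[k,s'])$ conflates $[1,s]\in\ind_aS$ with an object of $\ind_{c_j}\res_{d_j}S$; one must first pass through the coend defining $\Lan_{\overline{c_j}}\res_{\underline{d_j}}$, which introduces an intermediate variable $a\in\res_{d_j}S$ that you have silently eliminated. None of these are fatal — all are the kind of thing \autoref{Marco's-trick}\ref{overline-nu-iso} is designed to package — but the paper's per-$j$ reduction lets one apply that fact in its cleanest form and sidestep exactly these pitfalls.
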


\begin{proof}
First of all, notice that $\CJ$ is finite since $\#(G/K)<\infty$.
Notice also that $L_j$ and hence $c_j$ and~$d_j$ depend on the choice of representative.
It is enough to prove that diagram~\eqref{eq:double-coset} commutes up to isomorphism after composing with the projection from the top right corner to each factor $\SF_\IA(\oid{K}{\ind_{c_j}\res_{d_j} S})$.
Consider the following diagram.
\begin{equation}
\label{eq:pre-double-coset}
\begin{tikzcd}[column sep=6em, row sep=1.1em]
\ds\oid{L_j}{\res_{d_j} S}
\arrow[r, "\overline{c_j}"]
&
\ds\oid{K}{\ind_{c_j}\res_{d_j} S}
\\
&
\ds\oid{K}{\smash{\smallcoprod_{j\in\CJ}} \ind_{c_j} \res_{d_j} S}
\arrow[leftarrow, u, "\ \oid{K}{\incl_j}"']
\\
&
\ds\oid{K}{\res_b \ind_a S}
\arrow[leftarrow, u, "\cong", "\ \oid{K}{w}"']
\\
\\
\ds\oid{H}{S}
\arrow[leftarrow, uuuu, "\underline{d_j}\ "]
\arrow[r, "\overline{a}"']
\arrow[Rightarrow, ruuuu, shorten <=6em, shorten >=6em, "\nu"]
&
\ds\oid{G}{\ind_a S}
\arrow[leftarrow, uu, "\ \underline{b}"']
\end{tikzcd}
\end{equation}
Diagram~\eqref{eq:pre-double-coset} does not commute.
Given $s\in\res_{d_j}S$,
the composition through the left bottom corner sends $s$ to~$[1,s]$,
the composition through the top right corner sends $s$ to~$[g_j,s]$.
So there is a natural isomorphism $\nu=g_j$ in the indicated direction.
Notice that the functors $\overline{a}$ and $\overline{c_j}$ are equivalences of categories.
Now apply \autoref{Marco's-trick} and the definition of the functors in diagram~\eqref{eq:double-coset} to obtain the result.
\end{proof}

We are now ready to prove the main result of this section.

\begin{proof}[Proof of \autoref{constr-Mackey}]
The map~\eqref{eq:cov-mor} corresponds to the one obtained by applying $\pi_n\BT(-)$ to
\[
\SF_\IA(\oid{H}{\pt})
\xrightarrow{\Ind_\alpha}
\SF_\IA(\oid{G}{\ind_\alpha\pt})
\xrightarrow{\SF_\IA(\oid{G}{\pr})}
\SF_\IA(\oid{G}{\pt})
\,.
\]
We have to show that this extends to a Mackey functor.
The contravariant functoriality is induced by
\[
\SF_\IA(\oid{G}{\pt})
\xrightarrow{\Res_\alpha}
\SF_\IA(\oid{H}{\pt})
\,.
\]

To verify axiom~(i), notice that
if $\MOR{\alpha}{H}{G}$ is an isomorphism, then $\MOR{\overline{\alpha}=\underline{\alpha}}{\oid{H}{\pt}}{\oid{G}{\pt}}$ is an isomorphism.
Then apply \autoref{Lan-equivalence}.

(ii) follows from \autoref{Marco's-trick} and the fact that $g$ is a natural isomorphism from $\MOR{\underline{\conj}_g}{\oid{G}{\pt}}{\oid{G}{\pt}}$ to the identity.

(iii) follows by applying $\pi_n\BT(-)$ to the diagram~\eqref{eq:double-coset} in \autoref{double-coset} for~$S=\pt$ combined with the following commutative diagram.
\[
\begin{tikzcd}[column sep=large, row sep=1.1em]
\ds\smash{\smallprod_{j\in\CJ}} \SF_\IA(\oid{K}{\ind_{c_j} \pt})
\arrow[r]
&
\ds\smash{\smallprod_{j\in\CJ}} \SF_\IA(\oid{K}{\pt})
\\
\ds\SF_\IA\Bigl(\oid{K}{\smash{\smallcoprod_{j\in\CJ}} \ind_{c_j} \pt}\Bigr)
\arrow[u, "\cong", "\ \varphi"']
\arrow[r]
&
\ds\SF_\IA\Bigl(\oid{K}{\smash{\smallcoprod_{j\in\CJ}}\pt}\Bigr)
\arrow[u, "\cong", "\ \varphi"']
\\
\ds\SF_\IA(\oid{K}{\res_b \ind_a \pt})
\arrow[leftarrow, u, shorten >=.4ex, "\cong", "\ \SF_\IA^K(\oid{K}{w})"']
\arrow[r]
&
\ds\SF_\IA(\oid{K}{\pt})
\arrow[leftarrow, u, shorten >=.4ex, "\cong"]
\\
\ds\SF_\IA(\oid{G}{\ind_a \pt})
\arrow[u, "\ \Res_b"']
\arrow[r]
&
\ds\SF_\IA(\oid{G}{\pt})
\arrow[u, "\ \Res_b"']
\end{tikzcd}
\]
Here the unlabeled functors are induced by the projections to~$\pt$.
One also needs to use \autoref{eval-on-S} and the assumption on~$\BT$.
\end{proof}

Using the constructions and results in this section, we can also prove that $H^?_*(-;\BT\SF_\IA)$ is an {equivariant homology theory with restriction structure} in the sense of~\cite{L-Chern}*{Section~6, pages~217--218}.
This is not needed here, and details will appear elsewhere.


\section{The cyclotomic trace map and the main diagram}
\label{TRC}

In this section we explain the construction of the main diagram~\eqref{eq:main-diagram}.
An essential ingredient is the natural model for the cyclotomic trace map constructed by Dundas, Goodwillie, and McCarthy in~\cite{Dundas}.
Using the results from \autoref{MACKEY}, it follows that all the equivariant homology theories in~\eqref{eq:main-diagram} have Mackey structures, and therefore the theory of Chern characters from \autoref{CHERN} becomes available.
The proof of the Detection \autoref{detection} in \autoref{PROOF-DETECTION} is based on this.

In essence, diagram~\eqref{eq:main-diagram} is induced by natural transformations
\begin{equation*}
\K^{\ge0}(\IZ[-])
\FROM[\ell]
\K^{\ge0}(\IS[-])
\TO[\tau]
\smallprod_p\TC(\IS[-];p)
\TO[\sigma]
\THH(\IS[-])\times\smallprod_p\C(\IS[-];p)
\end{equation*}
of functors $\Groupoids\TO\IN\Sp$.
In detail, though, both $\tau$ and~$\sigma$ are zig-zags of natural transformations whose wrong-way maps are all natural $\pi_*$-isomorphisms.
Since natural $\pi_*$-isomorphisms induce isomorphisms of the associated homology theories, we use the same symbols for the zig-zags of natural transformations and for the induced one-way maps in homology.

We proceed to give technical details about $\ell$, $\tau$, and~$\sigma$.

The category of spectra used in~\cite{Dundas} is $\Gamma\CS_*$; see \autoref{SPECTRA}.
Given a $\Gamma\CS_*$-category~$\GS{D}$, let~$\TC(\GS{D};p)$ be the geometric realization of the spectrum denoted $\underline{TC}(\GS{D},S^0;p)$ in \cite{Dundas}*{Definition~6.4.1.1 on pages~253--254}.

In order to compare this definition to the one used in this paper, consider the lax monoidal functor
\(
\MOR{\reall{-}}{\Gamma\CS_*}{\Sigma\Sp}
\)
from \autoref{SPECTRA}.
It induces a change of base functor
\[
\MOR{\reall{-}}{\Gamma\CS_*\D\Cat}{\Sigma\Sp\D\Cat}
\,,
\]
which we denote with the same symbol; compare \autoref{ENRICHED}.
Notice that, if $\GS{A}$ is a monoid in~$\Gamma\CS_*$ (i.e., an $\mathbf{S}$-algebra in the sense of~\cite{Dundas}) and $\CC$ is an ordinary small category, then $\reall{\GS{A}[\CC]}\cong\reall{\GS{A}}[\CC]$.
The following result follows easily by inspecting the definitions.
\begin{lemma}
\label{simpl-vs-top}
For any $\Gamma\CS_*$-category~$\GS{D}$ there is a natural $\pi_*$-isomorphism in~$\IN\Sp$
\[
\TC(\GS{D};p)
\TO
\forget\TC\bigl(\reall{\GS{D}};p\bigr)
\,.
\]
\end{lemma}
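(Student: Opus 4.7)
The plan is to trace through the definition of $\TC$ level by level and check that the lax symmetric monoidal realization functor $\reall{-}\colon \Gamma\CS_*\TO\Sigma\Sp$ is compatible, up to natural $\pi_*$-isomorphism, with each ingredient: the cyclic bar construction defining $\THH$, the $C_{p^n}$-fixed points, the $R$ and $F$ maps, and finally the homotopy limit over~$\RFcat$ that yields $\TC$. The reason this should be essentially formal is that both $\TC(\GS{D};p)$ and $\TC(\reall{\GS{D}};p)$ are built from the same combinatorial templates (indexed over $\CI^{[q]}$, $\Lambda^\op$, and $\RFcat$), with the only difference lying in whether smash products, mapping spaces, and fixed points are taken in $\Gamma\CS_*$ before realization, or in $\Sigma\Sp$ (or $\CW\CT$) after realization.

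First, I would compare the two $\THH$ constructions. Recall from \autoref{SPECTRA} that $\reall{-} = \forget \circ \IP \circ \IT$, where $\IT$ and $\IP$ are strong symmetric monoidal, so $\reall{-}$ preserves smash products up to natural stable equivalence. Applied to the cyclic bar construction, this yields natural maps $\reall{cn_{[q]}\GS{D}(\vec{x})} \to cn_{[q]}\reall{\GS{D}}(\vec{x})$ that are $\pi_*$-isomorphisms, compatible with the cyclic structure maps (face, degeneracy, and cyclic permutation). Passing to the adjoint mapping space presentation of $M_{[q]}$ and then taking $\hocolim_{\CI^{[q]}}$ and geometric realization of the cyclic object in spectra, this produces a natural $\pi_*$-isomorphism between $\reall{\THH(\GS{D})}$ and $\forget\THH(\reall{\GS{D}})$ in $\IN\Sp$, with compatible $S^1$-actions in the sense that after forgetting to the underlying non-equivariant spectra the map becomes a $\pi_*$-isomorphism.

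Second, I would check that this identification is compatible with the formation of $C_{p^n}$-fixed points and with the restriction and Frobenius maps. Since we land in $\IN\Sp$ and fixed points there are taken levelwise, the identification of~$\Delta_{\GS{D}}$ as in~\eqref{eq:Delta-map} with its topological counterpart $\Delta_{\reall{\GS{D}}}$ is forced by the strong monoidal behaviour of $\IP\circ\IT$ on finite smash powers. Hence $r$ in~\eqref{eq:r-map} and the inclusions $F$ match under $\reall{-}$, and we obtain a comparison map of pro-systems indexed by $\RFcat$ that is a levelwise $\pi_*$-isomorphism.

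Finally, taking $\holim_{\RFcat}$ — after appropriate fibrant replacement — yields the desired natural $\pi_*$-isomorphism $\TC(\GS{D};p) \TO \forget\TC(\reall{\GS{D}};p)$. The only point requiring genuine care is verifying that the lax-monoidal (not strong-monoidal) nature of the forgetful map $\CW\CT\TO\Sigma\Sp$ does not obstruct the compatibility with fixed points and with the $\Delta$-homeomorphisms. This is the expected obstacle, but it is benign because connectivity and convergence of the relevant spectra ensure that the stable comparison maps obtained from the lax structure are $\pi_*$-isomorphisms, and because we are working in $\IN\Sp$ where the equivariant structure reduces to compatibility of the underlying naive maps.
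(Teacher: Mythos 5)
The paper gives no proof of this lemma beyond the preceding remark that it ``follows easily by inspecting the definitions,'' so your line-by-line unwinding of the two constructions---through the cyclic nerve, the B\"okstedt $\hocolim$ over~$\CI^{[q]}$, realization of the cyclic direction, edgewise subdivision and $\Cp{n}$-fixed points, the $R$- and $F$-maps, and finally $\holim_{\RFcat}$---is exactly what the authors intend, and the plan is sound.

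You do, however, misidentify where the care is needed. You single out the laxness (rather than strongness) of the monoidal structure on $\MOR{\forget}{\CW\CT}{\Sigma\Sp}$ as ``the only point requiring genuine care,'' and propose to defuse it via connectivity and convergence of the spectra. But the B\"okstedt construction on both sides never smashes spectra together: $cn_{[q]}\spec{D}(\vec{x})$ and $M_{[q]}\spec{D}(\vec{x})(A)$ are assembled from smash products and mapping spaces of pointed \emph{spaces}---the individual levels $\spec{D}(d_i,d_{i-1})_{x_i}$---so the lax monoidality of $\forget$ never enters the comparison. The actual work lies elsewhere: in the strong monoidality of~$\IT$ and~$\IP$; in the standard facts that geometric realization of simplicial sets commutes, up to natural isomorphism or weak equivalence, with finite smash products, with the homotopy colimits over~$\CI^{[q]}$, and with the $C_p$-fixed points of edgewise subdivision (which is where the simplicial and topological diagonal homeomorphisms as in~\eqref{eq:Delta-map} are matched); and in the assembly-map comparison between evaluating a $\Gamma$-space on a simplicial sphere and evaluating its prolongation $\IP\IT(-)$ on the corresponding topological sphere. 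Relatedly, the expression $\reall{cn_{[q]}\GS{D}(\vec{x})}$ is not well formed: $cn_{[q]}\GS{D}(\vec{x})$ is a pointed simplicial set, not an object of~$\Gamma\CS_*$, so the functor to be applied there is ordinary geometric realization, not~$\reall{-}$. None of this derails the strategy---it merely relocates where the checking actually happens.
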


Denote by $\Sym\Mon\Gamma\CS_*\D\Cat$ the category of small symmetric monoidal $\Gamma\CS_*$-categories.
The algebraic $K$-theory spectrum of a symmetric monoidal $\Gamma\CS_*$-category is defined in~\cite{Dundas}*{Subsection~5.2.1.4 on pages~192--193}.
By taking geometric realization, we get a functor
\begin{equation}
\label{eq:Dundas-K-functor}
\MOR{\BK}{\Sym\Mon\Gamma\CS_*\D\Cat}{\IN\Sp}
\,.
\end{equation}

The cyclotomic trace map is then defined in~\cite{Dundas}*{Definition~7.1.1.2 and the rest of Section 7.1.1 on page~285} as a zig-zag of natural transformations
\begin{equation}
\label{eq:trc}
\trc_p\colon
\K(\GS{D})\FROM[\simeq]\dotsb\TO\dotsb\FROM[\simeq]\TC(\GS{D};p)
\end{equation}
of functors ${\Sym\Mon\Gamma\CS_*\D\Cat}\TO{\IN\Sp}$, where the wrong-way maps are all natural $\pi_*$-isomorphisms.

We are now ready to describe $\ell$ and~$\tau$.
Fix models in the category of monoids in~$\Gamma\CS_*$ for the Eilenberg-Mac Lane ring spectrum~$\spec{H}\IZ$, the sphere spectrum~$\spec{S}$, and the Hurewicz map $\spec{S}\TO\spec{H}\IZ$.
We then obtain a natural transformation~$\ell$ from the bottom to the top composition in
\[
\begin{tikzcd}
\Groupoids
\arrow[r, shift left,  "\SF_{\spec{H}\IZ}"]
\arrow[r, shift right, "\SF_{\spec{S}}"']
&
\Gamma\CS_*\D\Cat^\copr
\arrow[r, hook]
&
\Sym\Mon\Gamma\CS_*\D\Cat
\arrow[r, "\K"]
&
\IN\Sp
\,,
\end{tikzcd}
\]
where $\K$ is the algebraic $K$-theory functor from~\eqref{eq:Dundas-K-functor}; compare also \autoref{examples-of-T}.

Next, using~\eqref{eq:trc} and taking the product over all primes~$p$, we obtain a zig-zag of natural transformations
\[
\tau=(\trc_p)\colon
\K(\GS{D})\FROM[\simeq]\dotsb\TO\dotsb\FROM[\simeq]\smallprod_p\TC(\GS{D};p)
\]
of functors ${\Sym\Mon\Gamma\CS_*\D\Cat}\TO{\IN\Sp}$, where the wrong-way maps are all natural $\pi_*$-isomorphisms.

In order to describe~$\sigma$, we first switch to enrichments in~$\Sigma\Sp$, and then we need to use Morita invariance, \autoref{Morita} below, because the zig-zag~\eqref{eq:alpha} between $\TC$ and~$\C$ is not defined for categories of modules.

In more detail, \autoref{simpl-vs-top} gives a natural $\pi_*$-isomorphism from the top to the bottom composition in the diagram
\[
\begin{tikzcd}[column sep=large, row sep=-1ex]
&
\Gamma\CS_*\D\Cat
\arrow[dr, near start, "{\raisebox{-.25ex}{$\ts\Pi$}}\TC(-;p)"]
\\
\Groupoids
\arrow[ur, near end, "\SF_\spec{S}"]
\arrow[dr, near end, "\SF_\spec{S}"']
&
&
\IN\Sp
\,.
\\
&
\Sigma\Sp\D\Cat
\arrow[ur, near start, "{\raisebox{-.25ex}{$\ts\Pi$}}\TC(-;p)"']
\end{tikzcd}
\]
\autoref{Morita}\ref{i:Morita-C-TC} gives a natural $\pi_*$-isomorphism from the bottom to the top composition in
\[
\begin{tikzcd}[column sep=large]
\Or G
\arrow[r, "\oid{G}{-}"]
&
\Groupoids
\arrow[r, shift left,  "\SF_\spec{S}"]
\arrow[r, shift right, "{\spec{S}[-]}"']
&
\Sigma\Sp\D\Cat
\arrow[r, "{\raisebox{-.25ex}{$\ts\Pi$}}\TC(-;p)"]
&
\IN\Sp
\,.
\end{tikzcd}
\]
Finally, using~\eqref{eq:alpha} and taking the product over all primes~$p$, and for $p=11$ using also the projection $\TC(-;11)\TO\THH(-)$, we obtain a zig-zag of natural transformations
\[
\sigma\colon
\smallprod_p\TC(\spec{S}[\CC];p)
\TO
\dotsb
\xleftarrow{\:\simeq\:}
\THH(\spec{S}[\CC])\times\smallprod_p\C(\spec{S}[\CC];p)
\]
of functors $\Groupoids\TO\IN\Sp$.

\begin{remark}
\label{tau-sigma-Mackey}
Because of the naturality of \autoref{constr-Mackey} in the functor~$\T$, the maps $\ell$ and~$\tau$ induce natural transformations of Mackey functors.
Arguing as above with Morita invariance and \autoref{constr-Mackey}, we see that also the homology theory associated with $\THH(\spec{S}[-])\times\smallprod_p\C(\spec{S}[-];p)$ has a Mackey structure.
However, we only know that the natural transformation induced by~$\sigma$ respects the induction structures, but not necessarily the Mackey structures.
\end{remark}

We finish this section with the well-known Morita invariance property needed above.
For a symmetric ring spectrum~$\spec{A}$, consider the symmetric spectral category~$\SF_\spec{A}(\CC)$ of finitely generated free $\spec{A}$-modules over~$\CC$ from \autoref{fgf-mod},
and the functor of symmetric spectral categories $\spec{A}[\CC]\TO\SF_\spec{A}(\CC)$ from \autoref{fgf-on-one-obj}.

\begin{lemma}[Morita invariance]
\label{Morita}
Let $\spec{A}$ be a symmetric ring spectrum.
\begin{enumerate}
\item
\label{i:Morita-THH}
The natural transformation
\[
\THH(\spec{A}[\oid{G}{-}])\TO\THH(\SF_\spec{A}(\oid{G}{-}))
\]
of functors $\Or G\TO\CW\CT^{S^1}$ is objectwise a $\pi_*^1$-isomorphism.
\item
\label{i:Morita-C-TC}
Assume that $\spec{A}$ is \cplus.
Then the natural transformations
\begin{align*}
 \C(\spec{A}[\oid{G}{-}];p)&\TO \C(\SF_\spec{A}(\oid{G}{-});p)
\,,
\\
\TC(\spec{A}[\oid{G}{-}];p)&\TO\TC(\SF_\spec{A}(\oid{G}{-});p)
\end{align*}
of functors $\Or G\TO\CW\CT$ are objectwise $\pi_*$-isomorphisms.
\end{enumerate}
\end{lemma}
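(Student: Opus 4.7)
The plan is to first establish part~\ref{i:Morita-THH} using the standard Morita invariance of $\THH$, and then bootstrap to part~\ref{i:Morita-C-TC} by checking that the map respects all the additional structure used to build $\C$ and $\TC$ from $\THH$. Throughout, the functor $\spec{A}[\CC] \to \SF_\spec{A}(\CC)$ from \autoref{fgf-on-one-obj} sends the unique object corresponding to $c \in \obj\CC$ to the representable $\spec{A}[\CC(c,-)]$; by the Yoneda lemma and the definition of $\SF_\spec{A}(\CC)$, this functor is full and faithful onto the full spectral subcategory of ``representable'' free modules, and every object of $\SF_\spec{A}(\CC)$ is a finite coproduct of representables. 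This is precisely the kind of inclusion handled by Morita invariance.

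For \ref{i:Morita-THH}, my approach is to reduce to the classical statement that $\THH$ is invariant under passage from a symmetric spectral category to its closure under finite coproducts. Inspecting the simplicial object $THH_\bullet(\spec{D})$ from \autoref{THH}, a direct computation shows that a morphism in $cn_{[q]}\SF_\spec{A}(\CC)$ into a finite coproduct decomposes canonically into components, and this splitting allows one to identify the inclusion $cn_{[q]}(\spec{A}[\CC]) \to cn_{[q]}\SF_\spec{A}(\CC)$ (after taking the homotopy colimit over $\CI^{[q]}$ and geometric realization) with a $\pi_*^1$\=/isomorphism. Alternatively, one invokes the Morita invariance theorem of Dundas-McCarthy directly. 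The $S^1$-equivariance is automatic because the map is induced by a $\Sigma\Sp$\=/functor, hence commutes with the cyclic structure. A minor verification using \autoref{THH-Segal-good} confirms that both sides are levelwise Segal good, so \autoref{real-lemma-WT} applies to reduce checking a $\pi_*$\=/isomorphism to each simplicial degree.

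For \ref{i:Morita-C-TC}, I would argue that the Morita map is compatible with the entire cyclotomic structure on $\THH$. More precisely, both the Frobenius maps $F$ of \eqref{eq:R-and-F} and the restriction maps $R$ defined via~\eqref{eq:res-to-fix} are built pointwise from operations in the ambient symmetric spectral category (fixed points, the canonical homeomorphism $\Delta$ of~\eqref{eq:Delta-map}), so naturality ensures that the Morita map commutes with both $R$ and $F$. Under the assumption that $\spec{A}$ is \cplus, \autoref{THH-Omega} guarantees that $\forget\sh^E\THH(-)$ is an orthogonal $C_{p^n}$-$\Omega$-spectrum for $-$ equal to either $\spec{A}[\oid{G}{-}]$ or $\SF_\spec{A}(\oid{G}{-})$, so a $\pi_*^1$\=/isomorphism induces $\pi_*$-isomorphisms on all $C_{p^n}$\=/fixed points. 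Passing to the homotopy limits along the $F$-maps (respectively the $R$ and $F$ maps) then shows that the Morita map induces $\pi_*$\=/isomorphisms on $\C(-;p)$ and $\TC(-;p)$.

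The main obstacle will be making precise the equivariant Morita invariance in step two, that is, passing from a $\pi_*^1$\=/isomorphism of $\CW$-spaces to $\pi_*$\=/isomorphisms on $C_{p^n}$\=/fixed points. The subtlety is that fixed points of an $S^1$\=/equivariant $\CW$-space do not in general preserve $\pi_*$\=/isomorphisms; this is why the $\Omega$-spectrum conclusion of \autoref{THH-Omega} is indispensable, and it is also why the hypothesis that $\spec{A}$ is \cplus\ is needed for part~\ref{i:Morita-C-TC} but not for part~\ref{i:Morita-THH}. Once this equivariant upgrade is in hand, the deduction of Morita invariance for $\C$ and $\TC$ is essentially formal from the definitions in \autoref{FUNCTOR-C} and \autoref{TC}.
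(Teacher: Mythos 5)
For part~\ref{i:Morita-THH} your plan is essentially the paper's: both reduce to standard Morita invariance for $\THH$ following Dundas--McCarthy, though the paper organizes this a bit differently via a $2\times2$ commutative square of spectral categories comparing $\spec{A}[\oid{G}{\,(G/H)}]$ and $\SF_\spec{A}(\oid{G}{\,(G/H)})$ with $\spec{A}[H]$ and $\SF_{\spec{A}[H]}(\pt)$, and cites specific results in~\cite{Dundas} for the vertical and horizontal maps.

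For part~\ref{i:Morita-C-TC}, however, there is a genuine gap. You write that, since both $\forget\sh^E\THH(\spec{A}[\oid{G}{-}])$ and $\forget\sh^E\THH(\SF_\spec{A}(\oid{G}{-}))$ are orthogonal $C_{p^n}$-$\Omega$-spectra, ``a $\pi_*^1$-isomorphism induces $\pi_*$-isomorphisms on all $C_{p^n}$-fixed points.'' This is false as stated: the result you seem to be invoking (e.g., \cite{RV}*{Lemma~6.20(iii) on page~1522}, cited in the proof of \autoref{Adams}) requires a \piiso, i.e.\ an isomorphism on $\pi_n^H$ for \emph{all} subgroups~$H$, not merely a $\pi_*^1$-isomorphism, which only controls the trivial subgroup. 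For genuine equivariant spectra, the fixed-point functor does not preserve non-equivariant weak equivalences, and the $\Omega$-spectrum hypothesis alone does not upgrade a $\pi_*^1$-iso to an equivariant equivalence. The correct mechanism, which the paper uses, is to invoke the fundamental fibration sequence from \autoref{THH-Adams}\ref{i:THH-Adams}: since homotopy orbits $(-)_{hC_{p^n}}$ do preserve $\pi_*^1$-isomorphisms, the fibration sequence $(\forget\THH)_{hC_{p^n}} \to \forget\THH^{C_{p^n}} \xrightarrow{R} \forget\THH^{C_{p^{n-1}}}$ gives an inductive argument (base case $n=0$ is part~\ref{i:Morita-THH}) proving that the Morita map is a $\pi_*$-isomorphism on each $\THH^{C_{p^n}}$, and then one passes to homotopy limits. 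Your proposal needs this induction; the $\Omega$-spectrum fact you cite is relevant elsewhere (for \autoref{Adams}) but cannot replace it here.
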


\begin{proof}
\ref{i:Morita-THH}
Consider the following commutative diagram of functors of spectral categories.
\begin{equation}
\label{eq:Morita}
\begin{tikzcd}
\spec{A}[\oid{G}{\,(G/H)}]
\arrow[r]
&
\SF_\spec{A}(\oid{G}{\,(G/H)})
\\
\spec{A}[H]
\arrow[r]
\arrow[u]
&
\SF_\spec{A}(H)\cong\SF_{\spec{A}[H]}(\pt)
\arrow[u]
\end{tikzcd}
\end{equation}
The vertical maps induce $\pi_*^1$-isomorphisms on~$\THH$ by an argument as in~\cite{Dundas}*{Corollary~4.3.5.2 on page~171}.
For any symmetric ring spectrum~$\spec{B}$, the functor $\spec{B}\TO\SF_\spec{B}(\pt)$ induces a $\pi_*^1$-isomorphism on~$\THH$ by an argument similar to that of~\cite{Dundas}*{Lemma~4.3.5.11 on page~176}.
So we conclude that also the top horizontal functor in~\eqref{eq:Morita} induces a $\pi_*^1$-isomorphism on~$\THH$.

\ref{i:Morita-C-TC}
The assumption on~$\spec{A}$ implies that $\spec{A}[\CC]$ and $\SF_\spec{A}(\CC)$ are objectwise strictly connective, objectwise convergent, and very well pointed for any small category~$\CC$.
The result then follows from part~\ref{i:Morita-THH}, the fundamental fibration sequence from \autoref{THH-Adams}\ref{i:THH-Adams}, and the fact that homotopy limits respect $\pi_*$-isomorphisms.
\end{proof}


\section{Proof of the Detection Theorem}
\label{PROOF-DETECTION}

The goal of this section is to prove the Detection \autoref{detection}, thus completing the proof of \autoref{main-technical}.

Let $\CF\subseteq\FCyc$ be a family of finite cyclic subgroups of~$G$, and let $\TT\subseteq\IN$ be a subset.
Recall that Assumption~\techB{\CF}{\TT} reads as follows.
\begin{itemize}[label=\techB{\CF}{\TT}]
\item
For every~$C\in\CF$ and for every $t\in\TT$, the natural homomorphism
\[
K_t(\IZ[\zeta_c])
\TO
\prodp K_t\Bigl(\IZ_p\tensor_\IZ\IZ[\zeta_c];\IZ_p\Bigr)
\]
is $\IQ$-injective, where $c$ is the order of~$C$ and $\zeta_c$ is any primitive $c$-th root of unity.
\end{itemize}

\begin{lemma}
\label{Z[mu]=>Z[C]}
If Assumption~\techB{\CF}{\TT} holds then, for every~$C\in\CF$ and for every $t\in\TT$, the natural homomorphism
\[
K_t(\IZ[C])
\TO
\prodp K_t(\IZ_p[C];\IZ_p)
\]
is $\IQ$-injective.
\end{lemma}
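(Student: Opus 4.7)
The plan is to reduce to Assumption~\techB{\CF}{\TT} via the standard decomposition of $\IQ[C]$ into cyclotomic fields. Let $C$ have order~$c$ with generator~$g$, and consider the ring homomorphism
\[
\MOR{\psi}{\IZ[C]}{\smallprod_{d\mid c}\IZ[\zeta_d]}
\]
sending $g$ to $(\zeta_d)_{d\mid c}$. Both source and target are $\IZ$-orders of the same rank $c=\sum_{d\mid c}\varphi(d)$ in the semisimple $\IQ$-algebra $\IQ[C]\cong\prod_{d\mid c}\IQ(\zeta_d)$, so $\psi$ is injective with finite cokernel. Base-changing along $\IZ\to\IZ_p$ produces a corresponding map $\IZ_p[C]\to\prod_{d\mid c}(\IZ_p\tensor_\IZ\IZ[\zeta_d])$ with trivial kernel, whose cokernel is the finite $p$-part of~$\coker\psi$.

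The first step is to show that $\psi$ induces a $\IQ$-isomorphism on algebraic $K$-theory, and that its $p$-adic counterpart induces a $\IQ$-isomorphism on $p$-completed $K$-theory. This is the classical fact that an inclusion of $\IZ$-orders with finite cokernel has this property: the relative $K$-theory spectrum has finite homotopy groups in each degree, which vanish after rationalization, and by \autoref{P-COMPL} its $p$-completion still has finite and hence rationally trivial homotopy groups. Combining these isomorphisms with the maps in Assumption~\techB{\CF}{\TT} then yields the commutative diagram
\[
\begin{tikzcd}[column sep=small]
K_t(\IZ[C])\tensor_\IZ\IQ\arrow[r,"\cong"]\arrow[d]&\ds\bigoplus_{d\mid c}K_t(\IZ[\zeta_d])\tensor_\IZ\IQ\arrow[d]\\
\ds\prodp K_t(\IZ_p[C];\IZ_p)\tensor_\IZ\IQ\arrow[r,"\cong"]&\ds\bigoplus_{d\mid c}\prodp K_t\bigl(\IZ_p\tensor_\IZ\IZ[\zeta_d];\IZ_p\bigr)\tensor_\IZ\IQ
\end{tikzcd}
\]
Because $\CF$ is a family of subgroups of~$G$, the unique cyclic subgroup of~$C$ of order~$d$ belongs to~$\CF$ for every $d\mid c$, so Assumption~\techB{\CF}{\TT} applies to each~$d$ and the right-hand vertical map is a direct sum of $\IQ$-injective maps, hence $\IQ$-injective. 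The desired $\IQ$-injectivity of the left-hand vertical map follows at once.

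The main obstacle is justifying the rational invariance claimed in the first step. The statement for ordinary $K$-theory is classical and can be extracted from the long exact sequence of $K$-groups associated to the inclusion $\IZ[C]\hookrightarrow\prod_{d\mid c}\IZ[\zeta_d]$ together with the fact that $K$-groups of finite rings are finite by~\cite{Quillen-fg}; the passage to $p$-completed $K$-theory is then immediate from the description of $p$-completion in \autoref{P-COMPL}. Everything else in the argument is formal.
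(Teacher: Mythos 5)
Your reduction to $\prod_{d\mid c}\IZ[\zeta_d]$ is exactly the maximal $\IZ$-order $\CM C$ used in the paper, so the strategy is the same, but the pivotal step is not justified. You assert that an inclusion of $\IZ$-orders with finite cokernel induces a $\IQ$-isomorphism on $K$-theory because ``the relative $K$-theory spectrum has finite homotopy groups in each degree.'' This is precisely the kind of excision statement that algebraic $K$-theory does \emph{not} satisfy in general: the homotopy fiber of $\K(\IZ[C])\to\K(\CM C)$ is not controlled by the finite cokernel of the ring map alone, and establishing the finiteness you need is the whole content of the argument. The paper gets around this by forming the conductor square with $c\CM C\subseteq\IZ[C]$, observing that the two quotient rings are finite, and invoking Charney's Mayer-Vietoris sequence for $K_*(-)\tensor\IZ[\tfrac1c]$ --- a nontrivial excision-after-localization result --- to deduce that $\IZ[C]\to\CM C$ is a rational $K_*$-isomorphism for $t\ge1$. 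Without something equivalent, your ``classical fact'' is an unproved assumption, and the same gap recurs in the $p$-complete column.

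Two further points. First, for $t=0$ the top horizontal arrow in your diagram is not a $\IQ$-isomorphism: $K_0(\IZ[C])\tensor\IQ\cong\IQ$ by Swan, while $\bigoplus_{d\mid c}K_0(\IZ[\zeta_d])\tensor\IQ$ has rank equal to the number of divisors of~$c$, so the map is only $\IQ$-injective. The diagram's logic survives if you weaken the labels accordingly, but as written the claim is false and your proposed justification (finiteness of relative homotopy) fails exactly here --- the cokernel of $K_0(\IZ[C])\to K_0(\CM C)$ is rationally nontrivial. The paper treats $t=0$ separately via Swan's theorem for this reason. Second, the citation to~\cite{Quillen-fg} for finiteness of $K$-groups of finite rings is misplaced; that paper proves finite generation for rings of integers in number fields. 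The relevant reference used in the paper is~\cite{Weibel-K-book}*{Proposition~IV.1.16}.
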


\begin{proof}
Let~$\CM C$ be the maximal $\IZ$-order in~$\IQ[C]$ containing~$\IZ[C]$.
Since $\IQ[C]\cong\smallprod_{d|c}\IQ(\zeta_d)$, we have that $\CM C \cong \smallprod_{d|c} \IZ[\zeta_d]$; compare~\cite{Reiner}*{Theorem~10.5 on page~128} or \cite{Oliver}*{Theorem~1.5(i) on page~25}.
Using that algebraic $K$-theory respects finite products, we get the following commutative diagram.
\begin{equation}
\label{eq:maximal-order}
\begin{tikzcd}
\ds K_t(\IZ[C])
\arrow[r]
\arrow[d]
&
\ds\smash{\smallprod_p}\,K_t(\IZ_p[C];\IZ_p)
\arrow[d]
\\
\ds K_t(\CM C)
\cong
\bigoplus_{d|c}K_t(\IZ[\zeta_d])
\arrow[r]
&
\ds\adjustlimits\smallprod_p\bigoplus_{d|c} K_t\smash{\Bigl(}\IZ_p\tensor_\IZ\IZ[\zeta_d];\IZ_p\smash{\Bigr)}
\end{tikzcd}
\end{equation}
The finite sums here commute with the product and with rationalization.
Hence, since the family~$\CF$ is closed under passage to subgroups, Assumption~\techB{\CF}{\TT} implies that the bottom horizontal map is $\IQ$-injective for every~$t\in\TT$.
By~\cite{Reiner}*{Theorem~41.1 on page~379}, we have that $c\CM C\subseteq\IZ[C]$, and so we get the following cartesian square of rings.
\[
\begin{tikzcd}
\IZ[C] \ar[r] \ar[d] & \IZ[C]/ c\CM C \ar[d] \\
\CM C  \ar[r]        & \CM C / c\CM C
\end{tikzcd}
\]
Both rings on the right are finite, and for any finite ring~$R$ and any $t\ge1$ we have $K_t(R)\tensor_\IZ\IQ=0$ by~\cite{Weibel-K-book}*{Proposition~IV.1.16 on page~296}.
Then the Mayer-Vietoris long exact sequence in~$K_*(-)\tensor_\IZ\IZ[\frac{1}{c}]$ from~\cite{Charney}*{Corollary~2 on page~6 and examples on page~7} implies that the left-hand vertical map in~\eqref{eq:maximal-order} is a $\IQ$-isomorphism for every~$t\ge1$.
For $t=0$, the natural map $K_0(\IZ)\TO K_0(\IZ[C])$ is a $\IQ$-isomorphism by~\cite{Swan}*{Proposition~9.1 on page~573}.
Using this, we see that $K_0(\IZ[C])\TO K_0(\CM C)$ is $\IQ$-injective.
So we conclude that the top horizontal map in~\eqref{eq:maximal-order} is $\IQ$-injective for every~$t\in\TT$.
\end{proof}

Combining this lemma with a result of Hesselholt and Madsen~\cite{HM-top}, which depends on a theorem of McCarthy~\cite{McCarthy}, we obtain the following corollary.

\begin{corollary}
\label{K->TC}
If Assumption~\techB{\CF}{\TT} holds then, for every~$C\in\CF$ and for every $t\in\TT$, the map
\[
\MOR{\tau=(\trc_p)}{\BK^{\ge0}(\IS[C])}{\prodp\TC(\IS[C];p)}
\]
induced by the cyclotomic trace maps is $\pi^\IQ_t$-injective.
\end{corollary}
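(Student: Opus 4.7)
The plan is to reduce to \autoref{Z[mu]=>Z[C]} by combining Waldhausen's rational linearization theorem with the Hesselholt--Madsen theorem comparing $p$-completed algebraic $K$-theory with topological cyclic homology.

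First, I would exploit the natural commutative square
\[
\begin{tikzcd}[column sep=large]
\BK^{\ge0}(\IS[C]) \arrow[r, "\tau"] \arrow[d, "\ell"'] & \prodp \TC(\IS[C];p) \arrow[d, "\ell"] \\
\BK^{\ge0}(\IZ[C]) \arrow[r, "\tau"'] & \prodp \TC(\IZ[C];p)
\end{tikzcd}
\]
provided by the natural transformations $\ell$ and $\tau$ of \autoref{TRC}. Since $\ell\colon\BK^{\ge0}(\IS[C])\to\BK^{\ge0}(\IZ[C])$ is a $\pi_*^{\IQ}$-isomorphism by Waldhausen~\cite{Waldhausen-A1}, it suffices to prove that the bottom map $\tau\colon K_t(\IZ[C])\to\prodp\pi_t\TC(\IZ[C];p)$ is $\IQ$-injective for $t\in\TT$.

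Next, I would post-compose this bottom map with the natural map $\prodp\TC(\IZ[C];p)\to\prodp\TC(\IZ_p[C];p)\pcompl$ induced by the ring homomorphism $\IZ[C]\to\IZ_p[C]$ and by $p$-completion. By naturality of the cyclotomic trace, the resulting composite on~$\pi_t$ factors as
\[
K_t(\IZ[C])\TO\prodp K_t(\IZ_p[C];\IZ_p)\xrightarrow{\;\prodp\trc_p\;}\prodp\pi_t\bigl(\TC(\IZ_p[C];p)\pcompl\bigr)\,.
\]
Now the key input enters: by Hesselholt--Madsen~\cite{HM-top}, building on McCarthy~\cite{McCarthy}, the cyclotomic trace $\BK(\IZ_p[C])\pcompl\to\TC(\IZ_p[C];p)\pcompl$ is a weak equivalence on connective covers. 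The hypotheses are satisfied because $\IZ_p[C]$ is a $\IZ_p$-algebra finitely generated as a $\IZ_p$-module, with $\IZ_p[C]/p\cong\IF_p[C]$ a finite ring. Hence the right-hand arrow above is an isomorphism on $\pi_t$ for all $t\ge 0$.

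Finally, \autoref{Z[mu]=>Z[C]} says precisely that under Assumption~\techB{\CF}{\TT} the left-hand arrow $K_t(\IZ[C])\to\prodp K_t(\IZ_p[C];\IZ_p)$ is $\IQ$-injective for $t\in\TT$. Stringing these together, the whole composite landing in $\prodp\pi_t(\TC(\IZ_p[C];p)\pcompl)$ is $\IQ$-injective, which forces the bottom $\tau$ of the square to be $\IQ$-injective, and hence so is $\tau$ on spherical coefficients. The main obstacle is not conceptual but lies in verifying that the cyclotomic traces, the $p$-completions, and the change-of-ring maps $\IZ[C]\to\IZ_p[C]$ assemble into a commutative diagram legitimizing the factorization above; once this bookkeeping is in place, everything reduces to the three black boxes of Waldhausen, Hesselholt--Madsen, and \autoref{Z[mu]=>Z[C]}.
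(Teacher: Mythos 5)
Your proposal is correct and takes essentially the same approach as the paper's proof: the paper packages the reduction into a single commutative square with top row $\ell$ followed by the map $c\colon\K^{\ge0}(\IZ[C])\to\prod_p\K^{\ge0}(\IZ_p[C])\pcompl$, left vertical $(\trc_p)$, and right vertical $\prod(\trc_p)\pcompl$, then invokes exactly the same three inputs (Waldhausen for $\ell$, \autoref{Z[mu]=>Z[C]} for $c$, and Hesselholt--Madsen for the right vertical). Your two-stage presentation (first reduce to integral coefficients, then factor through $p$-completion) is just a cosmetic rearrangement of the same diagram chase.
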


\begin{proof}
Consider the following commutative diagram.
\[
\begin{tikzcd}
\K^{\ge0}(\IS[C])
\arrow[r, "\ell"]
\arrow[d, "(\trc_p)"']
&
\K^{\ge0}(\IZ[C])
\arrow[r, "c"]
&
\ds\smash{\smallprod_p}\,\K^{\ge0}(\IZ_p[C])\pcompl
\arrow[d, "\prod(\trc_p)\pcompl"]
\\
\ds\smallprod_p\TC(\IS[C];p)
\arrow[rr]
&
&
\ds\smallprod_p\TC(\IZ_p[C];p)\pcompl
\end{tikzcd}
\]
The linearization map~$\ell$ is a $\pi^\IQ_*$ isomorphism by~\cite{Waldhausen-A1}*{Proposition~2.2 on page~43}.
\autoref{Z[mu]=>Z[C]} implies that $\pi_t(c)\tensor_\IZ\IQ$ is injective for every $t\in\TT$.
Since the $\IZ_p$-algebra $\IZ_p[C]$ is finitely generated as a $\IZ_p$-module, the $p$-completed cyclotomic trace map $\MOR{(\trc_p)\pcompl}{\K^{\ge0}(\IZ_p[C])\pcompl}{\TC(\IZ_p[C];p)\pcompl}$ is a $\pi_t$-isomorphism for every~$t\ge0$ and every prime~$p$ by~\cite{HM-top}*{Theorem~D on page~30}.
The result follows.
\end{proof}

\begin{proof}[Proof of \autoref{detection}]
We want to apply \autoref{rel-Chern} to the composition of the natural transformations
\[
\K^{\ge0}(\IS[-])
\TO[\tau]
\smallprod_p\TC(\IS[-];p)
\TO[\sigma]
\THH(\IS[-])\times\smallprod_p\C(\IS[-];p)
\]
of functors $\Groupoids\TO\IN\Sp$.
Here we are suppressing some wrong-way natural $\pi_*$-isomorphisms, as explained in \autoref{TRC}.
Recall from \autoref{tau-sigma-Mackey} that both $\tau$ and~$\sigma$ induce natural transformations of equivariant homology theories that respect the induction structures, but we only know that $\tau$ also respects the Mackey structures.

By \autoref{rel-Chern}\ref{i:tau-ind}, both $\tau$ and~$\sigma$ are compatible with the isomorphisms~$\mathit{c}$ in~\eqref{eq:Chern}.
Therefore it is enough to consider one $t\in\TT$ at a time.

Assume first that $t>0$.
Then \autoref{K->TC} and an argument similar to \autoref{rel-Chern}\ref{i:tau-Chern-inj} imply that the restriction of $\tau_\%\tensor_\IZ\IQ$ to the summand indexed by~$t$ is injective.
Moreover, for all finite groups~$C$, $\pi_t(\THH(\IS[C]))\tensor_\IZ\IQ=0$ by \autoref{THH-free-loop} and $\TC(\IS[C];p)\TO\C(\IS[C];p)$ is a $\pi^\IQ_t$-isomorphism for each~$p$ by \autoref{TC->TxC}.
Therefore $\sigma$ induces a $\IQ$-isomorphism between the corresponding functors $\Sub G(\Fin)\TO\Ab$, and an isomorphism between the summands indexed by~$t$.
We conclude then that the restriction $(\sigma_\%\circ\tau_\%)\tensor_\IZ\IQ$ to the summands indexed by~$t$ is injective.

Now let~$t=0$.
Recall that the linearization map $\MOR{\ell}{\K^{\ge0}(\IS[G])}{\K^{\ge0}(\IZ[G])}$ is a $\pi^\IQ_*$-isomorphism for every group~$G$, and therefore it induces an isomorphism $S_t(C;\K^{\ge0}(\IS[-]))\cong S_t(C;\K^{\ge0}(\IZ[-]))$.
By~\cite{Swan}*{Proposition~9.1 on page~573}, the natural map $K_0(\IZ)\tensor_\IZ\IQ\TO K_0(\IZ[C])\tensor_\IZ\IQ$ is an isomorphism for any finite group~$C$.
Therefore, $S_0(C;\K^{\ge0}(\IS[-]))=0$ whenever $C\ne1$.
This implies that the map $EG\TO EG(\CF)$ induces on the summand indexed by~$t=0$ an isomorphism
\[
H_n(BG;\IQ)\tensor_\IQ\Bigl(K_0(\IS)\tensor_\IZ\IQ\Bigr)
\TO[\cong]
\bigoplus_{(C)\in(\FCyc)}
H_n(BZ_GC;\IQ)\tensor_{\IQ[W_G C]}S_0(C;\K^{\ge0}(\IS[-]))
\,.
\]
So it is enough to consider the case~$\CF=1$; compare \autoref{EG(F)-computation}.
Since $H_n(BG;\IQ)\tensor_\IQ-$ clearly preserves injectivity, it is enough to show that $\pi_0(\sigma\circ\tau)$ in the following commutative diagram is $\IQ$-injective.
\[
\begin{tikzcd}[column sep=large, row sep=scriptsize]
K_0(\IS)
\arrow[r, "\pi_0(\sigma\circ\tau)"]
\arrow[d, "\pi_0(\ell)"']
&
\ds\smash{\pi_0\Bigl(\THH(\IS)\times\smallprod_p\C(\IS;p)\Bigr)}
\arrow[r, "\pi_0(\pr_1)"]
&
\pi_0\bigl(\THH(\IS)\bigr)
\arrow[d]
\\
K_0(\IZ)
\arrow[rr, "B"]
\arrow[drr, "D"']
&
&
\ds\pi_0\bigl(\THH(\IZ)\bigr)
\arrow[d]
\\
&
&
HH_0(\IZ)
\end{tikzcd}
\]
Here $HH$ is Hochschild homology, $D$~is the Dennis trace, $B$ is the B\"okstedt trace, and $\pr_1$ is the projection on the first factor.
Since $\ell$ and~$D$ are isomorphisms in dimension~$0$, it follows that $\pi_0(\sigma\circ\tau)$ is injective, thus finishing the proof.
\end{proof}


\section{Whitehead groups}
\label{WHITEHEAD}

This section is devoted to the proof of~\autoref{add-Whitehead} and its special case, \autoref{Whitehead}, about the rationalized assembly map for Whitehead groups.
First we need to introduce some notation.

The finite subgroup category~$\Sub G(\Fin)$ that appears in~\eqref{eq:whitehead} has as objects the finite subgroups $H$ of~$G$; the morphisms are defined as follows.
Given finite subgroups $H$ and~$K$ of~$G$, let $\conhom_G(H,K)$ be the set of all group homomorphisms $H\TO K$ given by conjugation by an element of~$G$.
The group $\inn(K)$ of inner automorphisms of~$K$ acts on~$\conhom_G(H,K)$ by post-composition.
The set of morphisms in~$\Sub G(\Fin)$ from $H$ to~$K$ is then defined as the quotient~$\conhom_G(H,K)/\inn(K)$.
For example, in the special case when $G$ is abelian, then $\Sub G(\Fin)$ is just the poset of finite subgroups of~$G$ ordered by inclusion.

Given a space~$X$, not pointed and not equivariant, we denote by~$\Pi(X)$ its fundamental groupoid, and consider the functor $\Top\TO\IN\Sp$ given by $X\longmapsto\K^{\ge0}(\IZ[\Pi(X)])$.
There is the classical assembly map
\begin{equation}
\label{eq:classical-assembly-Pi(X)}
X_+\sma\K^{\ge0}(\IZ)\TO\K^{\ge0}(\IZ[\Pi(X)])
\,;
\end{equation}
see~\cite{WW-assembly}.
(We suppress here an irrelevant wrong-way natural $\pi_*$-isomorphism.)
We denote by~$\Wh_\IZ^{\ge0}(X)$ the homotopy cofiber of~\eqref{eq:classical-assembly-Pi(X)}.

Now, given a group~$G$, we consider the composition of the functors above with
\[
\Or G
\xrightarrow{\oid{G}{-}}
\Groupoids
\xrightarrow{\ B\ }
\Top
\,,
\]
where $B=\real{N_\bullet(-)}$ is the classifying space functor.
We obtain the following diagram of functors $\Or G\TO\IN\Sp$
\begin{equation}
\label{eq:classical-assembly-Wh}
B(\oid{G}{-})_+ \sma \K^{\ge0}(\IZ)
\TO
\K^{\ge0}(\IZ[\Pi(B(\oid{G}{-}))])
\TO
\Wh_\IZ^{\ge0}(B(\oid{G}{-}))
\,,
\end{equation}
which, by definition, is objectwise a homotopy cofibration sequence.
For any subgroup $H$ of~$G$ we have
\begin{equation}
\label{eq:pi*(Wh)}
\pi_n
\Wh_\IZ^{\ge0}(B(\oid{G}{\,(G/H)}))
\cong\begin{cases}
\wh(H)&\text{if $n=1$;}
\\
\widetilde{K}_0(\IZ[H])&\text{if $n=0$;}
\\
0&\text{if $n<0$.}
\end{cases}
\end{equation}
From this we get the following lemma.

\begin{lemma}
\label{Wh-Bredon}
For every group~$G$ there is an isomorphism
\[
H_1^G\bigl(EG(\Fin);\Wh_\IZ^{\ge0}(B(-))\bigr)
\tensor_\IZ\IQ
\cong
\colim_{H\in\obj\Sub G(\Fin)}\wh(H)
\tensor_\IZ\IQ
\,.
\]
\end{lemma}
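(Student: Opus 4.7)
The plan is to apply the equivariant Atiyah--Hirzebruch spectral sequence, reduce the rationalized computation to Bredon homology in degree zero, and then identify the resulting colimit over $\Or G(\Fin)$ with the colimit over $\Sub G(\Fin)$ that appears in the statement.

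First I would invoke the equivariant Atiyah--Hirzebruch spectral sequence
\[
E^2_{s,t}=H_s^G\bigl(EG(\Fin);\underline{\pi}_t\Wh_\IZ^{\ge0}(B(\oid{G}{-}))\bigr)\Longrightarrow H_{s+t}^G\bigl(EG(\Fin);\Wh_\IZ^{\ge0}(B(-))\bigr),
\]
arising from the skeletal filtration of the $G$-CW-complex $EG(\Fin)$, where $\underline{\pi}_t$ is the coefficient system $G/H\mapsto\pi_t\Wh_\IZ^{\ge0}(B(\oid{G}{\,(G/H)}))$ on the orbit category $\Or G(\Fin)$. By~\eqref{eq:pi*(Wh)} the $E^2$-page is concentrated in rows $t\ge0$, and the row $t=0$---namely the coefficient system $H\mapsto\widetilde{K}_0(\IZ[H])$---vanishes rationally, since $\widetilde{K}_0(\IZ[H])$ is finite for every finite $H$ by Swan's theorem on projectives over finite group rings. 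Hence on the antidiagonal $s+t=1$ only $E^2_{0,1}$ survives after applying $-\tensor_\IZ\IQ$, yielding a natural isomorphism
\[
H_1^G\bigl(EG(\Fin);\Wh_\IZ^{\ge0}(B(-))\bigr)\tensor_\IZ\IQ \;\cong\; H_0^G\bigl(EG(\Fin);\underline{\wh}\bigr)\tensor_\IZ\IQ,
\]
where $\underline{\wh}$ is the coefficient system $G/H\mapsto\wh(H)$.

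Next I would use the standard identification of Bredon homology in degree zero with the colimit of the coefficient system,
\[
H_0^G\bigl(EG(\Fin);\underline{\wh}\bigr)\cong\colim_{G/H\in\Or G(\Fin)}\wh(H),
\]
which follows because the cellular chain complex computing $H_*^G(EG(\Fin);-)$ is a projective resolution of the constant functor $\underline{\IZ}$ on $\Or G(\Fin)$.

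Finally, I would identify this colimit with $\colim_{H\in\obj\Sub G(\Fin)}\wh(H)$ via the natural projection $p\colon\Or G(\Fin)\to\Sub G(\Fin)$, $G/H\mapsto H$. The kernel of the surjection from the automorphism group $N_GH/H$ of $G/H$ in $\Or G(\Fin)$ onto the automorphism group $W_GH=N_GH/(Z_GH\cdot H)$ of $H$ in $\Sub G(\Fin)$ is the image of $Z_GH\cdot H$; its elements act on $\wh(H)$ through conjugation by an element of $Z_GH$ (which is trivial) composed with conjugation by an element of $H$ (which is an inner automorphism of $\IZ[H]$ and hence induces the identity on $K_1$ and thus on $\wh$). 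An analogous check for non-endomorphism morphisms shows that the coefficient system $\wh(-)$ on $\Or G(\Fin)$ factors through $p$, whence explicit mutually inverse maps between the two colimits are readily written down. The main subtlety lies in this final comparison of indexing categories; after it is established, tensoring with $\IQ$ yields the asserted isomorphism.
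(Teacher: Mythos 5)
Your proof is correct and follows essentially the same route as the paper: the equivariant Atiyah--Hirzebruch spectral sequence (the paper cites Davis--L\"uck, Theorem~4.7, which is this spectral sequence arising from the skeletal filtration), Swan's theorem to kill the $t=0$ row rationally, and then the identification of Bredon $H_0$ with the colimit over $\Or G(\Fin)$ followed by passage to $\Sub G(\Fin)$. The only difference is that for the last step the paper cites~\cite{LRV}*{Lemma~3.11}, whereas you re-derive that lemma directly by observing that the projection $\Or G(\Fin)\to\Sub G(\Fin)$ is bijective on objects and full and that $\wh(-)$ factors through it because inner automorphisms and centralizer conjugations act trivially on $K_1$.
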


\begin{proof}
From~\cite{Davis-Lueck}*{Theorem~4.7 on pages~234--235} there is a spectral sequence converging to $H_{i+j}^G(EG(\Fin);\Wh_\IZ^{\ge0}(B(-)))$, with $E_2$-term  given by the Bredon homology groups
\[
H^G_i\bigl(EG(\Fin);\pi_j\Wh_\IZ^{\ge0}(B(-))\bigr)
\,.
\]
Since
\(
\pi_0\Wh_\IZ^{\ge0}(B(\oid{G}{\,(G/H)}))\tensor_\IZ\IQ
\cong
\widetilde{K}_0(\IZ[H])\tensor_\IZ\IQ
\)
vanishes if $H$ is finite~\cite{Swan}*{Proposition~9.1 on page~573}, the edge homomorphism of the spectral sequence gives an isomorphism
\[
H_1^G\bigl(EG(\Fin);\Wh_\IZ^{\ge0}(B(-))\bigr)
\tensor_\IZ\IQ
\cong
H_0^G\bigl(EG(\Fin);\pi_1\Wh_\IZ^{\ge0}(B(-))\bigr)
\tensor_\IZ\IQ
\,.
\]
Using the definition of Bredon homology, the fact that the cellular chain complex of $EG(\Fin)$ is a free resolution of the constant functor on~$\Or G(\Fin)$ with value~$\IZ$, and right exactness of $-\tensor_{\Or G(\Fin)} M$, we get an isomorphism
\[
H_0^G\bigl(EG(\Fin);\pi_1\Wh_\IZ^{\ge0}(B(-))\bigr)
\cong
\colim_{G/H\in\obj\Or G(\Fin)}\wh(H)
\,.
\]
Finally, using~\cite{LRV}*{Lemma~3.11 on page~152}, we see that
\[
\colim_{G/H\in\obj\Or  G(\Fin)}\wh(H)
\cong
\colim_{H  \in\obj\Sub G(\Fin)}\wh(H)
\,.
\qedhere
\]
\end{proof}

\begin{proof}[Proof of~\autoref{add-Whitehead}]
Consider the homotopy cofibration sequence~\eqref{eq:classical-assembly-Wh}.
The natural homeomorphism $EG\times_G S\cong B(\oid{G}{S})$ (e.g., see~\cite{LR-cyclic}*{Lemma~9.9(i) on page~628}), identifies the term on the left with
\[
\BE(-) = (EG \times_G -)_+ \sma\K^{\ge0}(\IZ)
\,.
\]
For any groupoid~$\CG$ there is an equivalence $\CG\TO\Pi(B\CG)$, which induces a $\pi_*$-isomorphism between the middle term in~\eqref{eq:classical-assembly-Wh} and $\K^{\ge0}(\IZ[\oid{G}{-}])$.

The homotopy cofibration sequence~\eqref{eq:classical-assembly-Wh} produces a long exact sequence of the associated $G$-equivariant homology theories from \autoref{equiv-homolgy-theory}.
The maps induced by the projection $EG(\Fin)\TO\pt$ give the following commutative diagram with exact columns.
\[
\begin{tikzcd}[sep=scriptsize]
H_n^G\bigl(EG(\Fin);\BE\bigr)
\arrow[d]
\arrow[rr]
&
\raisebox{1.25ex}{\one}
&
H_n^G(  \pt  ;\BE)
\arrow[d]
\\
H_n^G\bigl(EG(\Fin);\K^{\ge0}(\IZ[-])\bigr)
\arrow[d]
\arrow[rr]
&
\raisebox{1.25ex}{\two}
&
K_n(\IZ[G])
\arrow[d]
\\
H_n^G\bigl(EG(\Fin);\Wh_\IZ^{\ge0}(B(-))\bigr)
\arrow[d]
\arrow[rr]
&
\raisebox{1.25ex}{\three}
&
H_n^G\bigl(  \pt  ;\Wh_\IZ^{\ge0}(B(-))\bigr)
\arrow[d]
\\
H_{n-1}^G\bigl(EG(\Fin);\BE\bigr)
\arrow[rr]
&
\raisebox{1.25ex}{\four}
&
H_{n-1}^G(  \pt  ;\BE)
\end{tikzcd}
\]
By \cite{Davis-Lueck}*{Lemma~5.4 on page~239}, the assembly map for~$\BE$ is an isomorphism for every group~$G$ and every family of subgroups.
So the top and the bottom horizontal maps $\one$ and~$\four$ are isomorphisms.

In the notation of \autoref{main-technical}, we are assuming that condition~\techA{\FCyc}{2} holds, and \techB{\FCyc}{\{0,1\}} is satisfied by~\autoref{no-B-in-Wh}.
Therefore the map~$\two$ is $\IQ$-injective if~$n=1$.
An elementary diagram chase, the four lemma, implies that also the map~$\three$ is $\IQ$-injective if~$n=1$.

When $n=1$ the target of~$\three$ is isomorphic to~$\wh(G)$ by~\eqref{eq:pi*(Wh)}.
The source is computed rationally by \autoref{Wh-Bredon}.
\end{proof}


\section{Schneider Conjecture}
\label{SCHNEIDER}

We briefly review the relation between algebraic $K$-theory and \'etale cohomology of rings of integers in number fields, and give equivalent formulations of the Schneider \autoref{Schneider-Conj} and some useful reductions.
Other equivalent formulations are discussed in \cites{Soule-ast, Kolster, Geisser}.

Let $\CO_F$ denote the ring of integers in a finite field extension $F| \IQ$. Fix a prime~$p$.
For a  prime ideal $\mathfrak{p}$ in $\CO_F$ with $\mathfrak{p} | p$, let $\CO_{\mathfrak{p}}$ denote the completion, $F_{\mathfrak{p}}$ its field of fractions, and $k_{\mathfrak{p}}$ the residue field at~$\mathfrak{p}$.
Consider the following commutative square.
\[
\begin{tikzcd}[column sep=large]
\ds \CO_F
\arrow[r]
\arrow[d]
&
\ds \IZ_p \:\smash{\tensor_\IZ}\: \CO_F \cong \smash{\smallprod_{\mathfrak{p}|p}} \CO_\mathfrak{p} \arrow[d]
\\
\ds
\CO_F[\tfrac{1}{p}]
\arrow[r]
&
\ds \IZ_p \tensor_\IZ \CO_F[\tfrac{1}{p}] \cong
\smallprod_{\mathfrak{p}|p} F_\mathfrak{p}
\end{tikzcd}
\]
These ring homomorphisms,
together with the natural map from $K$-theory to \'etale $K$-theory \cite{Dwyer-Friedlander}*{Proposition~4.4 on page~256} for $\IZ[\tfrac{1}{p}]$-algebras, the spectral sequence from \'etale cohomology to \'etale $K$-theory \cite{Dwyer-Friedlander}*{Proposition~5.1 on page~260}, and the identification of \'etale cohomology with continuous group cohomology~\cite{Milne-ADT}*{Proposition~II.2.9 on page~170} induce the following commutative diagram.
The top horizontal map is the map in the Schneider \autoref{Schneider-Conj}.
\begin{equation}
\label{eq:K-to-etale}
\begin{tikzcd}[column sep=large]
\ds K_{2i-1}\bigl(\CO_F;\IZ_p\bigr)
\arrow[r]
\arrow[d, "\ts\one\,"']
&
\ds\smash{\smallprod_{\mathfrak{p}|p}}K_{2i-1}\bigl(\CO_\mathfrak{p};\IZ_p\bigr)
\arrow[d, "\,\ts\bone"]
\\
\ds K_{2i-1}\bigl(\CO_F[\tfrac{1}{p}];\IZ_p\bigr)
\arrow[r]
\arrow[d, "\ts\two\,"']
&
\ds\smash{\smallprod_{\mathfrak{p}|p}}K_{2i-1}\bigl(F_\mathfrak{p};\IZ_p\bigr)
\arrow[d, "\,\ts\btwo"]
\\
\ds K_{2i-1}^\et\bigl(\CO_F[\tfrac{1}{p}];\IZ_p\bigr)
\arrow[r]
\arrow[d, "\ts\three\,"']
&
\ds\smash{\smallprod_{\mathfrak{p}|p}}K_{2i-1}^\et\bigl(F_\mathfrak{p};\IZ_p\bigr)
\arrow[d, "\,\ts\bthree"]
\\
\ds H^1_\et\bigl(\CO_F[\tfrac{1}{p}];\IZ_p(i)\bigr)
\arrow[r]
\arrow[d, "\ts\four\,"']
&
\ds\smash{\smallprod_{\mathfrak{p}|p}}H^1_\et\bigl(F_\mathfrak{p};\IZ_p(i)\bigr)
\arrow[d, "\,\ts\bfour"]
\\
\ds H^1_\cont\bigl(G_\Sigma;\IZ_p(i)\bigr)
\arrow[r]
&
\ds\smallprod_{\mathfrak{p}|p}H^1_\cont\bigl(G_{\mathfrak{p}};\IZ_p(i)\bigr)
\end{tikzcd}
\end{equation}
Here $G_{\Sigma}$ is the Galois group of the extension $F_\Sigma|F$, where $F_\Sigma$ is the union of all finite extensions $F'$ of $F$ for which $\CO_F[\tfrac{1}{p}] \to \CO_{F'}[\tfrac{1}{p}]$ is unramified \citelist{\cite{Schneider}*{page~182} \cite{Milne-ADT}*{Section~I.4 on page~48}},
and $G_{\mathfrak{p}}$ denotes the absolute Galois group of the local field $F_{\mathfrak{p}}$.
We write $r_1(F)$ for the number of real embeddings  and $r_2(F)$ for the number of pairs of complex embeddings of~$F$.
The degree of $F|\IQ$ is $r_1(F)+2r_2(F)$.

\begin{theorem}
\label{K-to-etale}
Let $F$ be a number field, and if $p=2$ assume that $F$ is totally imaginary.
Let~$i\ge2$.
In diagram~\eqref{eq:K-to-etale}, all vertical maps are isomorphisms, and therefore if one of the horizontal maps is $\IQ$-injective, then all the horizontal maps are $\IQ$-injective.
All groups in~\eqref{eq:K-to-etale} are finitely generated $\IZ_p$-modules, and their $\IZ_p$-ranks are given in the following table.
\begin{center}
\begin{tabular}{c|c|c}
& left column & right column\\
\hline
$i$ odd  & $r_1(F) + r_2(F)$ &  $r_1(F) + 2 r_2(F)$\\
$i$ even & $r_2(F)$          &  $r_1(F) + 2 r_2(F)$\\
\end{tabular}
\end{center}
\end{theorem}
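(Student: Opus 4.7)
The plan is to verify that each of the four vertical maps in \eqref{eq:K-to-etale} is an isomorphism under the stated hypotheses, by invoking well-established results, and then to compute the ranks using standard global and local duality theorems. The $\IQ$-injectivity transfer between rows is then immediate from commutativity.

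First I would handle the bottom two pairs, which are the most classical. The identification $\four$ of \'etale cohomology of $\Spec \CO_F[\frac{1}{p}]$ with continuous Galois cohomology of $G_\Sigma$ with coefficients in $\IZ_p(i)$ is~\cite{Milne-ADT}*{Proposition~II.2.9 on page~170}; the corresponding local identification $\bfour$ is standard. For $\three$ and $\bthree$, I would use the Dwyer-Friedlander descent spectral sequence $E_2^{s,t} = H^s_\et(-; \IZ_p(t/2)) \Rightarrow K_{t-s}^\et(-; \IZ_p)$ from \cite{Dwyer-Friedlander}*{Proposition~5.1 on page~260}. The hypothesis that $F$ is totally imaginary when $p = 2$ guarantees that $\CO_F[\frac{1}{p}]$ has $p$-cohomological dimension exactly $2$; together with cohomological dimension $2$ for the local fields $F_\mathfrak{p}$, only $s = 0, 1, 2$ contribute, so in odd total degree the edge map into $H^1(-;\IZ_p(i))$ is an isomorphism.

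Next I would treat $\two$ and $\btwo$, which are instances of the Quillen-Lichtenbaum conjecture, now a theorem as a consequence of the Bloch-Kato / norm residue isomorphism theorem (Voevodsky, Rost, Weibel et al.); for $\IZ[\frac{1}{p}]$-algebras of small \'etale cohomological dimension this says that $K_n(-;\IZ_p) \to K_n^\et(-;\IZ_p)$ is an isomorphism for $n$ large enough, in particular for $n = 2i-1 \geq 3$ in our setting. Finally, for $\one$ and $\bone$ I would use the localization sequences
\[
\dotsb \to \bigoplus_{\mathfrak{p}|p} K_n(k_\mathfrak{p};\IZ_p) \to K_n(\CO_F;\IZ_p) \to K_n(\CO_F[\tfrac{1}{p}];\IZ_p) \to \dotsb
\]
and its $\mathfrak{p}$-local analogue. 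Since $k_\mathfrak{p}$ is a finite field of characteristic $p$, Quillen's computation gives $K_n(k_\mathfrak{p}) \in \{\IZ, 0, \IZ/(p^{fn}-1)\}$, and $p^{fn}-1$ is coprime to $p$; hence $K_n(k_\mathfrak{p}; \IZ_p) = 0$ for $n \geq 1$, forcing $\one$ and $\bone$ to be isomorphisms in degrees $\geq 2$.

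Once all verticals are known to be isomorphisms, the rank computation reduces to the bottom row. For the right-hand column, local Tate duality together with the local Euler-Poincar\'e characteristic formula yield $\rank_{\IZ_p} H^1_\cont(G_{\mathfrak{p}}; \IZ_p(i)) = [F_\mathfrak{p} : \IQ_p]$ for $i \geq 2$, and summing over $\mathfrak{p} | p$ gives $\sum_{\mathfrak{p}|p} [F_\mathfrak{p}:\IQ_p] = [F:\IQ] = r_1(F) + 2r_2(F)$, independent of the parity of $i$. For the left-hand column, I would invoke Soul\'e's computation identifying $\rank_{\IZ_p} H^1_\cont(G_\Sigma; \IZ_p(i))$ with Borel's rank of $K_{2i-1}(\CO_F) \tensor \IQ$, namely $r_1(F) + r_2(F)$ for odd $i \geq 3$ and $r_2(F)$ for even $i \geq 2$; this follows from the global Euler characteristic formula combined with the vanishing of $H^0_\cont(G_\Sigma; \IZ_p(i))$ and $H^2_\cont(G_\Sigma; \IZ_p(i)) \tensor \IQ$ for $i \geq 2$ (the latter being, up to finite groups, the statement that $\CO_F[\frac{1}{p}]$ has no higher \'etale $\IQ_p$-cohomology in this twist). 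I expect the main subtlety to be the $p = 2$ case, where real places contribute $2$-torsion and higher cohomology in $\IZ_2$-coefficients; the totally imaginary hypothesis is precisely what suppresses this contribution and ensures both that cohomological dimension equals $2$ and that the rank formulas agree with those in the odd-prime case. All other steps are essentially compilation from the literature.
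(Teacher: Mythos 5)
Your argument for the vertical maps matches the paper's proof step for step: localization plus Quillen's computation of $K$-theory of finite fields for $\one$, $\bone$; Quillen-Lichtenbaum via Bloch-Kato for $\two$, $\btwo$; the Dwyer-Friedlander descent spectral sequence with cohomological dimension two for $\three$, $\bthree$; and Milne's identification of \'etale with continuous Galois cohomology for $\four$, $\bfour$. The one place you diverge is the rank computation. The paper computes the $\IZ_p$-ranks at different levels of the diagram and lets the isomorphisms transport them: Borel's theorem gives the rank of $K_{2i-1}(\CO_F)$ (converted to $\IZ_p$-rank via Quillen's finite generation), Wagoner and Panin handle the local $K$-groups, and Soul\'e is cited for the Galois cohomology ranks. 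You instead propose computing everything uniformly at the bottom row using the local Euler-Poincar\'e characteristic together with Tate duality for the right column, and the global Euler characteristic formula plus the finiteness (mod torsion) of $H^2_\cont(G_\Sigma;\IZ_p(i))$ for the left column. This is a legitimate alternative, and in some ways more self-contained, but it does push an extra input into the rank computation: the cofiniteness of $H^2$ for $i\ge 2$ is Soul\'e's theorem (proved by Iwasawa-theoretic methods), whereas citing Borel directly sidesteps that. Either route is fine, since once the vertical maps are known to be isomorphisms the rank may be read off at any level.
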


\begin{proof}
The fact that the maps $\one$ and $\bone$ are isomorphisms follows from the localization sequence in algebraic $K$-theory \cite{Quillen-K1}*{Section~5, Corollary on page~113} and Quillen's computation~\cite{Quillen-finite}*{Theorem~8 on page~583} of the algebraic $K$-theory groups of finite fields.
Notice that $p$-completion of spectra preserves stable homotopy fibration sequences; see \autoref{P-COMPL}.

The global and local Lichtenbaum-Quillen conjectures, asserting that the maps $\two$ and $\btwo$ are isomorphisms, follow from celebrated results of Voevodsky and others.
For fields of characteristic zero, the Milnor and Bloch-Kato conjectures were confirmed in~\cite{Voevodsky-mod-2}*{Corollary~7.5 on page~97} and~\cite{Voevodsky-mod-l}*{Theorem~6.1 on page~429}, and shown to imply the Beilinson-Lichtenbaum conjecture in~\cite{Suslin-Voevodsky}*{Theorem~7.4 on page~169}.
Using Grayson's~\cites{Grayson, Suslin} or Levine's~\cites{Levine-coniveau, Voevodsky-possible} constructions of the motivic spectral sequence, it follows that algebraic and \'etale $K$-theory, both with $\IZ_p$-coefficients, agree in positive degrees for global and local number fields.
Using localization techniques, as in~\cite{RW00}*{Section~6} and~\cite{Levine-loc}, it follows that the Dwyer-Friedlander comparison map is also an isomorphism for the rings of $p$-integers in global number fields.
For $p=2$, the comparison is made for totally imaginary number fields in~\cite{RW99}*{Theorem 0.1 on page 101}, and for real number fields in~\cite{PAO}*{Theorem~4 on page~201}.

Independently, $\btwo$ was shown to be an isomorphism for $p$ odd by Hesselholt and Madsen~\cite{HM-annals}*{Theorem~6.1.10 on page~103}, using topological cyclic homology and cyclotomic trace methods.

The isomorphisms $\three$ and $\bthree$ follow at once from the spectral sequence of~\cite{Dwyer-Friedlander}*{Proposition~5.1 on page~260},
since $\CO_F[\frac{1}{p}]$ and $F_\mathfrak{p}$ have \'etale cohomological dimension~$2$.
For $p=2$ this uses our hypothesis that $F$ is totally imaginary.

For the isomorphisms $\four$ and~$\bfour$, see \cite{Milne-ADT}*{Proposition~II.2.9 on page~170} and~\cite{Milne-EC}*{III.1.7 on page~86}.

The rank of $K_{2i-1}(\CO_F)$ was calculated by Borel~\cite{Borel}*{Proposition~12.2 on page~271}, and agrees with the $\IZ_p$-rank of $K_{2i-1}(\CO_F;\IZ_p)$ by Quillen's finite generation theorem~\cite{Quillen-fg}*{Theorem~1 on page~179}.
A similar calculation of the $\IZ_p$-rank of $K_{2i-1}(F_\mathfrak{p};\IZ_p)$ was outlined in~\cite{Wagoner}; see also~\cite{Panin}.
The corresponding results for Galois cohomology were proven by Soul\'e in~\cite{Soule-LNM}*{Fact~b) on page~376 and Remark~1 on page~390}, as applications of Iwasawa theory.
For the local result, see also~\cite{Schneider}*{\textsection3 Satz~4 on page~188}.
\end{proof}

\begin{theorem}
\label{Schneider}
Assume that $p$ is odd, and let~$i\ge2$.
The following statements are equivalent.
\begin{enumerate}
\item \label{i:schneider2}
The horizontal maps in diagram~\eqref{eq:K-to-etale} are $\IQ$-injective.
\item \label{i:schneider3}
The group $H^2 \bigl(G_\Sigma;\IQ_p/\IZ_p(1-i)\bigr)$ is finite.
\item  \label{i:schneider4}
The group $H^2 \bigl(G_\Sigma;\IQ_p/\IZ_p(1-i)\bigr)$ is zero.
\item \label{i:schneider5}
The group $H^2_\cont \bigl(G_\Sigma;\IZ_p(1-i)\bigr)$ is finite.
\item \label{i:schneider6}
The kernel of the map
\(
H^1 \bigl(G_\Sigma;\IQ_p/\IZ_p(i)\bigr)
\TO
\smallprod_{\mathfrak{p}|p}H^1 \bigl(G_{\mathfrak{p}};\IQ_p/\IZ_p(i)\bigr)
\)
is finite.
\item
\label{i:Iwasawa}
Let $F(\mu_{p^\infty}\!)$ denote the union of the fields $F(\zeta_{p^n})$.
Let $L$ be the maximal abelian pro-$p$-extension of $F(\mu_{p^\infty})$ for which all primes are completely split.
Let $G=\mathit{Gal}(F(\mu_{p^\infty}\!)|F)$ and let $Z$ be the Pontryagin dual of $\mathit{Gal}(L|F(\mu_{p^\infty}\!))$.
Then $H^0(G;Z(i))$ is finite.
\end{enumerate}
Furthermore, for a fixed number field~$F$ and a fixed odd prime~$p$, the equivalent conditions above hold for almost all~$i\ge2$.
\end{theorem}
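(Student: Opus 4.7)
The plan is to translate each of (i)--(vi) to an equivalent statement about Galois cohomology over the ring of $p$-integers $\CO_F[\tfrac{1}{p}]$, and then chain the equivalences via Tate local duality, global Poitou-Tate duality, and Iwasawa descent, reorganizing the arguments of \cite{Schneider}*{\textsection2--4}. The first step is to pass from (i) to a cohomological statement. Since $p$ is odd, no hypothesis on real places is needed and \autoref{K-to-etale} asserts that all vertical maps in~\eqref{eq:K-to-etale} are isomorphisms; hence $\IQ$-injectivity of the top horizontal map is equivalent to $\IQ$-injectivity of the bottom one. Because source and target of that bottom map are finitely generated $\IZ_p$-modules (\autoref{K-to-etale}), $\IQ$-injectivity is the same as finiteness of its kernel, and the coefficient sequence $0\to\IZ_p(i)\to\IQ_p(i)\to\IQ_p/\IZ_p(i)\to0$ combined with the vanishing $H^0_\cont(G_\Sigma;\IQ_p(i))=0$ for $i\ge1$ identifies this kernel, up to a finite group, with the kernel appearing in~(v).

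Next, the equivalence (v)$\iff$(iv) follows from the global Poitou-Tate nine-term exact sequence applied to the discrete $G_\Sigma$-module $\IQ_p/\IZ_p(i)$, whose Tate dual is $\IZ_p(1-i)$; see~\cite{NSW}*{Section~8.6}. The relevant portion identifies the Pontryagin dual of $H^2_\cont(G_\Sigma;\IZ_p(1-i))$, modulo finite contributions from local $H^0$ terms at primes above~$p$ (archimedean terms contribute nothing since $p$ is odd), with the kernel of the localization map appearing in~(v). The equivalences (ii)$\iff$(iii)$\iff$(iv) then all follow from elementary abelian group theory applied to the coefficient sequence $0\to\IZ_p(1-i)\to\IQ_p(1-i)\to\IQ_p/\IZ_p(1-i)\to0$: since $G_\Sigma$ has strict $p$-cohomological dimension~$2$, the group $H^3_\cont(G_\Sigma;\IZ_p(1-i))$ vanishes, so $H^2(G_\Sigma;\IQ_p/\IZ_p(1-i))$ is the cokernel of the embedding of $H^2_\cont(G_\Sigma;\IZ_p(1-i))$ modulo torsion as a lattice in the $\IQ_p$-vector space $H^2_\cont(G_\Sigma;\IQ_p(1-i))$, and is therefore isomorphic to $(\IQ_p/\IZ_p)^{r}$ with $r$ the $\IZ_p$-rank of $H^2_\cont(G_\Sigma;\IZ_p(1-i))$; thus $r=0$ characterizes (ii), (iii), and~(iv) simultaneously.

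For the equivalence with~(vi) and the final assertion I would invoke Iwasawa theory. Setting $F_\infty=F(\mu_{p^\infty}\!)$ and $G=\mathit{Gal}(F_\infty|F)\cong\IZ_p^\times$, an Iwasawa-theoretic descent via the Hochschild-Serre spectral sequence for $G_\Sigma\twoheadrightarrow G$, together with Kummer theory identifying $Z=\mathit{Gal}(L|F_\infty)^\vee$ as a classical Iwasawa module on the cyclotomic tower (see~\cite{Schneider}*{\textsection4} and~\cite{Washington}*{Chapter~13}), yields a natural isomorphism (up to finite groups) between $H^0(G;Z(i))$ and $H^2(G_\Sigma;\IQ_p/\IZ_p(1-i))$; this gives (vi)$\iff$(ii). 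Moreover, $Z$ is a finitely generated torsion $\IZ_p[[G]]$-module by a classical theorem of Iwasawa, and $G\cong\IZ_p^\times$ acts on each $\IZ_p(i)$ through the $i$-th power of the cyclotomic character, so $H^0(G;Z(i))$ can be nonzero for at most finitely many~$i$; this proves the final ``almost all $i$'' statement.

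The main obstacle will be the careful bookkeeping of finite groups that appear and disappear along this chain---specifically, checking that the Poitou-Tate duality isomorphism and the descent argument are compatible, up to finite kernels and cokernels, with the concretely-defined localization maps from the $K$-theoretic side, so that $\IQ$-injectivity in~(i) is preserved throughout. None of these steps require genuinely new ideas beyond those in Schneider's original paper, but the individual identifications must be traced attentively.
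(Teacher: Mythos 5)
Your proposal is essentially correct but takes a genuinely different route from the paper. The paper applies Poitou--Tate twice---with $\IZ/p^\nu\IZ(i)$ coefficients followed by inverse limit to prove \ref{i:schneider2}$\iff$\ref{i:schneider3}, and followed by colimit to prove \ref{i:schneider5}$\iff$\ref{i:schneider6}---and then bridges the two chains by citing Schneider's \textsection5 Corollary~4 for \ref{i:schneider4}$\iff$\ref{i:schneider6}. You instead use Poitou--Tate only once, prove \ref{i:schneider3}$\iff$\ref{i:schneider4}$\iff$\ref{i:schneider5} directly from $\mathrm{scd}_p(G_\Sigma)\le 2$ and the $\IZ_p\to\IQ_p\to\IQ_p/\IZ_p$ coefficient sequence, and relate \ref{i:schneider2} to \ref{i:schneider6} by comparing the two localization kernels; this closes the chain by hand rather than by citation and is a legitimate, arguably cleaner, logical organization. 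The bookkeeping concerns you flag at the end are real, and they are worth naming. First, the kernels of the $\IZ_p$-coefficient and $\IQ_p/\IZ_p$-coefficient localization maps are \emph{not} ``identified up to a finite group''---one is a finitely generated $\IZ_p$-module, the other a cofinitely generated torsion group, and these cannot agree up to a finite group unless both are finite; what the coefficient-sequence diagram chase actually yields (using that the torsion submodule of a finitely generated $\IZ_p$-module is finite, so that the image of the torsion kernel inside a cokernel of lattices is finite) is the weaker but sufficient statement that finiteness of one kernel is equivalent to finiteness of the other. Second, Poitou--Tate is stated for finite modules, so it must be applied with $\IZ/p^\nu\IZ(i)$ coefficients and passed to the colimit, exactly as the paper does, rather than invoked directly for the discrete module $\IQ_p/\IZ_p(i)$. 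Third, in the ``almost all $i$'' argument you should conclude from the Iwasawa structure theory that $H^0(G;Z(i))$ is \emph{infinite} for at most finitely many~$i$, not nonzero; finiteness, not vanishing, is the condition to be proved. None of these caveats breaks the argument, but they are precisely the finite-group accounting you acknowledge needs tracing.
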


That $H^2(G_{\Sigma};\IQ_p/\IZ_p(1-i))$ is finite was conjectured by Schneider in~\cite{Schneider}.
Neukirch, Schmidt, and Wingberg~\cite{NSW}*{page~641} also refer to this assertion as a conjecture of Schneider.
The notation $H^{2,1-i}_{\Sigma} = H^2( G_{\Sigma} ; \IQ_p / \IZ_p (1-i))$ for arbitrary $i \in \IZ$ is introduced in~\cite{Schneider}*{page~189}.
For $i=0$ this group is computed in~\cite{Schneider}*{\textsection4 Lemma~2ii) on page~190}.
For $i \neq 0$ the groups are of the form
$H^{2,1-i}_{\Sigma} \cong (\IQ_p / \IZ_p)^{i_{1-i}(F)}$
for some finite numbers $i_{1-i}(F)$ by~\cite{Schneider}*{\textsection4 Lemma~2i) on page~190}.
On page~192 it is conjectured that the numbers $i_{1-i}(F) = 0$ for all $i \neq 0$.

\begin{proof}[Proof of \autoref{Schneider}]
This is due to Schneider~\cite{Schneider}.

We first explain the equivalence of \ref{i:schneider2} and~\ref{i:schneider3}.
The lower horizontal map in \eqref{eq:K-to-etale} sits in the Poitou-Tate 9-term exact sequence, where the group to the left is the Pontryagin dual of the group in \ref{i:schneider3}, and the group further left is finite since $H^0_\cont( G_{\mathfrak{p}} ; \IZ_p (i) )$ is finite cyclic; compare \cite{Weibel-survey}*{Lemma~13 on page~148}.
Indeed, let $\Sigma$ denote the finite set of primes over $p$ together with all infinite primes.
Then the Poitou-Tate exact sequence as stated for example in \cite{NSW}*{(8.6.10)~on page~489}, with coefficients in~$\IZ/p^\nu\IZ(i)$ and ramification allowed over~$\Sigma$, consists of finite groups by \cite{Schneider}*{\textsection2 Satz~1i) and Satz~2i) on page~185}.
Passing to the limit over the reduction maps $\IZ/ p^{\nu +1} \IZ (i) \TO \IZ/ p^{\nu} \IZ (i)$ yields the desired exact sequence.

Passing instead to the colimit over the inclusions $\IZ/ p^{\nu} \IZ (i) \TO \IZ/ p^{\nu + 1} \IZ (i)$, we obtain similarly the equivalence of \ref{i:schneider5} and \ref{i:schneider6}; compare~\cite{Schneider}*{\textsection5 proof of Lemma~3 on page~196}.

Statement \ref{i:schneider3} is equivalent to \ref{i:schneider4} since the group is divisible by \cite{Schneider}*{\textsection4 Lemma~2i) on page~190}.

The equivalence of \ref{i:schneider4} and \ref{i:schneider6} is \cite{Schneider}*{\textsection5 Corollary~4 on page~197}, where the notation $R_i(F)$ for the kernel of the map in \ref{i:schneider6} is introduced on page~196.

Then $R_i(F)$ is identified with $H^0(G;Z(i))$ in~\cite{Schneider}*{\textsection6 Lemma~1 on page~199}, proving the equivalence of \ref{i:schneider6} and
\ref{i:Iwasawa}.

Finally, the last claim is~\cite{Schneider}*{Satz~3 on page~200}.
\end{proof}

\begin{lemma}
\label{Schneider-reductions}
Fix a prime~$p$ and an integer~$c\ge3$.
Let $t=2i-1$ with $i\ge2$.
\begin{enumerate}
\item
\label{i:extensions}
Suppose $E|F$ is a finite extension.
If the Schneider \autoref{Schneider-Conj} holds for $E$ and~$t$, then it holds for $F$ and~$t$.
\item
\label{i:max-real}
Let $\IQ(\zeta_c)^+$ denote the maximal real subfield of~$\IQ(\zeta_c)$.
Assume that $i$ is odd.
If the Schneider \autoref{Schneider-Conj} holds for $\IQ(\zeta_c)^+$ and~$t$, then it holds for $\IQ(\zeta_c)$ and~$t$.
\end{enumerate}
\end{lemma}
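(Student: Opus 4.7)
My plan for both parts is to exploit the naturality of the Schneider map with respect to the ring inclusion $\CO_F\hookrightarrow\CO_E$, combined with a transfer argument that uses the fact that $\CO_E$ is a finitely generated projective $\CO_F$-module of rank~$[E:F]$.

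For~(i), I would consider the commutative square
\[
\begin{tikzcd}[column sep=large]
K_{t}(\CO_F;\IZ_p)
\arrow[r,"\phi_F"]
\arrow[d,"u"']
&
\smallprod_{\mathfrak{p}|p}K_{t}(\CO_\mathfrak{p};\IZ_p)
\arrow[d,"v"]
\\
K_{t}(\CO_E;\IZ_p)
\arrow[r,"\phi_E"']
&
\smallprod_{\mathfrak{P}|p}K_{t}(\CO_\mathfrak{P};\IZ_p)
\end{tikzcd}
\]
coming from the functoriality of algebraic $K$-theory in the ring, where $\mathfrak{p}$ ranges over primes of $\CO_F$ above~$p$ and $\mathfrak{P}$ over primes of $\CO_E$ above~$p$.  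Since $\CO_E$ is finitely generated and projective of rank~$[E:F]$ over~$\CO_F$, one has transfer maps in the opposite direction whose compositions with $u$ and~$v$ equal multiplication by~$[E:F]$; in particular, $u$ and $v$ become injective after tensoring with~$\IQ$.  A straightforward diagram chase then deduces the $\IQ$-injectivity of $\phi_F$ from that of~$\phi_E$.

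For~(ii), I would apply the same commutative square with $F=\IQ(\zeta_c)^+$ and $E=\IQ(\zeta_c)$, and then invoke the rank computations of \autoref{K-to-etale}.  For $i$ odd, the $\IZ_p$-rank of $K_{2i-1}(\CO_F;\IZ_p)$ equals $r_1(F)+r_2(F)=\phi(c)/2$ since $F$ is totally real of degree~$\phi(c)/2$, while the $\IZ_p$-rank of $K_{2i-1}(\CO_E;\IZ_p)$ equals $r_1(E)+r_2(E)=\phi(c)/2$ since $E$ is totally imaginary of degree~$\phi(c)$.  Hence $u$ is a $\IQ$-injective homomorphism between finitely generated $\IZ_p$-modules of the same $\IZ_p$-rank, and is therefore a $\IQ$-isomorphism.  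Commutativity then identifies $\phi_E\tensor_\IZ\IQ$ with the composition $v\circ\phi_F\circ u^{-1}$ of $\IQ$-injective maps, yielding the $\IQ$-injectivity of~$\phi_E$.

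The main technical point to check is the compatibility of the transfer with the right-hand vertical arrow~$v$: this amounts to verifying that $p$-adic completion and the construction of $K_t(-;\IZ_p)$ commute with the finite flat base change $\CO_\mathfrak{p}\to\smallprod_{\mathfrak{P}|\mathfrak{p}}\CO_\mathfrak{P}$, which is routine.  For~$p=2$, \autoref{K-to-etale} requires the base number field to be totally imaginary, so the rank comparison used in~(ii) is a priori available only for odd~$p$; this suffices for the applications of the lemma in the sequel, such as \autoref{no-B-in-BHM}, where the relevant primes are already assumed odd.
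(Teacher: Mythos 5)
Your proof is correct and follows the same strategy as the paper: for both parts one considers the commutative square induced by $\CO_F\hookrightarrow\CO_E$, uses the transfer argument to show that the vertical (base-change) maps are $\IQ$-injective, and for~(ii) upgrades the left vertical map to a $\IQ$-isomorphism by comparing $\IZ_p$-ranks for $F=\IQ(\zeta_c)^+$ and $E=\IQ(\zeta_c)$ when $i$ is odd.

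One remark on your caveat about $p=2$: it is overcautious. The $\IZ_p$-ranks of $K_{2i-1}(\CO_F;\IZ_p)$ and $K_{2i-1}(\CO_E;\IZ_p)$ that enter your argument are determined unconditionally by Borel's computation of the rank of $K_{2i-1}(\CO_F)$ and Quillen's finite generation theorem, and do not depend on the totally-imaginary hypothesis appearing in \autoref{K-to-etale} (which is only needed there to identify $K$-theory with \'etale cohomology via the vertical maps in~\eqref{eq:K-to-etale}). So part~(ii) holds for $p=2$ as well; there is no need to restrict to odd primes.
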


\begin{proof}
\ref{i:extensions}
The inclusion of~$F$ into~$E$ induces the following commutative diagram.
\[
\begin{tikzcd}[row sep=tiny]
\ds K_{2i-1}\bigl(\CO_F;\IZ_p\bigr)\tensor_\IZ\IQ
\arrow[r]
\arrow[d, shorten <=-1.1ex, shorten >=-.5ex]
&
\ds K_{2i-1}\bigl(\IZ_p\tensor_\IZ\CO_F;\IZ_p\bigr)\tensor_\IZ\IQ
\arrow[d, shorten <=-1.1ex, shorten >=-.5ex]
\\
\ds K_{2i-1}\bigl(\CO_E;\IZ_p\bigr)\tensor_\IZ\IQ
\arrow[r]
&
\ds K_{2i-1}\bigl(\IZ_p\tensor_\IZ\CO_E;\IZ_p\bigr)\tensor_\IZ\IQ
\end{tikzcd}
\]
The vertical maps are injective, since composed with restriction both maps are isomorphisms.

\ref{i:max-real}
Consider the diagram above for $F=\IQ(\zeta_c)^+$ and $E=\IQ(\zeta_c)$.
If $i$ is odd then the vertical map on the left is an isomorphism.
This follows from the statement about ranks in \autoref{K-to-etale} and since
$r_1(E)+r_2(E)=\varphi(c)/2=r_1(F)+r_2(F)$.
\end{proof}

\begin{remark}
There are partial results about the equivalent statements in \autoref{Schneider}, but unfortunately they cannot be used to verify Assumption~\ref{our-B}.
For example, Soul\'e verified in~\cite{Soule-thesis}*{Th{\'e}or{\`e}me~5 on page~268} that the condition~\ref{i:schneider4} of \autoref{Schneider} holds for all~$i<0$.
The results in~\cite{Schneider}*{\textsection6 Satz~6 on page~201} and~\cite{Geisser}*{Lemma~4.3a) and~b)} deal with totally real number fields and $i$ even or negative.
\end{remark}

We close the paper by showing how the last statement in~\autoref{Schneider} leads to \autoref{above-L}.

\begin{proof}[Proof of \autoref{above-L}]
If $EG(\Fin)$ is finite, then by \autoref{EGF-finite-type} there are only finitely many conjugacy classes of finite cyclic subgroup of~$G$.
So the last statement in~\autoref{Schneider} and \autoref{Schneider=>B} imply that there exists an~$M\in\IN$ such that Assumption~\techB{\FCyc}{[M,\infty)} holds.
By \autoref{EGFin-finite-type=>A}, Assumption~\ref{our-A} holds, and so also \techA{\FCyc}{N} holds for every~$N\in\IN$.
Moreover, by \autoref{EG(F)-computation}, for every finite cyclic subgroup~$C$ of~$G$ we have $H_s(BZ_GC;\IQ)\cong H_s(EG(\Fin)^C/Z_GC;\IQ)$, and these groups vanish if~$s>D=\dim EG(\Fin)$.
Now apply \autoref{main-technical} to obtain \autoref{above-L} for $L=M+D$.
\end{proof}


\begin{bibdiv}
\begin{biblist}

\bib{Adams}{article}{
   author={Adams, John Frank},
   title={Prerequisites (on equivariant stable homotopy) for Carlsson's lecture},
   conference={
      title={Algebraic topology},
      address={Aarhus},
      date={1982},
   },
   book={
      series={Lecture Notes in Math.},
      volume={1051},
      publisher={Springer},
      place={Berlin},
   },
   date={1984},
   pages={483--532},
   review={\MR{764596}},
   doi={\DOI{10.1007/BFb0075584}},
}

\bib{Bartels-Echterhoff-Lueck}{article}{
   author={Bartels, Arthur},
   author={Echterhoff, Siegfried},
   author={L\"uck, Wolfgang},
   title={Inheritance of isomorphism conjectures under colimits},
   conference={
      title={$K$-theory and noncommutative geometry},
   },
   book={
      series={EMS Ser. Congr. Rep.},
      publisher={Eur. Math. Soc., Z\"urich},
   },
   date={2008},
   pages={41--70},
   review={\MR{2513332}},
   doi={\DOI{10.4171/060-1/2}},
}

\bib{Bartels-Farrell-Lueck}{article}{
   author={Bartels, Arthur},
   author={Farrell, F. Thomas},
   author={L\"uck, Wolfgang},
   title={The Farrell-Jones conjecture for cocompact lattices in virtually connected Lie groups},
   journal={J. Amer. Math. Soc.},
   volume={27},
   date={2014},
   number={2},
   pages={339--388},
   issn={0894-0347},
   review={\MR{3164984}},
   doi={\DOI{10.1090/S0894-0347-2014-00782-7}},
}

\bib{Bartels-Lueck-HK}{article}{
   author={Bartels, Arthur},
   author={L\"uck, Wolfgang},
   title={Isomorphism conjecture for homotopy $K$-theory and groups acting on trees},
   journal={J. Pure Appl. Algebra},
   volume={205},
   date={2006},
   number={3},
   pages={660--696},
   issn={0022-4049},
   review={\MR{2210223}},
   doi={\DOI{10.1016/j.jpaa.2005.07.020}},
}

\bib{Bartels-Lueck-ind}{article}{
   author={Bartels, Arthur},
   author={L\"uck, Wolfgang},
   title={Induction theorems and isomorphism conjectures for $K$- and $L$-theory},
   journal={Forum Math.},
   volume={19},
   date={2007},
   number={3},
   pages={379--406},
   issn={0933-7741},
   review={\MR{2328114}},
   doi={\DOI{10.1515/FORUM.2007.016}},
}

\bib{Bartels-Lueck-coeff}{article}{
   author={Bartels, Arthur},
   author={L\"uck, Wolfgang},
   title={On crossed product rings with twisted involutions, their module categories and $L$-theory},
   conference={
      title={Cohomology of groups and algebraic $K$-theory},
   },
   book={
      series={Adv. Lect. Math.},
      volume={12},
      publisher={Int. Press, Somerville, MA},
   },
   date={2010},
   pages={1--54},
   review={\MR{2655174}},
}

\bib{Bartels-Lueck-annals}{article}{
   author={Bartels, Arthur},
   author={L\"uck, Wolfgang},
   title={The Borel conjecture for hyperbolic and CAT(0)-groups},
   journal={Ann. of Math. (2)},
   volume={175},
   date={2012},
   number={2},
   pages={631--689},
   issn={0003-486X},
   review={\MR{2993750}},
   doi={\DOI{10.4007/annals.2012.175.2.5}},
}

\bib{Bartels-Lueck-Reich-invent}{article}{
   author={Bartels, Arthur},
   author={L\"uck, Wolfgang},
   author={Reich, Holger},
   title={The $K$-theoretic Farrell-Jones conjecture for hyperbolic groups},
   journal={Invent. Math.},
   volume={172},
   date={2008},
   number={1},
   pages={29--70},
   issn={0020-9910},
   review={\MR{2385666}},
   doi={\DOI{10.1007/s00222-007-0093-7}},
}

\bib{Bartels-Lueck-Reich-appl}{article}{
   author={Bartels, Arthur},
   author={L\"uck, Wolfgang},
   author={Reich, Holger},
   title={On the Farrell-Jones conjecture and its applications},
   journal={J. Topol.},
   volume={1},
   date={2008},
   number={1},
   pages={57--86},
   issn={1753-8416},
   review={\MR{2365652}},
   doi={\DOI{10.1112/jtopol/jtm008}},
}

\bib{Bartels-Lueck-Reich-Rueping}{article}{
   author={Bartels, Arthur},
   author={L\"uck, Wolfgang},
   author={Reich, Holger},
   author={R{\"u}ping, Henrik},
   title={K- and L-theory of group rings over $GL_n(\IZ)$},
   journal={Publ. Math. Inst. Hautes \'Etudes Sci.},
   volume={119},
   date={2014},
   pages={97--125},
   issn={0073-8301},
   review={\MR{3210177}},
   doi={\DOI{10.1007/s10240-013-0055-0}},
}

\bib{Bartels-Reich-jams}{article}{
   author={Bartels, Arthur},
   author={Reich, Holger},
   title={On the Farrell-Jones conjecture for higher algebraic $K$-theory},
   journal={J. Amer. Math. Soc.},
   volume={18},
   date={2005},
   number={3},
   pages={501--545},
   issn={0894-0347},
   review={\MR{2138135}},
   doi={\DOI{10.1090/S0894-0347-05-00482-0}},
}

\bib{Bartels-Reich-coeff}{article}{
   author={Bartels, Arthur},
   author={Reich, Holger},
   title={Coefficients for the Farrell-Jones conjecture},
   journal={Adv. Math.},
   volume={209},
   date={2007},
   number={1},
   pages={337--362},
   issn={0001-8708},
   review={\MR{2294225}},
   doi={\DOI{10.1016/j.aim.2006.05.005}},
}

\bib{Bartels-Rosenthal}{article}{
   author={Bartels, Arthur},
   author={Rosenthal, David},
   title={On the $K$-theory of groups with finite asymptotic dimension},
   journal={J. Reine Angew. Math.},
   volume={612},
   date={2007},
   pages={35--57},
   issn={0075-4102},
   review={\MR{2364073}},
   doi={\DOI{10.1515/CRELLE.2007.083}},
}

\bib{BMS}{article}{
   author={Bass, Hyman},
   author={Milnor, John},
   author={Serre, Jean-Pierre},
   title={Solution of the congruence subgroup problem for $SL_n\,(n\geq3)$ and $ Sp_{2n}\,(n\geq2)$},
   journal={Inst. Hautes \'Etudes Sci. Publ. Math.},
   number={33},
   date={1967},
   pages={59--137},
   issn={0073-8301},
   review={\MR{0244257}},
}

\bib{Blumberg}{article}{
   author={Blumberg, Andrew J.},
   title={Continuous functors as a model for the equivariant stable homotopy category},
   journal={Algebr. Geom. Topol.},
   volume={6},
   date={2006},
   pages={2257--2295},
   issn={1472-2747},
   review={\MR{2286026}},
   doi={\DOI{10.2140/agt.2006.6.2257}},
}

\bib{B-THH}{article}{
   author={B{\"o}kstedt, Marcel},
   title={Topological Hochschild homology},
   date={1986},
   address={Bielefeld},
   status={preprint},
}

\bib{BHM}{article}{
   author={B{\"o}kstedt, Marcel},
   author={Hsiang, Wu Chung},
   author={Madsen, Ib},
   title={The cyclotomic trace and algebraic $K$-theory of spaces},
   journal={Invent. Math.},
   volume={111},
   date={1993},
   number={3},
   pages={465--539},
   issn={0020-9910},
   review={\MR{1202133}},
   doi={\DOI{10.1007/BF01231296}},
}

\bib{Borceux}{book}{
   author={Borceux, Francis},
   title={Handbook of categorical algebra. 1},
   series={Encyclopedia of Mathematics and its Applications},
   volume={50},
   publisher={Cambridge University Press},
   place={Cambridge},
   date={1994},
   pages={xvi+345},
   isbn={0-521-44178-1},
   review={\MR{1291599}},
   note={DOI \DOI{10.1017/CBO9780511525858}},
}

\bib{Borel}{article}{
   author={Borel, Armand},
   title={Stable real cohomology of arithmetic groups},
   journal={Ann. Sci. \'Ecole Norm. Sup. (4)},
   volume={7},
   date={1974},
   pages={235--272 (1975)},
   issn={0012-9593},
   review={\MR{0387496}},
}

\bib{Bousfield}{article}{
   author={Bousfield, Aldridge K.},
   title={The localization of spectra with respect to homology},
   journal={Topology},
   volume={18},
   date={1979},
   number={4},
   pages={257--281},
   issn={0040-9383},
   review={\MR{551009}},
   doi={\DOI{10.1016/0040-9383(79)90018-1}},
}

\bib{Bousfield-Kan}{book}{
   author={Bousfield, Aldridge K.},
   author={Kan, Daniel M.},
   title={Homotopy limits, completions and localizations},
   series={Lecture Notes in Math.},
   volume={304},
   publisher={Springer},
   place={Berlin-New York},
   date={1972},
   pages={v+348},
   review={\MR{0365573}},
   note={DOI \DOI{10.1007/978-3-540-38117-4}},
}

\bib{Brumer}{article}{
   author={Brumer, Armand},
   title={On the units of algebraic number fields},
   journal={Mathematika},
   volume={14},
   date={1967},
   pages={121--124},
   issn={0025-5793},
   review={\MR{0220694}},
   doi={\DOI{10.1112/S0025579300003703}},
}

\bib{Carlsson-Goldfarb}{article}{
   author={Carlsson, Gunnar},
   author={Goldfarb, Boris},
   title={The integral $K$-theoretic Novikov conjecture for groups with finite asymptotic dimension},
   journal={Invent. Math.},
   volume={157},
   date={2004},
   number={2},
   pages={405--418},
   issn={0020-9910},
   review={\MR{2076928}},
   doi={\DOI{10.1007/s00222-003-0356-x}},
}

\bib{Carlsson-Pedersen}{article}{
   author={Carlsson, Gunnar},
   author={Pedersen, Erik Kj{\ae}r},
   title={Controlled algebra and the Novikov conjectures for $K$- and $L$-theory},
   journal={Topology},
   volume={34},
   date={1995},
   number={3},
   pages={731--758},
   issn={0040-9383},
   review={\MR{1341817}},
   doi={\DOI{10.1016/0040-9383(94)00033-H}},
}

\bib{Charney}{article}{
   author={Charney, Ruth M.},
   title={$K$-theory of ideals},
   conference={
      title={Current trends in algebraic topology, Part 1},
      address={London, Ont.},
      date={1981},
   },
   book={
      series={CMS Conf. Proc.},
      volume={2},
      publisher={Amer. Math. Soc., Providence, R.I.},
   },
   date={1982},
   pages={3--18},
   review={\MR{686107}},
}

\bib{Davis-Lueck}{article}{
   author={Davis, James F.},
   author={L\"uck, Wolfgang},
   title={Spaces over a category and assembly maps in isomorphism conjectures in $K$- and $L$-theory},
   journal={$K$-Theory},
   volume={15},
   date={1998},
   number={3},
   pages={201--252},
   issn={0920-3036},
   review={\MR{1659969}},
   doi={\DOI{10.1023/A:1007784106877}},
}

\bib{tD-transf}{book}{
   author={tom Dieck, Tammo},
   title={Transformation groups},
   series={de Gruyter Studies in Mathematics},
   volume={8},
   publisher={Walter de Gruyter \& Co., Berlin},
   date={1987},
   pages={x+312},
   isbn={3-11-009745-1},
   review={\MR{889050}},
   note={DOI \DOI{10.1515/9783110858372}},
}

\bib{tD-top}{book}{
   author={tom Dieck, Tammo},
   title={Algebraic topology},
   series={EMS Textbooks in Mathematics},
   publisher={European Mathematical Society},
   place={Z\"urich},
   date={2008},
   pages={xii+567},
   isbn={978-3-03719-048-7},
   review={\MR{2456045}},
   note={DOI \DOI{10.4171/048}},
}

\bib{Drinfeld}{article}{
   author={Drinfeld, Vladimir},
   title={On the notion of geometric realization},
   journal={Mosc. Math. J.},
   volume={4},
   date={2004},
   number={3},
   pages={619--626, 782},
   issn={1609-3321},
   review={\MR{2119142}},
}

\bib{Dundas}{book}{
   author={Dundas, Bj{\o}rn Ian},
   author={Goodwillie, Thomas G.},
   author={McCarthy, Randy},
   title={The local structure of algebraic K-theory},
   series={Algebra and Applications},
   volume={18},
   publisher={Springer},
   place={London},
   date={2013},
   pages={xvi+435},
   isbn={978-1-4471-4392-5},
   isbn={978-1-4471-4393-2},
   review={\MR{3013261}},
   note={DOI \DOI{10.1007/978-1-4471-4393-2}},
}

\bib{DMcC-THH}{article}{
   author={Dundas, Bj{\o}rn Ian},
   author={McCarthy, Randy},
   title={Topological Hochschild homology of ring functors and exact categories},
   journal={J. Pure Appl. Algebra},
   volume={109},
   date={1996},
   number={3},
   pages={231--294},
   issn={0022-4049},
   review={\MR{1388700}},
   doi={\DOI{10.1016/0022-4049(95)00089-5}},
}

\bib{Dwyer-Friedlander}{article}{
   author={Dwyer, William G.},
   author={Friedlander, Eric M.},
   title={Algebraic and \'etale $K$-theory},
   journal={Trans. Amer. Math. Soc.},
   volume={292},
   date={1985},
   number={1},
   pages={247--280},
   issn={0002-9947},
   review={\MR{805962}},
   doi={\DOI{10.2307/2000179}},
}

\bib{DHK}{article}{
   author={Dwyer, William G.},
   author={Hopkins, Michael J.},
   author={Kan, Daniel M.},
   title={The homotopy theory of cyclic sets},
   journal={Trans. Amer. Math. Soc.},
   volume={291},
   date={1985},
   number={1},
   pages={281--289},
   issn={0002-9947},
   review={\MR{797060}},
   doi={\DOI{10.2307/1999909}},
}

\bib{FH-spaceform}{article}{
   author={Farrell, F. Thomas},
   author={Hsiang, Wu Chung},
   title={The topological-Euclidean space form problem},
   journal={Invent. Math.},
   volume={45},
   date={1978},
   number={2},
   pages={181--192},
   issn={0020-9910},
   doi={\DOI{10.1007/BF01390272}},
   review={\MR{0482771}},
}

\bib{FH-Novikov}{article}{
   author={Farrell, F. Thomas},
   author={Hsiang, Wu Chung},
   title={On Novikov's conjecture for nonpositively curved manifolds. I},
   journal={Ann. of Math. (2)},
   volume={113},
   date={1981},
   number={1},
   pages={199--209},
   issn={0003-486X},
   review={\MR{604047}},
   doi={\DOI{10.2307/1971138}},
}

\bib{FH-poly}{article}{
   author={Farrell, F. Thomas},
   author={Hsiang, Wu Chung},
   title={The Whitehead group of poly-(finite or cyclic) groups},
   journal={J. London Math. Soc. (2)},
   volume={24},
   date={1981},
   number={2},
   pages={308--324},
   issn={0024-6107},
   review={\MR{631942}},
   doi={\DOI{10.1112/jlms/s2-24.2.308}},
}

\bib{FJ-dyn}{article}{
   author={Farrell, F. Thomas},
   author={Jones, Lowell E.},
   title={$K$-theory and dynamics. I},
   journal={Ann. of Math. (2)},
   volume={124},
   date={1986},
   number={3},
   pages={531--569},
   issn={0003-486X},
   review={\MR{866708}},
   doi={\DOI{10.2307/2007092}},
}

\bib{FJ-Mostow}{article}{
   author={Farrell, F. Thomas},
   author={Jones, Lowell E.},
   title={A topological analogue of Mostow's rigidity theorem},
   journal={J. Amer. Math. Soc.},
   volume={2},
   date={1989},
   number={2},
   pages={257--370},
   issn={0894-0347},
   review={\MR{973309}},
   doi={\DOI{10.2307/1990978}},
}

\bib{FJ-iso}{article}{
   author={Farrell, F. Thomas},
   author={Jones, Lowell E.},
   title={Isomorphism conjectures in algebraic $K$-theory},
   journal={J. Amer. Math. Soc.},
   volume={6},
   date={1993},
   number={2},
   pages={249--297},
   issn={0894-0347},
   review={\MR{1179537}},
   doi={\DOI{10.2307/2152801}},
}

\bib{FJ-vcyc}{article}{
   author={Farrell, F. Thomas},
   author={Jones, Lowell E.},
   title={The lower algebraic $K$-theory of virtually infinite cyclic groups},
   journal={$K$-Theory},
   volume={9},
   date={1995},
   number={1},
   pages={13--30},
   issn={0920-3036},
   review={\MR{1340838}},
   doi={\DOI{10.1007/BF00965457}},
}

\bib{FW}{article}{
   author={Farrell, F. Thomas},
   author={Wu, Xiaolei},
   title={The Farrell-Jones conjecture for the solvable Baumslag-Solitar groups},
   journal={Math. Ann.},
   volume={359},
   date={2014},
   number={3-4},
   pages={839--862},
   issn={0025-5831},
   review={\MR{3231018}},
   doi={\DOI{10.1007/s00208-014-1021-y}},
}

\bib{Geisser}{article}{
   author={Geisser, Thomas},
   title={The cyclotomic trace map and values of zeta functions},
   conference={
      title={Algebra and number theory},
   },
   book={
      publisher={Hindustan Book Agency, Delhi},
   },
   date={2005},
   pages={211--225},
   review={\MR{2193354}},
}

\bib{Geoghegan}{book}{
   author={Geoghegan, Ross},
   title={Topological methods in group theory},
   series={Graduate Texts in Mathematics},
   volume={243},
   publisher={Springer, New York},
   date={2008},
   pages={xiv+473},
   isbn={978-0-387-74611-1},
   review={\MR{2365352}},
   note={DOI \DOI{10.1007/978-0-387-74614-2}},
}

\bib{GV}{article}{
   author={Geoghegan, Ross},
   author={Varisco, Marco},
   title={On Thompson's group $T$ and algebraic $K$-theory},
   date={2015},
   status={to appear in \emph{Geometric and Cohomological Group Theory}, London Math.\ Soc.\ Lecture Note Ser., Cambridge Univ.\ Press, available at \arXiv{1401.0357}},
}

\bib{Goerss-Jardine}{book}{
   author={Goerss, Paul G.},
   author={Jardine, John F.},
   title={Simplicial homotopy theory},
   series={Modern Birkh\"auser Classics},
   publisher={Birkh\"auser, Basel},
   date={2009},
   pages={xvi+510},
   isbn={978-3-0346-0188-7},
   review={\MR{2840650}},
   note={DOI \DOI{10.1007/978-3-0346-0189-4}},
}

\bib{Goodwillie}{article}{
   author={Goodwillie, Thomas G.},
   title={Cyclic homology, derivations, and the free loopspace},
   journal={Topology},
   volume={24},
   date={1985},
   number={2},
   pages={187--215},
   issn={0040-9383},
   review={\MR{793184}},
   doi={\DOI{10.1016/0040-9383(85)90055-2}},
}

\bib{Grayson}{article}{
   author={Grayson, Daniel R.},
   title={Weight filtrations via commuting automorphisms},
   journal={$K$-Theory},
   volume={9},
   date={1995},
   number={2},
   pages={139--172},
   issn={0920-3036},
   review={\MR{1340843}},
   doi={\DOI{10.1007/BF00961457}},
}

\bib{Grunewald}{article}{
   author={Grunewald, Joachim},
   title={The behavior of Nil-groups under localization and the relative
   assembly map},
   journal={Topology},
   volume={47},
   date={2008},
   number={3},
   pages={160--202},
   issn={0040-9383},
   review={\MR{2414359}},
   doi={\DOI{10.1016/j.top.2007.03.007}},
}

\bib{Hambleton-Pedersen}{article}{
   author={Hambleton, Ian},
   author={Pedersen, Erik Kj\ae r},
   title={Identifying assembly maps in $K$- and $L$-theory},
   journal={Math. Ann.},
   volume={328},
   date={2004},
   number={1-2},
   pages={27--57},
   issn={0025-5831},
   review={\MR{2030369}},
   doi={\DOI{10.1007/s00208-003-0454-5}},
}

\bib{H-survey}{article}{
   author={Hesselholt, Lars},
   title={$K$-theory of truncated polynomial algebras},
   conference={
      title={Handbook of $K$-theory. Vol.~1},
   },
   book={
      publisher={Springer},
      place={Berlin},
   },
   date={2005},
   pages={71--110},
   review={\MR{2181821}},
   note={\hurl[]{k-theory.org/handbook}, DOI \DOI{10.1007/978-3-540-27855-9_3}},
}

\bib{HM-top}{article}{
   author={Hesselholt, Lars},
   author={Madsen, Ib},
   title={On the $K$-theory of finite algebras over Witt vectors of perfect fields},
   journal={Topology},
   volume={36},
   date={1997},
   number={1},
   pages={29--101},
   issn={0040-9383},
   review={\MR{1410465}},
   doi={\DOI{10.1016/0040-9383(96)00003-1}},
}

\bib{HM-annals}{article}{
   author={Hesselholt, Lars},
   author={Madsen, Ib},
   title={On the $K$-theory of local fields},
   journal={Ann. of Math. (2)},
   volume={158},
   date={2003},
   number={1},
   pages={1--113},
   issn={0003-486X},
   review={\MR{1998478}},
   doi={\DOI{10.4007/annals.2003.158.1}},
}

\bib{HHR}{article}{
   author={Hill, Michael A.},
   author={Hopkins, Michael J.},
   author={Ravenel, Douglas C.},
   title={On the nonexistence of elements of Kervaire invariant one},
   journal={Ann. of Math. (2)},
   volume={184},
   date={2016},
   number={1},
   pages={1--262},
   issn={0003-486X},
   review={\MR{3505179}},
   doi={\DOI{10.4007/annals.2016.184.1.1}},
}

\bib{Hirschhorn}{book}{
   author={Hirschhorn, Philip S.},
   title={Model categories and their localizations},
   series={Mathematical Surveys and Monographs},
   volume={99},
   publisher={American Mathematical Society, Providence, RI},
   date={2003},
   pages={xvi+457},
   isbn={0-8218-3279-4},
   review={\MR{1944041}},
}

\bib{Hsiang}{article}{
  author={Hsiang, Wu Chung},
  title={Geometric applications of algebraic $K$-theory},
  conference={
    title={Proceedings of the International Congress of Mathematicians},
    address={Warsaw},
    date={1983},
  },
  book={
    publisher={PWN},
    place={Warsaw},
  },
  date={1984},
  pages={99--118},
  review={\MR{804679}},
}

\bib{Jones}{article}{
   author={Jones, John D. S.},
   title={Cyclic homology and equivariant homology},
   journal={Invent. Math.},
   volume={87},
   date={1987},
   number={2},
   pages={403--423},
   issn={0020-9910},
   review={\MR{870737}},
   doi={\DOI{10.1007/BF01389424}},
}

\bib{Kammeyer-Lueck-Rueping}{article}{
   author={Kammeyer, Holger},
   author={L\"uck, Wolfgang},
   author={R\"uping, Henrik},
   title={The Farrell-Jones conjecture for arbitrary lattices in virtually connected Lie groups},
   journal={Geom. Topol.},
   volume={20},
   date={2016},
   number={3},
   pages={1275--1287},
   issn={1465-3060},
   review={\MR{3523058}},
   doi={\DOI{10.2140/gt.2016.20.1275}},
}

\bib{Kasprowski}{article}{
   author={Kasprowski, Daniel},
   title={On the $K$-theory of groups with finite decomposition complexity},
   journal={Proc. Lond. Math. Soc. (3)},
   volume={110},
   date={2015},
   number={3},
   pages={565--592},
   issn={0024-6115},
   review={\MR{3342098}},
   doi={\DOI{10.1112/plms/pdu062}},
}

\bib{Kelly}{article}{
   author={Kelly, Gregory Maxwell},
   title={Basic concepts of enriched category theory},
   note={Reprint of the 1982 original (Cambridge Univ. Press)},
   journal={Repr. Theory Appl. Categ.},
   number={10},
   date={2005},
   pages={vi+137},
   review={\MR{2177301}},
}

\bib{Kolster}{article}{
   author={Kolster, Manfred},
   title={Remarks on \'etale $K$-theory and Leopoldt's conjecture},
   conference={
      title={S\'eminaire de Th\'eorie des Nombres, Paris, 1991--92},
   },
   book={
      series={Progr. Math.},
      volume={116},
      publisher={Birkh\"auser, Boston, MA},
   },
   date={1993},
   pages={37--62},
   review={\MR{1300881}},
}

\bib{Levine-loc}{article}{
   author={Levine, Marc},
   title={Techniques of localization in the theory of algebraic cycles},
   journal={J. Algebraic Geom.},
   volume={10},
   date={2001},
   number={2},
   pages={299--363},
   issn={1056-3911},
   review={\MR{1811558}},
}

\bib{Levine-coniveau}{article}{
   author={Levine, Marc},
   title={The homotopy coniveau tower},
   journal={J. Topol.},
   volume={1},
   date={2008},
   number={1},
   pages={217--267},
   issn={1753-8416},
   review={\MR{2365658}},
   doi={\DOI{10.1112/jtopol/jtm004}},
}

\bib{Lewis}{article}{
   author={Lewis, L. Gaunce, Jr.},
   title={When is the natural map $X\TO\Omega\Sigma X$ a cofibration?},
   journal={Trans. Amer. Math. Soc.},
   volume={273},
   date={1982},
   number={1},
   pages={147--155},
   issn={0002-9947},
   review={\MR{664034}},
   doi={\DOI{10.2307/1999197}},
}

\bib{LMS}{book}{
   author={Lewis, L. Gaunce, Jr.},
   author={May, J. Peter},
   author={Steinberger, Mark},
   title={Equivariant stable homotopy theory},
   series={Lecture Notes in Math.},
   volume={1213},
   publisher={Springer},
   place={Berlin},
   date={1986},
   pages={x+538},
   isbn={3-540-16820-6},
   review={\MR{866482}},
   note={With contributions by James E. McClure. DOI \DOI{10.1007/BFb0075778}},
}

\bib{Loday}{article}{
   author={Loday, Jean-Louis},
   title={$K$-th\'eorie alg\'ebrique et repr\'esentations de groupes},
   journal={Ann. Sci. \'Ecole Norm. Sup. (4)},
   volume={9},
   date={1976},
   number={3},
   pages={309--377},
   issn={0012-9593},
   review={\MR{0447373}},
}

\bib{L-LNM}{book}{
   author={L\"uck, Wolfgang},
   title={Transformation groups and algebraic $K$-theory},
   series={Lecture Notes in Math.},
   volume={1408},
   publisher={Springer},
   place={Berlin},
   date={1989},
   pages={xii+443},
   isbn={3-540-51846-0},
   review={\MR{1027600}},
   note={DOI \DOI{10.1007/BFb0083681}},
}

\bib{L-type}{article}{
   author={L\"uck, Wolfgang},
   title={The type of the classifying space for a family of subgroups},
   journal={J. Pure Appl. Algebra},
   volume={149},
   date={2000},
   number={2},
   pages={177--203},
   issn={0022-4049},
   review={\MR{1757730}},
   doi={\DOI{10.1016/S0022-4049(98)90173-6}},
}

\bib{L-Chern}{article}{
   author={L\"uck, Wolfgang},
   title={Chern characters for proper equivariant homology theories and applications to $K$- and $L$-theory},
   journal={J. Reine Angew. Math.},
   volume={543},
   date={2002},
   pages={193--234},
   issn={0075-4102},
   review={\MR{1887884}},
   doi={\DOI{10.1515/crll.2002.015}},
}

\bib{L-survey}{article}{
   author={L\"uck, Wolfgang},
   title={Survey on classifying spaces for families of subgroups},
   conference={
      title={Infinite groups: geometric, combinatorial and dynamical aspects},
   },
   book={
      series={Progr. Math.},
      volume={248},
      publisher={Birkh\"auser, Basel},
   },
   date={2005},
   pages={269--322},
   review={\MR{2195456}},
   doi={\DOI{10.1007/3-7643-7447-0_7}},
}

\bib{L-ICM}{article}{
   author={L\"uck, Wolfgang},
   title={$K$- and $L$-theory of group rings},
   conference={
      title={Proceedings of the International Congress of Mathematicians.
      Volume II},
   },
   book={
      publisher={Hindustan Book Agency, New Delhi},
   },
   date={2010},
   pages={1071--1098},
   review={\MR{2827832}},
}

\bib{LR-survey}{article}{
   author={L\"uck, Wolfgang},
   author={Reich, Holger},
   title={The Baum-Connes and the Farrell-Jones conjectures in $K$- and $L$-theory},
   conference={
      title={Handbook of $K$-theory, Vol.~2},
   },
   book={
      publisher={Springer},
      place={Berlin},
   },
   date={2005},
   pages={703--842},
   review={\MR{2181833}},
   note={\hurl[]{k-theory.org/handbook}, DOI \DOI{10.1007/978-3-540-27855-9_15}},
}

\bib{LR-cyclic}{article}{
   author={L\"uck, Wolfgang},
   author={Reich, Holger},
   title={Detecting $K$-theory by cyclic homology},
   journal={Proc. London Math. Soc.~(3)},
   volume={93},
   date={2006},
   number={3},
   pages={593--634},
   issn={0024-6115},
   review={\MR{2266961}},
   doi={\DOI{10.1017/S0024611506015954}},
}

\bib{tc}{article}{
   author={L\"uck, Wolfgang},
   author={Reich, Holger},
   author={Rognes, John},
   author={Varisco, Marco},
   title={Assembly maps for topological cyclic homology of group algebras},
   date={2016},
   status={preprint, available at \arXiv{1607.03557}},
}

\bib{LRV}{article}{
   author={L\"uck, Wolfgang},
   author={Reich, Holger},
   author={Varisco, Marco},
   title={Commuting homotopy limits and smash products},
   journal={$K$-Theory},
   volume={30},
   date={2003},
   number={2},
   pages={137--165},
   issn={0920-3036},
   review={\MR{2064237}},
   doi={\DOI{10.1023/B:KTHE.0000018387.87156.c4}},
}

\bib{LS}{article}{
   author={L\"uck, Wolfgang},
   author={Steimle, Wolfgang},
   title={Splitting the relative assembly map, Nil-terms and involutions},
   journal={Ann. K-Theory},
   volume={1},
   date={2016},
   number={4},
   pages={339--377},
   issn={2379-1683},
   review={\MR{3536432}},
   doi={\DOI{10.2140/akt.2016.1.339}},
}

\bib{Lydakis}{article}{
   author={Lydakis, Manos},
   title={Simplicial functors and stable homotopy theory},
   date={1998},
   status={preprint, available at \hurl[]{hopf.math.purdue.edu/cgi-bin/generate?/Lydakis/s_functors}},
}

\bib{MacLane}{book}{
   author={Mac Lane, Saunders},
   title={Categories for the working mathematician},
   series={Graduate Texts in Mathematics},
   volume={5},
   edition={2},
   publisher={Springer},
   place={New York},
   date={1998},
   pages={xii+314},
   isbn={0-387-98403-8},
   review={\MR{1712872}},
   note={DOI \DOI{10.1007/978-1-4757-4721-8}},
}

\bib{Madsen}{article}{
   author={Madsen, Ib},
   title={Algebraic $K$-theory and traces},
   conference={
      title={Current developments in mathematics, 1995 (Cambridge, MA)},
   },
   book={
      publisher={Int. Press},
      place={Cambridge, MA},
   },
   date={1994},
   pages={191--321},
   review={\MR{1474979}},
}

\bib{MM}{article}{
   author={Mandell, Michael A.},
   author={May, J. Peter},
   title={Equivariant orthogonal spectra and $S$-modules},
   journal={Mem. Amer. Math. Soc.},
   volume={159},
   date={2002},
   number={755},
   pages={x+108},
   issn={0065-9266},
   review={\MR{1922205}},
   doi={\DOI{10.1090/memo/0755}},
}

\bib{MMSS}{article}{
   author={Mandell, Michael A.},
   author={May, J. Peter},
   author={Schwede, Stefan},
   author={Shipley, Brooke},
   title={Model categories of diagram spectra},
   journal={Proc. London Math. Soc. (3)},
   volume={82},
   date={2001},
   number={2},
   pages={441--512},
   issn={0024-6115},
   review={\MR{1806878}},
   doi={\DOI{10.1112/S0024611501012692}},
}

\bib{May}{book}{
   author={May, J. Peter},
   title={The geometry of iterated loop spaces},
   series={Lectures Notes in Math.},
   volume={271},
   publisher={Springer},
   place={Berlin-New York},
   date={1972},
   pages={viii+175},
   review={\MR{0420610}},
   note={DOI \DOI{10.1007/BFb0067491}},
}

\bib{McCarthy}{article}{
   author={McCarthy, Randy},
   title={Relative algebraic $K$-theory and topological cyclic homology},
   journal={Acta Math.},
   volume={179},
   date={1997},
   number={2},
   pages={197--222},
   issn={0001-5962},
   review={\MR{1607555}},
   doi={\DOI{10.1007/BF02392743}},
}

\bib{Milne-EC}{book}{
   author={Milne, James S.},
   title={\'Etale cohomology},
   series={Princeton Mathematical Series},
   volume={33},
   publisher={Princeton University Press},
   place={Princeton, N.J.},
   date={1980},
   pages={xiii+323},
   isbn={0-691-08238-3},
   review={\MR{559531}},
}

\bib{Milne-ADT}{book}{
   author={Milne, James S.},
   title={Arithmetic duality theorems},
   edition={2},
   publisher={BookSurge, LLC},
   place={Charleston, SC},
   date={2006},
   pages={viii+339},
   isbn={1-4196-4274-X},
   review={\MR{2261462}},
}

\bib{Mislin}{article}{
   author={Mislin, Guido},
   title={Classifying spaces for proper actions of mapping class groups},
   journal={M\"unster J. Math.},
   volume={3},
   date={2010},
   pages={263--272},
   issn={1867-5778},
   review={\MR{2775365}},
}

\bib{NSW}{book}{
   author={Neukirch, J{\"u}rgen},
   author={Schmidt, Alexander},
   author={Wingberg, Kay},
   title={Cohomology of number fields},
   series={Grundlehren der Mathematischen Wissenschaften},
   volume={323},
   edition={2},
   publisher={Springer},
   place={Berlin},
   date={2008},
   pages={xvi+825},
   isbn={978-3-540-37888-4},
   review={\MR{2392026}},
   note={DOI \DOI{10.1007/978-3-540-37889-1}},
}

\bib{Oliver}{book}{
   author={Oliver, Robert},
   title={Whitehead groups of finite groups},
   series={London Mathematical Society Lecture Note Series},
   volume={132},
   publisher={Cambridge University Press},
   place={Cambridge},
   date={1988},
   pages={viii+349},
   isbn={0-521-33646-5},
   review={\MR{933091}},
   note={DOI \DOI{10.1017/CBO9780511600654}},
}

\bib{PAO}{article}{
   author={{\O}stv{\ae}r, Paul Arne},
   title={\'Etale descent for real number fields},
   journal={Topology},
   volume={42},
   date={2003},
   number={1},
   pages={197--225},
   issn={0040-9383},
   review={\MR{1928650}},
   doi={\DOI{10.1016/S0040-9383(02)00003-4}},
}

\bib{Panin}{article}{
   author={Panin, Ivan},
   title={The Hurewicz theorem and $K$-theory of complete discrete valuation rings},
   language={Russian},
   journal={Izv. Akad. Nauk SSSR Ser. Mat.},
   volume={50},
   date={1986},
   number={4},
   pages={763--775, 878},
   issn={0373-2436},
   review={\MR{864175}},
   doi={\DOI{10.1070/IM1987v029n01ABEH000962}},
}

\bib{Patronas}{thesis}{
   author={Patronas, Dimitrios},
   title={The Artin defect in algebraic $K$-theory},
   date={2014},
   type={Ph.D.~thesis},
   organization={Freie Universit\"at Berlin},
}

\bib{Quillen-finite}{article}{
   author={Quillen, Daniel},
   title={On the cohomology and $K$-theory of the general linear groups over a finite field},
   journal={Ann. of Math. (2)},
   volume={96},
   date={1972},
   pages={552--586},
   issn={0003-486X},
   doi={\DOI{10.2307/1970825}},
   review={\MR{0315016}},
}

\bib{Quillen-K1}{article}{
   author={Quillen, Daniel},
   title={Higher algebraic $K$-theory. I},
   conference={
      title={Algebraic $K$-theory, I: Higher $K$-theories},
      address={Proc. Conf., Battelle Memorial Inst., Seattle, Wash.},
      date={1972},
   },
   book={
      series={Lecture Notes in Math.},
      volume={341},
      publisher={Springer},
      place={Berlin},
   },
   date={1973},
   pages={85--147},
   review={\MR{0338129}},
   doi={\DOI{10.1007/BFb0067048}},
}

\bib{Quillen-fg}{article}{
   author={Quillen, Daniel},
   title={Finite generation of the groups $K_i$ of rings of algebraic
   integers},
   conference={
      title={Algebraic $K$-theory, I: Higher $K$-theories},
      address={Proc. Conf., Battelle Memorial Inst., Seattle, Wash.},
      date={1972},
   },
   book={
      series={Lecture Notes in Math.},
      volume={341},
      publisher={Springer},
      place={Berlin},
   },
   date={1973},
   pages={179--198},
   review={\MR{0349812}},
   doi={\DOI{10.1007/BFb0067048}},
}

\bib{RTY}{article}{
   author={Ramras, Daniel A.},
   author={Tessera, Romain},
   author={Yu, Guoliang},
   title={Finite decomposition complexity and the integral Novikov conjecture for higher algebraic $K$-theory},
   journal={J. Reine Angew. Math.},
   volume={694},
   date={2014},
   pages={129--178},
   issn={0075-4102},
   review={\MR{3259041}},
   doi={\DOI{10.1515/crelle-2012-0112}},
}

\bib{RV}{article}{
   author={Reich, Holger},
   author={Varisco, Marco},
   title={On the Adams isomorphism for equivariant orthogonal spectra},
   journal={Algebr. Geom. Topol.},
   volume={16},
   date={2016},
   number={3},
   pages={1493--1566},
   issn={1472-2747},
   review={\MR{3523048}},
   doi={\DOI{10.2140/agt.2016.16.1493}},
}

\bib{Reiner}{book}{
   author={Reiner, Irving},
   title={Maximal orders},
   note={London Mathematical Society Monographs, No. 5},
   publisher={Academic Press},
   place={London-New York},
   date={1975},
   pages={xii+395},
   review={\MR{0393100}},
}

\bib{Rognes-2adic}{article}{
   author={Rognes, John},
   title={Algebraic $K$-theory of the two-adic integers},
   journal={J. Pure Appl. Algebra},
   volume={134},
   date={1999},
   number={3},
   pages={287--326},
   issn={0022-4049},
   review={\MR{1663391}},
   doi={\DOI{10.1016/S0022-4049(97)00156-4}},
}

\bib{Rognes-2primary}{article}{
   author={Rognes, John},
   title={Two-primary algebraic $K$-theory of pointed spaces},
   journal={Topology},
   volume={41},
   date={2002},
   number={5},
   pages={873--926},
   issn={0040-9383},
   review={\MR{1923990}},
   doi={\DOI{10.1016/S0040-9383(01)00005-2}},
}

\bib{RW99}{article}{
   author={Rognes, John},
   author={Weibel, Charles A.},
   title={\'Etale descent for two-primary algebraic $K$-theory of totally imaginary number fields},
   journal={$K$-Theory},
   volume={16},
   date={1999},
   number={2},
   pages={101--104},
   issn={0920-3036},
   review={\MR{1671258}},
   doi={\DOI{10.1023/A:1007751307676}},
}

\bib{RW00}{article}{
   author={Rognes, John},
   author={Weibel, Charles A.},
   title={Two-primary algebraic $K$-theory of rings of integers in number fields},
   journal={J. Amer. Math. Soc.},
   volume={13},
   date={2000},
   number={1},
   pages={1--54},
   issn={0894-0347},
   review={\MR{1697095}},
   note={Appendix A by Manfred Kolster. DOI \DOI{10.1090/S0894-0347-99-00317-3}},
}

\bib{Rosenthal}{article}{
   author={Rosenthal, David},
   title={Splitting with continuous control in algebraic $K$-theory},
   journal={$K$-Theory},
   volume={32},
   date={2004},
   number={2},
   pages={139--166},
   issn={0920-3036},
   review={\MR{2083578}},
   doi={\DOI{10.1023/B:KTHE.0000037563.35102.0d}},
}

\bib{Roushon}{article}{
   author={Roushon, Sayed K.},
   title={The Farrell-Jones isomorphism conjecture for 3-manifold groups},
   journal={J. K-Theory},
   volume={1},
   date={2008},
   number={1},
   pages={49--82},
   issn={1865-2433},
   review={\MR{2424566}},
   doi={\DOI{10.1017/is007011012jkt005}},
}

\bib{Rueping}{article}{
   author={R\"uping, Henrik},
   title={The Farrell-Jones conjecture for $S$-arithmetic groups},
   journal={J. Topol.},
   volume={9},
   date={2016},
   number={1},
   pages={51--90},
   issn={1753-8416},
   review={\MR{3465840}},
   doi={\DOI{10.1112/jtopol/jtv034}},
}

\bib{Schneider}{article}{
   author={Schneider, Peter},
   title={\"Uber gewisse Galoiscohomologiegruppen},
   journal={Math. Z.},
   volume={168},
   date={1979},
   number={2},
   pages={181--205},
   issn={0025-5874},
   review={\MR{544704}},
   doi={\DOI{10.1007/BF01214195}},
}

\bib{Segal}{article}{
   author={Segal, Graeme B.},
   title={Categories and cohomology theories},
   journal={Topology},
   volume={13},
   date={1974},
   pages={293--312},
   issn={0040-9383},
   doi={\DOI{10.1016/0040-9383(74)90022-6}},
   review={\MR{0353298}},
}

\bib{Shipley}{article}{
   author={Shipley, Brooke},
   title={Symmetric spectra and topological Hochschild homology},
   journal={$K$-Theory},
   volume={19},
   date={2000},
   number={2},
   pages={155--183},
   issn={0920-3036},
   review={\MR{1740756}},
   doi={\DOI{10.1023/A:1007892801533}},
}

\bib{Soule-thesis}{article}{
   author={Soul\'e, Christophe},
   title={$K$-th\'eorie des anneaux d'entiers de corps de nombres et cohomologie \'etale},
   journal={Invent. Math.},
   volume={55},
   date={1979},
   number={3},
   pages={251--295},
   issn={0020-9910},
   review={\MR{553999}},
   doi={\DOI{10.1007/BF01406843}},
}

\bib{Soule-LNM}{article}{
   author={Soul\'e, Christophe},
   title={On higher $p$-adic regulators},
   conference={
      title={Algebraic $K$-theory, Evanston 1980},
      address={Proc. Conf., Northwestern Univ., Evanston, Ill.},
      date={1980},
   },
   book={
      series={Lecture Notes in Math.},
      volume={854},
      publisher={Springer},
      place={Berlin-New York},
   },
   date={1981},
   pages={372--401},
   review={\MR{618313}},
   doi={\DOI{10.1007/BFb0089514}},
}

\bib{Soule-ast}{article}{
   author={Soul\'e, Christophe},
   title={\'El\'ements cyclotomiques en $K$-th\'eorie},
   note={Journ\'ees arithm\'etiques de Besan\c con (Besan\c con, 1985)},
   journal={Ast\'erisque},
   number={147-148},
   date={1987},
   pages={225--257, 344},
   issn={0303-1179},
   review={\MR{891430}},
}

\bib{Strickland}{article}{
   author={Strickland, Neil P.},
   title={The category of CGWH spaces},
   date={2009-08-19},
   status={preprint, available at \hurl{neil-strickland.staff.shef.ac.uk/courses/homotopy}},
}

\bib{Suslin}{article}{
   author={Suslin, Andrei},
   title={On the Grayson spectral sequence},
   journal={Tr. Mat. Inst. Steklova},
   volume={241},
   date={2003},
   number={Teor. Chisel, Algebra i Algebr. Geom.},
   pages={218--253},
   issn={0371-9685},
   translation={
      journal={Proc. Steklov Inst. Math.},
      date={2003},
      number={2 (241)},
      pages={202--237},
      issn={0081-5438},
   },
   review={\MR{2024054}},
}

\bib{Suslin-Voevodsky}{article}{
   author={Suslin, Andrei},
   author={Voevodsky, Vladimir},
   title={Bloch-Kato conjecture and motivic cohomology with finite coefficients},
   conference={
      title={The arithmetic and geometry of algebraic cycles},
      address={Banff, AB},
      date={1998},
   },
   book={
      series={NATO Sci. Ser. C Math. Phys. Sci.},
      volume={548},
      publisher={Kluwer Acad. Publ.},
      place={Dordrecht},
   },
   date={2000},
   pages={117--189},
   review={\MR{1744945}},
}

\bib{Swan}{article}{
   author={Swan, Richard G.},
   title={Induced representations and projective modules},
   journal={Ann. of Math. (2)},
   volume={71},
   date={1960},
   pages={552--578},
   issn={0003-486X},
   doi={\DOI{10.2307/1969944}},
   review={\MR{0138688}},
}

\bib{Thomason}{article}{
   author={Thomason, Robert W.},
   title={Homotopy colimits in the category of small categories},
   journal={Math. Proc. Cambridge Philos. Soc.},
   volume={85},
   date={1979},
   number={1},
   pages={91--109},
   issn={0305-0041},
   review={\MR{510404}},
   doi={\DOI{10.1017/S0305004100055535}},
}

\bib{Voevodsky-possible}{article}{
   author={Voevodsky, Vladimir},
   title={A possible new approach to the motivic spectral sequence for algebraic $K$-theory},
   conference={
      title={Recent progress in homotopy theory},
      address={Baltimore, MD},
      date={2000},
   },
   book={
      series={Contemp. Math.},
      volume={293},
      publisher={Amer. Math. Soc.},
      place={Providence, RI},
   },
   date={2002},
   pages={371--379},
   review={\MR{1890744}},
   doi={\DOI{10.1090/conm/293/04956}},
}

\bib{Voevodsky-mod-2}{article}{
   author={Voevodsky, Vladimir},
   title={Motivic cohomology with $\IZ/2$-coefficients},
   journal={Publ. Math. Inst. Hautes \'Etudes Sci.},
   number={98},
   date={2003},
   pages={59--104},
   issn={0073-8301},
   review={\MR{2031199}},
   doi={\DOI{10.1007/s10240-003-0010-6}},
}

\bib{Voevodsky-mod-l}{article}{
   author={Voevodsky, Vladimir},
   title={On motivic cohomology with $\IZ/l$-coefficients},
   journal={Ann. of Math. (2)},
   volume={174},
   date={2011},
   number={1},
   pages={401--438},
   issn={0003-486X},
   review={\MR{2811603}},
   doi={\DOI{10.4007/annals.2011.174.1.11}},
}

\bib{Wagoner}{article}{
   author={Wagoner, John B.},
   title={Continuous cohomology and $p$-adic $K$-theory},
   conference={
      title={Algebraic K-theory (Proc. Conf., Northwestern Univ., Evanston, Ill., 1976)},
   },
   book={
      series={Lecture Notes in Math.},
      volume={551},
      publisher={Springer},
      place={Berlin},
   },
   date={1976},
   pages={241--248},
   review={\MR{0498502}},
   doi={\DOI{10.1007/BFb0079990}},
}

\bib{Waldhausen-A1}{article}{
   author={Waldhausen, Friedhelm},
   title={Algebraic $K$-theory of topological spaces. I},
   conference={
      title={Algebraic and geometric topology},
      address={Stanford Univ.},
      date={1976},
   },
   book={
      series={Proc. Sympos. Pure Math., XXXII},
      publisher={Amer. Math. Soc., Providence, R.I.},
   },
   date={1978},
   pages={35--60},
   review={\MR{520492}},
}

\bib{Waldhausen-A2}{article}{
   author={Waldhausen, Friedhelm},
   title={Algebraic $K$-theory of topological spaces. II},
   conference={
      title={Algebraic topology},
      address={Univ. Aarhus},
      date={1978},
   },
   book={
      series={Lecture Notes in Math.},
      volume={763},
      publisher={Springer},
      place={Berlin},
   },
   date={1979},
   pages={356--394},
   review={\MR{561230}},
   doi={\DOI{10.1007/BFb0088073}},
}

\bib{Washington}{book}{
   author={Washington, Lawrence C.},
   title={Introduction to cyclotomic fields},
   series={Graduate Texts in Mathematics},
   volume={83},
   edition={2},
   publisher={Springer},
   place={New York},
   date={1997},
   pages={xiv+487},
   isbn={0-387-94762-0},
   review={\MR{1421575}},
   note={DOI \DOI{10.1007/978-1-4612-1934-7}},
}

\bib{Wegner-CAT0}{article}{
   author={Wegner, Christian},
   title={The $K$-theoretic Farrell-Jones conjecture for CAT(0)\=/groups},
   journal={Proc. Amer. Math. Soc.},
   volume={140},
   date={2012},
   number={3},
   pages={779--793},
   issn={0002-9939},
   review={\MR{2869063}},
   doi={\DOI{10.1090/S0002-9939-2011-11150-X}},
}

\bib{Wegner-solv}{article}{
   author={Wegner, Christian},
   title={The Farrell-Jones conjecture for virtually solvable groups},
   journal={J. Topol.},
   volume={8},
   date={2015},
   number={4},
   pages={975--1016},
   issn={1753-8416},
   review={\MR{3431666}},
   doi={\DOI{10.1112/jtopol/jtv026}},
}

\bib{Weibel-survey}{article}{
   author={Weibel, Charles A.},
   title={Algebraic $K$-theory of rings of integers in local and global fields},
   conference={
      title={Handbook of $K$-theory, Vol.~1},
   },
   book={
      publisher={Springer},
      place={Berlin},
   },
   date={2005},
   pages={139--190},
   review={\MR{2181823}},
   note={\hurl[]{k-theory.org/handbook}, DOI \DOI{10.1007/978-3-540-27855-9_15}},
}

\bib{Weibel-K-book}{book}{
   author={Weibel, Charles A.},
   title={The $K$-book},
   series={Graduate Studies in Mathematics},
   volume={145},
   publisher={American Mathematical Society, Providence, RI},
   date={2013},
   pages={xii+618},
   isbn={978-0-8218-9132-2},
   review={\MR{3076731}},
}

\bib{WW-assembly}{article}{
   author={Weiss, Michael},
   author={Williams, Bruce},
   title={Assembly},
   conference={
      title={Novikov conjectures, index theorems and rigidity, Vol.~2},
      address={Oberwolfach},
      date={1993},
   },
   book={
      series={London Math. Soc. Lecture Note Ser.},
      volume={227},
      publisher={Cambridge Univ. Press},
      place={Cambridge},
   },
   date={1995},
   pages={332--352},
   review={\MR{1388318}},
   doi={\DOI{10.1017/CBO9780511629365.014}},
}

\end{biblist}
\end{bibdiv}
\vfill

\end{document}